\renewcommand{\today}{\ifcase \month \or January\or February\or March\or April\or May\or June\or July\or August\or September\or October\or November\or December\fi \space \number \day, \number \year}
\title[Generalized Busemann functions for RWRP]{Existence of generalized Busemann functions and Gibbs measures for random walks in random potentials}
\author[S.~Groathouse]{Sean Groathouse}
\address{Sean Groathouse\\ University of Utah\\  Department of Mathematics\\ 155 S 1400 E\\   Salt Lake City, UT 84112}
\email{sean.groathouse@utah.edu}
\urladdr{http://www.math.utah.edu/~sean/}
\thanks{S.\ Groathouse and F.\ Rassoul-Agha were partially supported by National Science Foundation grants DMS-1811090 and DMS-2054630}
\author[C.~Janjigian]{Christopher Janjigian}
\address{Christopher Janjigian\\ Purdue University\\   Department of Mathematics\\  150 N University St\\  West Lafayette, IN 47907}
\email{cjanjigi@purdue.edu}
\urladdr{http://www.math.purdue.edu/~cjanjigi}
\thanks{C.\ Janjigian was partially supported by National Science Foundation grant DMS-2125961}
\author[F.~Rassoul-Agha]{Firas Rassoul-Agha}
\address{Firas Rassoul-Agha\\ University of Utah\\  Department of Mathematics\\ 155 S 1400 E\\   Salt Lake City, UT 84112}
\email{firas@math.utah.edu}
\urladdr{http://www.math.utah.edu/~firas}
\thanks{F.\ Rassoul-Agha  was partially supported by MPS-Simons Fellowship grant 823136}
\keywords{Busemann functions, cocycle, directed polymer in a random environment, first-passage percolation, Gibbs measure, harmonic function, last-passage percolation, Martin boundary, random polymer, random walk in a random environment, random growth, shape theorem}
\subjclass[2020]{60K35, 60K37} 
\date{\today}
\begin{document}

	\begin{abstract}
	We establish the existence of generalized Busemann functions and Gibbs-Dobrushin-Landford-Ruelle measures for a general class of lattice random walks in random potentials with finitely many admissible steps. This class encompasses directed polymers in random environments, first- and last-passage percolation, and elliptic random walks in both static and dynamic random environments in all dimensions and with minimal assumptions on the random potential.
	\end{abstract}

	\maketitle
 
	\section{Introduction}
The model of a random walk interacting with a random potential (RWRP) has been a major topic of research in probability over the last half-century. Through various specializations, it encompasses random walks in both static and dynamic random environments, directed polymers in random environments, as well as zero-temperature models of deterministic walks in random environments like directed and undirected first- and last-passage percolation. The objects we investigate, called generalized Busemann functions, have been previously studied in specific instances through the structure of positive harmonic functions \cite{Yil-09-aop,Yil-09-cpam,Yil-11-aop} and the associated Martin boundary \cite{Bal-Ras-Sep-19}, 
infinite volume Gibbs-Dobrushin-Landford-Ruelle measures and geodesics \cite{Cat-Pim-11,Bus-Sep-Sor-24,Sep-Sor-23-pmp, Alb-Ras-Sim-20,New-95,Dam-Han-14, Dam-Han-17, Geo-Ras-Sep-17-ptrf-2,Jan-Ras-20-aop,Bak-Li-19,Bak-Cat-Kha-14,Bak-13,Bak-16,Jan-Ras-Sep-22-1F1S-}, increment stationary distributions of directed polymers and stochastic Hamilton-Jacobi equations \cite{Bus-Sep-Sor-24,Sep-Sor-23-pmp,Alb-Ras-Sim-20,Cat-Pim-12, Bak-Cat-Kha-14, Bak-13,Bak-16,Bak-Li-19, Jan-Ras-20-jsp,Jan-Ras-Sep-22-1F1S-}, solutions to variational formulas for the limiting free energy and shape function \cite{Bal-Ras-Sep-19,Yil-09-cpam,Ras-Sep-11,Ras-Sep-Yil-13,Ras-Sep-Yil-17-ber}, and, in the related stochastic Hamilton-Jacobi setting, correctors for the stochastic homogenization problem \cite{Car-Sou-17,Jan-Ras-Sep-22-1F1S-}. 

In this introductory section, we will motivate the model and questions we consider informally, ignoring technical complications. A careful treatment follows in the main body of the text. Our model begins with a time-homogeneous reference random walk with finitely many admissible steps on an integer lattice $\bbZ^d$. We denote the set of admissible steps by $\range$, the associated transition mass function by $p$, and the path law started at $x \in \bbZ^d$ by $\rwP_x$. The random walk interacts with a random potential $\{V(\w,z) : z \in \range\}$ through its position and its increment.  

At positive temperature, the \emph{quenched {\rm(}unrestricted-length{\rm)} point-to-point polymer measure} in \emph{environment $\w$} is a Gibbsian measure on paths $X_{\aabullet}$ which emanate from a site $x$, defined via the Radon-Nikodym derivative
\begin{align}
    \frac{d\Qunr{x}{y}{\beta, \w}}{d\rwP_x}(X_{\aabullet}) = \frac{e^{-\beta\sum_{k=0}^{\stoppt{y}-1} \pote(\T_{X_k} \omega, X_{k+1}-X_k)}}{\prtunr{x}{y}{\beta,\w}}\one_{\{\stoppt{y}<\infty\}}(X_{\aabullet}). \label{eq:unrq}
\end{align}
In the previous expression, $T_x\w$ is a shift of the environment $\w$ by $x$, $\tau_y$ is the first strictly positive time the path reaches site $y$, and the \emph{partition function} is
\begin{align}
\prtunr{x}{y}{\beta,\w} = \rwE_{x}\Bigl[e^{-\beta\sum_{k=0}^{\stoppt{y}-1} \pote(\T_{X_k} \omega, X_{k+1}-X_k)}\one_{\{\stoppt{y}<\infty\}}\Bigr]. \label{eq:unrpart}
\end{align}
Here $\rwE_{x}$ is the expectation with respect to $\rwP_x$, and $\beta \in (0,\infty)$ is interpreted as the \emph{inverse temperature}.

A straightforward computation checks that these measures are Markovian with one-step transition probabilities from $x \neq y$ to $x+z$, $z \in \range$, given by
\begin{align}
p(z)e^{-\beta \pote(\T_x\w,z)} \frac{\prtunr{x+z}{y}{\beta,\w}}{\prtunr{x}{y}{\beta,\w}} = p(z) e^{-\beta(\pote(\T_x\w,z) + \frac{1}{\beta}(\log\prtunr{x}{y}{\beta,\w} - \log\prtunr{x+z}{y}{\beta,\w}) )}.\label{eq:unrtrans}
\end{align}

As $\beta \to \infty$, one should expect the measure in \eqref{eq:unrq} to converge to a distribution supported on paths which minimize the potential along the path; i.e., the measure is asymptotically supported on paths which optimize 
\begin{align}
-\min_{x_{\abullet} \in \supp \rwP_x} \biggl\{ \sum_{k=0}^{\tau_y-1} \pote(\T_{x_k}\w, x_{k+1}-x_{k})\biggr\} = \freeunr{x}{y}{\infty,\w} = \lim_{\beta\to\infty} \frac{1}{\beta}\log \prtunr{x}{y}{\beta,\w}.\label{eq:fpplim}
\end{align}
This ``zero temperature" RWRP model is known in the literature as \emph{first-passage percolation} (FPP), with the standard undirected Euclidean model arising if $\rwP_x$ is any random walk with the same admissible steps as simple symmetric random walk on $\bbZ^d$. In that setting $\freeunr{x}{y}{\infty,\w}$ is called the \emph{passage time}. The statement that the transition probabilities in \eqref{eq:unrtrans} sum to one becomes the following local control problem, which can be used to construct minimizing paths, called \emph{geodesics} in FPP:
\begin{align*}
    0 &= \min_{z\in \range}\{\pote(\T_x\w, x+z) + \freeunr{x}{y}{\infty,\w} - \freeunr{x+z}{y}{\infty,\w} \}.
\end{align*}

If one replaces stopping on reaching $y$ with stopping after $n$ steps, then the positive-temperature polymer measure defined through \eqref{eq:unrq} is what we call the \emph{quenched restricted-length point-to-level polymer measure}, which is defined via the Radon-Nikodym derivative 
\begin{align}
    \frac{d\QnStep{x}{n}{\beta, \w}}{d\rwP_{x,n}}(X_{\aabullet})=\frac{e^{-\beta\sum_{k=0}^{n-1} \pote(\T_{X_k} \omega, X_{k+1}-X_k)}}{\prtnstep{x}{n}{\beta, \w}}.\label{eq:reslen}
\end{align}
In the previous expression, $\rwP_{x,n}$ is the restriction of $\rwP_{x}$ to the first $n$ steps of the walk and 
\[\prtnstep{x}{n}{\beta}(\omega) = \rwE_{x,n}\Bigl[e^{-\beta\sum_{k=0}^{n-1} \pote(\T_{X_k} \omega, X_{k+1}-X_k)}\Bigr]. \]
Again, these measures are Markovian with one-step transition probabilities from $x$ to $x+z$, where $z \in \range$, given by
\begin{align}
p(z)e^{-\beta V(T_x\w,z)} \frac{\prtnstep{x+z}{n-1}{\beta}(\omega) }{\prtnstep{x}{n}{\beta}(\omega) } = p(z) e^{-\beta(V(T_x\w,z) + \frac{1}{\beta}(\log\prtnstep{x}{n}{\beta} - \log\prtnstep{x+z}{n-1}{\beta} ))}.\label{eq:restrans}
\end{align}

Denote by $\bbP$ the law of the random environment $\w$. In general, the above quenched point-to-level polymer measures are not Kolmogorov-consistent as $n$ varies unless it happens to be the case that $\bbP$-almost surely for all $x$ and all $n$,
\begin{align}
\prtnstep{x}{n}{\beta}(\omega) = 1\qquad \text{ or, equivalently, }\qquad \sum_{z\in \range} p(z)e^{-\beta \pote(\T_x \w, z)} =1.\label{eq:RWRE}
\end{align}
If this holds, then the restricted point-to-level model reduces to that of a \emph{random walk in a random environment} (RWRE).

As previously noted, except in the special case where the model is an RWRE, the measures $(\QnStep{x}{n}{\beta, \w})_{n}$ and $(\Qunr{x}{y}{\beta,\w})_y$ discussed above are not in general Kolmogorov-consistent.  The domain Markov property satisfied by another family of measures (restricted length point-to-point, defined further down in \eqref{finitePathMeasures}) gives Dobrushin-Landford-Ruelle (DLR) equations which characterize infinite volume Gibbs measures in this setting.
 One typically expects such measures to arise in the \emph{thermodynamic limit} as the terminal condition ($y$ or $n$, respectively) tends to infinity. Such a limit is equivalent to the convergence of the transition probabilities defined through \eqref{eq:unrtrans} (or \eqref{eq:restrans}). Writing $\beta^{-1} \log \prtunr{x}{y}{\beta,\w} = \freeunr{x}{y}{\beta,\w},$ this, in turn, is equivalent to the convergence of limits of the form
\begin{align}
    \B{\beta,\w}(x,y) = \lim_u \Bigl\{ \freeunr{x}{u}{\beta,\w} -\freeunr{y}{u}{\beta,\w}\Bigr\},\label{eq:intbusf}
\end{align}
where the limit is taken as $u\to\infty$ in an appropriate sense. In the zero-temperature setting of FPP, the analogous limits  
\begin{align}
    \B{\infty,\w}(x,y) = \lim_u \Bigl\{\freeunr{x}{u}{\infty,\w} - \freeunr{y}{u}{\infty,\w}\Bigr\} \label{eq:intbusi}
\end{align}
are known as \emph{Busemann functions}, which is a name inherited from their interpretation in metric geometry. We keep this terminology in the general model.

The existence of such limits is highly non-trivial to prove in general, with most arguments, even in the planar $d=2$ setting, relying on strong and generally unproven hypotheses about the limiting \emph{free energy} ($\beta<\infty$) or \emph{limit shape} ($\beta=\infty$). These quantities are defined respectively (in the point-to-point setting) through the limits
\begin{align*}
    \ppShapeUnr{\beta}(\xi) = \lim_{n \rightarrow \infty} \frac{1}{n} \freeunr{\orig}{x_n}{\beta,\w} \qquad \text{ and }\qquad     \ppShapeUnr{\infty}(\xi) = \lim_{n \rightarrow \infty} \frac{1}{ n} \freeunr{\orig}{x_n}{\infty,\w},
\end{align*}
where $x_n/n \to \xi\in \bbR^d$. In the setting of RWRE, the corresponding object has also been studied extensively in the guise of the quenched large deviation rate function for the time $n$ position of the path. 

If the limits in \eqref{eq:intbusf} and \eqref{eq:intbusi} exist, then these Busemann functions satisfy four key properties, the first three of which are inherited immediately from the pre-limit  structure of the model:
\begin{enumerate}[label={\rm(\roman*)}, ref={\rm\roman*}]   \itemsep=3pt
\item \textup{(Recovery)} For  $\beta<\infty,$
\begin{align*}
    1= \sum_{z\in\range} p(z) e^{-\beta(V(\w,z) + \frac{1}{\beta}\B{\beta,\w}(\orig,z))},
\end{align*}
and for $\beta=\infty$,
\begin{align*}
    0 &= \min_{z\in \range}\{\pote(\w, z) + \B{\infty,\w}(\orig,z) \}.
\end{align*}
\item \textup{(Cocycle)} 
\[\B{\beta,\w}(x,y)+\B{\beta,\w}(y,z) = \B{\beta,\w}(x,z).\]
\item \textup{(Shift covariance)} 
\[
\B{\beta,T_z\w}(x,y) = \B{\beta,\w}(x+z,y+z).
\]
\item \textup{(Duality)} There exists $m \in \partial \ppShapeUnr{\beta}(\xi)$ for some $\xi$ for which
\begin{align}\label{duality}
\bbE[\B{\beta,\w}(x,y)] = m\cdot(x-y).
\end{align}
\end{enumerate}
 In the duality expression, $\partial\ppShapeUnr{\beta}(\xi)$ denotes the superdifferential of the concave function $\ppShapeUnr{\beta}$. With some convex analysis, one can generally show that duality is actually a consequence of the previous three conditions. We include this condition as part of our (informal) definition to simplify the discussion below. Random fields which satisfy these four properties are called \emph{generalized Busemann functions}. Where such objects exist, they can serve essentially the same role as true Busemann functions defined through the limits in \eqref{eq:intbusf} and \eqref{eq:intbusi}. In particular, in the setting of polymer models (resp.\ FPP/LPP), the recovery property of the generalized Busemann functions can be used to construct semi-infinite Markovian path measures (resp.\ paths). Then, the cocycle property implies that these measures (resp.\ paths) satisfy the aforementioned DLR equations (resp.\ are geodesics). Gibbs measures (resp.\ geodesics) built in this way have extra structure and can be shown to also be consistent with the unrestricted-length point-to-point measures (resp.\ geodesics) which are discussed above.

As the reader will see in the body, because of how generally we work in this project, the careful statements of our main results are fairly technical. For this reason, we give informal statements in this introduction, with precise statements to follow. Our main technical result, stated carefully in Theorem \ref{thm:Cocycles} below, is that generalized Busemann functions essentially always exist.
\begin{itheorem}\label{ithm:Cocycles}
For appropriate choices of the distribution $\P$ of the environment $\w$ and all choices of the reference walk with finitely many admissible steps, for each $\xi$ and $m \in \partial  \ppShapeUnr{\beta}(\xi)$, there exist random variables $\B{\beta,\w}(x,y)$ satisfying the duality \eqref{duality} and the recovery, cocycle, and covariance properties above.
\end{itheorem}

In the statement above, what is meant by ``appropriate" is that the environment needs to be appropriately ergodic or mixing and satisfy mild moment hypotheses. Our hypotheses include most models with finitely many admissible steps studied in the literature, with two significant exceptions being degenerate (i.e.~not elliptic) random walks in random environments and walks on percolation clusters.

The fourth condition in our informal definition of the generalized Busemann function was called duality because it expresses a Legendre-Fenchel duality (through the concave function $\ppShapeUnr{\beta}$) between the mean vector $m$ of the Busemann function and a direction $\xi$ for which $m \in \partial\ppShapeUnr{\beta}(\xi)$ which appears when one uses the Busemann function to construct Gibbs measures or infinite geodesics. 

 The study of the Gibbs measures and semi-infinite geodesics has been a focus of significant recent attention. We discuss some of these related works below and refer the reader to the introduction of \cite{Jan-Ras-Sep-23} for a more comprehensive discussion. The main result of this paper establishes the existence of such measures and geodesics in a broad setting, while also detailing some of their fundamental properties. The precise statement of our existence result is Theorem \ref{thm:CocycleMeasuresFace} below. Informally, we can summarize that result as follows.
 \begin{itheorem}\label{ithm:Gibbs}
Under the same hypotheses as Informal Theorem \ref{ithm:Cocycles}, for each $x \in \Z^d$, $\xi$, and $m \in \partial\ppShapeUnr{\beta}(\xi)$, there exists a probability measure $\Qinf{x}{\beta, \m, \w}$ on infinite paths starting at $x$. This measure satisfies Gibbs consistency with the restricted-length point-to-point measures and, in an appropriate sense, with the unrestricted-length measures. Furthermore, it corresponds to the tilt $m$ through the following dualities:

For almost all $\w$, $\Qinf{x}{\beta,\m,\w}$-almost surely, the limit points of $X_n/|X_n|_1$ lie in the set of directions $\zeta$ such that $m \in \partial\ppShapeUnr{\beta}(\zeta)$. Similarly, the limit points of $X_n/n$ lie in the set of velocities satisfying the analogous condition with respect to the restricted-length free energy.
\end{itheorem}

The need for understanding consistency with the unrestricted-length measures ``in an appropriate sense" in the statement above is because the unrestricted-length measures are not in general self-consistent in the Gibbs sense. We discuss this point further in Appendix \ref{app:consistency}.

Appendix A.1 of the Ph.D.\ thesis \cite{Gro-23} contains some further preliminary results on the general structure of infinite volume Gibbs measures in this general setting, including their equivalence to recovering cocycles.
 
 
What we are calling generalized Busemann functions have previously appeared with other interpretations in other settings. Suppose that the model is an irreducible RWRE as in \eqref{eq:RWRE} and that $h$ solves for all $x \in \bbZ^d$ and some $c\in\bbR$,
\begin{align}
h(x) = \sum_{z}p(z)e^{-\pote(\T_x\w,z)-c} h(x+z).\label{eq:harmonic}
\end{align}
Then the above equation says that $h(X_n)e^{cn}$ is a (quenched) martingale for the RWRE. When $c=0$, this becomes the usual notion of a harmonic function. We call the $c\neq 0$ case a \textit{time-dependent} harmonic function. 

If $\B{}$ satisfies the recovery and cocycle properties discussed above, then the function $h(x) = e^{\B{1,\w}(0,x)}$  is harmonic (with $c=0$). More generally, we will see that the (space-time) generalized Busemann functions (see Remark \ref{rmk:spaceTimeCocycle} below for a careful definition) coming from the restricted-length measures in \eqref{eq:reslen} define time-dependent harmonic functions as in \eqref{eq:harmonic}. Such time-dependent harmonic functions have previously been used to study RWRE in \cite{Yil-09-aop, Yil-09-cpam, Yil-11-aop}. One could conversely start with a covariant positive harmonic function with $c=0$ and construct a generalized Busemann function by setting $B(x,y) = \log h(x) - \log h(y).$ A similar statement holds for space-time generalized Busemann functions in the case of $c \neq 0.$

Through this connection, generalized Busemann functions can be seen to be closely related to the Martin boundary theory of the RWRE and generalizations to the RWRP model. In this interpretation, the infinite volume polymer measures discussed in Informal Theorem \ref{ithm:Gibbs} are the Markov processes generated by Doob $h$-transforms of the RWRE  with respect to this harmonic $h$. Through this connection, it is our hope that the study of the objects constructed in this work may shed some light on the long-open questions concerning zero-one laws for RWRE models previously studied in \cite{Kal-81,Ras-05,Zer-Mer-01,Zer-07,Slo-21-,Tou-15}. 

As mentioned in the opening paragraph of this manuscript, many other connections are present in related settings. Next, we briefly comment on two of these connections.
Generalized Busemann functions have been shown in \cite{Jan-Ras-20-jsp} to be equivalent to translation invariant stationary distributions for directed polymer models (in the sense that a realization of one induces a realization of the other). See also \cite{Jan-Ras-Sep-22-1F1S-}. These translation-invariant stationary distributions play a prominent role in the KPZ scaling theory, which predicts the values of the non-universal constants needed to center and scale to see the universal distributions in the KPZ class \cite{Kru-Mea-Hal-92,Spo-14}.

In the closely related setting of stochastic Hamilton-Jacobi equations, generalized Busemann functions correspond to globally defined solutions to the corrector equation, as discussed in \cite{Car-Sou-17,Jan-Ras-Sep-22-1F1S-}. The construction of such objects is a key step in one approach to the problem of proving stochastic homogenization for such models, beginning with the pioneering work \cite{Lio-Pap-Var-87}. We refer the reader to the discussion in \cite{Bak-Kha-18} as well as that in \cite{Jan-Ras-Sep-22-1F1S-} for connections between the problems studied here and the general stochastic Hamilton-Jacobi setting.

Before concluding this section on motivation, it is important to clarify that our focus in this work lies in the simultaneous construction of the generalized Busemann functions for a countable dense set of vectors in the superdifferential of the limiting free energy. Our scope does not encompass the complete construction of the Busemann process, meaning a stochastic process indexed by the whole superdifferential. Thus far, such a process has only been successfully constructed generally in two-dimensional directed nearest-neighbor settings \cite{Jan-Ras-20-aop, Jan-Ras-Sep-23}, where the path structure imposes a monotonicity that allows for the construction of the Busemann process based on its values at a dense countable set of directions.


We close this introductory section by remarking that this work leaves open many of the usual questions, such as those considered in Newman's seminal work \cite{New-95}, regarding the structure of the semi-infinite Gibbs measures and geodesics. In particular, it is natural to wonder whether all semi-infinite Gibbs measures and geodesics are directed into faces of the unrestricted-length limit shape.  One might also ask for conditions under which one can show the uniqueness of these semi-infinite measures and geodesics or the non-existence of bi-infinite measures and geodesics. See \cite{Cha-Kri-19, Cha-Kri-21} for some recent relevant results. The existence of the full Busemann process and its connection to the non-uniqueness of Gibbs measures and infinite geodesics as in \cite{Jan-Ras-20-aop,Jan-Ras-Sep-23}  would also be of interest.

Another open problem of interest is to determine the conditions under which paths have an asymptotic law of large numbers (LLN) velocity. In the context of standard FPP, this is commonly referred to as the problem of asymptotic geodesic length. Our results show that a sufficient condition for paths to possess an almost sure asymptotic velocity is for the restricted-length limiting free energy to be strictly concave. However, it is known that this condition does not hold universally, as exemplified by marginally nestling RWRE, where the limiting free energy features a linear segment \cite[Theorems 7.4 and 8.1]{Var-03-cpam}. Nevertheless, even for such models, it has been proven that in certain special cases the paths do have an asymptotic velocity \cite{Ras-03-aop,Szn-Zer-99,Com-Zei-04}, but the general scenario remains unresolved. The results in \cite{Bat-24} and \cite{Kri-Ras-Sep-23} suggest that in the standard FPP model, for a given asymptotic direction, there is a large class of weight distributions for which there are multiple asymptotic speeds and, conversely, there is a large class of weight distributions for which there is a unique asymptotic speed.  Consequently, it is natural to question whether or not having positive temperature forces the LLN to hold, as is conjectured, for example, in the case of a uniformly elliptic RWRE with i.i.d.\ transition probabilities.

\subsection{Methods and related work}\label{sec:MRW}
In the polymer and percolation literature, two main approaches have been widely utilized to establish the existence of Busemann functions. The earliest approach can be traced back to the pioneering work of Newman and collaborators \cite{New-95,Lic-New-96,How-New-01} on first-passage percolation. 

In this approach, quantitative estimates on the strict convexity (or concavity, depending on sign conventions)
of the limiting shape are used to prove geodesic coalescence, from which the existence of Busemann functions follows as a consequence. These ideas were later carried over to other percolation \cite{Bak-13,Bak-Cat-Kha-14,Cat-Pim-12,Cat-Pim-13} and polymer \cite{Bak-Li-19} models.

However, it is worth noting that obtaining the required curvature bounds for this approach, except for a few specific cases where the shape function can be explicitly computed, remains a major open problem in the field. More importantly, from our perspective, such conditions are provably false in full generality.  It was shown by H\"aggstrom and Meester \cite{Hag-Mee-95} that in two dimensions, every compact convex shape with all the symmetries of the lattice appears as the limit shape of some stationary FPP model. See also the polygonal examples constructed in \cite{Ale-Ber-18,Bri-Hof-21}. Moreover, even in the FPP setting, geodesic coalescence in dimensions three or higher continues to be an unresolved problem, with even its validity in high dimensions remaining unclear.

The approach we employ in this work is based on the connection between generalized Busemann functions and invariant measures. It broadly follows the idea of C\'esaro averaging distributions of Markov processes to produce stationary distributions, a concept that traces back at least to the classical Krylov-Bogoliouboff theorem \cite{Kry-Bog-37}. The essential technical difficulty in this approach in RWRP models is constructing generalized Busemann functions that satisfy the duality condition \eqref{duality} for a rich set of vectors $m$. There are two main steps to such an argument: constructing tight approximate Busemann functions with means converging to $m$ and then establishing uniform integrability.

A method for proving the existence of generalized Busemann functions with the correct mean structure was developed (mostly in the context of FPP)  over a series of works tracing from  Liggett's proof of the subadditive ergodic theorem \cite[Theorem 1.10]{Lig-85}, to Garet and Marchand \cite{Gar-Mar-05} to Gou\'er\'e \cite{Gou-07}, Hoffman \cite{Hof-08}, and finally culminating with Damron and Hanson \cite{Dam-Han-14}. This approach was subsequently applied by Cardaliaguet and Souganidis \cite{Car-Sou-17} to construct correctors in a class of stochastic Hamilton-Jacobi equations.


 The uniform integrability requirement is simplified in settings like FPP where loops are possible and easy  upper and lower bounds on the approximate Busemann functions immediately imply the result. In settings where loops are not allowed in some directions, only one-sided bounds follow quickly from the model structure, and one needs more involved tools to obtain the mean convergence. In the queueing literature, this issue was also encountered in the construction of stationary $\abullet$/G/1/$\infty$ queues in \cite{Mai-Pra-03}, which also follows the general Krylov-Bogoliouboff approach mentioned above. This construction was later applied in \cite{Geo-Ras-Sep-17-ptrf-1} to show the existence of Busemann functions in the two-dimensional directed last-passage percolation model.  A generally applicable method was introduced in \cite{Jan-Ras-20-aop} in the context of a $1+1$ dimensional nearest-neighbor directed polymer in an i.i.d.\ random environment, where  a variational representation of $\ppShapeUnr{\beta}$, originally proven in that setting in \cite{Geo-Ras-Sep-16}, was used to show the uniform integrability.

Implementing these previous ideas in this general setting is complicated by the variety of admissible paths and the structure of the limiting free energy and shape function in the range of models we study. Our approach is a hybrid of that of \cite{Dam-Han-14} and \cite{Jan-Ras-20-aop}, but with some significant differences from both. In \cite{Dam-Han-14}, the approximate Busemann functions corresponding to a direction $\xi$ are increments of point-to-(randomized-) hyperplane passage times, where the hyperplane is chosen to be tangent to the limit shape in direction $\xi$. These point-to-hyperplane passage times can be viewed as a random discrete approximation to the dual norm of the limit shape. One sees immediately from this construction how the relationship to a vector in the subdifferential of the (convex) limit shape arises. In the restricted-length setting of \cite{Jan-Ras-20-aop}, by contrast, the approximate Busemann functions are increments of point-to-(randomized-)time-level passage times, with an external field added to the potential. These tilted point-to-level passage times are a random discrete approximation of the Legendre transform of the shape function. Standard convex analytic arguments then connect the external field to the superdifferential of the (concave) limiting free energy.

In our setting, which encompasses models with characteristics of both directed and undirected models, neither of these approaches appears to work on its own. Notably, there are cases where the tangent to the limiting free energy can be the zero vector, or the limiting free energy itself can be zero in non-trivial directions (e.g., in certain RWRE models), which poses challenges in implementing the approach of \cite{Dam-Han-14} in a general setting. Furthermore, when loops are permitted in the model, introducing external fields to unrestricted-length point-to-level models may lead to infinite partition functions. To address these challenges, we employ a hybrid approximation utilizing a point-to-hyperplane free energy with an external field, which proves successful except in purely directed cases where all hyperplanes become the zero hyperplane. In such instances, our approach closely resembles the one adopted in \cite{Jan-Ras-20-aop}. 

With generalized Busemann functions constructed, the existence of Gibbs-Dobrushin-Landford-Ruelle measures follows essentially immediately from the cocycle and recovery properties. We show that these measures are simultaneously consistent with both the restricted and unrestricted length point-to-point measures, as well as with the Green's function.  The next natural questions are if the paths under these measures are directed (meaning that $X_n/|X_n|_1$ converges) and if they  satisfy a law of large numbers (meaning that $X_n/n$ converges).

The argument that the paths under the semi-infinite measures have an asymptotic direction requires some care because our models allow for the possible existence of traps that constrain paths to bounded sets. We give an exact characterization of when this occurs and provide easy-to-check conditions on the environment that ensure the paths do not become trapped.  We then show that in the absence of traps, the paths are strongly directed into the set of directions that are in a Legendre-Fenchel duality (defined through the concave function $\ppShapeUnr{\beta}$) with the mean of the Busemann function. This is the set of directions $\xi$ such that $m \in \partial \ppShapeUnr{\beta}(\xi)$, where $m$ is the mean vector of the Busemann function. This essentially follows by combining the recovery property of the generalized Busemann functions with the shape theorems for both the free energy and the generalized Busemann functions themselves.
This argument is similar to the approach taken in \cite{Dam-Han-14,Geo-Ras-Sep-17-ptrf-2}, with certain details modified in the positive temperature case.

We also prove a large deviation principle for the velocity of the path under the semi-infinite polymer measure, using methods similar to \cite{Jan-Ras-20-aop,Ras-Sep-14}. The possible law of large numbers limiting velocities are then included in the zero set of the rate function, which turns out to be the set of vectors $\xi$ that are dual to the mean of the Busemann function through the concave function $\ppShapeUnr{\beta}$, i.e., such that $m \in \partial \ppShapeUnr{\beta}(\xi)$.


We conclude this section with a brief technical remark. For simplicity, the preceding discussion focused solely on the free energy $\ppShapeUnr{\beta}$ and its superdifferential as a function on the cone generated by the admissible steps $\range$. In actuality, when working with a direction $\xi$ in this cone, we restrict attention to the unique face of the cone containing $\xi$ in its relative interior. This restriction arises because paths with $\xi$ as their asymptotic direction can only take steps within this face.
Another subtle technical point is that the free energy $\ppShapeUnr{\beta}$ is not generally known to be continuous up to the boundary of the face. To address this, we instead consider 
the unique continuous extension of $\ppShapeUnr{\beta}$ from the interior to the entire face,
which happens to be given by the upper-semicontinuous regularization of the restriction of $\ppShapeUnr{\beta}$ to the interior of the face.
The Gibbs measures (and geodesics) we construct are then supported on semi-infinite paths that are constrained to steps lying within the corresponding face of the cone.

\subsection*{Outline of paper} 
The outline of the remainder of the paper is as follows. Section \ref{sec:setting} describes our setting, defining the relevant path spaces and RWRP measures,  then collects results about the free energies, and states the shape theorems. Section \ref{sec:main} collects the technical conditions for our results and states our main result concerning the existence of Gibbs measures and geodesics. Section \ref{sec:coc} includes both the statement and proof of our results concerning the existence of the generalized Busemann functions. Section \ref{sec:semiinf} contains the construction of the associated semi-infinite Gibbs measures. Section \ref{sec:MainThmProofs} then concludes the proofs of the results stated in Section \ref{sec:main}. Appendix \ref{app:conv} collects some basic facts we use from linear algebra and convex analysis. Appendix \ref{app:consistency} discusses Gibbs consistency and inconsistency of the various RWRP models we consider. Appendix \ref{sec:shapeThms} proves the shape theorems we state in Section \ref{sec:setting}. 
Appendix \ref{app:resunr} relates the restricted- and unrestricted-length limiting free energies.

\subsection*{Notation}
$\R$ denotes the real numbers and $\ZZ$ denotes the integers. For $a\in\R$, $\R_{\ge a}=[a,\infty)$, $\R_{>a}=(a,\infty)$, $\ZZ_{\ge a}=\R_{\ge a}\cap\ZZ$, and $\R_{\le a}$, $\R_{<a}$, $\ZZ_{>a}$, $\ZZ_{\le a}$, $\ZZ_{<a}$ are similar.
For $x \in \RR^d$, let $\abs{x}_1$ denote the $\ell^1$ norm of $x$, and $\abs{x}_\infty$ denote the $\ell^\infty$ norm of $x$.  A sequence $(x_k)_{n\le k\le m}$ is denoted by $x_{n:m}$. Similarly, $x_{-\infty:n}=(x_k)_{k\le n}$, $x_{n:\infty}=(x_k)_{k\ge n}$, and $x_{-\infty:\infty}=(x_k)_{k\in\ZZ}$. When the index set is understood, we denote the sequence by $x_\abullet$. For $a\in\R$, $a^+=\max(a,0)$ and $a^-=-\min(a,0)$. 

Throughout, we use the convention that $\inf \varnothing = \infty$. $\ri A$ denotes the relative interior of a set $A\subset\R^d$. $\ext A$ denotes the set of extreme points of a convex set $A\subset\R^d$.

Given a metrizable topological space $\sX$, we denote by $\sB(\sX)$ the Borel $\sigma$-algebra of $\sX$. The collection of probability measures on $(\sX,\sB(\sX))$ is denoted by $\sM_1(\sX)$ and equipped with the weak topology.

The symbol $\triangle$ marks the end of a numbered remark.

	\section{Setting}\label{sec:setting}
	In this section, we describe the models of random walks in random potential that we study in this work. 
	The examples at the end of Section \ref{sec:rwrp} show how a large number of 
 models studied elsewhere in the literature are special cases of the models we consider.

	\subsection{The admissible paths}
	We begin by defining the path spaces used by the various models. Let $d\ge 1$ be a positive integer and consider a non-empty finite subset $\range\subset\ZZ^d$ containing at least two points. This is the set of admissible steps. $\range$ may contain the zero vector $\zero \in \ZZ^d$.   
	
	We denote the additive group and semi-group generated by $\range$ by \[\Rgroup= \Bigl\{ \sum_{z \in \range} b_z  z : b_z \in \ZZ \Bigr\} \qquad \text{ and } \qquad \Rsemi = \Bigl\{ \sum_{z \in \range} b_z z : b_z \in \ZZ_{\ge0} \Bigr\}.\] 

  For each $n \in \ZZ_{\ge0}$, the set of points that are accessible from $\zero$ in exactly $n$ admissible steps is
	\[
	\Dn{n} = \Bigl\{ x \in \Rsemi : \exists(b_z)_{z\in\range} \in \ZZ_{\ge0}^\range \text{ with } \sum_{z \in \range} b_z = n \text{ and } x = \sum_{z \in \range} b_z z \Bigr\}.
	\]
	
	We now define several collections of admissible paths.  The set of all admissible paths of length $n\in\ZZ_{\ge0}$ from $x\in\Z^d$ is denoted by
	\[
	\PathsNStep{x}{n} = \bigl\{x_{0:n} : x_0 = x, x_i - x_{i-1} \in \range \text{ for all } 1 \leq i \leq n \bigr\}.
	\]
	For integers $j \leq k$, the set of admissible paths from $x\in\Z^d$ at time $j$ to $y\in\Z^d$ at time $k$ is denoted by
	\[
	\PathsPtPResKM{x}{y}{j}{k} = \bigl\{x_{j:k} : x_j = x, x_k = y, x_{i+1} - x_{i} \in \range \text{ for all } j \leq i \leq k-1 \bigr\}.
	\]
	This set is empty unless $y-x \in \Dn{k-j}$.  When $j = 0$, we abbreviate the set of admissible paths from $x$ to $y$ of length $n$ by 
	\[
	\PathsPtPRes{x}{y}{n} = \PathsPtPResKM{x}{y}{0}{n}.
	\]
	
	The set of all admissible paths from $x\in\Z^d$ to $y\in\Z^d$ is 
	\[
	\PathsPtPUnr{x}{y} = \bigcup_{n =0}^\infty  \PathsPtPRes{x}{y}{n}.
	\]
	This set is empty unless $y-x \in \Rsemi$.  
	Define the space of paths from $x\in\Z^d$ that reach $y\in\Z^d$ for the first time in exactly $k\in\ZZ_{\ge0}$ steps
	\[
	\PathsPtPKilledRes{x}{y}{k} =  \bigl\{x_{0:k} \in \PathsNStep{x}{k} : x_{i} \neq y \text{ for all } 0 \leq i < k \text{ and } x_k = y\bigr\}.
	\]
	Define the space of killed paths from $x\in\Z^d$ to $y\in\Z^d$, i.e.\ paths of arbitrary length that start at $x$ and end when they reach $y$:
	\[
	\PathsPtPKilled{x}{y} = \bigcup_{k=0}^\infty \PathsPtPKilledRes{x}{y}{k}.
	\]

	The set of all semi-infinite paths rooted at $x\in\Z^d$ is denoted by 
	\[
	\PathsSemiInf{x} = \bigl\{x_{0:\infty} : x_0 = x, x_i - x_{i-1} \in \range \text{ for all } i \geq 1\bigr\}.
	\]
  $\PathsSemiInf{x}$ and the spaces above, which can be embedded in it, come equipped with the natural filtration generated by the coordinate projections. We denote the natural stochastic process on these path spaces by $X_{\aabullet}$.
 For $y \in \ZZ^d$, define the stopping time $\stoppt{y} = \inf \{k \geq 0 : X_k = y\}$.  

 All of the path spaces discussed above are compact metrizable spaces in the product-discrete topology.

When we wish to emphasize the dependence on the set $\range$ of admissible steps, we will write $\Dn{n}(\range)$, $\PathsNStep{x}{n}(\range)$, $\PathsSemiInf{x}(\range)$, and so on.

		
		Let $p : \range \rightarrow (0,1]$ be a probability kernel, i.e. $\sum_{z \in \range} p(z) = 1$.  We assume, without loss of generality, $p(z) > 0$ for all $z \in \range$.  Let $\rwP_x$ (with expectation $\rwE_x$) be the classical random walk starting at $x$ and having i.i.d.\ increments with distribution $p$. 
		This random walk is referred to as the \emph{reference walk}, and $\rwP_x$ is the  \emph{reference measure}.

    \subsection{Random walks in random potentials}\label{sec:rwrp}
	Let $(\Omega, \Sig)$ be a Polish space endowed with its Borel $\sigma$-algebra. We assume this measurable space is equipped with a commutative group of continuous bijections $\T = \{\T_x : \Omega \rightarrow \Omega : x \in \ZZ^d\}$ such that $\T_{\orig}$ is the identity map, and for all $x, y \in \ZZ^d$, $\T_x \circ \T_y = \T_y \circ \T_x = \T_{x+y}$. A generic element in $\Omega$ will be denoted by $\w$ and is referred to as an \emph{environment}.
	Let $\P$ (with expectation $\E$) be a probability measure on $(\Omega, \Sig)$, called the \emph{environment measure}. We assume $\P$ is invariant under $\T_x$ for all $x \in \ZZ^d$. 	For a subset $S \subset \ZZ^d$, we say $(\Omega,\Sig,\P,\{\T_z : z \in S\})$ is ergodic if for every event $A\in\Sig$ satisfying $\T^{-1}_z A = A$ for all $z \in S$, $\P(A) \in \{0, 1\}$. 
	

	Let $\pote:\Omega \times \range\to \R$ be a measurable function, which we will call a \emph{potential}.  For $q \geq 1$, we say $\pote \in L^q$ if $\bbE[\abs{\pote(\w,z)}^q] < \infty$ for all $z \in \range$.  

	Let $\beta$ be a positive real number, called the \emph{inverse temperature}.  
	 For $x,y\in\ZZ^d$ such that $y-x\in \Rsemi$, $n\in\Z_{>0}$, and $u,v\in\ZZ^d$ and $n \in \bbN$ such that $v-u\in\Dn{n}$,
	the \emph{restricted and unrestricted-length point-to-point partition functions} at inverse temperature $\beta$ 
	are, respectively,
	\begin{align}
		&\prtres{u}{v}{n}{\beta}(\w) = \rwE_u\Bigl[ e^{- \beta\sum_{k=0}^{n-1} \pote(\T_{X_k} \omega, X_{k+1}-X_k)}\one_{\{X_n=v\}}\Bigr]
		\quad\text{and} \label{def:prtPtPRes}\\
		&\prtunr{x}{y}{\beta}(\w)= \rwE_x\Bigl[ e^{- \beta\sum_{k=0}^{\stoppt{y}-1} \pote(\T_{X_k} \omega, X_{k+1}-X_k)}\one_{\{\stoppt{y}<\infty\}}\Bigr]. \label{def:prtPtPUnr}
	\end{align}
		Throughout the paper, depending on what notation is most convenient in a particular expression, we will at times place the dependence on $\w$ into a superscript, writing expressions like $\prtres{u}{v}{n}{\beta,\w}$ or omit the dependence on $\w$ entirely and write expressions like $\prtres{u}{v}{n}{\beta}$.
        Since $\range$ is finite, $\prtres{u}{v}{n}{\beta}(\w)$ is always finite. $\prtunr{x}{y}{\beta}(\w)$, on the other hand, can be infinite if the potential $\pote$ can take negative values and the paths can make loops. 
        When loops are present, \eqref{finiteZ} below suffices to ensure that 
        $\prtunr{x}{y}{\beta}(\w)$ is finite. 
        Define $\prtres{u}{v}{n}{\beta}(\w) = 0$ when $v-u \not\in \Dn{n}$ and $\prtunr{x}{y}{\beta}(\w) = 0$ when $y-x \not\in \Rsemi$.
		
	The corresponding \emph{restricted and unrestricted-length point-to-point free energies} are, respectively,
	\begin{align*}
		\freeres{u}{v}{n}{\beta}(\w) = \frac{1}{\beta} \log \prtres{u}{v}{n}{\beta}(\w) \quad\text{and}\quad
		\freeunr{x}{y}{\beta}(\w) = \frac{1}{\beta} \log \prtunr{x}{y}{\beta}(\w).
	\end{align*}
    If $v-u \not\in\Dn{n}$, define $\freeres{u}{v}{n}{\beta}(\w) = -\infty$. Similarly, if $y-x \not\in\Rsemi$, then $\freeunr{x}{y}{\beta}(\w) = -\infty$.

\begin{remark}
A more physically correct definition of the free energy includes a minus sign. Many mathematics papers, including many of those on which we rely in this work, omit this sign to avoid proliferating minus signs in the various computations. To maintain consistency and avoid conflicting notation with most of the earlier works we cite, we have also omitted the minus sign. 
\end{remark}

	
	The corresponding \emph{quenched point-to-point polymer measures} are the probability measures on $\PathsPtPResKM{u}{v}{0}{n}$ and $\PathsPtPKilled{x}{y}$, respectively, such that 
	\begin{align}
	\begin{split}
	    &\frac{d\Qres{u}{v}{n}{\beta, \w}}{d\rwP_u}(X_{\aabullet}) =\frac{e^{-\beta\sum_{k=0}^{n-1} \pote(\T_{X_k} \omega, X_{k+1}-X_k)}\one_{\{X_n=v\}}}{\prtres{u}{v}{n}{\beta}(\w)}
		\qquad\text{and}\\
		&\frac{d\Qunr{x}{y}{\beta, \w}}{d\rwP_x}(X_{\aabullet}) = \frac{e^{-\beta\sum_{k=0}^{\stoppt{y}-1} \pote(\T_{X_k} \omega, X_{k+1}-X_k)}\one_{\{\stoppt{y}<\infty\}}}{\prtunr{x}{y}{\beta}(\w)}.
		\end{split}\label{finitePathMeasures}
	\end{align}

	For a \emph{tilt} (or \emph{external field} or \emph{force}) $h \in \RR^d$ and an integer $n\ge1$, define the  tilted \emph{$n$-step partition function}
	\[\prtnstep{x}{n}{\beta, h}(\w)= \rwE_x\Bigl[ e^{-\beta\sum_{k=0}^{n-1} \pote(\T_{X_k} \omega, X_{k+1}-X_k) + \beta h \cdot (X_{n}-x)}\Bigr]\]
	and the corresponding free energy 
	\[
	\freenstep{x}{n}{\beta, h}(\omega) = \frac{1}{\beta} \log  \prtnstep{x}{n}{\beta, h}(\w).
	\]
 Similarly to the point-to-point case, $\prtnstep{x}{n}{\beta,h}(\w)$ is always finite, since $\range$ is finite.
    The corresponding quenched polymer measure is the probability measure on $\PathsNStep{x}{n}$ such that 
	\[\frac{d\QnStep{x}{n}{\beta, h, \w}}{d\rwP_x}(X_{\aabullet})=\frac{e^{-\beta\sum_{k=0}^{n-1} \pote(\T_{X_k} \omega, X_{k+1}-X_k) + \beta h \cdot (X_n - x)}}{\prtnstep{x}{n}{\beta,h}(\w)}\,.\]

	Through most of the manuscript, we work with the unrestricted-length model.  This is without loss of generality because a restricted-length model can be rewritten as an unrestricted-length model, as explained in the following remark.

 \begin{remark}\label{rk:resAsUnr1} (Restricted-length models as unrestricted-length models)
	Consider a restricted-length model in $\ZZ^d$ with steps $z \in \range$, random walk probability kernel $p$, and potential function $V(\w, z)$.  This model can be written as an unrestricted-length model in $\ZZ^{d+1}$ as follows.
	Write $\ZZ^{d+1}=\ZZ^d\times\Z$ and denote elements of $\ZZ^{d+1}$ by $\langle z,n\rangle$, where $z\in\ZZ^d$ and $n\in\ZZ$. The set of admissible steps of the new model is $\overline{\range} = \{ \langle z,1\rangle : z \in \range\}$.  Shifts in the $(d+1)$-st dimension are defined to be the identity shift, i.e., $\overline \T_{\langle z,n\rangle}=\T_z$. 
	The probability kernel is $\overline{p}(\langle z,1\rangle) = p(z)$, and the potential function is $\overline{\pote}(\w, \langle z,1\rangle) = \pote(\w,z)$.  
	Then the restricted-length point-to-point partition function and quenched polymer measure from $x$ to $y$ in $n$ steps are equivalent to the corresponding unrestricted-length objects from $\langle x,0\rangle$ to $\langle y,n\rangle$.   
    %
     Theorem \ref{Thm:ShapeRes} and Remarks \ref{rmk:obtainingSTCocycles} and \ref{rk:Qunr->Qres} are examples of how results for the restricted-length model can be obtained from ones for the unrestricted-length model.  Remark \ref{rk:resAsUnr2} explains how other quantities for the restricted-length model transfer to ones for the unrestricted-length model.
\end{remark}

Taking $\beta\to\infty$ in the above definitions leads to polymer models at {\it zero temperature}, where the free energy becomes a passage time.  For $x, y \in \ZZ^d$ with $y-x \in \Rsemi$ and an integer $k\ge0$, the \emph{restricted and unrestricted-length point-to-point passage times} are given by
\begin{align*}
    &\freeres{x}{y}{k}{\infty} = 
    \sup_{x_{0:k} \in \PathsPtPRes{x}{y}{k}} \sum_{i=0}^{k-1} (-\pote(\T_{x_i}\w, x_{i+1}-x_i)) \quad \text{and}\\
  &\freeunr{x}{y}{\infty} = 
  \sup_{k \in \ZZ_{\ge0}} \sup_{x_{0:k} \in \PathsPtPRes{x}{y}{k}} \sum_{i=0}^{k-1} (-\pote(\T_{x_i}\w, x_{i+1}-x_i)).
\end{align*}
Similar to the positive temperature situation, while $\freeres{x}{y}{k}{\infty}$ is always finite, 
condition \eqref{finiteZ} below suffices to ensure the finiteness of $\freeunr{x}{y}{\infty}$ if loops are allowed. 

At zero temperature, the quenched measures are replaced by (restricted and unrestricted-length) \emph{geodesics}, which are optimal paths that achieve the passage time.  Between two points, there may be multiple geodesics.  One may choose an ordering on $\range$ to order all paths lexicographically, obtaining an ordering of geodesics.  Then, for example, the first geodesic in this ordering is unique.

Next, we give examples of models that appear in the literature and are covered by our setting. 

\begin{example}[Edge and vertex weights]\label{ex:edgeVertexWeights}
We can represent random weights assigned to the vertices of $\Z^d$ by taking $\Omega = \R^{\ZZ^d}$ and $\pote(\w) = \w_\orig$.  
To represent directed edge weights, take $\Omega_0 = \R^\range$, $\Omega = \Omega_0^{\ZZ^d}$.  Then for each $x \in \ZZ^d$, $\w_x = (\w_{(x, x+z)})_{z \in \range}$ is the vector of edge weights out of $x$. The potential function is $\pote(\w, z) = \w_{(\orig,z)}$. The shifts are $(\T_v \w)_{(x, x+z)} = \w_{(x+v, x+v+z)}$ for $v \in \Z^d$. 

For undirected edge weights, assume $\orig\not\in\range$ and define the set of edges $\sE = \{ \{x,y\} \subset \ZZ^d : y-x \in \range \text{ or }x-y\in\range\}$.  Let $\Omega = \R^{\sE}$ and $\pote(\w, z) = \w_{\{\orig, z\}}$. The shifts are $(\T_v \w)_{\{x, x+z\}} = \w_{\{x+v, x+v+z\}}$ for $v \in \Z^d$. 

One can also have a hybrid of all three versions by letting $\Omega=\R^{\Z^d}\times(\R^\range)^{\Z^d}\times\R^{\sE}$. For each $x\in\Z^d$, the weights at $x$ are given by $\w_x=\{\alpha_x,\gamma_{x,x+z},\delta_{\{x,x+z'\}}:z,z'\in\range\}$ and for $z\in\range$, $\pote(\w,z)=\alpha_\orig+\gamma_{\orig,z}+\delta_{\{\orig,z\}}$.
\end{example}

\begin{example}[Product environment]\label{ex:productEnv}
Let $\Omega_0$ be Polish and $\Omega = \Omega_0^{\Z^d}$ equipped with the product topology and Borel $\sigma$-algebra.  We denote points $\w = (\w_x)_{x \in \Z^d}$.  The translations are $(\T_x \w)_y = \w_{x+y}$.  $\bbP$ is a product measure if $\{\w_x\}_{x \in \Z^d}$ are i.i.d.\ random variables under $\bbP$.  $\bbP$ has a \emph{finite range of dependence} if there exists an $r_0 \geq 0$ such that for any $A, B \subset \ZZ^d$ where $\abs{x-y}_1 \geq r_0$ for all $x \in A$ and $y \in B$, $\{\w_x : x \in A\}$ and $\{\w_y : y \in B\}$ are independent.  $\pote$ is \emph{local} if there exists an $L \geq 0$ such that for all $z \in \range$, $\pote(\w,z)$ is measurable with respect to $\sigma\{\w_x : \abs{x}_1 \leq L \}$.  In other words, $\pote$ only depends on finitely many coordinates $\w_x$.
\end{example}

	     \begin{example}\label{examples}
	     Our general setting covers the following models. 
	     
      \begin{enumerate}[label=\rm(\arabic{*}), ref=\rm\arabic{*}]
      
        \item \label{itm:randomGrowth} \emph{First- and last-passage percolation.} At zero temperature, directed \emph{last-passage percolation} is obtained by taking $\range = \{e_1, \ldots, e_d\}$, $\Omega=\R^{\Z^d}$,  setting $\pote(\w,z)=-\w_0$, and taking $\beta = \infty$. 
        The last-passage time is
          \[
          \freeunr{x}{y}{\infty} 
          = \sup_{k \in \ZZ} \sup_{x_{0:k} \in \PathsPtPRes{x}{y}{k}} \sum_{i=0}^{k-1} \w_{x_i}.\]
          Standard \emph{first-passage percolation} is obtained by taking $\range = \{\pm e_1, \ldots, \pm e_d\}$, putting weights on the edges as explained in Example \ref{ex:edgeVertexWeights}, 
          then 
          taking $\beta = \infty$ and putting a minus sign in front of the free energy.
          The first-passage time is
          \begin{align*}
          -\freeunr{x}{y}{\infty} = 
          \inf_{k \in \ZZ} \inf_{x_{0:k} \in \PathsPtPRes{x}{y}{k}} \sum_{i=0}^{k-1} \w_{\{x_i,x_{i+1}\}}.
          \end{align*}
        \item \label{itm:directedPolymers} \emph{Directed polymers in random environments (DPRE).}  This is the case where $\beta<\infty$ and there exists a vector $\uhat\in\R^d$ with $z\cdot\uhat>0$ for all $z\in\range$ or, equivalently, where $0$ does not lie in the convex hull of $\range$.
        Thus, the polymer always moves strictly forward in  direction $\uhat$;
        by \cite[Corollary A.2]{Ras-Sep-Yil-13}, this is equivalent to the condition that there are no loops, i.e., $\PathsPtPUnr{\orig}{\orig} = \{\orig\}$. A commonly studied special case, which we call a \textit{space-time directed polymer}, is one where there exists a vector $\uhat\in\R^d$ such that $z\cdot\uhat=1$ for all $z\in\range$. Then $(X_n-X_0)\cdot\uhat=n$ plays the role of a time coordinate. There is a large literature studying this model; see \cite{Com-17} and the references therein. By Lemma \ref{lm:z.uhat=1}, this is satisfied if and only if, for any two points $x$ and $y$ in $\Z^d$ such that $y-x\in\Rsemi$, all paths from $x$ to $y$ take the same number of steps.     A common example of this is when $\range = \{e_1, e_2, \ldots, e_d\}$.  

          \item \label{itm:RWRE} \emph{Random walk in a random environment (RWRE).} Let $\range$ be general and $\beta = 1$.  Let $\pi_z:\Omega\to[0,1]$, $z \in \range$, be measurable such that $\sum_{z \in \range} \pi_z = 1$, $\P$-almost surely.   Let $\pote(\w, z) = -\log \pi_z(\w) + \log p(z)$.  Then $\prtnstep{x}{n}{1, \w} = 1$ and the measures $\{\QnStep{x}{n}{1,\orig,\w} : n \in \mathbb{N}\}$ are consistent, being the marginal distributions of the Markov chain $X_n$ starting at $x$ which uses transition probabilities $\pi_{y,y+z} = \pi_z(\T_y\w)$. Under the path measure this family induces by Kolmogorov's extension theorem, $\prtunr{x}{y}{\beta}(\w)$ is the probability that the path ever reaches $y$ if started at $x$ and $\Qunr{x}{y}{1,\orig, \w}$ is the distribution of this Markov chain conditioned on reaching $y$, provided that this probability is positive.
          A space-time directed polymer which is also a random walk in a random environment is called a \emph{random walk in a dynamic random environment}.

          An RWRE is called \textit{elliptic} if $\pi_{z}>0$ for all $z\in \range$ and \textit{uniformly elliptic} if there exists a deterministic $c>0$ such that $\pi_{z}\geq c$ almost surely for all $z\in \range.$ If the model is not elliptic, we say it is \textit{degenerate}; such models are not covered by the results of this paper. Note that uniform ellipticity is equivalent to the potential $\pote$ being bounded. 

          An RWRE is called \textit{nestling} if $\orig$ lies in the relative interior of the convex hull of the support of the random vector $\sum_{z\in\range} \pi_{z}z$, \textit{marginally nestling} if $\orig$ lies on the relative boundary of this convex hull and \textit{non-nestling} if $\orig$ does not lie in the convex hull. This concept features in some of the discussion above and in the discussion below. 
        \item\label{itm:stretchedPolymers} \emph{Stretched polymers.} The term $h\cdot(X_n-x)$ in the definition of $\QnStep{x}{n}{\beta, h, \w}$ can be interpreted as an external force that stretches the polymer. A special case that appears in the literature is when $\range = \{\pm e_1, \pm e_2, \ldots, \pm e_d\}$, $\Omega = \R^{\Z^d}$, and $\pote(\w,z) = \w_0$. This model is studied in \cite{Iof-Vel-12}, for example.
        
        \item \label{itm:killedPolymers} \emph{Killed polymers.}  Let $\beta<\infty$ and consider an unrestricted-length model with $\Omega = \R^{\Z^d}$, $\range = \{\pm e_1, \pm e_2, \ldots, \pm e_d\}$,  and $\pote(\w,z) = \w_0$. The Green's function $\greens(x,y)$, defined in \eqref{g:def}, can be interpreted as the expected number of visits of the reference random walk to the site $y$ before it is ``killed'' by the potential. See, for example, \cite[p. 249]{Zer-98-aap}.  \qedhere
  \end{enumerate}
\end{example}


\subsection{Limiting free energies}\label{sub:FE}

Before we state our main result, we need to record a few limit theorems for the various free energies defined in Section \ref{sec:rwrp}. These results require the following definition.

\begin{definition}\label{def:cL} For a finite subset $\range\subset\Z^d$ and $z\in\range\setminus\{\orig\}$, 
a non-negative measurable function $g : \Omega \to \R$ is in class $\classL_{z,\range}$ if 
\[
\varlimsup_{\varepsilon \searrow 0} \varlimsup_{n \rightarrow \infty} \max_{\substack{x \in \Rsemi(\range)\\ \abs{x}_1\le n}} n^{-1} \sum_{0 \leq k \leq \varepsilon n} g(\T_{x+kz}\w) = 0 \quad \text{for } \bbP\text{-a.e. } \w.
\] 
\end{definition}

Membership in class $\classL_{z,\range}$ depends on a trade-off between the moments of $g$ and the degree of mixing of $\bbP$. For example, if $(\bbP,\T_z)$ is exponentially mixing, then $g \in L^q(\bbP)$ for some $q > d$ is sufficient.  For a general $\bbP$, if $g$ is bounded, then $g \in \classL_{z,\range}$. See \cite[Lemma A.4]{Ras-Sep-Yil-13} for a precise formulation of sufficient conditions to ensure $g \in \classL_{z,\range}$.

     For inverse temperature $\beta\in(0,\infty]$ and velocity $\xi \in \Uset$ the \emph{restricted-length limiting quenched point-to-point free energy} is defined by
	\begin{align}
	\ppShapeRes{\beta}(\xi) = \lim_{n \rightarrow \infty} n^{-1}  \freeres{\orig}{\widehat{x}_n(\xi)}{n}{\beta}, \label{ppShapeRes}
	\end{align}
	where $\widehat{x}_n(\xi) \in \Dn{n}$ is a lattice point approximation of $n\xi$ as defined in \cite[(2.1)]{Ras-Sep-14}.

	It was shown in Theorem 2.2 in \cite{Ras-Sep-14} and Theorem 2.4 in  \cite{Geo-Ras-Sep-16} that the limit \eqref{ppShapeRes} exists in $(-\infty,\infty]$, $\bbP$-almost surely, for all $\xi \in \Uset$ simultaneously. By  \cite[Theorem 2.6]{Ras-Sep-14} and the inequalities in \cite[(2.11)]{Geo-Ras-Sep-16}, $\ppShapeRes{\beta}(\xi)$ is either identically infinite for all $\xi\in\ri\Uset$ and $\beta\in(0,\infty]$ or $\ppShapeRes{\beta}$ is bounded on $\Uset$, for each $\beta\in(0,\infty]$. 
    In the latter case,
    \cite[Theorem 2.6]{Ras-Sep-14} and \cite[Remark 2.5]{Geo-Ras-Sep-16} state that with $\P$-probability one, $\ppShapeRes{\beta}$ is lower semicontinuous 
    on $\Uset$ and concave and continuous on $\ri \Uset$.
    Furthermore, the upper semicontinuous regularization of $\ppShapeRes{\beta}$ and its unique continuous
extension from $\ri\Uset$ to $\Uset$ are equal.  
The conditions of Theorem \ref{Thm:ShapeRes} below ensure that $\ppShapeRes{\beta}$ is finite on the relative interior of the face being considered in that result.

	\begin{remark}\label{rk-cL}
	   In  \cite{Geo-Ras-Sep-16} and \cite{Ras-Sep-14}, the authors assume that $\bbP$ is ergodic, but the results we mention above continue to hold for a stationary $\bbP$ by the ergodic decomposition theorem.
	   The authors of these papers also assume that for each $z'\in\range$, $\abs{V(\w,z')}\in L^1(\P)$ and belongs to $\classL_{z,\range}$ for all $z\in\range\setminus{\{\orig\}}$. However, upon closer examination of their proofs, it can be observed that the condition on $\pote$ in Theorem \ref{Thm:ShapeRes} below is sufficient for their results to hold. This can be seen, for instance, in the proof of Lemma \ref{lem:upperBoundCompact}, which demonstrates how the arguments can be adjusted to work under the weaker assumptions.
	  \end{remark}
	  
	  \begin{remark}\label{shape:deterministic}
	     In general, $\ppShapeRes{\beta}(\xi)$ may not be deterministic.
	     For example, if $\orig$ is an extreme point of $\Uset$, then 
	     $\ppShapeRes{\beta}(\orig)=-\pote(\w,\orig)+\beta^{-1}\log p(\orig)$.
	     A sufficient condition to ensure that it is deterministic is to have the ergodicity of $\bigl(\Omega, \Sig^V_{\range'},\bbP,\{\T_z:z\in\range'\}\bigr)$, where $\range'=\Uset'\cap\range$, $\Uset'$ is the unique face of $\Uset$ such that $\xi\in\ri\Uset'$,   $\Sig^\pote_{\range'}=\sigma(\pote(\T_x\w,z):x\in\Rgroup',z\in\range')$, and $\Rgroup'$ is the additive group generated by $\range'$. 
	  \end{remark}

The following example provides a simple model where $\ppShapeRes{1}$ can be computed via simulations. We will also refer to this example throughout the paper to illustrate key results and definitions.

\begin{example}[Nearest-neighbor RWRE on $\bbZ$]\label{ex:nRWRE}
Let $p(1)=p(-1)=1/2$ so that $\range=\{-1,1\}$ and $\Uset=[-1,1]$. Consider the i.i.d.~RWRE model as in Example \ref{examples}\eqref{itm:RWRE}.  Greven and den Hollander \cite{Gre-Hol-94} gave a detailed description of $-\ppShapeRes{1}$, which is the large deviation rate function for the position of the particle. They demonstrated that this function is always differentiable away from $0$ but may have a corner at $0$. It also may have two linear pieces around the corner at $0$ but is strictly convex otherwise.  Figure \ref{fig:RWRE} shows the results of simulations for the model with $\pi_1=1-\pi_{-1}=a_1$ with probability $q$ and $\pi_1=1-\pi_{-1}=a_2$ with probability $1-q$, for the various regimes of the parameters $q,a_1,a_2\in(0,1)$. $N$, in the caption of the figure, is the number of steps we used for the random walk in the simulation used to approximate $-\ppShapeRes{1}$, which is plotted in blue. The unrestricted-length rate function $-\ppShapeUnr{1}$, defined in \eqref{ppShapeUnr} below, is plotted in red. See 
Remark \ref{rk:RWRE_unrShape} for the details involved in the computation of $-\ppShapeUnr{1}$. 
\end{example}

\begin{figure}
\centering
\begin{subfigure}[b]{0.45\textwidth}
    \centering
    \includegraphics[width=\textwidth]{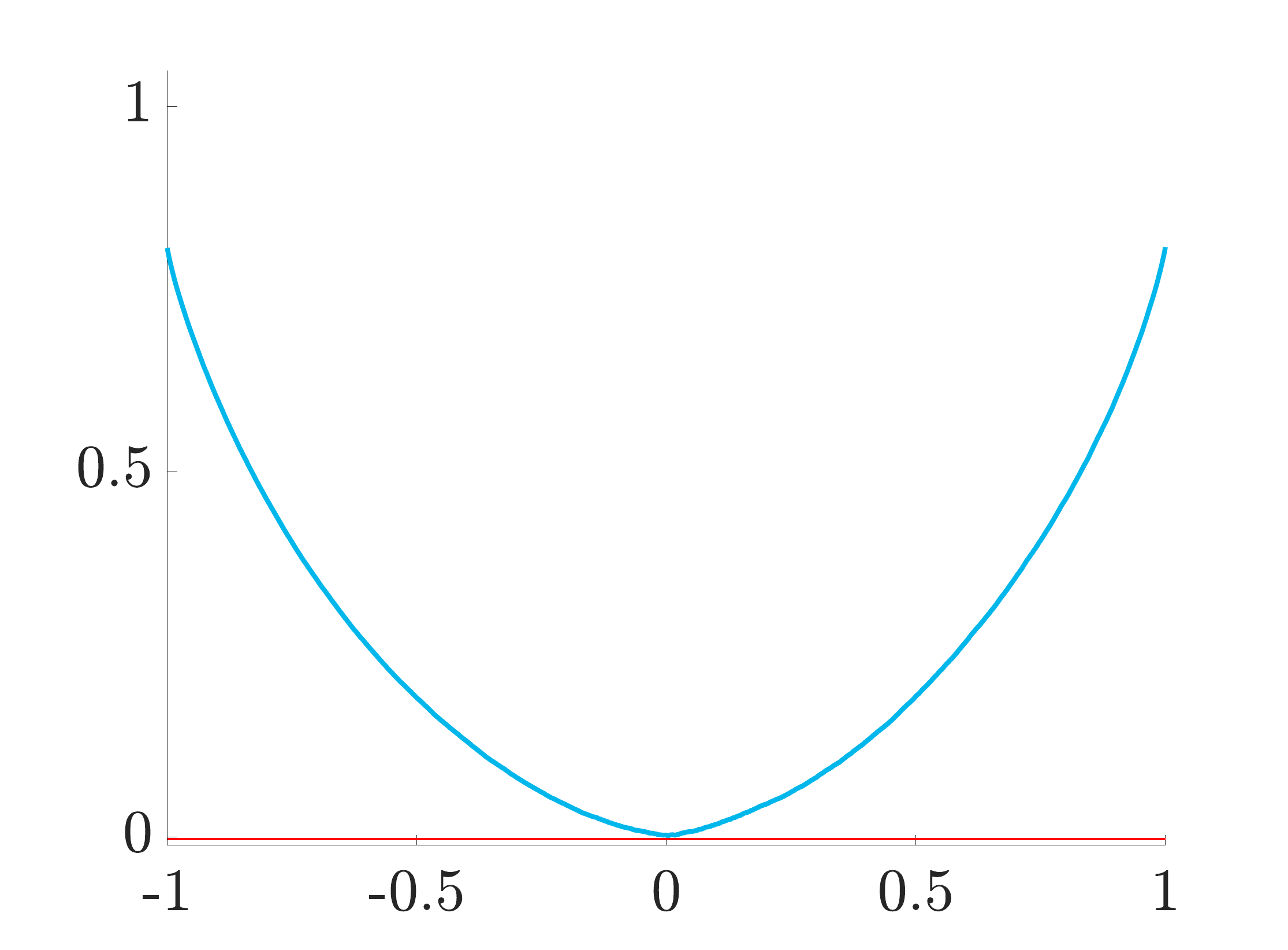}
    \subcaption{(Recurrent) $q=1/3$, $a_1=1/5$, $a_2=2/3$, $N=2,000$}\label{fig:RWRErec}
\end{subfigure}
    \hfill
\begin{subfigure}[b]{0.45\textwidth}
\centering
\includegraphics[width=\textwidth]{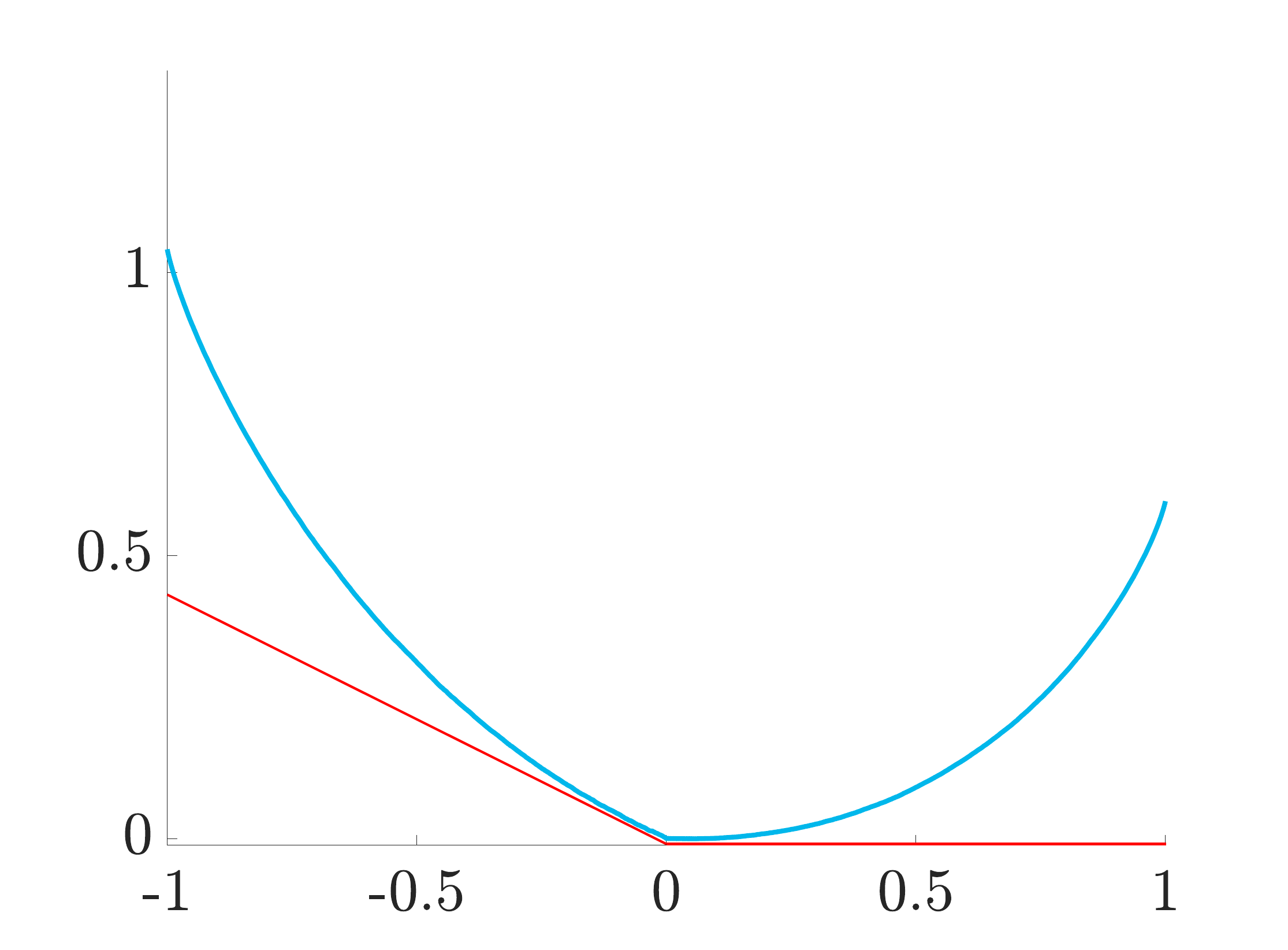}
        \subcaption{(Transient with $0$ velocity) $q=2/3$, $a_1=3/4$, $a_2=3/10$, $N=20,000$}\label{fig:RWREtv0}
\end{subfigure}\\
\begin{subfigure}[b]{0.45\textwidth}
    \centering
    \includegraphics[width=\textwidth]{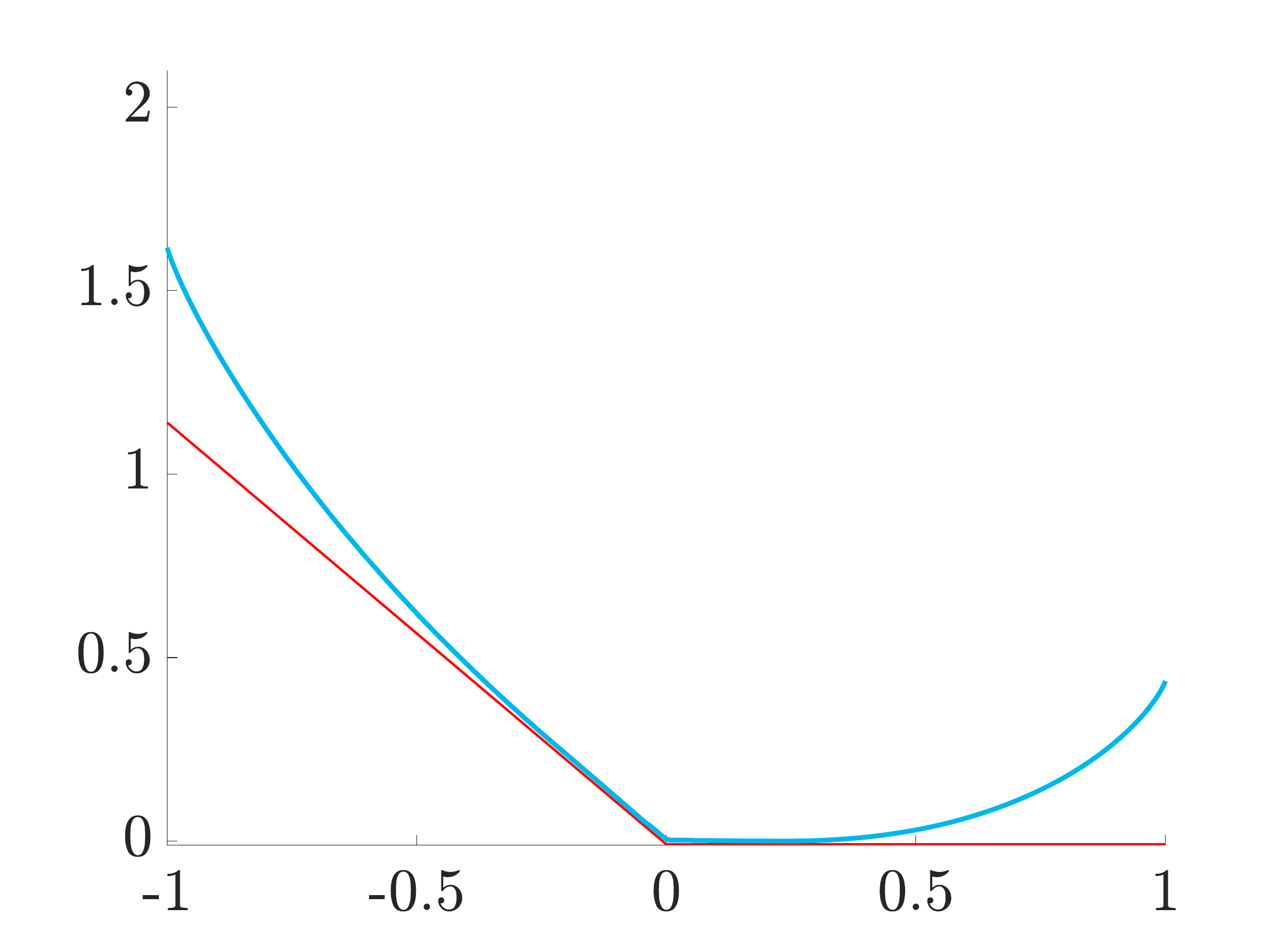}
    \subcaption{(Marginal nestling) $q=3/5$, $a_1=1/2$, $a_2=95/100$, $N=1000,000$}\label{fig:RWREmnest}
\end{subfigure}
    \hfill
\begin{subfigure}[b]{0.45\textwidth}
\centering
\includegraphics[width=\textwidth]{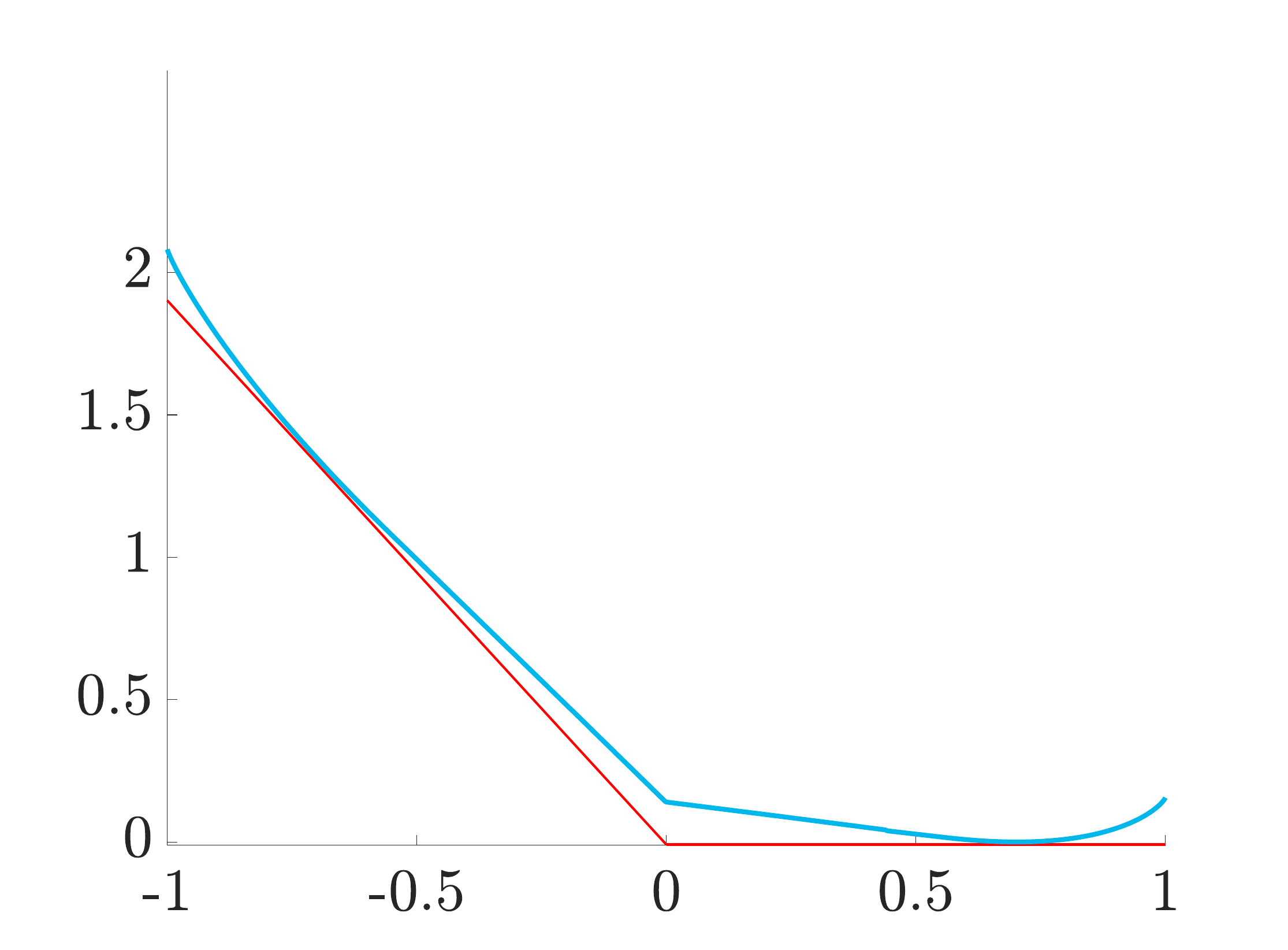}
        \subcaption{(Non-nestling) $q=4/5$, $a_1=9/10$, $a_2=7/10$, $N=200,000$}\label{fig:RWREnnest}
\end{subfigure}
\caption{The thicker blue curves show plots of the approximation of $-\ppShapeRes{1}(t)$ from Example \ref{ex:nRWRE} by $-N^{-1}\freeres{\orig}{\lfloor Nt\rfloor}{N}{1}$ for $t\in[-1,1]$ in simulations. In the recurrent example in Figure \ref{fig:RWRErec}, $-\ppShapeRes{1}$ is differentiable and strictly convex. In the transient example with $0$ velocity in Figure \ref{fig:RWREtv0}, $-\ppShapeRes{1}$ is strictly convex and differentiable except at $0$. In the ballistic marginal nestling example in Figure \ref{fig:RWREmnest}, $-\ppShapeRes{1}$ has a flat segment and a linear segment, meeting at a corner at $0$. Elsewhere, it is differentiable and strictly convex. (The ballistic nestling case looks qualitatively the same.) In the non-nestling example in Figure \ref{fig:RWREnnest}, $-\ppShapeRes{1}$ has two linear segments that meet at a corner at $0$. Elsewhere, it is differentiable and strictly convex. The thinner red piecewise linear graphs show plots of $-\ppShapeUnr{1}(t)$ for $t\in[-1,1]$. In all cases, $\ppShapeUnr{1}(t)=0$ for $t\ge0$. In Figure \ref{fig:RWRErec}, $\ppShapeUnr{1}(t)=0$ for $t\le0$ as well. In Figures \ref{fig:RWREtv0} and \ref{fig:RWREmnest}, the slope of $-\ppShapeUnr{1}$ on $t<0$ matches the slope of the linear segment of  $-\ppShapeRes{1}$ left of $0$. In Figure \ref{fig:RWREnnest}, $-\ppShapeUnr{1}$ is tangent to $-\ppShapeRes{1}$ at a unique point in $(-1,0)$.}\label{fig:RWRE}
\end{figure}
	
%

Define the cone 
	    \[\cone = \Bigl\{\sum_{z \in \range} b_z z : b_z \in \Rnonneg\Bigr\}.\]    
  For a face $\sA$ of $\cone$, let 
	$\rangeA = \range \cap \sA$, $\UsetA = \Uset \cap \sA$, and let 
	$\RgroupA$ and $\RsemiA$ denote, respectively, the group and the semigroup generated by $\rangeA$. For the definition of a face of a convex set, see Appendix \ref{app:conv}.

Define 
	\begin{align}\label{Rid}
	\rangeA^{\id}=\{z\in\rangeA:\T_z\text{ is the identity map}\}
	\end{align}
 to be the set of steps for which the associated shift is the identity. If $\orig \in \rangeA$, then $\orig \in \rangeA^{\id}$. Non-zero steps can also be in this set. As an example, in the restricted-length setting of Remark \ref{rk:resAsUnr1}, if $\orig$ is an admissible step, $\overline \T_{\langle \orig,1\rangle}$ is the identity. It may also be the case that $\rangeA^{\id}=\varnothing$. 
 Let $\faces$ be the set of faces $\sA$ of $\cone$ such that $\rangeA\ne\rangeA^{\id}$.  Note that if $\orig$ is an extreme point of $\Uset$, then the face $\{\orig\}$ is not in $\faces$.
 
 Given $\sA \in \faces$, $\xi \in \ri \sA$, and an inverse temperature $\beta\in(0,\infty]$, the \emph{unrestricted-length limiting quenched point-to-point free energy} is defined by 
	\begin{align}\label{ppShapeUnr}
     \ppShapeUnr{\beta}(\xi) = \lim_{n \rightarrow \infty} n^{-1} \freeunr{\orig}{x_n}{\beta},
	\end{align}
    where the sequence $x_n \in \RsemiA$ satisfies $x_n / n \rightarrow \xi$. 
	 In \cite[Theorem 2.8]{Jan-Nur-Ras-22} it is shown that  
	 the limit exists $\bbP$-almost surely for all $\xi \in \cone$ simultaneously, and is finite, positively $1$-homogeneous, and lower semicontinuous on $\cone$, and concave and continuous on $\ri \cone$.  
	 As in Remark \ref{rk-cL}, although \cite{Jan-Nur-Ras-22} assumes stronger conditions on class $\sL$ membership, minor adjustments of their proofs (along the lines of the proof of our Lemma \ref{lem:upperBoundCompact}) give the same results under the assumptions of Theorem \ref{Thm:ShapeUnr} below. 
	 
	 \begin{remark}
	 Similarly to the restricted-length case, a sufficient condition to guarantee that $\ppShapeUnr{\beta}(\xi)$ is deterministic is to have the ergodicity of $\bigl(\Omega, \Sig^\pote_{\range'},\bbP,\{\T_z : z \in \range' \}\bigr)$, where 
	 $\range'=\sA\cap\range$, $\sA\in\faces$ is the unique face of $\cone$ such that $\xi\in\ri\sA$, and $\Sig^\pote_{\range'}$ is as in Remark \ref{shape:deterministic}.  
	 \end{remark}

\begin{remark}[RWRE on $\Z$]\label{rk:RWRE_unrShape}
We sketch the computation of $\ppShapeUnr{\beta}$ in the case of the one-dimensional RWRE models from Example \ref{ex:nRWRE} which were discussed in Figure \ref{fig:RWRE}. Here, $\cone=\R$ and it is the only face. 


In this model, $\prtunr{0}{x}{1,\w}$ represents the probability that the RWRE, starting at $0$, eventually reaches $x$. In the recurrent case (Figure \ref{fig:RWRErec}), $\prtunr{0}{x}{1,\w} = 1$ almost surely for all $x \in \Z$, which implies that $\ppShapeUnr{1}$ is identically $0$. Similarly, in the other (transient-to-the-right) cases, $\ppShapeUnr{1}(t) = 0$ for all $t \geq 0$. By homogeneity, $\ppShapeUnr{1}(t) = -t\ppShapeUnr{1}(-1)$ for all $t < 0$. Using the results in \cite{Yil-09-cpam}, we identify $-\ppShapeUnr{1}(-1)$, which corresponds to $\bar\lambda(0)$ in the notation of that paper.

In that work, the rate function $I$ corresponds to our $-\ppShapeRes{1}$, and $\bar\lambda(0)$ there matches $-\ppShapeUnr{1}(-1)$ in our framework. For the transient nestling cases (Figures \ref{fig:RWREtv0} and \ref{fig:RWREmnest}), Theorem 1.8 of \cite{Yil-09-cpam} shows that, in the notation of that paper, $r_c = I(0) = -\ppShapeRes{1}(0) = 0$. The arguments in the proof of that theorem (page 1052 of the paper) further establish that the left derivative of $-\ppShapeRes{1}$ at $0$ equals $\bar\lambda(0) = -\ppShapeUnr{1}(-1)$. This explains the behavior depicted in Figures \ref{fig:RWREtv0} and \ref{fig:RWREmnest}.

In the non-nestling case (Figure \ref{fig:RWREnnest}), we have $r_c = I(0) = -\ppShapeRes{1}(0) > 0$, and there exists a point $\bar\xi_c \in (-1, 0)$ such that $r(\bar\xi_c) = r_c$, where $r(\xi)$ is defined in the middle of page 1050 in \cite{Yil-09-cpam}, substituting $\bar\lambda$ for $\lambda$. By \cite[Theorem 1.8]{Yil-09-cpam}, $\bar\xi_c$ is the left endpoint of the interval $[\bar\xi_c, 0]$ where $I = -\ppShapeRes{1}$ is linear, with the derivative of $-\ppShapeRes{1} = I$ at $\bar\xi_c$ matching its left derivative at $0$. Then, referring again to the formula on page 1052 in \cite{Yil-09-cpam}, we get that the left derivative of $-\ppShapeRes{1} = I$ at $0$ equals $\bar\lambda(r(\bar\xi_c)) = \bar\lambda(I(0)) > \bar\lambda(0)$. In particular, $-\ppShapeUnr{1}(-1) < 0$, and the slope of the line representing $-\ppShapeUnr{1}$ on $(-\infty, 0]$ is steeper than that of the linear segment of $-\ppShapeRes{1}$ to the left of $0$. 
Furthermore, a little bit of calculus, using the formula (1.8) in \cite{Yil-09-cpam}, shows that $-\ppShapeUnr{1}$ and $-\ppShapeRes{1}$ intersect at a unique point in $(-1,\bar\xi_c)$. 
This explains the behavior illustrated in Figure \ref{fig:RWREnnest}.  See Remark \ref{ex2:nRWRE} for more.
%
%
\end{remark}
     
The above limits can be strengthened to uniform limits, commonly referred to as ``shape theorems.'' The shape theorem for the unrestricted-length model is established in \cite[Theorem 3.10]{Jan-Nur-Ras-22}, where the uniform limit holds within the interior of each face. Theorem \ref{Thm:ShapeUnr} extends this result to the entire face, while Theorem \ref{Thm:ShapeRes} presents the shape theorem for the restricted-length model. Detailed proofs are provided in Appendix \ref{sec:shapeThms}.
A key observation is that the shape theorem is stated face-by-face, with the proper centering achieved through the upper-semicontinuous regularization of the limiting shape restricted to the interior of each face. The restriction to faces arises because paths starting at the origin and terminating at a specific point remain within the face containing the endpoint in its relative interior (see Lemma \ref{lem:pathrem}). The primary technical point, however, is the necessity of the upper-semicontinuous regularization. This stems from the fact that the continuity of the limiting shape up to the boundaries of the faces is not generally known. Thus, the correct centering requires using the unique continuous extension from the interior of the face to its boundary, which is precisely captured by the upper-semicontinuous regularization.
	
	Recall the upper semicontinuous regularization of a function $f : \sX \rightarrow [-\infty, \infty)$  defined as
    \[
    f^{\usc}(x) = \inf \Bigl\{ \sup_{y \in G} f(y) : G \ni x \text{ and } G \text{ is open}\Bigr\}.
    \] 

Given a face $\sA$ of $\cone$, let $\ppShapeUnrA{\beta}$ be the function that equals $\ppShapeUnr{\beta}$ on $\sA$ and equals $-\infty$ outside of $\sA$. 
Denote the upper semicontinuous regularization of $\ppShapeUnrA{\beta}$ by $\ppShapeUnrA{\beta, \usc}$. Then, under the conditions of Theorem \ref{Thm:ShapeUnr} below, the restriction of  $\ppShapeUnrA{\beta, \usc}$ to $\sA$ is the unique continuous extension of $\ppShapeUnrA{\beta}$ from $\ri\sA$ to all of $\sA$. In particular, $\ppShapeUnrA{\beta, \usc}$ is concave and positively $1$-homogeneous.

\begin{remark}
The shape $\ppShapeUnr{\beta}$ is expected to be continuous on all of $\cone$ in a wide class of natural models.  
In such cases, the restrictions and upper semi-continuous extensions discussed above can be dispensed with. The example following this remark gives examples where this continuity is known to hold.

In the example in Remark \ref{shape:deterministic}, if the distribution of a zero step is continuous, then $\ppShapeUnr{\beta}$ cannot be continuous at zero and so considering passage times seen from the relative interior of a face is necessary.
\end{remark}
\begin{example}[i.i.d.~directed LPP and RWRP] \label{ex:LPPpoly}
In i.i.d.~directed LPP on $\Z^d$ with $\range=\{e_1,e_2, \dots, e_d\}$ (discussed in Example \ref{examples}\eqref{itm:randomGrowth}) under a moment hypothesis slightly weaker than the weights having $>d$ moments, it is shown in \cite{Mar-04} that the shape function 
$\ppShapeUnr{\infty}$ is continuous on all of $\cone = \bbR^d_+ = \{(x_1,\dots, x_d) : x_1,\dots, x_d \geq 0\}$. 

Continuity of $\ppShapeRes{1}$ on $\Uset$ is proven for a wide collection of RWRP models in \cite[Theorem 3.2]{Ras-Sep-14}.
\end{example}

We will reference the next example frequently in the discussion that follows to illustrate our results and the definitions.

\begin{example}[FPP on $\bbZ^3$ with a forbidden step]\label{ex:FPPforb} Consider the standard FPP on $\Z^3$ with i.i.d.\ Exponential(1) weights, but where paths are not allowed to use the $-e_3$ step. We can model this as a zero temperature model with hybrid edge weights as in Example \ref{ex:edgeVertexWeights}. In the notation of that example, let $\range = \{e_1, -e_1, e_2, -e_2, e_3\}$. For $x\in\Z^3$ and $z\in\{-e_1,e_1,-e_2,e_2\}$ let $\alpha_x=\gamma_{x,x+z}=\delta_{\{x,x+e_3\}}=0$ and let $\{\gamma_{x,x+e_3},\delta_{\{y,y+e_1\}},\delta_{\{u,u+e_2\}}:x,y,u\in\Z^d\}$ be i.i.d.\ Exponential(1) random variables.  The first-passage time is given by $-\freeunr{x}{y}{\infty}$.
Then $\cone = \{(x,y,z) \in \bbR^3 : z \geq 0\}$ has two non-empty faces: $\cone$ and its boundary, $\partial \cone = \{(x,y,0) : x,y \in \bbR\}$. Restricted to $\sA=\partial \cone$, this model becomes the classical planar standard FPP with i.i.d.\ Exponential(1) edge-weights and the upper semi-continuous regularization of $-\ppShapeUnrA{\infty}$ is the usual shape function of that planar FPP. See Figure \ref{fig:FPPforbshape} for simulations of the limit shape in this model.
\end{example}

\begin{figure}[h!]
        \centering
    \begin{subfigure}[b]{0.485\textwidth}
        \centering
        \includegraphics[width=3.in,height=3in]{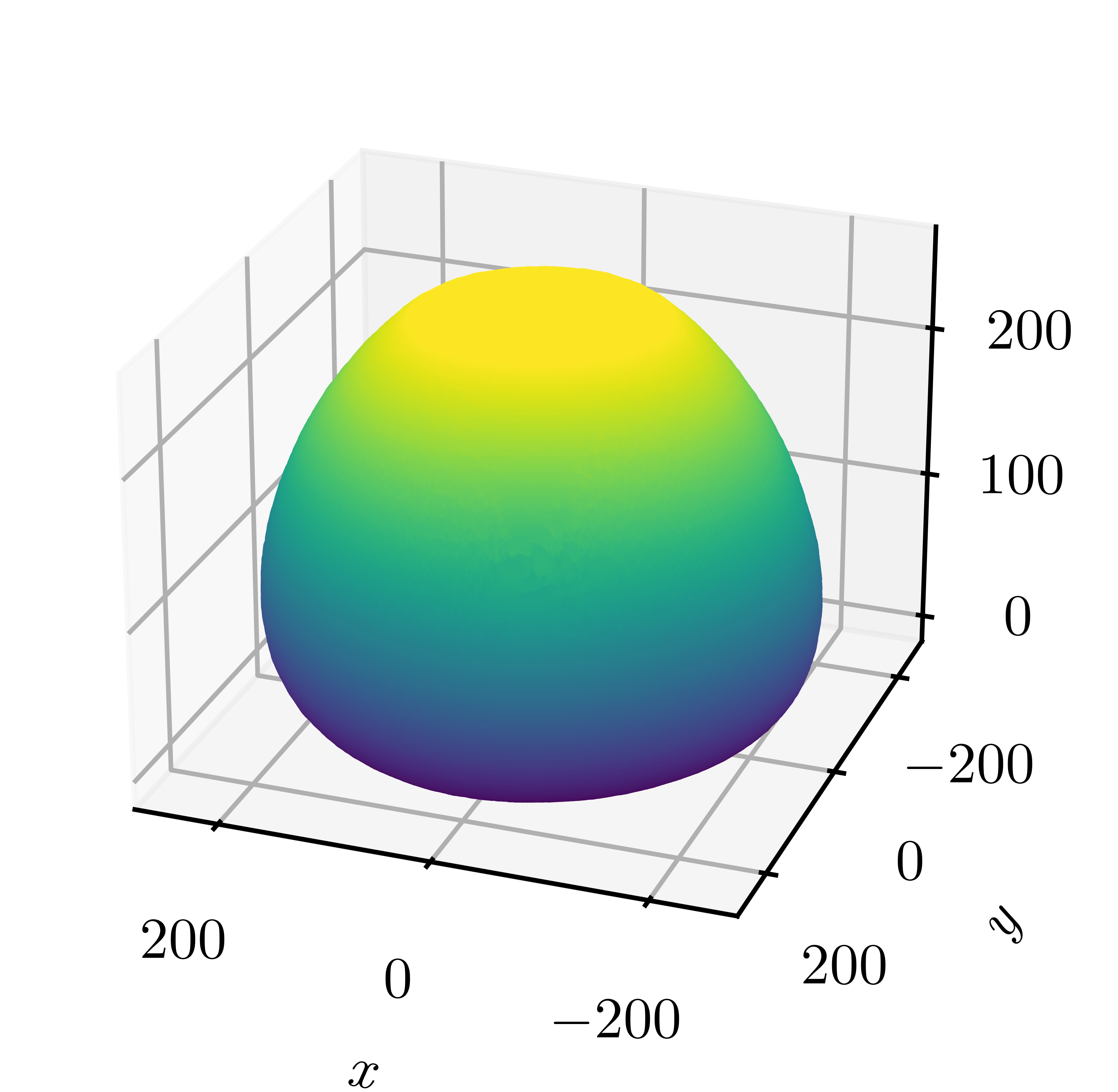}
        \caption{Level 75 sub-level set of an approximation of $-\ppShapeUnrA{\infty}$, $\sA = \cone = \{(x,y,z)\in\bbR^3 :  z \geq 0\}$. Darker colors represent points with lower $z$ coordinates.}\label{fig:FPP3D}
    \end{subfigure}
    \hfill
    \begin{subfigure}[b]{0.485\textwidth}
        \centering
        \includegraphics[width=3in,height=3in]{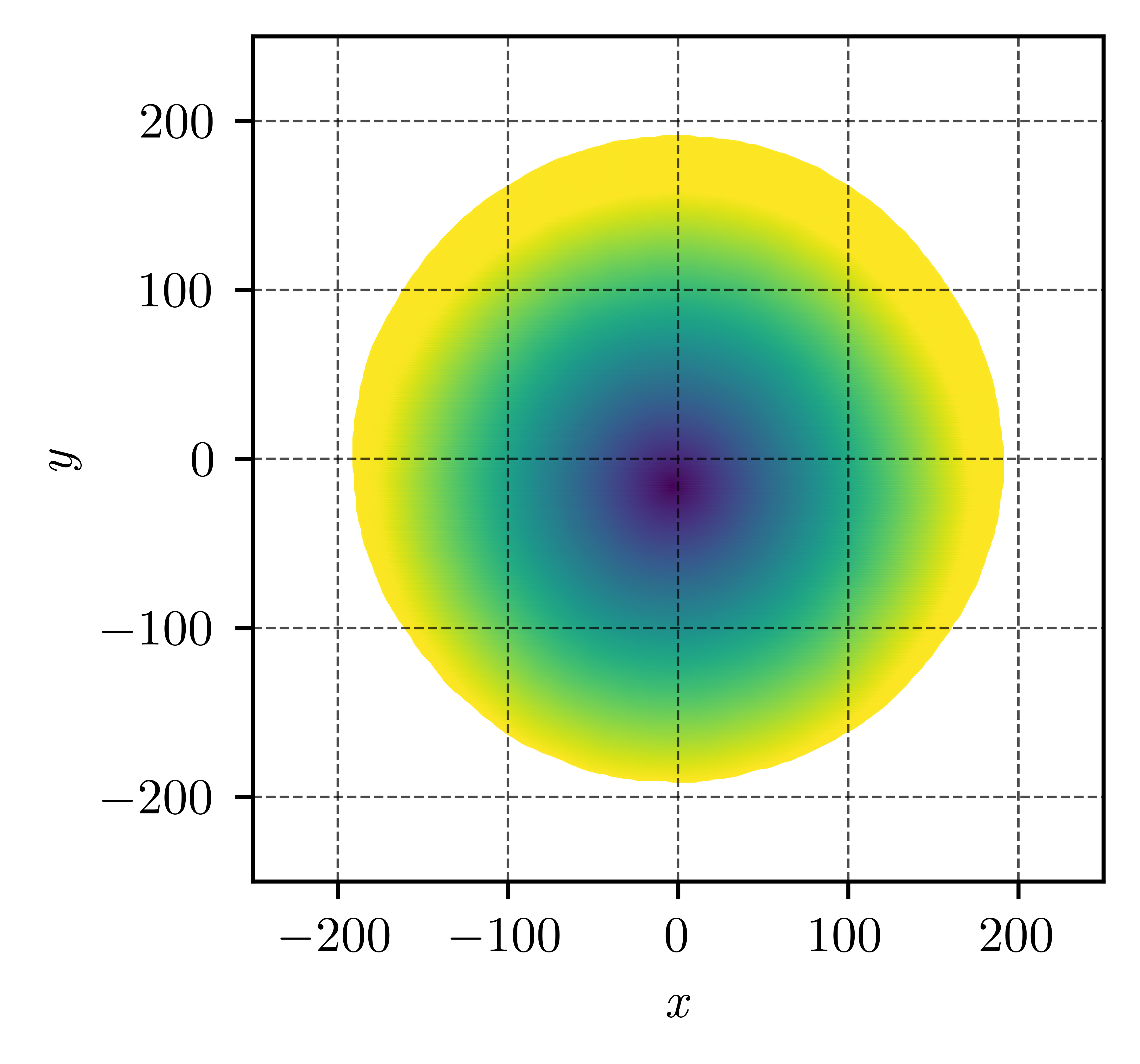}
        \caption{Level 75 sub-level set of an approximation of $-\ppShapeUnrA{\infty}$, where $\sA = \partial \cone = \{(x,y,0) : x,y\in\bbR\}$. Darker colors represent points closer to the origin.}\label{fig:FPP2D}
    \end{subfigure}

    \caption{A simulation of the ball of radius 75, $\{\xi:-\ppShapeUnr{\infty}(\xi)\le75\}$,  in Example \ref{ex:FPPforb} obtained by taking sub-level sets of the average of 100 samples of passage times from the origin. Both figures use the same data, so the pane on the right is the figure on the left viewed from below (with different coloring). We expect that $\ppShapeUnr{\infty}$ is continuous on $\cone$.}\label{fig:FPPforbshape}
\end{figure}

As explained above, it is enough to study the model from the perspective of the relative interior of each face $\sA$ of $\cone$. That there is a locally uniform version of the limit \eqref{ppShapeUnr} defining the shape function is the content of the following ``shape theorem". 

For $x \in \Rsemi$ let $\invSig_x$ be the $\sigma$-algebra generated by $A \in \Sig$ such that $\T_x^{-1} A = A$.
 
\begin{theorem}\label{Thm:ShapeUnr}
	Let $\beta \in (0,\infty]$.  Fix a face $\sA \in \faces$ of $\cone$ {\rm(}possibly $\cone$ itself\,{\rm)}.  
    Assume $\pote^+(\w,z) \in L^1(\bbP)$ for each $z \in \rangeA$, and $\pote^+(\w, z) \in \classL_{z, \rangeA}$ for each $z \in \rangeA \setminus \rangeA^{\id}$. For each $z\in\rangeA^{\id}$, assume that there exists a $\widehat{z}\in\rangeA\setminus\rangeA^{\id}$ such that $\pote^+(\w,z)\in\classL_{\widehat{z},\rangeA}$.  Assume that $\bbE\bigl[\sup_{n\geq 1} n^{-1} \bbE[\freeunr{\orig}{nx}{\beta} \given \sI_x]\bigr] < \infty$ for all $x \in \RsemiA \setminus \{\orig\}$.  Then $\bbP$-almost surely,
	$\ppShapeUnrA{\beta}$ is finite on $\sA$,
    $\ppShapeUnrA{\beta,\usc}$ is the unique continuous extension of $\ppShapeUnrA{\beta}$ from $\ri\sA$ to $\sA$,
	\begin{align}\label{shape-UB}
		\varlimsup_{\substack{|x|_1 \rightarrow \infty \\ x \in \RsemiA}} \frac{\freeunr{\orig}{x}{\beta} - \ppShapeUnrA{\beta, \usc}(x)}{|x|_1} \le 0,
	\end{align}
	and, simultaneously for all $\xi\in\sA$ with $\abs{\xi}_1=1$ and any sequence $v_n\in\RsemiA$ with $\abs{v_n}_1\to\infty$ and $v_n/\abs{v_n}_1\to\xi$, we have 
	\begin{align}\label{shape-unr}
		\ppShapeUnr{\beta}(\xi)\le
		\varliminf_{n\to\infty} \abs{v_n}_1^{-1}\freeunr{\orig}{v_n}{\beta}
		\le\varlimsup_{n\to\infty} \abs{v_n}_1^{-1}\freeunr{\orig}{v_n}{\beta}\le\ppShapeUnrA{\beta, \usc}(\xi).
	\end{align}
	If furthermore $\bigl(\Omega,\Sig^\pote_{\rangeA},\P,\{\T_z:z\in\rangeA\}\bigr)$ is ergodic, then $\ppShapeUnrA{\beta, \usc}$ is deterministic on $\sA$.  
\end{theorem}

In the statement of the previous result, the first three conditions concerning the integrability of $\pote^+$ and membership of potential steps in $\classL$ classes are minor regularity assumptions which are satisfied in the typical applications. The last condition, on the integrability of conditional expectations of $n^{-1}\freeunr{\orig}{nx}{\beta}$, is addressed in the following remark.

\begin{remark}\label{shape:finite}
	\cite[Theorem 2.8]{Jan-Nur-Ras-22} provides common cases which guarantee the  hypothesis \[
	\bbE\Bigl[\sup_{n\geq 1} n^{-1} \bbE[\freeunr{\orig}{nx}{\beta} \given \sI_x]\Bigr] < \infty\] holds for all $x \in \Rsemi \setminus \{\orig\}$. Particularly this hypothesis is satisfied under Condition \ref{VCondition} below.  
    See \cite[Lemmas 3.11 and 3.12]{Jan-Nur-Ras-22} for the proofs of these implications.
\end{remark}

Naturally, there is a version of the previous result in the special case of a restricted-length model, which we record below for use later in the paper.  Given a face $\sU'$ of $\Uset$, let $\ppShapeResU{\beta}$ be the function that equals $\ppShapeRes{\beta}$ on $\sU'$ and equals $-\infty$ outside of $\sU'$. Denote the upper semicontinuous regularizations of $\ppShapeResU{\beta}$ by $\ppShapeResU{\beta, \usc}$.  Then, under the conditions of Theorem \ref{Thm:ShapeRes}, the restriction of $\ppShapeResU{\beta, \usc}$ to  $\sU'$ is the unique continuous extension of $\ppShapeResU{\beta}$ from $\ri \sU'$ to all of $\sU'$. In particular, $\ppShapeResU{\beta, \usc}$ and  $\ppShapeUnrA{\beta, \usc}$ are concave and $\ppShapeUnrA{\beta, \usc}$ is positively $1$-homogeneous.

\begin{theorem}\label{Thm:ShapeRes}
	Let $\beta\in(0,\infty]$. Fix a face $\sU' \neq \{\orig\}$ of $\Uset$ {\rm(}possibly $\Uset$ itself\,{\rm)}.  Let $\sA$ be the cone generated by $\sU'$. 
	Assume $\rangeA\ne\rangeA^{\id}$.  
    Assume $\pote^+(\w,z) \in L^1(\bbP)$ for each $z \in \rangeA$, and $\pote^+(\w, z) \in \classL_{z, \rangeA}$ for each $z \in \rangeA \setminus \rangeA^{\id}$. For each $z\in\rangeA^{\id}$, assume that there exists a $\widehat{z}\in\rangeA\setminus\rangeA^{\id}$ such that $\pote^+(\w,z)\in\classL_{\widehat{z},\rangeA}$.
    Take $\beta\in(0,\infty]$.
	Assume that for all $k \in \ZZ_{>0}$ and $x \in \Dn{k}(\range\cap\Uset')$, $\bbE\bigl[\sup_{n\geq 1} n^{-1} \bbE[\freeres{\orig}{nx}{nk}{\beta} \given \sI_x]\bigr] <\infty$.   
    Then $\bbP$-almost surely, $\ppShapeResU{\beta}$ is finite on $\Uset'$, 
    $\ppShapeResU{\beta,\usc}$ is the unique continuous extension of $\ppShapeResU{\beta}$ from $\ri\sU'$ to $\sU'$,
	\begin{align}\label{shape-res-UB}
		\varlimsup_{n \rightarrow \infty} \max_{x \in n\sU' \cap \Dn{n}}\frac{\freeres{\orig}{x}{n}{\beta} - \ppShapeResU{\beta, \usc}(x)}{n} \le 0,
	\end{align}
	and, simultaneously for all $\xi\in\sU'$ and any sequence $v_n\in\RsemiA$ with $v_n/n\to\xi$ we have 
	\begin{align}\label{shape-res}
		\ppShapeResU{\beta}(\xi)\le
		\varliminf_{n\to\infty} n^{-1}\freeres{\orig}{v_n}{n}{\beta}
		\le\varlimsup_{n\to\infty} n^{-1}\freeres{\orig}{v_n}{n}{\beta}\le\ppShapeResU{\beta, \usc}(\xi).
	\end{align}
	If furthermore $\bigl(\bbP,\Sig^\pote_{\range'},\P\{\T_z:z\in\rangeA\}\bigr)$ is ergodic, then $\ppShapeResU{\beta, \usc}$ is deterministic on $\sU'$.  
\end{theorem}

\begin{remark}\label{rk:La_res-finite}
Condition \ref{VConditionRes} below implies 
 \[\bbE\Bigl[\sup_{n\geq 1} n^{-1} \bbE[\freeres{\orig}{nx}{nk}{\beta} \given \sI_x]\Bigr] <\infty\]
holds for all $k \in \ZZ_{>0}$ and $x \in \Dn{k}(\range\cap\Uset')$. 
This can be shown by transforming the restricted-length model into an unrestricted-length one, as described in Remark \ref{rk:resAsUnr1}, using \eqref{Esup-unr=res}, and following the argument in Remark \ref{shape:finite} within the context of the unrestricted-length version.
\end{remark}

	\section{The main result}\label{sec:main}
 	Recall that $\range$ denotes the set of admissible steps, its convex hull $\Uset$, and the cone $\cone = \{\sum_{z \in \range} b_z z : b_z \in \Rnonneg\}$. 
	 By \cite[Corollary A.2]{Ras-Sep-Yil-13}, $\orig \in \Uset$ is equivalent to the existence of loops, i.e.\ $\PathsPtPUnr{\orig}{\orig} \neq \{\orig\}$. By \cite[Corollary A.3]{Ras-Sep-Yil-13}, $\orig \in \ri \Uset$ is equivalent to the existence of admissible paths from $\orig$ to each $x \in \Rgroup$, i.e.\ $\Rsemi = \Rgroup$.  See Figure \ref{fig:Uset} for an illustration of possible cases of $\range$ and $\Uset$ and Figure \ref{fig:Shape} for illustrations of $\cone$. 
     Note that because this cone is finitely generated, it is either equal to $\bbR^d$ (which happens if $\orig \in \ri \Uset)$ or it is an intersection of finitely many half spaces whose boundary hyperplanes pass through the origin (see \cite[Theorem 19.1]{Roc-70}).
	 
\begin{figure}[h]
\begin{center}
\begin{tikzpicture}[>=latex,scale=0.6, z={(0.03,.55)}, y={(-.35,1.2)}]
\begin{scope}
\shade [ball color=red!90] (0,0,0) circle [radius=.1cm];
\draw (-0.1,-0.4,0) node {{\color{my-red}$\orig$}};
\draw[fill=cyan!80,line width=0.6pt,draw=cyan!50!black](3,0,0)--(0,3,0)--(0,0,3)--(3,0,0);
\shade [ball color=red!90] (3,0,0) circle [radius=.2cm];
\shade [ball color=red!90] (0,3,0) circle [radius=.2cm];
\shade [ball color=red!90] (0,0,3) circle [radius=.2cm];
\end{scope}
\begin{scope}[shift={(5.5,0)}]
\shade [ball color=red!90] (0,0,0) circle [radius=.1cm];
\draw (-0.1,-0.4,0) node {{\color{my-red}$\orig$}};
\draw[fill=cyan,line width=0.6pt,draw=cyan!50!black](3,0,0)--(0,0,3)--(2.5,2.5,2.5)--(3,0,0);
\draw[fill=cyan!80,line width=0.6pt,draw=cyan!50!black](0,3,0)--(0,0,3)--(2.5,2.5,2.5)--(0,3,0);
\draw[opacity=0.15](0,3,0)--(3,0,0);
\shade [ball color=red!90] (3,0,0) circle [radius=.2cm];
\shade [ball color=red!90] (0,3,0) circle [radius=.2cm];
\shade [ball color=red!90] (0,0,3) circle [radius=.2cm];
\shade [ball color=red!90] (2.5,2.5,2.5) circle [radius=.2cm];
\end{scope}
\begin{scope}[shift={(11,0)}]
\draw[fill=cyan,line width=0.6pt,draw=cyan!50!black](3,0,0)--(0,0,3)--(2.5,2.5,2.5)--(3,0,0);
\draw[fill=cyan!80,line width=0.6pt,draw=cyan!50!black](0,3,0)--(0,0,3)--(2.5,2.5,2.5)--(0,3,0);
\draw[opacity=0.15](0,3,0)--(3,0,0);
\shade [ball color=red!90] (0,3,0) circle [radius=.2cm];
\shade [ball color=red!90] (3,0,0) circle [radius=.2cm];
\shade [ball color=red!90] (0,0,3) circle [radius=.2cm];
\shade [ball color=red!90] (2.5,2.5,2.5) circle [radius=.2cm];
\draw (-.4,-.4,3) node {{\color{my-red}$\orig$}};
\end{scope}
\begin{scope}[shift={(16.5,0)}]
\draw[fill=cyan,line width=0.6pt,draw=cyan!50!black](3,0,0)--(0,0,3)--(2.5,2.5,2.5)--(3,0,0);
\draw[fill=cyan!80,line width=0.6pt,draw=cyan!50!black](0,3,0)--(0,0,3)--(2.5,2.5,2.5)--(0,3,0);
\draw[opacity=0.15](0,3,0)--(3,0,0);
\shade [ball color=red!90] (3,0,0) circle [radius=.2cm];
\shade [ball color=red!90] (0,3,0) circle [radius=.2cm];
\shade [ball color=red!90] (0,0,3) circle [radius=.2cm];
\shade [ball color=red!90] (2.5,2.5,2.5) circle [radius=.2cm];
\shade [opacity=0.65,ball color=red!90] (.95,.95,.95) circle [radius=.1cm];
\draw (1.2,0.75,1) node {{\transparent{0.65}\color{my-red}$\orig$}};
\end{scope}
\begin{scope}[shift={(22,0)}]
\draw[fill=cyan,line width=0.6pt,draw=cyan!50!black](3,0,0)--(0,0,3)--(2.5,2.5 ,2.5)--(3,0,0);
\draw[fill=cyan!80,line width=0.6pt,draw=cyan!50!black](0,3,0)--(0,0,3)--(2.5,2.5,2.5)--(0,3,0);
\draw[opacity=0.15](0,3,0)--(3,0,0);
\shade [ball color=red!90] (3,0,0) circle [radius=.2cm];
\shade [ball color=red!90] (0,3,0) circle [radius=.2cm];
\shade [ball color=red!90] (0,0,3) circle [radius=.2cm];
\shade [ball color=red!90] (2.5,2.5,2.5) circle [radius=.2cm];
\shade [opacity=0.65,ball color=red!90] (1.4,1.4,1.4) circle [radius=.1cm];
\draw (1.5,1.05,1.5) node {{\transparent{0.65}\color{my-red}$\orig$}};
\end{scope}
\end{tikzpicture}
\end{center}
\caption{\small The different panels depict various possible cases of $\range$ and $\Uset$. In each panel, the large balls represent $z\in\range$ that are extreme in $\Uset$, the small ball is $\orig$, and $\Uset$ is the lightly shaded figure.  Left to right: $\exists\uhat:\forall x\in\Uset,\ x\cdot\uhat=1$; $\orig\notin\Uset$; $\orig\in\ext\Uset$; 
$\orig\in\Uset\setminus\ri\Uset$; $\orig\in\ri\Uset$.}
\label{fig:Uset} 
\end{figure}

	%

%

Given a proper concave function $f:\R^d\to[-\infty,\infty)$, we will denote by $\partial f$ the superdifferential of $f$.  For $\xi\in\RR^d$,
\[\partial f(\xi)=\{v \in \RR^d : f(\zeta) \leq f(\xi)+v\cdot (\zeta - \xi)  \quad \forall \zeta \in \RR^d \}.\]  
By the argument in \cite[p.\ 15]{Roc-70}, $\partial f(\xi)$ is a closed convex set.
The image of $A\subset\R^d$ is denoted by $\partial f(A)$. If $f$ is positively $1$-homogeneous, then by the Fenchel-Young inequality, if $m\in\partial f(\xi)$ then $f(\xi)=m\cdot\xi$. See Lemma \ref{lem:concaveDualHomogeneous}.

\pgfdeclareradialshading{parabola}{\pgfpoint{-0.5cm}{-0.95cm}}%
{color(0cm)=(white);
color(0.15cm)=(cyan!20);
color(0.5cm)=(cyan!50);
color(0.9cm)=(cyan!90);
color(1cm)=(blue!30!cyan);
color(1.1cm)=(blue!50!cyan)
}

\begin{figure}[h]
\begin{center}
\begin{tikzpicture}[>=latex]
\tdplotsetmaincoords{70}{165}
\begin{scope}[rotate=30]
    \shade[ball color=cyan,opacity=0.8] (0,0) ellipse (2cm and 1.5cm);
    \draw[line width=0.8pt,draw=cyan!60!black] (-2,0) arc (180:360:2cm and 0.5cm);
    \draw[draw=cyan!60!black,opacity=0.4] (-2,0) arc (180:0:2cm and 0.5cm);
    \draw[line width=0.8pt,draw=cyan!60!black] (0,1.5) arc (90:270:0.5cm and 1.5cm);
    \draw[draw=cyan!60!black,opacity=0.4] (0,1.5) arc (90:-90:0.5cm and 1.5cm);
    \draw[line width=0.8pt,draw=cyan!60!black] (0,0) ellipse (2cm and 1.5cm);
    \shade [opacity=0.65,ball color=red!90] (-.8,0.1,0) circle [radius=.08cm];
    \draw (-.8,-.2,0) node {{\transparent{0.65}\color{my-red}$\orig$}};
\end{scope}
\begin{scope}[scale=2,shift={(2.3,0)},tdplot_main_coords,rotate=20]
\fill[cyan!75,line width=0.6pt,draw=cyan!60!black](1,0,0) -- (0,1,0) -- (0,0,1) -- cycle;
\fill[cyan!85,line width=0.6pt,draw=cyan!60!black](1,0,0) -- (0,1,0) -- (0,0,-1) -- cycle;
\fill[cyan!90,line width=0.6pt,draw=cyan!60!black](0,1,0) -- (-1,0,0) -- (0,0,1) -- cycle;
\fill[cyan,line width=0.6pt,draw=cyan!60!black](0,1,0) -- (-1,0,0) -- (0,0,-1) -- cycle;
\draw[opacity=0.15](-1,0,0)--(0,-1,0)--(1,0,0);
\draw[opacity=0.15](0,0,1)--(0,-1,0)--(0,0,-1);
\shade [opacity=0.65,ball color=red!90] (0,-0.3,.1) circle [radius=.04cm];
\draw (0,-.3,-.05) node {{\transparent{0.5}\color{my-red}$\orig$}};
\end{scope}
\begin{scope}[scale=0.37,shift={(20,-6)}]
\fill[cyan!80,line width=0.6pt,draw=cyan!50!black!70](0,0)--(7,1.5)--(9,4)--(6,5)--(4,5)--cycle;
\fill[cyan!60,line width=0.6pt,draw=cyan!50!black!70](4,5)--(6,5)--(4,7)--cycle;
\fill[cyan!50,line width=0.6pt,draw=cyan!50!black!70](4,7)--(6,5)--(9,4)--(6,8)--cycle;
\fill[cyan!50,line width=0.6pt,draw=cyan!50!black!70](4,7)--(6,8)--(1,7)--cycle;
\fill[cyan!70,line width=0.6pt,draw=cyan!50!black!70](0,0)--(4,5)--(4,7)--(1,7)--cycle;
\draw[opacity=0.1](6,8)--(7,1.5);
\fill[red,opacity=0.25,draw=red!20](0,0)--(15,3.21)--(6.8,8.5)--cycle;
\draw[red!50](0,0)--(15,3.21)--(6.8,8.5)--cycle;
\fill[red,opacity=0.2,draw=red!20](0,0)--(6.8,8.5)--(2,14)--cycle;
\draw[red!50](0,0)--(2,14)--(6.8,8.5)--cycle;
\fill[red,opacity=0.1,draw=red!10](15,3.21)--(2,14)--(6.8,8.5);
\draw[red!50](15,3.21)--(2,14)--(6.8,8.5)--cycle;
\shade [ball color=red!90] (0,0) circle [radius=.2];
\draw (0.15,-.65) node {{\color{my-red}$\orig$}};
\end{scope}
\begin{scope}[scale=0.3,shift={(-5,-25)}]
\fill[cyan!80,line width=0.6pt,draw=cyan!50!black](0,0)--(8,3)--(13.23,6.4)--(10.5,7.62)--(3,3)--cycle;
\fill[cyan!60,line width=0.6pt,draw=cyan!50!black](0,0)--(3,3)--(6.83,10.66)--(4.47,13.68)--(1,6)--cycle;
\fill[cyan!70,line width=0.6pt,draw=cyan!60!black!60](3,3)--(10.5,7.62)--(6.83,10.66)--cycle;
\fill[cyan!20,line width=0.6pt,draw=cyan!50!black](4.47,13.68)--(6.35,12.95)--(12.22,7.92)--(13.23,6.4)--(10.5,7.62)--(6.83,10.66)--cycle;
\draw[opacity=0.25](1,6)--(6.35,12.95);
\draw[opacity=0.25](8,3)--(12.22,7.92);
\fill[red,opacity=0.25,draw=red!20](0,0)--(8.5,8.5)--(15,5.625)--cycle;
\draw[red!50](0,0)--(8.5,8.5)--(15,5.625)--cycle;
\fill[red,opacity=0.2,draw=red!20](0,0)--(8.5,8.5)--(2.67,16)--cycle;
\draw[red!50](0,0)--(8.5,8.5)--(2.67,16)--cycle;
\fill[red,opacity=0.1,draw=red!10](8.5,8.5)--(2.67,16)--(15,5.625);
\draw[red!50](8.5,8.5)--(2.67,16)--(15,5.625)--cycle;
\shade [ball color=red!90] (0,0) circle [radius=.3];
\draw (0.5,-0.9) node {{\color{my-red}$\orig$}};
\end{scope}
\begin{scope}[scale=0.3,shift={(10.5,-25)}]
\shade[
shading=parabola,
line width=0.6pt,draw=cyan!50!black,opacity=0.8,scale=2,rotate=-41.5,shift={(0,1.25)}]{(-1.75,5.3) parabola bend (0,0) (1.75,5.3)}{--(1.75,5.3) arc [start angle=360, end angle=180, x radius = 1.75, y radius=.5]};
\fill[cyan!20,draw=cyan!50!black,scale=2,rotate=-41.5,shift={(0,1.25)}]{(1.75,5.25) arc [start angle=0, end angle=180, x radius = 1.75, y radius=.5]}{(1.75,5.25) arc [start angle=360, end angle=180, x radius = 1.75, y radius=.5]};
\draw[draw=cyan!60!black,opacity=0.4,scale=2,rotate=-41.5,shift={(0,1.25)}](0,0) parabola (-1,5.75);
\draw[line width=0.8pt,draw=cyan!60!black,scale=2,rotate=-41.5,shift={(0,1.25)}](0,0) parabola (1,4.8);
\draw[draw=cyan!60!black,opacity=0.4,scale=2,rotate=-41.5,shift={(0,1.25)}] (1.3,2.897) arc [start angle=0, end angle=180, x radius = 1.3, y radius=.5];
\draw[line width=0.8pt,draw=cyan!60!black,scale=2,rotate=-41.5,shift={(0,1.25)}] (1.3,2.897) arc [start angle=360, end angle=180, x radius = 1.3, y radius=.5];
\fill[red,opacity=0.25,draw=red!20](0,0)--(8.3,8.3)--(15,5.625)--cycle;
\draw[red!50](0,0)--(8.3,8.3)--(15,5.625)--cycle;
\fill[red,opacity=0.2,draw=red!20](0,0)--(8.3,8.3)--(2.67,16)--cycle;
\draw[red!50](0,0)--(8.3,8.3)--(2.67,16)--cycle;
\fill[red,opacity=0.1,draw=red!10](8.3,8.3)--(2.67,16)--(15,5.625);
\draw[red!50](8.3,8.3)--(2.67,16)--(15,5.625)--cycle;
\shade [ball color=red!90] (0,0) circle [radius=.3];
\draw (0.5,-0.9) node {{\color{my-red}$\orig$}};
\end{scope}
\begin{scope}[scale=.3,shift={(27,-25)}]
\fill[cyan!20,opacity=0.6,draw=cyan!40!black!40,scale=2,rotate=-41.5,shift={(-0.1,1.25)}]{(1.535,1.5) parabola bend (0,.95) (-1.6,1.5)}--(-4.2,5.63)--(3.85,5.83)--(1.535,1.5);
\shade[
shading=parabola,
line width=0.6pt,opacity=0.7,scale=2,rotate=-41.5,shift={(-0.1,1.25)}]{(-1.6,1.5) parabola bend (0,0) (1.535,1.5)}{(1.535,1.5) parabola bend (0.65,1) (0.28,1.5)}{(0.28,1.5) parabola bend (-0.5,0.9) (-1.6,1.5)};
\draw[line width=0.6pt,draw=cyan!30!black,opacity=0.5,scale=2,rotate=-41.5,shift={(-0.1,1.25)}](-1.6,1.5) parabola bend (0,0) (1.535,1.5);
\draw[draw=cyan!60!black,opacity=.3,scale=2,rotate=-41.5,shift={(-0.1,1.25)}] (1.075,.75) arc [start angle=5, end angle=185, x radius = 1.075, y radius=.15];
\draw[line width=0.6pt,draw=cyan!60!black,opacity=.5,scale=2,rotate=-41.5,shift={(-0.1,1.25)}] (1.075,.75) arc [start angle=355, end angle=185, x radius = 1.075, y radius=.35];
\fill[cyan,line width=0.6pt,draw=cyan!70!black!60,opacity=0.7,scale=2,rotate=-41.5,shift={(-0.1,1.25)}]{(1.535,1.5) parabola bend (0.65,1) (0.28,1.5)}--(0.46,4.75)--(3.85,5.83)--(1.535,1.5);
\fill[cyan!90,line width=0.6pt,draw=cyan!70!black!60,opacity=0.75,scale=2,rotate=-41.5,shift={(-0.1,1.25)}]{(0.28,1.5) parabola bend (-0.5,0.9) (-1.6,1.5)}--(-4.2,5.63)--(0.46,4.75)--(0.28,1.5);
\fill[red,opacity=0.25,draw=red!20](0,0)--(8.5,8.5)--(15,5.625)--cycle;
\draw[red!50](0,0)--(8.5,8.5)--(15,5.625)--cycle;
\fill[red,opacity=0.2,draw=red!20](0,0)--(8.5,8.5)--(2.67,16)--cycle;
\draw[red!50](0,0)--(8.5,8.5)--(2.67,16)--cycle;
\fill[red,opacity=0.1,draw=red!10](8.5,8.5)--(2.67,16)--(15,5.625);
\draw[red!50](8.5,8.5)--(2.67,16)--(15,5.625)--cycle;
\shade [ball color=red!90] (0,0) circle [radius=.3];
\draw (0.5,-0.9) node {{\color{my-red}$\orig$}};
\end{scope}
\end{tikzpicture}
\end{center}
\caption{\small The different panels depict various possible cases of the super-level sets $B_t=\{\xi\in\cone:\ppShapeUnr{\beta,\usc}(\xi)\ge t\}$, which are depicted darkly shaded. When $\orig\in\ri\Uset$, the cone $\cone$ is the whole of $\R^d$. When $\orig\notin\ri\Uset$, the cone $\cone$ is depicted lightly shaded. These cones are unbounded but truncated in the drawings and hence appear as pyramids. The faces of each of these cones are the cone itself, the two-dimensional faces that are truncated in the drawing and appear as triangles on the side of the pyramid, the lines that are the boundaries of the two-dimensional faces, and the point $\orig$. The super-level sets on the first row are all bounded. Left to right: $\orig\in\ri\Uset$, $t<0$, and $\ppShapeUnr{\beta,\usc}$ is differentiable; $\orig\in\ri\Uset$, $t<0$, and $B_t$ is polygonal; $\orig\notin\Uset$, $\ppShapeUnr{\beta,\usc}(\xi)<0$ $\forall\xi\ne\orig$, $t<0$, and $B_t$ is polygonal. The shapes on the second row are all unbounded and are truncated in the drawings. Left to right:  $\orig\notin\Uset$, $\ppShapeUnr{\beta,\usc}$ takes both $<0$ and $\ge0$ values, $t<0$, and $B_t$ is polygonal; $\orig\notin\Uset$, $\ppShapeUnr{\beta,\usc}$ takes both $>0$ and $<0$ values, $t>0$, and $\ppShapeUnr{\beta,\usc}$ is differentiable; $\orig\notin\Uset$, $\ppShapeUnr{\beta,\usc}(\xi)>0$ $\forall\xi\ne\orig$, $t>0$, and $\ppShapeUnr{\beta,\usc}$ is differentiable.}
\label{fig:Shape} 
\end{figure}

%
%
%
%
%

For each face $\sA \in \faces$ (possibly $\cone$ itself), let $\subspaceA$ be the linear subspace of $\RR^d$ generated by $\rangeA$. 
Since $\ppShapeUnrA{\beta,\usc}(\zeta) = -\infty$ when $\zeta\not\in\subspaceA$ we have
    \[\subspaceA\cap \superDiffUnrA{\beta,\usc}(\xi)
    =\bigl\{v\in\subspaceA:\ppShapeUnrA{\beta,\usc}(\zeta) \le \ppShapeUnrA{\beta,\usc}(\xi) + v\cdot(\zeta-\xi)\quad\forall\zeta\in\subspaceA\bigr\}.\]
The set on the right-hand side is the superdifferential at $\xi$ of the restriction of $\ppShapeUnrA{\beta,\usc}$ to $\subspaceA$. If $\ppShapeUnrA{\beta,\usc}(\xi)$ is finite for all $\xi\in\sA$, then \cite[Theorem 23.4]{Roc-70} says that this set is non-empty and bounded. It then has an extreme point, e.g., by the Krein–Milman theorem. Remark \ref{rmk:finiteLambda} explains how Conditions \ref{VCondition} and \ref{classLCondition} below guarantee the finiteness of $\ppShapeUnrA{\beta,\usc}$. 
See Figure \ref{fig:Shape} for an illustration of some possible cones and super-level sets of $\ppShapeUnr{\beta,\usc}$. Figure \ref{fig:FPPforbshape} already gives an example of the case where $\orig$ lies on the relative boundary of $\cone$.

%
\begin{remark}[FPP on $\Z^3$ with a forbidden step]
In the model described in Example \ref{ex:FPPforb} above, the cone has two faces $\cone = \{(x,y,z) : x,y\in\bbR, z\geq0\}$ and $\partial \cone =\{(x,y,0) : x,y\in\bbR\}$. If $\sA = \cone$, then $\subspaceA = \bbR^3$ and the elements of $\subspaceA\cap \superDiffUnrA{\infty,\usc}(\xi)$ correspond to $3$-dimensional hyperplanes ($2$-dimensional planes) which are tangent to $\ppShapeUnrA{\infty,\usc}$ at $\xi$. See Figure \ref{fig:FPP3D}. If $\sA = \partial \cone$, then $\subspaceA = \{(x,y,0) : x,y\in\bbR\}$ and the elements of $\subspaceA\cap \superDiffUnrA{\infty,\usc}(\xi)$ correspond to $2$-dimensional hyperplanes ($1$-dimensional lines) on $\{(x,y,0) : x,y\in\bbR\}$ which are tangent to $\ppShapeUnrA{\infty,\usc}$ at $\xi$.  See Figure \ref{fig:FPP2D}.
\end{remark}

To ensure that the unrestricted-length polymer measures in the next theorem are well defined, we need the following conditions.

     \begin{condition}[Unrestricted-length conditions]\label{VCondition} Given a face $\sA$ of $\cone$ {\rm(}possibly $\cone$ itself\,{\rm)}, assume that one of the following holds:
     \begin{enumerate}[label=\rm(\alph{*}), ref=\rm\alph{*}] \itemsep=2pt
     \item\label{condBddBelow0} The setting is undirected {\rm(}meaning $\orig \in\UsetA${\rm)} and the potential satisfies 
        \begin{align}\label{finiteZ}
	    \P\{\pote(\w,z)\ge0\}=1\quad\text{for all } z\in\rangeA
	    \end{align} 
     and $\pote^+(\w,z)\in L^1(\P)$ for each $z\in\rangeA$.
        \item\label{condBddBelowByC} The setting is directed {\rm(}meaning $\orig \not\in\UsetA${\rm)}, there exists $c \in \R$ such that
        \begin{align}
            \P\{\pote(\w,z)\ge c\}=1\quad\text{for all } z\in\rangeA,
        \end{align}
    and $\pote^+(\w,z)\in L^1(\P)$ for each $z\in\rangeA$. 
        
        \item\label{condProdMeasure}  The setting of Example \ref{ex:productEnv} holds,  $\orig \not\in \UsetA$, $\pote$ is local, $\P$ has a finite range of dependence $r_0$, and for some $q > d$, $\pote^+(\w,z)\in L^1(\P)$  and  $\pote^-(\w,z) \in L^q(\P)$ for all $z \in \rangeA$. 
     \end{enumerate}
     \end{condition}


When considering a restricted-length model, one can convert it to an unrestricted-length model as in Remark \ref{rk:resAsUnr1}. When viewing the model without doing this, parts \eqref{condBddBelowByC} and \eqref{condProdMeasure} of the above condition then  specialize to the condition below, which we present without going through unrestricted-length measures for the convenience of the reader.

     \begin{condition}[Restricted-length conditions]\label{VConditionRes} Given a face $\Uset'$ of $\Uset$ {\rm(}possibly $\Uset$ itself\,{\rm)}, assume that one of the following holds:
     \begin{enumerate}[label=\rm(\alph{*}), ref=\rm\alph{*}] \itemsep=2pt
        \item\label{condBddBelowByCRes} There exists $c \in \R$ such that
        \begin{align}
            \P\{\pote(\w,z)\ge c\}=1\quad\text{for all } z\in\range\cap\Uset'
        \end{align}
    and $\pote^+(\w,z)\in L^1(\P)$ for each $z\in\range\cap\Uset'$. 
        
        \item\label{condProdMeasureRes}  The setting of Example \ref{ex:productEnv} holds, $\orig \not\in \Uset'$, $\pote$ is local, $\P$ has a finite range of dependence $r_0$, and for some $q > d$, $\pote^+(\w,z)\in L^1(\P)$  and  $\pote^-(\w,z) \in L^q(\P)$ for all $z \in \range\cap\Uset'$. 
     \end{enumerate}
     \end{condition}

Before continuing, we comment on the strength and generality of these hypotheses.

\begin{remark} The moment hypotheses in Conditions \ref{VCondition}  and \ref{VConditionRes} hold in typical applications but are also often not quite sharp in particular examples. For example, in the standard FPP with i.i.d.\ edge weights, one can often weaken the moment hypothesis to one involving the minimum of the edge weights at a site. If $\orig\in\UsetA$, then \eqref{finiteZ} in Condition \ref{VCondition}\eqref{condBddBelow0} is necessary for the finiteness of $\prtunr{x}{y}{\beta}$. Otherwise loops can be repeated indefinitely, causing $\prtunr{x}{y}{\beta}(\w)=\infty$ for $x,y\in\ZZ^d$ with $y-x\in\RsemiA$, with positive probability. The standard FPP gives a frequently studied example of a model satisfying this condition. Condition \ref{VCondition}\eqref{condBddBelowByC} (and \ref{VConditionRes}\eqref{condBddBelowByCRes}) corresponds to a physically reasonable assumption that energies are bounded from below.  Among many other examples, this allows us to consider well-studied bounded potential models like uniformly elliptic RWRE models in ergodic environments, without needing independence. The last conditions allow us to cover examples which are unbounded below with a finite range of dependence. Natural examples of such models not covered by the first two cases include i.i.d.~Gaussian last-passage percolation and directed polymers as well as (not necessarily uniformly) elliptic RWREs in independent environments which satisfy our moment conditions.  
\end{remark}

Recall the set $\rangeA^{\id}$, defined in \eqref{Rid}. To apply the shape theorems for the limiting free energy we need the following mild technical condition. As explained after Definition \ref{def:cL}, this condition is a trade-off between moment and mixing assumptions and is typically valid in practice.

        \begin{condition}\label{classLCondition}
            Given a face $\sA$ of $\cone$ {\rm(}possibly $\cone$ itself\,{\rm)}, assume $\pote^+(\w,z) \in L^1(\bbP)$ for each $z \in \rangeA$, and $\pote^+(\w, z) \in \classL_{z, \rangeA}$ for each $z \in \rangeA \setminus \rangeA^{\id}$. For each $z\in\rangeA^{\id}$, assume that there exists a $\widehat{z}\in\rangeA\setminus\rangeA^{\id}$ such that $\pote^+(\w,z)\in\classL_{\widehat{z},\rangeA}$.
     \end{condition}

         \begin{remark}\label{rmk:finiteLambda}
    	By \cite[Theorem 2.8]{Jan-Nur-Ras-22}, if Conditions \ref{VCondition} and \ref{classLCondition} are satisfied, then we have, $\bbP$-almost surely, $\abs{\ppShapeUnrA{\beta}(\xi)}<\infty$ for all $\xi \in \ri \sA$, and $\abs{\ppShapeUnrA{\beta,\usc}(\xi)}<\infty$ for all $\xi \in \sA$. As mentioned earlier, the version of Condition \ref{classLCondition} assumed by \cite{Jan-Nur-Ras-22} is stronger than the one we assume here. However, their results still hold under our weaker condition with minor adjustments in the proofs. For detailed information on these adjustments, see the proof of Lemma \ref{lem:upperBoundCompact}.
        \end{remark}

A primary objective in this work is to construct polymer measures that are supported on the set of directed semi-infinite paths. To that end, we now define directedness.

\begin{definition}\label{def:directed}
For a set $A \subset \Uset$, the sequence $x_n$ is directed into $A$ if $|x_n|_1 \rightarrow \infty$ and all limit points of $x_n / |x_n|_1$ are contained in $A$.  
\end{definition}

The directedness of the polymers we construct requires the following extra assumption that avoids degenerate situations. Let $\Uset_\orig$ be the unique face of $\Uset$ that contains $\orig$ in its relative interior. Let $\range_{\orig}=\range\cap\Uset_\orig$. If $\orig\not\in\Uset$, then $\Uset_\orig=\range_\orig=\varnothing$. Let \[\sA_\orig=\Bigl\{\sum_{z \in \range_\orig} b_z z : b_z \in \Rnonneg\Bigr\}.\]

\begin{condition}\label{trans}
	Assume that 
	\begin{align}\label{V>0}
	\P\{\pote(\w,z)\ge0\}=1\quad\text{for all }z\in\range_{\orig}
	\end{align}
	and 
	if $\beta<\infty$, then assume also that either the reference walk $\rwP_{\orig}$ is transient or it is recurrent and \[\P\bigl\{\exists x\in\Rsemi_{\sA_\orig},\exists z\in\range_\orig:\pote(\T_x\w,z)>0\bigr\}=1.\] 
\end{condition}


\begin{remark}
Condition \eqref{V>0} states that the potential $\pote$ must be non-negative along any loops that may exist. This is implied by Condition \ref{VCondition}, which we already assume to ensure the well-definedness of the unrestricted-length polymer measures with which we work. By Lemma \ref{lm:trans.pos} below, Condition \ref{trans} is equivalent to the positive-temperature semi-infinite polymers we construct being transient, which is a necessary precondition for them to move in a preferred direction. 
\end{remark}

        For $\beta\in(0,\infty]$, a face $\sA\in\faces$ (including $\cone$), and $m\in \superDiffUnrA{\beta,\usc}(\ri\sA)$, define 
        \begin{equation}\label{def:facetUnr}
            \facetUnr{\m,\sA} = \{ \xi\in \sA : \abs{\xi}_1 = 1 \text{ and } \ppShapeUnrA{\beta, \usc}(\xi) = m \cdot \xi\}.
        \end{equation}
        This set is not empty because $m \in \superDiffUnrA{\beta,\usc}(\ri\sA)$ and $\ppShapeUnr{\beta,\usc}$ is positively $1$-homogeneous. 

	    Also, define 
        \begin{equation}\label{def:facetRes}
        \facetRes{\m,\sA} = \{\xi \in \UsetA : \ppShapeResA{\beta, \usc}(\xi) = m\cdot \xi\}.
        \end{equation}
        Theorem \ref{thm:CocycleMeasuresFace} implies, in particular, that this set is not empty (a direct convex analytic proof is also possible).


\begin{definition}
    For $x,y\in\Z^d$ with $y-x\in\Rgroup$ the Green's function is given by
	\begin{align}\label{g:def}
	\begin{split}
	\greens(x, y)
	&= \sum_{\ell=0}^\infty \prtres{x}{y}{\ell}{\beta} = \one_{\{y=x\}} + \sum_{\ell=1}^\infty \rwE_x\bigl[e^{-\beta \sum_{j=0}^{\ell-1} \pote(\T_{X_j} \w, X_{j+1}-X_j)} \one_{\{X_\ell=y\}}\bigr]\\ 
	&= \rwE_x\Bigl[\sum_{\ell=0}^\infty e^{-\beta \sum_{j=0}^{\ell-1} \pote(\T_{X_j} \w, X_{j+1}-X_j)} \one_{\{X_\ell=y\}}  \Bigr],
	\end{split}
	\end{align}
	where an empty sum is taken to be zero.
\end{definition}

Let $\DctbA$ be a countable (possibly empty or finite) set of pairs $(\beta, \m)$ so that for each pair, $\beta \in (0,\infty]$, 
$\ppShapeUnrA{\beta,\usc}$ is deterministic and finite on $\sA$, and there exists a $\xi \in(\ri \sA)\setminus\{\orig\}$ such that $\m \in \subspaceA \cap \superDiffUnrA{\beta,\usc}(\xi)$ and, if $\orig\in\UsetA$, $\ppShapeUnrA{\beta}(\xi) \neq 0$.  
An important case will be one where each $m$ is an extreme point,
\begin{equation}\label{mIsExtreme}
    \m \in \ext( \subspaceA \cap \superDiffUnrA{\beta,\usc}(\xi) ) \text{ for some } \xi \in(\ri \sA)\setminus\{\orig\}.
\end{equation}
Also, define $\Dctb = \{ (\sA, \beta, \m) : \sA \in \faces, (\beta,\m)\in\DctbA \}$  and $\Bctb = \{\beta \in (0,\infty] : \exists \m \in \RR^d, \sA \in \faces \text{ with } (\sA, \beta, \m) \in \Dctb\}$. 

\begin{remark}\label{rem:Dset}
For maximum utility, one would take $\DctbA$ to consist of a countable dense set of $\beta \in (0,\infty]$ which includes $\infty$ and then for each such $\beta$, an appropriately dense subset of $m \in \bigcup_{\xi \in \ri \sA\setminus{\{\orig\}}} \{\ext(\subspaceA \cap \superDiffUnrA{\beta,\usc}(\xi))\}$. One natural way to construct such a set is to take a dense set of $\beta$ as above and then a dense subset of directions $\xi \in \ri(\sA)\setminus{\{\orig\}}$ (for which $\ppShapeUnrA{\beta,\usc}(\xi)\neq 0$ if $\orig \in \UsetA$) which are directions of differentiability of $\ppShapeUnrA{\beta}$. For such $\xi$, $\superDiffUnrA{\beta}(\xi)$ consists of  a single point. Existence of such a set follows from \cite[Theorem 25.4]{Roc-70}. 
\end{remark}

\begin{remark}
It can happen that $(\beta,\m)$ can be in $\DctbA$ for multiple faces $\sA$ of $\cone$.
A simple, albeit degenerate, example is one where $d=3$, $\range=\{e_1,e_2,e_3\}$, and $\ppShapeUnr{\beta}(\xi)=\xi\cdot e_1$. Then for $\sA\in\{\cone,\cone(e_1,e_2),\cone(e_1,e_3),\cone(e_1)\}$,  and $\xi\in\ri\sA$, $\superDiffUnrA{\beta}(\xi)=e_1$. For all other faces $\sA$, $\ppShapeUnrA{\beta}\equiv0$ and $\superDiffUnrA{\beta}=\orig$.
\end{remark}

\begin{remark}[RWRE on $\bbZ$]\label{rk:RWREDA} 
We illustrate the conditions on the membership in $\DctbA$ in the case of the RWRE models from Figure \ref{fig:RWRE}, which were defined in Example \ref{ex:nRWRE}. In this model, $\cone=\R$ is the only face and $\subspaceC=\R$. 
Recall the description of $\ppShapeUnr{1}$ from Remark \ref{rk:RWRE_unrShape} and Figures \ref{fig:RWRE}. In the recurrent case, we have $\partial\ppShapeUnr{1}(t)=\{0\}$ for all $t\in\R$. In this case, there are no points that can belong to $\DctbC$ with $\beta=1$. In all the other cases, $\partial\ppShapeUnr{1}(t)=\{0\}$ for $t>0$, $\partial\ppShapeUnr{1}(t)=\{-\ppShapeUnr{1}(-1)\}$ for $t<0$, and $\partial\ppShapeUnr{1}(0)=[-\ppShapeUnr{1}(-1),0]$ (with the extreme slopes in the last case being $-\ppShapeUnr{1}(-1)$ and $0$). Thus, the only point that can belong to $\DctbC$ with $\beta=1$ is $(1,-\ppShapeUnr{1}(-1))$.

If, on the other hand, we consider the restricted-length version of the RWRE model and rewrite it as an unrestricted-length model, following Remark \ref{rk:resAsUnr1}, then the paths become directed and the restriction on the choice of $\xi$ in the definition of $\DctbCbar$ disappears. 
See Remark \ref{ex2:nRWRE} for further details.
\end{remark}


With the previous discussion in mind, the following is our main result. It produces for each linear facet of $\ppShapeUnrA{\beta, \usc}$ (which can degenerate into a singleton) a probability measure on semi-infinite paths that is consistent with both the restricted-length and the unrestricted-length point-to-point quenched polymer measures and under which the polymer is directed into the given facet. Appendix \ref{app:consistency}  discusses the Gibbs consistency properties of the restricted- and unrestricted-length measures, from which one can connect to the familiar setting of Dobrushin-Lanford-Ruelle equations.
Before stating the result, note that because $\cone$ is finitely generated, it has finitely many faces \cite[Theorem 19.1]{Roc-70}.

	\begin{theorem}\label{thm:CocycleMeasuresFace} 
	      For each face $\sA \in \faces$ {\rm(}possibly $\cone$ itself\,{\rm)} where  $\DctbA \neq \varnothing$, 
	    assume that $\pote$ satisfies Conditions \ref{VCondition} and \ref{classLCondition} on $\sA$ 
     and that \eqref{mIsExtreme} holds for each $(\beta,m) \in \DctbA$. 
	    For each $x\in\ZZ^d$, there exists a family $\{\Qinf{x}{\sA, \beta, \m, \w}:\sA \in\faces,(\beta, \m) \in \DctbA\}$ of random probability measures  on semi-infinite paths $x_{0:\infty} \in \PathsSemiInf{x}(\rangeA)$, such that the following is satisfied. There exists an event $\OmegaDir \subset \Omega$  with $\bbP(\OmegaDir) = 1$, on which the following properties hold: 
		\begin{enumerate}[label=\rm(\alph{*}), ref=\rm\alph{*}] \itemsep=2pt 
			\item\label{CocycleMeasuresFace.consPos} If $\beta < \infty$, then for all non-negative integers $j \leq k \leq n$, points $u, x, y \in \ZZ^d$ with $x-u$ and $y-x$ in $\RsemiA$, $x_{0:k} \in \PathsPtPKilled{u}{y}$ with $j = \min\{0 \leq i \leq k : x_i = x\}$, and $x_{k:n} \in \PathsNStep{y}{n-k}(\rangeA)$,  
			\begin{align*} &\Qinf{u}{\sA, \beta, \m, \w}(X_{0:\stoppt{y}+n-k} = x_{0:n} \given \stoppt{x} \leq \stoppt{y} <\infty, X_{0:\stoppt{x}} = x_{0:j}, X_{\stoppt{y}:\stoppt{y}+n-k}=x_{k:n}) \\
				&\qquad=\Qinf{u}{\sA, \beta, \m, \w}(X_{\stoppt{x}:\stoppt{y}} = x_{j:k} \given \stoppt{x} \leq \stoppt{y} <\infty) = \Qunr{x}{y}{\beta, \w}(X_{0:\stoppt{y}} = x_{j:k}). 
			\end{align*} 

            \item\label{CocycleMeasuresFace.Greens} If $\beta<\infty$, then for each distinct $x,y \in \ZZ^d$ such that $y-x \in \RsemiA$,
            \begin{align*}
                 \rwE^{\Qinf{x}{\sA,\beta,\m,\w}}\Bigl[ \sum_{n=0}^\infty \one_{\{X_n = y\}} \Big| \stoppt{y}<\infty \Bigr] = \frac{\greens(x,y)}{\prtunr{x}{y}{\beta,\w}}.
            \end{align*}

            \item\label{CocycleMeasuresFace.consPosRes} If $\beta < \infty$, then for all non-negative integers $j \leq k \leq n$, points $u, x, y \in \ZZ^d$  such that $x-u \in \DnA{j}$ and $y-x \in \DnA{k-j}$, $x_{0:j} \in \PathsPtPResKM{u}{x}{0}{j}$, $x_{j:k} \in \PathsPtPResKM{x}{y}{j}{k}$, and $x_{k:n} \in \PathsNStep{y}{n-k}(\rangeA)$, 
	\begin{align*}
		\Qinf{u}{\sA, \beta, \m, \w}(X_{0:n} = x_{0:n} \given X_{0:j} = x_{0:j}, X_{k:n} = x_{k:n}) &= \Qinf{u}{\sA, \beta, \m, \w}(X_{j:k} = x_{j:k} \given X_j = x, X_k = y) \\
		&= \Qres{x}{y}{k-j}{\beta, \w}(X_{0:k-j} = x_{j:k}).
	\end{align*} 		
			
			\item\label{CocycleMeasuresFace.consZero} If $\beta = \infty$, then for each $x \in \ZZ^d$, $\Qinf{x}{\sA, \infty, \m, \w}$ is supported on a set of semi-infinite geodesics rooted at $x$ and having increments in $\rangeA$.
			
			\item\label{CocycleMeasuresFace.trans} If either $\beta=\infty$, or $\beta<\infty$ while $\sA\ne\sA_\orig$, or $\beta<\infty$,  $\sA=\sA_\orig$, and Condition \ref{trans} holds, then for all $x\in\ZZ^d$, $\Qinf{x}{\sA, \beta, \m, \w}\{\abs{X_n}_1\to\infty\}=1$ and 
%
%
            \begin{align}\label{CocycleMeasuresFace.dir}
            \Qinf{x}{\sA, \beta, \m, \w}(X_{0:\infty} \text{ is directed into } \facetUnr{\m,\sA}) = 1.
            \end{align}
   
			\item\label{CocycleMeasuresFace.LLNlimits} For each $x \in \ZZ^d$, $\Qinf{x}{\sA, \beta, \m, \w}(\text{all limit points of } X_n/n \text{ are contained in } \facetRes{\m,\sA}) = 1$. 
		\end{enumerate}	
	\end{theorem}
    
	\begin{remark}[Previous work]\label{rem:Gibbsrel} This result extends several works in the literature, including \cite[Theorem 1.1]{Dam-Han-14} and the extension of that result to $\bbZ^d$ described in \cite[Section 5.4]{Auf-Dam-Han-17} (though with slightly more restrictive moment hypotheses), \cite[Theorem 3.2]{Jan-Ras-20-aop}, and \cite[Theorem 2.1 (i) and (ii)]{Geo-Ras-Sep-17-ptrf-2}. The proof of Theorem \ref{thm:CocycleMeasuresFace} relies primarily on Theorem \ref{thm:Cocycles}, which provides another construction of the queueing fixed points in \cite[Theorem 5.1]{Mai-Pra-03} 
    as well as the harmonic functions for RWREs considered in \cite[Theorem 4]{Yil-09-aop} and \cite[Theorem 3.1]{Yil-11-aop} (for directions where the quenched and averaged rate functions coincide), \cite[Lemma 1.6]{Yil-09-cpam} (the one-dimensional RWRE case), and  \cite[Example 5.6]{Geo-Ras-Sep-16} (the $L^2$ weak disorder regime). It can also be applied to exactly solvable models such as those in \cite{Bal-Ras-Sep-19, Geo-etal-15} to construct infinite-volume Gibbs measures, infinite geodesics, and generalized Busemann functions. In \cite{Yil-09-aop,Yil-11-aop,Geo-Ras-Sep-16}, stronger almost-sure limits are proved using martingales available in these settings. Similarly, almost-sure limits are obtained in \cite{Bal-Ras-Sep-19, Dam-Han-17,Geo-Ras-Sep-17-ptrf-2, Geo-etal-15, Jan-Ras-20-aop, New-95,Yil-09-cpam} by exploiting the geometric constraints on paths inherent to lower-dimensional cases.
        \end{remark}

	\begin{remark}
	    If $\facetUnr{\m,\sA}$ consists of a single point, e.g.\ when $\ppShapeUnrA{\beta,\usc}$ is strictly concave, then \eqref{CocycleMeasuresFace.dir}  says that the polymer has an asymptotic direction,  $\Qinf{x}{\sA, \beta, \m, \w}$-almost surely. Similarly, when $\facetRes{\m,\sA}$ is a singleton,  \eqref{CocycleMeasuresFace.LLNlimits} says that the polymer has an asymptotic velocity, $\Qinf{x}{\sA, \beta, \m, \w}$-almost surely.
	\end{remark}

    \begin{remark}\label{rmk:parametrizeDirections}
    A common way to parametrize the measures in the above theorem is  by directions $\xi \in(\ri \sA)\setminus\{\orig\}$, $\sA \in \faces$ instead of by vectors $\m$ in the superdifferential $\superDiffUnrA{\beta,\usc}(\ri \sA)$, $\sA\in\faces$. This can be done by applying Theorem \ref{thm:CocycleMeasuresFace} with $\m \in \ext(\subspaceA \cap \superDiffUnrA{\beta,\usc}(\xi))$.  Such an extreme point must exist, as explained earlier in this section. Then by homogeneity,
        $(\xi / \abs{\xi}_1) \cdot m = \ppShapeUnrA{\beta,\usc}(\xi / \abs{\xi}_1),$
        and $\xi / \abs{\xi}_1 \in \facetUnr{\m,\sA}$. As a result, $\bbP$-almost surely, under $\Qinf{x}{\sA,\beta,\m,\w}$, $X_{0:\infty}$ is almost surely directed into $\facetUnr{\m,\sA}$, which is a linear facet of $\ppShapeUnrA{\beta, \usc}$ that contains $\xi$. 
    \end{remark}

        \begin{remark}[FPP with a forbidden step] In Example \ref{ex:FPPforb}, applying the previous result to $\sA=\partial \cone = \{(x,y,0) : x,y\in\bbR\}$ with $\DctbA$ given by a countable dense set of directions of differentiability, we recover the construction of semi-infinite geodesics in planar Exponential FPP which can be obtained from the results in \cite{Dam-Han-14}. Because here we have $\orig \in \UsetA$, our conditions defining $\DctbA$ require us to know that such directions satisfy $\ppShapeUnrA{\infty}(\xi)\neq 0$. This follows from \cite[Theorems 3.1 and 6.1]{Kes-86-stflour}. Applying the result to $\sA = \cone = \{(x,y,z)\in \bbR^3 : z \geq 0\}$ with $\DctbA$ given by a countable dense set of interior directions of differentiability in $\cone$, we similarly obtain the existence of semi-infinite geodesics corresponding to a countable collection of faces of the limit shape defined by sub-level sets of $-\ppShapeUnrA{\infty}$ as illustrated in Figure \ref{fig:FPP3D}.  These have asymptotic directions contained in those faces by part \eqref{CocycleMeasuresFace.trans} of the theorem. Note that by a coupling with standard three-dimensional exponential FPP, it also follows from \cite[Theorems 3.1 and 6.1]{Kes-86-stflour}  that $\ppShapeUnrA{\infty}(\xi)\neq 0$ for $\xi \in \cone \setminus{\{\orig\}}$ here.
        \end{remark}

    \begin{remark}\label{rk:resAsUnr2} (Restricted-length models as unrestricted-length models)
        Remark \ref{rk:resAsUnr1} explained how one can rewrite a restricted-length model as an unrestricted-length one.
	We explain here how certain quantities for the restricted-length model transfer to ones for the unrestricted-length model.
    Consider a face $\Uset'$ of $\Uset$. Then the cone $\sA$ generated by $\Uset'$ is a face of the cone $\cone$ and $\rangeA=\range\cap\Uset'$. Continuing to use bars to denote the quantities in the unrestricted-length model,
	let $\overline\range'=\{\langle z,1\rangle:z\in\rangeA\}$
	and let $\overline\sA$ be the cone generated by this set. Then $\overline\range_{\overline\sA}=\overline\range'$, and
	$\overline\sA = \{\langle \zeta, t \rangle : \zeta\in\sA,t>0,\zeta/t \in \sU'\}\cup\{\langle\orig,0\rangle\}$. 
    Since, by definition,  
	$\overline \T_{\langle z,1\rangle}=\T_z$, for all $z\in\range$, we have $\overline\range_{\overline\sA}^{\id}=\{\langle z,1\rangle:z\in\rangeA^{\id}\}$.
	In particular, $\rangeA\ne\rangeA^{\id}$ implies $\overline\range_{\overline\sA}\ne\overline\range_{\overline\sA}^{\id}$ and $\overline\faces$ is the set of faces $\overline\sA$ such that $\sA\in\faces$. 
	
	If $\pote^+$ satisfies the conditions of Theorem \ref{Thm:ShapeRes}, then $\overline\pote^+$ satisfies the conditions of Theorem \ref{Thm:ShapeUnr}. Similarly, 
	Condition \ref{VConditionRes} transfers to Condition \ref{VCondition}. If $\pote^+$ satisfies Condition \ref{classLCondition}, then $\overline\pote^+$ also satisfies that condition. 

	Since for all $k\in\ZZ_{>0}$ and $x\in\Dn{k}$, $\overline{F}_{\langle \orig, 0\rangle, \langle x, k \rangle}^{\beta}=\freeres{\orig}{x}{k}{\beta}$ and $\overline\sI_{\langle x,k\rangle}=\sI_x$,
	\begin{align}\label{Esup-unr=res}
	\bbE\left[\sup_{n \geq 1} n^{-1} \bbE[\overline{F}_{\langle \orig, 0\rangle, n\langle x, k \rangle}^{\beta} \given \overline{\sI}_{\langle x, k \rangle}] \right] = \bbE\left[\sup_{n \geq 1} n^{-1} \bbE[\freeres{\orig}{nx}{nk}{\beta} \given \sI_x] \right].
	\end{align}
	Therefore, if the right-hand side is finite (as assumed, e.g., in Theorem \ref{Thm:ShapeRes}), then so is the left-hand side.
    In this case, Theorem \ref{Thm:ShapeUnr} implies that with $\bbP$-probability one,  $\overline\Lambda^{\beta}_{\overline\sA}$ is finite on $\overline\sA$. 
    Then, from \eqref{ppShapeRes} and \eqref{ppShapeUnr}, we get that, $\bbP$-almost surely, $\overline\Lambda^{\beta}_{\overline\sA}(\langle \zeta,t\rangle)=t\ppShapeResU{\beta}(\zeta/t)$ for all $\zeta\in\sA$ and $t>0$ with $\zeta/t\in\sU'$. 
    This implies that, with $\bbP$-probability one, $\ppShapeResU{\beta}$ is finite on $\sU'$, and as mentioned above Remark \ref{rk-cL}, this implies that, $\bbP$-almost surely, $\ppShapeResU{\beta,\usc}$ is the unique continuous extension of $\ppShapeResU{\beta}$ to $\sU'$. By Theorem \ref{Thm:ShapeUnr}, $\bbP$-almost surely, $\overline\Lambda^{\beta,\usc}_{\overline\sA}$ is the unique continuous extension of $\overline\Lambda^{\beta}_{\overline\sA}$ to $\overline\sA$. Therefore, with $\bbP$-probability one, $\overline\Lambda^{\beta,\usc}_{\overline\sA}(\langle \zeta,t\rangle)=t\ppShapeResU{\beta,\usc}(\zeta/t)$ for all $\zeta\in\sA$ and $t>0$ with $\zeta/t\in\sU'$.
    \end{remark}

    \begin{remark}[Extension to $|\range|=\infty$]
    It is natural to wonder to what extent our methods can be expected to generalize to the setting of an unbounded number of steps. Some extension in this direction is undoubtedly possible under some hypotheses on the reference walk and the potential. At this point, however, it is not clear to us what the correct conditions for such an extension are in general. The place to start any such program is the existence of the free energy and the shape theorem and then development of a variational representation of the free energy in terms of cocycles similar to \cite[Theorem 2.14]{Jan-Nur-Ras-22}. With these inputs in hand, our methods should mostly extend (as related methods have already done in the fully continuous setting of the KPZ equation and its associated continuum directed polymer \cite{Jan-Ras-Sep-22-1F1S-}), though one expects additional technical complications.
    \end{remark}

	The proof of Theorem \ref{thm:CocycleMeasuresFace} is given in Section \ref{sec:MainThmProofs}. Sections \ref{sec:coc} and \ref{sec:semiinf} develop a more general theory from which Theorem \ref{thm:CocycleMeasuresFace} follows.

 \section{Recovering cocycles}\label{sec:coc}
In this section, we construct the generalized Busemann functions mentioned in the introduction. We then use them to construct the semi-infinite polymer measures in Theorem \ref{thm:CocycleMeasuresFace}. 

	
	\begin{definition}\label{def:cocycle}
		Given a face $\sA \in \faces$ {\rm(}possibly $\cone$ itself\,{\rm)}, consider a Borel-measurable function 
		\[B : \{(x,y) : x, y \in \ZZ^d, y-x \in \RgroupA\} \times \Omega \rightarrow \RR.\] 
		\begin{enumerate}[label=\rm(\alph{*}), ref=\rm\alph{*}] \itemsep=2pt
		\item $B$ is said to be a cocycle,  if for $\bbP$-almost every $\omega$, $B(v, x, \omega) + B(x, y, \omega) = B(v, y, \omega)$ for all $v,x,y\in\ZZ^d$ with $x-v, y-x \in \RgroupA$.
		\item $B$ is $\T$-covariant if for $\bbP$-almost every $\omega$, $B(x+v, y+v, \omega) = B(x, y, \T_{v} \omega)$ for all $v,x,y\in\ZZ^d$ with $y-x \in \RgroupA$.  
		\item For $\beta\in(0,\infty]$, we say that $B$ $\beta$-recovers, 
		if for $\bbP$-almost every $\omega$, we have for all $x\in\ZZ^d$,
			\begin{align}
				&\sum_{z \in \rangeA} p(z) \exp \left\{ -\beta B(x, x+z, \w) - \beta \pote(\T_x \w, z) \right\} = 1, \text{ for } \beta < \infty \qquad \text{ and }\label{eqn:rec-beta} \\
				&\min_{z \in \rangeA} \left\{ B(x, x+z, \w) + \pote(\T_x \w, z) \right\} = 0, \text{ for } \beta = \infty.\label{eqn:rec-inf}
			\end{align}
		\item A cocycle $B$ on $\sA$ is said to be $L^1(\bbP)$ if $\bbE[|B(\orig, z)|] < \infty$ for all $z \in \rangeA$.  If also $\bbE[B(\orig, z)] = 0$ for all $z \in \rangeA$, then $B$ is centered.  
		\item An $L^1(\P)$ $\T$-covariant $\beta$-recovering cocyle is called a generalized Busemann function {\rm(}in inverse temperature $\beta${\rm)}.
		\end{enumerate}
	\end{definition}

	
	
	For a face $\sA \in \faces$ (possibly $\cone$ itself), recall that $\subspaceA$ is the linear span of $\rangeA$. Let $\invSig_\sA$ be the $\sigma$-algebra generated by the events that are invariant under $\T_z$ for all $z\in\RgroupA$. For a $\T$-covariant $L^1(\bbP)$ cocycle $B$ on $\sA$, the invariance of $\bbP$ under the shifts $\T_z$ implies that $\bbE[B(x,y) \given \invSig_\sA] = \bbE[B(\orig, y-x) \given \invSig_\sA]$ for all $x,y \in \RgroupA$. Consequently, there exists a unique (possibly random) vector $\mVec(B) \in \subspaceA$ such that 
    \begin{align}
	    \bbE[B(x, y) \given \invSig_\sA] = \mVec(B) \cdot (y-x). \label{meanVec}
	\end{align}  
	

	\begin{remark}[Space-time cocycles]\label{rmk:spaceTimeCocycle}
        If we write a restricted-length model as an unrestricted-length model, as in Remark \ref{rk:resAsUnr1}, then a cocycle in the unrestricted-length version corresponds to a \emph{space-time cocycle} in the restricted-length version. Precisely, let $\stRgroupA$ be the group generated by $\{\langle z,1\rangle:z\in\rangeA\}$. Lemma \ref{lm:stRgroupA} gives a characterization of this group. Consider a Borel-measurable function 
        \[B : \{(x,j,y,k)\in\ZZ^d\times\ZZ\times\ZZ^d\times\ZZ, \langle y-x,k-j\rangle\in\stRgroupA\} \times \Omega \longrightarrow \RR.\]
        \begin{enumerate}[label=\rm(\alph{*}), ref=\rm\alph{*}] \itemsep=2pt
        \item\label{rmk:spaceTimeCocycle.coc}  $B$ satisfies the space-time cocycle property if
        for $\bbP$-almost every $\w$, all $x,y,z\in\ZZ^d$, and all $j,k,\ell\in\ZZ$ with $\langle y-x,k-j\rangle,\langle z-y,\ell-k\rangle\in\stRgroupA$, 
        \begin{align}\label{st-coc}
        B(x, j, y, k, \omega) + B(y, k, z, \ell, \omega) = B(x, j, z, \ell, \omega).
        \end{align}
        \item $B$ is $\T$-covariant if for $\bbP$-almost every $\w$, $B(x+v, j+\ell, y+v, k+\ell, \w) = B(x, j, y, k, \T_v \w)$ for all $j, k, \ell \in \ZZ$ and $x,y,v\in\ZZ^d$ 
        such that $\langle y-x,k-j\rangle\in\stRgroupA$.
        In particular, $B(x,j+\ell,y,k+\ell,\w)=B(x,j,y,k,\w)$, i.e., the cocycle depends on the time coordinates only through the size of the time increment. 
        Consequently, in the case of a space-time directed polymer, described in Example \ref{examples}\eqref{itm:directedPolymers}, one 
        can drop the time coordinates and write $B(x,y)$, since $(y-x)\cdot\uhat$ determines the time increment.
        \item\label{rmk:spaceTimeCocycle.rec} For $\beta \in (0,\infty]$, the $\beta$-recovery property is that for $\bbP$-almost every $\w$, we have for all $x \in \ZZ^d$ and $j \in \ZZ$,
        \begin{align}
				&\sum_{z \in \rangeA} p(z) \exp \left\{ -\beta B(x, j, x+z, j+1, \w) - \beta \pote(\T_x \w, z) \right\} = 1, \text{ if } \beta < \infty,\quad\text{and}\label{st-recpos}\\
				&\min_{z \in \rangeA} \left\{ B(x, j, x+z, j+1, \w) + \pote(\T_x \w, z) \right\} = 0, \text{ if } \beta = \infty.\label{st-rec0}
	\end{align}
        \item A space-time cocycle is said to be $L^1(\bbP)$ if $\bbE[|B(\orig,0, z,1)|] < \infty$ for all $z \in \rangeA$.  If in addition $\bbE[B(\orig,0, z, 1)] = 0$ for all $z \in \rangeA$, then $B$ is centered.  
        \item For a shift-covariant $L^1(\bbP)$ space-time cocycle $B$, the random vector $\mVec(B) \in \subspaceA \times \RR$ is the unique vector such that $\E[B(x, j, y, k) \given \invSig_\sA] = \mVec(B) \cdot \langle y-x, k-j \rangle$ for all $x,y\in\ZZ^d$ and $j,k\in\ZZ$ such that $\langle x,j\rangle,\langle y,k\rangle\in \stRgroupA$.\qedhere 
   \end{enumerate}
    \end{remark}
    We next note a property of space-time cocycles which explains their relationship to the cocycles one sees when working with unrestricted-length models and which will allow us to connect our results to some of the previous literature. In words, it says that a space-time cocycle can be decomposed into a cocycle in the sense of Definition \ref{def:cocycle} plus a shift-invariant linear function of the time increment.
    
    \begin{lemma}\label{lem:ResCoc}
    Fix a face $\sA \in \faces$ {\rm(}possibly $\cone$ itself\,{\rm)}.
    Let $\B{}$ be a space-time $T$-covariant cocylce as in Remark \ref{rmk:spaceTimeCocycle}. 
    Then there exists an $\invSig_\sA$-measurable random variable $c$ and a cocycle $\overline B$ {\rm(}as in Definition \ref{def:cocycle}{\rm)} such that $\P$-almost surely, 
    $B(x,j,y,k)=\overline B(x,y)+c(k-j)$, for all $(x,j,y,k)\in\ZZ^d\times\ZZ\times\ZZ^d\times\ZZ$ with 
    $\langle y-x,k-j\rangle\in\stRgroupA$.
\end{lemma}

\begin{proof}
Let $A$ denote the set of $m\in\Z$ such that $\langle \orig,m\rangle\in\stRgroupA$. Note that $A$ is an additive subgroup of $\Z$. Fix $m\in A$.
Take any $z\in\rangeA$. Then $\langle z,1\rangle\in\stRgroupA$ and $\langle z,m+1\rangle\in\stRgroupA$
and so $\P$-almost surely,
\begin{align*}
\B{}(\orig,0,\orig,m,\w)+B(\orig,0,z,1,\w)&=\B{}(\orig,0,z,m+1,\w)=\B{}(\orig,0,z,1,\w)+B(z,0,z,m,\w)\\
&=\B{}(\orig,0,z,1,\w)+B(\orig,0,\orig,m,T_z\w).
\end{align*}
This implies that $\P$-almost surely, $\B{}(\orig,0,\orig,m,\w)=\B{}(\orig,0,\orig,m,T_z\w)$ for all $z\in\rangeA$. 
Consequently, for any $m\in A$, $\B{}(\orig,0,\orig,m,\w)$ is $\invSig_\sA$-measurable. Call $c_m=\E[\B{}(\orig,0,\orig,m)\,|\,\invSig_\sA]$, which satisfies $c_m=\B{}(\orig,0,\orig,m,\w)$, $\P$-almost surely.

If $m,n\in A$, then $n+m\in A$ and the cocycle property of $\B{}$ implies $c_{m+n}=c_m+c_n$. This implies that there exists an  $\invSig_\sA$-measurable random variable $c$ such that $c_m=cm$ for all $m\in A$. Then we have $\P$-almost surely, for any $m\in A$, $\B{}(\orig,0,\orig,m)=cm$.


By the covariance of $B$, we have
\[\B{}(x,k,x,j,\w)=\B{}(\orig,0,\orig,j-k,T_x\w)=c(j-k),\]
for $\P$-almost every $\w$ and any $x\in\ZZ^d$ and $(j,k)\in\ZZ\times\ZZ$ with $k-j\in A$.

Now, take $(x,y,j,k,m,n)\in\ZZ^d\times\ZZ^d\times\ZZ\times\ZZ\times\ZZ\times\ZZ$ with 
$\langle y-x,k-j\rangle\in\stRgroupA$ and $\langle y-x,n-m\rangle\in\stRgroupA$. 
Then
\[B(x,j,y,k)-B(x,m,y,n)=B(x,0,y,k-j)-B(x,0,y,n-m)=B(y,n-m,y,k-j)=c\bigl((k-j)-(n-m)\bigr).\]
Thus, if $x,y\in\ZZ^d\times\ZZ^d$ are such that $y-x\in\RgroupA$, then
\[\overline B(x,y)=B(x,j,y,k)-c(k-j)\quad\text{for any }j,k\in\Z\times\Z\text{ such that }\langle y-x,k-j\rangle\in\stRgroupA\]
is well defined and we have
\[B(x,j,y,k)=\overline B(x,y)+c(k-j)\]
as desired. 
\end{proof}

We document next a property of the random vector $\mVec(B)$, which will be of use to us in what follows.

\begin{lemma}\label{lem:E[m(B)]}
    Fix a face $\sA\in\faces$ {\rm(}possibly $\cone$ itself\,{\rm)}. Let $\B{}$ be an $L^1(\bbP)$ $\T$-covariant $\beta$-recovering cocycle on $\sA$.   
    If $\bbE[\mVec(B)] \in \superDiffUnrA{\beta, \usc}(\xi)$ for some $\xi \in \ri \sA$, then $\mVec(B) \in \superDiffUnrA{\beta, \usc}(\xi)$, $\bbP$-almost surely.  If additionally $\bbE[\mVec(B)] \in \ext\superDiffUnrA{\beta, \usc}(\xi)$, then $\mVec(B) = \bbE[\mVec(B)]$, $\bbP$-almost surely. 
\end{lemma}
\begin{proof}
    Let $\xi \in \ri \sA$ be such that $\bbE[\mVec(B)] \in \superDiffUnrA{\beta, \usc}(\xi)$.  By Lemma \ref{lem:concaveDualHomogeneous}, this implies that $\ppShapeUnr{\beta}(\xi) = \ppShapeUnrA{\beta,\usc}(\xi) = \bbE[\mVec(B)]\cdot\xi$. By the variational formula \cite[Theorem 2.14]{Jan-Nur-Ras-22}, we have $\bbP$-almost surely, $\ppShapeUnr{\beta}(\zeta) \leq \mVec(B)\cdot\zeta$ for all $\zeta \in \sA$.  Thus, it must be that, $\bbP$-almost surely, 
    \[ 
    0 = \ppShapeUnr{\beta}(\xi) - \mVec(B)\cdot \xi \leq \sup_{\zeta \in \sA}(\ppShapeUnr{\beta}(\zeta) - \mVec(B)\cdot \zeta) \leq 0.
    \] 
    Then applying \cite[Theorem 4.21]{Ras-Sep-15-ldp}, we obtain that $\mVec(B) \in \superDiffUnrA{\beta, \usc}(\xi)$.  If additionally $\bbE[\mVec(B)]$ is an extreme point of $\superDiffUnrA{\beta, \usc}(\xi)$, then $\mVec(B) = \bbE[\mVec(B)]$ $\bbP$-almost surely by the definition of an extreme point.  
\end{proof}

    For a face $\sA$ of $\cone$ and a non-empty set $I \subset \ZZ^d$, define the set of points reachable from some point of $I$ taking steps in $\rangeA$,
    \begin{equation}
    I_{\sA}^{\ge} = \{y \in \ZZ^d : y-x \in \RsemiA \text{ for some } x \in I \}. \label{def:Igeq}
    \end{equation}
	Also, define the set of points unreachable from all points in $I$ with steps in $\rangeA$,
	\begin{equation}
	I_{\sA}^{<} = \ZZ^d \setminus I_{\sA}^{\ge} = \{y \in \ZZ^d : y-x \not\in \RsemiA \text{ for all } x \in I \}. \label{def:Ileq}
	\end{equation} 
	If the model is undirected on $\sA$, i.e.\ $\orig\in\ri\UsetA$, then $\RsemiA = \RgroupA$ so that $I_{\sA}^{\ge} = \bigcup_{x \in I} (x + \RgroupA)$. 

    Recall the countable (possibly finite or empty) set of inverse temperatures and extremal supergradients $\DctbA$ defined in the paragraph containing \eqref{mIsExtreme}.
	For each $\sA \in \faces$, let $\sE_\sA = \{ (\beta, m, x, z) : (\beta,m) \in \DctbA, x \in \ZZ^d, z \in \rangeA \}$ and let $\Omegahat = \Omega \times \prod_{\sA \in \faces} \RR^{\sE_\sA}$ be equipped with the product topology and its Borel $\sigma$-algebra, $\Sighat$. 
	For $\omegahat \in \Omegahat$, let $\wProj$ be the projection to its $\Omega$ coordinate $\w$. We will sometimes write this as $\w(\omegahat)$. For  each $\sA \in \faces$ and $(\beta,\m,x,z)\in \sE_{\sA}$ let $\B{\sA,\beta, \m}(x, x+z, \omegahat)$ be the $(\sA,\beta, \m,  x, z)$-coordinate of $\omegahat$.  
Let $\That=\bigl\{\That_{v}: v \in \ZZ^d \bigr\}$ be the $\Sighat$-measurable group of transformations 
that map \[
	\left(\omega,\left\{b_{\sA, \beta, \m, x, z } : \sA \in \faces, (\beta, \m, x, z) \in \sE_{\sA} \right\}\right)
	\] to \[
	\left(\T_v \omega, \left\{b_{\sA, \beta, \m, x+v, z} : \sA \in \faces, (\beta, \m, x, z) \in \sE_{\sA}\right\}\right).
	\] 
As such, we have 	
$\pi_\Omega(\That_v\omegahat)=T_v\wProj$ and for all $\sA \in \faces$ and $(\beta,\m,x,z)\in \sE_\sA$, $\B{\sA,\beta, \m}(x, x+z,\That_v\omegahat)=\B{\sA,\beta,\m}(x+v,x+v+z,\omegahat)$.   Note that $\Omegahat$ satisfies the hypotheses on $\Omega$ in Section \ref{sec:rwrp}.


	\begin{theorem}\label{thm:Cocycles}
  For each face $\sA \in \faces$ {\rm(}possibly $\cone$ itself\,{\rm)} where $\DctbA \neq \varnothing$,   
	    assume that $\pote$ satisfies Conditions \ref{VCondition} and \ref{classLCondition} on $\sA$.
     There exists a $\That$-invariant probability measure $\bbPhat$ on $(\Omegahat, \Sighat)$ and a real-valued measurable function $\B{\sA, \beta, \m}(x, y, \omegahat)$ of $\{ (\sA, \beta, \m, x, y, \omegahat) : \sA \in \faces, (\beta, m) \in \DctbA, x \in \ZZ^d, y \in \ZZ^d, y-x \in \RgroupA, \omegahat \in \Omegahat \}$ such that the following hold. 
		\begin{enumerate} [label=\rm(\alph{*}), ref=\rm\alph{*}] \itemsep=2pt  
		\item\label{Cocycles.a} For any event $A \in \Sig$, $\bbPhat(\wProj \in A) = \bbP(A)$.
		
		\item\label{Cocycles.c} There exists a $\That$-invariant event $\OmegaCoc$ with $\bbPhat(\OmegaCoc) = 1$ such that for each face $\sA\in \faces$, $\omegahat \in \OmegaCoc$, $u, v, x,y \in \ZZ^d$ such that $x-u \in \RgroupA$, $y-x \in \RgroupA$, and $(\beta,\m)\in\DctbA$, 
		\begin{align}
			&\B{\sA, \beta, \m}(x+v, y+v, \omegahat) = \B{\sA, \beta, \m}(x, y, \That_v \omegahat)\label{Bcov} \\
			&\B{\sA, \beta, \m}(u, x, \omegahat) + \B{\sA, \beta, \m}(x, y, \omegahat) = \B{\sA, \beta, \m}(u, y, \omegahat)\label{Bcoc} \\
			&\sum_{z \in \rangeA} p(z) \exp \Bigl\{ -\beta \B{\sA, \beta, \m}(x, x+z, \omegahat) - \beta \pote(\omega(\That_x \omegahat), z) \Bigr\} = 1, \text{ if } \beta < \infty \label{Brecbeta}\\
			&\min_{z \in \rangeA} \Bigl\{ \B{\sA, \beta, \m}(x, x+z, \omegahat) + \pote(\omega(\That_x \omegahat), z) \Bigr\} = 0, \text{ if } \beta = \infty.\label{Brecinf}
		\end{align}
		
		\item\label{Cocycles.b}  For any face $\sA\in \faces$, $(\beta, \m)\in\DctbA$, and $x,y\in\ZZ^d$ such that $y-x \in \RgroupA$, $\B{\sA, \beta, \m}(x, y)$ is integrable under $\bbPhat$, 
		\begin{equation}
			\bbEhat[\B{\sA, \beta, \m}(x, y)] = \m \cdot (y-x), 
		\end{equation}
        and $\bbEhat[\mVec(\B{\sA, \beta,\m})] = m$.
        
        \item\label{Cocycles.d} Fix a face $\sA\in\faces$. If the random variables $\{(\pote(T_v\w,z))_{z\in\rangeA}:v\in\ZZ^d\}$ are independent under $\bbP$, then  for any non-empty set $I \subset \ZZ^d$, the variables 
		\begin{equation}\label{Ivars}
		\bigl\{ \B{\sA, \beta, \m}(x, y, \omegahat) : x \in I, y-x \in \RsemiA, (\beta, m) \in \DctbA \bigr\}
		\end{equation}
        are independent, under $\bbPhat$, of the variables $\{\pote(T_v\w,z):z\in\rangeA, v \in I_{\sA}^{<}\}$. 		
		\end{enumerate}
	\end{theorem}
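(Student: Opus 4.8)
The construction is by C\'esaro averaging of approximate Busemann functions, carried out simultaneously over all faces $\sA\in\faces$ with $\DctbA\neq\varnothing$ and all $(\beta,\m)\in\DctbA$. Fix such $\sA$, $(\beta,\m)$, and $\xi\in(\ri\sA)\setminus\{\orig\}$ with $\m\in\subspaceA\cap\superDiffUnrA{\beta,\usc}(\xi)$; by positive $1$-homogeneity and Fenchel--Young (Lemma \ref{lem:concaveDualHomogeneous}), $\ppShapeUnrA{\beta,\usc}(\xi)=\m\cdot\xi$ and $\ppShapeUnrA{\beta,\usc}(\zeta)\le\m\cdot\zeta$ for all $\zeta\in\sA$. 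Pick $\nu$ in the relative interior of the dual cone of $\sA$ (transverse to $\sA$, modulo its lineality), and for $\beta<\infty$ set
\[
\Phi_N(x,\w)=\frac1\beta\log\sum_{\substack{y:\,y-x\in\RsemiA\\ \lfloor\nu\cdot y\rfloor=N}}e^{-\beta\,\m\cdot(y-x)}\,\prtunr{x}{y}{\beta}(\w),
\]
a finite quantity under Condition \ref{VCondition} (finitely many terms, each $\prtunr{x}{y}{\beta}<\infty$; adding the linear field $\m\cdot(\cdot)$ to $\pote$ leaves loop contributions unchanged). Put $B^{(N)}_{\sA,\beta,\m}(x,y,\w)=\Phi_N(x,\w)-\Phi_N(y,\w)+\m\cdot(y-x)$ for $y-x\in\RgroupA$. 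At $\beta=\infty$ replace the sum by a supremum (a tilted point-to-hyperplane passage time, in the spirit of \cite{Dam-Han-14}); in the purely directed subcase where every such hyperplane degenerates to the zero hyperplane, replace $\Phi_N$ by the $n$-step point-to-level free energy with external field $\m$, following \cite{Jan-Ras-20-aop}. From the one-step recursion $\prtunr{x}{y}{\beta}=\sum_{z\in\rangeA}p(z)e^{-\beta\pote(\T_x\w,z)}\prtunr{x+z}{y}{\beta}$ ($x\neq y$) one checks that these $B^{(N)}_{\sA,\beta,\m}$ satisfy the cocycle identity \eqref{Bcoc} and covariance \eqref{Bcov} exactly, and the recovery identity \eqref{Brecbeta} (resp.\ \eqref{Brecinf}) at every $x$ not on the level-$N$ hyperplane.

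Next I would establish tightness on $\Omegahat$ of the laws of $\bigl(\w,(B^{(N)}_{\sA,\beta,\m}(x,x+z))_{\sA,(\beta,\m),x,z}\bigr)$. Uniform bounds on $B^{(N)}_{\sA,\beta,\m}(x,x+z)$ are required: the lower bound $B^{(N)}_{\sA,\beta,\m}(x,x+z)\ge\beta^{-1}\log p(z)-\pote^+(\T_x\w,z)$ is immediate from a one-path comparison and $\pote^+\in L^1(\bbP)$, while the matching upper bound is elementary only when full loops are available on $\sA$ (the FPP-like case) and is otherwise the crux (see below). Given tightness, form the C\'esaro averages $M^{-1}\sum_{N=1}^M\bigl(\text{law of }(\w,(B^{(N)}))\bigr)$ and extract a weak subsequential limit $\bbPhat$, writing $\B{\sA,\beta,\m}(x,y,\omegahat)$ for the induced coordinate variables on $\Omegahat$. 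Averaging over $N$ is exactly what makes $\bbPhat$ $\That$-invariant, since a lattice shift moves the level $N$ by only a bounded amount. The $\w$-marginal of $\bbPhat$ is $\bbP$ because each prelimit law has $\w$-marginal $\bbP$. The cocycle and covariance identities are closed conditions, so they pass to the weak limit on a $\That$-invariant full-measure event $\OmegaCoc$; the recovery identities \eqref{Brecbeta}--\eqref{Brecinf} pass to the limit because the set of $x$ at which they can fail in the prelimit (those on the level-$N$ hyperplane) has asymptotic density zero, hence carries zero mass under the C\'esaro limit.

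For the mean identities, $N^{-1}\bbE[\Phi_N(\orig)]\to\sup_{\eta\in\sA,\,\nu\cdot\eta=1}\bigl(\ppShapeUnrA{\beta,\usc}(\eta)-\m\cdot\eta\bigr)$ by a point-to-hyperplane shape theorem derived from Theorem \ref{Thm:ShapeUnr}, and this supremum is $0$: it is $\le0$ since $\ppShapeUnrA{\beta,\usc}\le\m\cdot(\cdot)$ on $\sA$, and it is attained at $\eta=\xi/(\nu\cdot\xi)$, which lies on the facet $\facetUnr{\m,\sA}$ with $\nu\cdot\xi>0$. Writing $\bbE[\Phi_N(z)]=\bbE[\Phi_{N-\lfloor\nu\cdot z\rfloor}(\orig)]$ by covariance and stationarity, the C\'esaro-averaged prelimit means of $B^{(N)}_{\sA,\beta,\m}(\orig,z)$ therefore converge to $\m\cdot z$; uniform integrability of $\{B^{(N)}_{\sA,\beta,\m}(\orig,z)\}_N$ upgrades this to $\bbEhat[\B{\sA,\beta,\m}(\orig,z)]=\m\cdot z$ for every $z\in\rangeA$, and since $\rangeA$ spans $\subspaceA$ and $\bbEhat[\mVec(\B{\sA,\beta,\m})]\in\subspaceA$, this yields $\bbEhat[\mVec(\B{\sA,\beta,\m})]=\m$ and $\bbEhat[\B{\sA,\beta,\m}(x,y)]=\m\cdot(y-x)$. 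For the independence statement, in the prelimit $\Phi_N(x,\w)$ depends only on $\{\pote(\T_v\w,z):z\in\rangeA,\ v-x\in\RsemiA\}$; hence for $x\in I$ and $y-x\in\RsemiA$ both $\Phi_N(x)$ and $\Phi_N(y)$, and thus $B^{(N)}_{\sA,\beta,\m}(x,y)$, are measurable with respect to $\sigma\bigl(\pote(\T_v\w,z):z\in\rangeA,\ v\in I_{\sA}^{\ge}\bigr)$, which under the independence hypothesis is independent of $\sigma\bigl(\pote(\T_v\w,z):v\in I_{\sA}^{<}\bigr)$; independence of joint laws persists under the C\'esaro average and the weak limit.

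The main obstacle is the uniform-integrability step above (equivalently, the uniform upper bound on $B^{(N)}_{\sA,\beta,\m}(x,x+z)$) in the \emph{directed} regime, where loops on $\sA$ are forbidden and only a one-sided bound is elementary; closing the gap requires the variational representation of $\ppShapeUnrA{\beta}$ from \cite{Jan-Nur-Ras-22}, playing the role played by \cite{Geo-Ras-Sep-16} in \cite{Jan-Ras-20-aop}. Secondary technical points are: verifying carefully that the prelimit increments are genuine cocycles on $\RgroupA$ and giving a clean definition of the transverse hyperplanes (handling the lineality of $\sA$, the sets $\rangeA^{\id}$, and a random offset in the level to absorb the rounding of $\lfloor\nu\cdot y\rfloor$); treating the degenerate purely-directed and fully-undirected subcases; and carrying out the entire construction on a single probability space $\Omegahat$ for all $\sA\in\faces$ and $(\beta,\m)\in\DctbA$ at once.
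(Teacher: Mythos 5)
Your construction is essentially the one in the paper: a tilted point-to-hyperplane free energy whose increments give prelimit recovering cocycles, followed by level-averaging to restore shift-invariance (you use a C\'esaro average over integer levels $N$; the paper uses a uniform random level $U_n\in[0,n]$; these are interchangeable, and your tilt $-\m$ matches the paper's $h=\uhat-\m$ up to an additive constant on the slab). There are, however, two issues.

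First, a concrete gap: when $\orig\in\UsetA$ (the undirected, FPP-like case), the sum defining $\Phi_N(x)$ runs over an \emph{unbounded} slice of the hyperplane $\{\lfloor\nu\cdot y\rfloor=N\}$ inside $\sA$, not finitely many terms as your parenthetical asserts; under Condition \ref{VCondition}\eqref{condBddBelow0} the factor $e^{-\beta\m\cdot(y-x)}$ is essentially constant on this slice (since $\m$ is parallel to the slab normal $\uhat=\m/\ppShapeUnr{\beta}(\xi)$ in that case), so the series has no reason to converge even though each summand is $\le1$. The paper fixes exactly this by inserting a damping factor $e^{-\e|v|_1}$ into $\prtlevn{x}{t}{\sA,\beta,\e}$ whenever Condition \ref{VCondition}\eqref{condBddBelow0} holds, sending $\e\to0$ only after $n\to\infty$; without this, your $\Phi_N$ can be $+\infty$ and $B^{(N)}$ undefined. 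Second, a misdescription of how the mean is pinned down: you expect uniform integrability of $\{B^{(N)}(\orig,z)\}_N$ to convert C\'esaro mean convergence into $\bbEhat[\B{\sA,\beta,\m}(\orig,z)]=\m\cdot z$, citing the variational representation from \cite{Jan-Nur-Ras-22} as what closes that gap. But the paper never establishes uniform integrability of the prelimit increments --- only an $L^1$ bound, which is strictly weaker --- and accordingly Fatou gives only the one-sided estimate $\bbEhat[\B{\sA,\beta,\m}(\orig,z)]\le\m\cdot z$, as in \eqref{FatouUnr}. The matching bound comes from a different mechanism: the variational formula \cite[(2.8)]{Jan-Nur-Ras-22} yields the pointwise a.s.\ inequality $\mVec(\B{\sA,\beta,\m})\cdot\xi\ge\ppShapeUnr{\beta}(\xi)=\m\cdot\xi$ on the \emph{limit} cocycle, which combined with the Fatou bound forces $\mVec(\B{\sA,\beta,\m})\cdot\xi=\m\cdot\xi$ a.s.; one then writes $\xi=\sum_{z\in\rangeA}a_z z$ with $a_z>0$ and uses the per-coordinate one-sided inequality to conclude $\bbEhat[\mVec(\B{\sA,\beta,\m})]=\m$. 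You have the right ingredient but describe the wrong mechanism, which as stated leaves a hole. Aside from these two points, your treatment of covariance, the cocycle identity, recovery, tightness, and the independence statement matches the paper's.
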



    \begin{remark}[Restricted-length space-time cocycles]\label{rmk:obtainingSTCocycles}
    The above can be applied to  restricted-length polymers as follows.
  %
    Let $\Uset'$ be a face of $\Uset$ with $\rangeA \neq \rangeA^{\id}$, where $\sA$ is the cone generated by $\Uset'$. Let $\beta \in (0,\infty]$. 
     Write the restricted-length model in terms of an unrestricted-length model as described in Remark \ref{rk:resAsUnr1}. 
    If $\pote$ satisfies Conditions \ref{VConditionRes} and \ref{classLCondition}, it satisfies the conditions of Theorem \ref{Thm:ShapeRes}, and $\overline\pote$ satisfies Conditions \ref{VCondition} and \ref{classLCondition} on $\overline\sA$, and hence it satisfies the conditions of Theorem \ref{Thm:ShapeUnr}. As explained in Remark \ref{rk:resAsUnr2}, with $\bbP$-probability one, $\ppShapeResU{\beta,\usc}$ is finite on $\Uset'$, $\overline\Lambda^{\beta,\usc}_{\overline\sA}$ is finite on $\overline\sA$, and $\overline\Lambda^{\beta,\usc}_{\overline\sA}(\langle \zeta,t\rangle)=t\ppShapeResU{\beta,\usc}(\zeta/t)$ for all $\zeta\in\sA$ and $t>0$ with $\zeta/t\in\sU'$. Therefore, if $\ppShapeResU{\beta,\usc}$ is $\bbP$-almost surely deterministic on $\Uset'$, then $\overline\Lambda^{\beta,\usc}_{\overline\sA}$ 
    is $\bbP$-almost surely deterministic on $\overline\sA$. Observe next that if $\xi\in\ri\Uset'$, then $\overline\xi=\langle\xi,1\rangle\in(\ri\overline\sA)\setminus\{\langle\orig,0\rangle\}$. 
    Take $\overline\m\in\overline W_{\overline\sA}\cap\partial\overline\Lambda^{\beta,\usc}_{\overline\sA}(\langle\xi,1\rangle)$ for some $\xi\in\ri\Uset$. (This set is non-empty, bounded, and has at least one extreme point, as explained at the beginning of Section \ref{sec:main}.)
  Now all the hypotheses of Theorem \ref{thm:Cocycles} are satisfied for the unrestricted-length model, and the theorem produces the cocycles $\overline{B}^{\overline \sA, \beta, \overline\m}(\langle x,k \rangle, \langle y, \ell \rangle, \omegahat)$, $\langle y-x,k-\ell\rangle\in\overline\sG_{\overline\sA}$.  
  Then, for the original restricted-length model, we define the space-time cocycles (see Remark \ref{rmk:spaceTimeCocycle}) by setting
    	\[
    	\B{\Uset', \beta, \overline\m}(x, k, y, \ell, \omegahat) = \overline{B}^{\overline \sA, \beta, \overline\m}(\langle x,k \rangle, \langle y, \ell \rangle, \omegahat).
    	\]
    In particular, if we write $\overline\m=\langle\m,c\rangle$, then 
        \[\bbEhat[\B{\Uset', \beta, \overline\m}(x,k ,y,\ell)] = 
    	\bbEhat[\overline{B}^{\overline \sA, \beta, \overline\m}(\langle x,k \rangle, \langle y, \ell \rangle)]
    	=\overline\m \cdot \langle y-x,\ell-k\rangle=\m\cdot(y-x)+c(\ell-k).\]
    A direct computation given in Appendix \ref{app:conv} shows that
    \begin{align}\label{comp:c}
    c=\ppShapeResU{\beta,\usc}(\xi) - \m\cdot\xi;
    \end{align}
        moreover, $\m$ is in the superdifferential at $\xi$ of the concave function that is equal to $\Lambda^{\beta,\usc}_{\sA}$ on $\Uset'$ and is set to $-\infty$ outside $\Uset'$.  When $\invSig_\sA$ is trivial, this is also the value of $c$ from Lemma \ref{lem:ResCoc}.
    \end{remark}
    
    \begin{remark}[RWRE on $\bbZ$]\label{ex2:nRWRE} Consider the models in Figure \ref{fig:RWRE}. As noted in Remark \ref{rk:RWREDA}, in the recurrent case, there are no points in $\DctbC$ with $\beta=1$ that can serve as input for Theorem \ref{thm:Cocycles}. In contrast, in the transient cases, there exists a unique point $(1,\m)$, where $\m=-\ppShapeUnr{1}(-1)$, that can belong to $\DctbC$. Using this as input in Theorem \ref{thm:Cocycles} yields a cocycle $\B{\cone,1,\m}$, which we will denote more concisely as ${\overline B}^{\m}$.

    Next, consider the restricted-length version of the RWRE, reformulated as an unrestricted-length model per Remark \ref{rmk:obtainingSTCocycles}. Here, the set of admissible steps becomes $\overline\range=\{\langle1,1\rangle,\langle-1,1\rangle\}$, and $\langle0,0\rangle\not\in\overline\Uset$. Consequently, if we take the full cone $\overline\cone\subset\R^2$ generated by $\overline\range$, there are no restrictions on the choice of $\xi$ in defining $\DctbCbar$.

    Remark \ref{rmk:obtainingSTCocycles} explains how the superdifferential of the limiting free energy for this unrestricted-length model connects back to the superdifferential of 
$\ppShapeRes{1}$, which is either the single point 
$\frac{d}{dt}\ppShapeRes1(t)$ for $t\in(-1,1)\setminus\{0\}$, or an interval $[0,\frac{d}{dt}\ppShapeRes{1}(0-)]$ when $t=0$. See Figure \ref{fig:RWRE}.

Using this setup, Theorem \ref{thm:Cocycles} produces space-time cocycles (as described in Remark \ref{rmk:obtainingSTCocycles}) for a countable collection of $\m\in\partial\ppShapeRes{1}(t)$ with $t\in(-1,1)$. By Lemma \ref{lem:ResCoc}, these cocycles take the form
\[\B{\m}(x,j,y,k)=\overline{B}^{\m}(x,y)+c(k-j),\]
where 
$\overline{B}^{\m}$ is a cocycle and $c$ is a deterministic constant.

The corresponding Gibbs measure, constructed via Theorem \ref{thm:CocycleMeasuresFace}, is the Doob 
$h$-transform of the RWRE, with the one-step transition probability kernel
\[\pi^\m_z=\pi_z e^{\overline{B}^\m(\orig,z)+c}.\] 
These measures were previously constructed and studied in \cite{Yil-09-cpam} as the minimizers of the level-3 to level-1 contraction principle for quenched large deviations in a more general one-dimensional RWRE model. See, in particular, Lemma 1.6, Definition 5.6, and Theorem 5.17 of that paper. Our constant $c$ corresponds to the parameter $r$ in their notation. 
    
    $\B{\m}$ is a genuine cocycle if and only if  $c=0$, in which case $\B{\m}=\overline{B}^\m$. In this case,  (5.15) and (5.17) in \cite{Yil-09-cpam} imply that this cocycle coincides with the one obtained in the first paragraph of this remark, when applying Theorem \ref{thm:Cocycles} to the unrestricted-length RWRE. Furthermore, by \eqref{comp:c}, $c=0$ implies $\ppShapeRes{1}(t)=\m t$ and, by the observation following \eqref{comp:c}, $\m$ is also in the superdifferential of $\ppShapeUnr{1}$. This forces $\m\in\{\ppShapeUnr{1}(-1),0\}$. Thus, in the transient nestling cases, the points $t\in(-1,0)$ for which $c=0$ correspond precisely to the points in the interval left of $0$ where $\ppShapeRes{1}$ is linear (see Figures \ref{fig:RWREtv0} and \ref{fig:RWREmnest}). In the non-nestling case, there exists a unique such $t$, which is the sole point in $(-1,0)$ where $\ppShapeRes{1}$ and $\ppShapeUnr{1}$ intersect. See Figure \ref{fig:RWREnnest}.
    
    We conclude this remark with a high-level explanation of the significance of the unique intersection point mentioned at the end of the previous paragraph. Let $t_0\in(-1,0)$ denote this point. By the maximum entropy principle (see \cite[Section 5.3]{Ras-Sep-15-ldp} for a similar application), conditioning the RWRE on $\{\tau_{-n}<\infty\}$ and taking $n\to\infty$ causes the path measure to converge almost surely to the Doob $h$-transformed RWRE tilted by the cocycle $\overline{B}^{\m}$, with $\m=-\ppShapeUnr{1}(-1)$. But since $\overline{B}^{\m}=\B{\m}$, 
    Lemma 1.6 and Theorem 5.17 in \cite{Yil-09-cpam} along with another application of the maximum entropy principle imply that this tilted RWRE corresponds exactly to conditioning the original RWRE on having the asymptotic velocity $t_0$. Succinctly, if we condition the walk on the (zero probability) event $\{X_n \to -\infty\}$, it will proceed ballistically with velocity $t_0$.  
    \end{remark}

    \begin{remark}
	Our construction shows that the claim in Theorem \ref{thm:Cocycles}\eqref{Cocycles.d} continues to hold if independence is replaced by mixing. That is, if  $\sigma\{(\pote(T_v\w,z))_{z\in\rangeA}:v\in A\}$ and $\sigma\{(\pote(T_v\w,z))_{z\in\rangeA}:v\in A'\}$ mix under $\bbP$ at rate $r_k$, when $A,A'\subset\ZZ^d$ are separated by distance $k$, then $\sigma\bigl\{ \B{\sA, \beta, \m}(x, y, \omegahat) : x \in I, y-x \in \RsemiA, (\beta, m) \in \DctbA \bigr\}$ and $\sigma\{\pote(T_v\w,z):z\in\rangeA, v \in I'\}$ mix at the same rate $r_k$ if $I\subset\ZZ^d$ and $I'\subset I_{\sA}^{<}$ are separated by distance $k$. See \cite{Bra-05} for a definition and basic properties of the strong mixing rate between $\sigma$-algebras.
	\end{remark}

    We prove the theorem towards the end of the section. 
    For a high-level summary of the ideas, see Section \ref{sec:MRW}  in the introduction.
    We begin by defining a few objects and proving a few bounds. Assume the hypotheses of the above theorem for the rest of this section.
    
    For a face $\sA\in \faces$ and a pair $(\beta, \m) \in \DctbA$, $\m \in \subspaceA \cap \superDiffUnrA{\beta,\usc}(\xi)$ for some $\xi \in \ri \sA \setminus \{\orig\}$.  
    Define $\uhat, h \in \RR^d$ 
    in the following way:
    
    If either Condition \ref{VCondition}\eqref{condBddBelowByC} or \ref{VCondition}\eqref{condProdMeasure} holds, then $\orig \not\in\UsetA$.  \cite[Lemma A.1]{Jan-Nur-Ras-22} implies that there exists a vector $u$ such that $u \cdot z > 0$ for all $z \in \rangeA$.  Then $u \cdot \xi > 0$.  Define $\uhat = u / (u \cdot \xi)$ and $h = \uhat - m$. 


    If instead Condition \ref{VCondition}\eqref{condBddBelow0} holds, then $\orig \in \UsetA$ and by the definition of $\DctbA$ (above \eqref{mIsExtreme}), we have that $\ppShapeUnr{\beta}(\xi) \neq 0$ and hence $\ppShapeUnr{\beta}(\xi) < 0$.
    Also by Remark \ref{rmk:finiteLambda}, $\ppShapeUnr{\beta}(\xi) > -\infty$.  Define $\uhat = \frac{\m}{\ppShapeUnr{\beta}(\xi)}$ and $h = \frac{1-\ppShapeUnr{\beta}(\xi)}{\ppShapeUnr{\beta}(\xi)} \m$.  
    Note that $\uhat \cdot \xi = \frac{\m}{\ppShapeUnr{\beta}(\xi)} \cdot \xi = 1$, since $\m \cdot \xi = \ppShapeUnrA{\beta,\usc}(\xi) = \ppShapeUnr{\beta}(\xi)$ by Lemma \ref{lem:concaveDualHomogeneous}.  

   With the above definitions, we can now consider that throughout the proof, each $\sA\in \faces$ and  $(\beta, \m) \in \DctbA$ are accompanied by $\xi$, $\uhat$, and $h$ that satisfy these properties:
    \begin{equation}\label{eq:uhatHConditions}
    \begin{aligned}
        &\m = \uhat - h, \\
        &\uhat \cdot \xi = 1, \\
        &\uhat \cdot z > 0 \text{ for all } z \in \rangeA, \text{ if either Condition \ref{VCondition}\eqref{condBddBelowByC} or \ref{VCondition}\eqref{condProdMeasure} holds, and} \\
        &h = (1-\ppShapeUnr{\beta}(\xi)) \uhat, \text{ if Condition \ref{VCondition}\eqref{condBddBelow0} holds.}
    \end{aligned}
    \end{equation}
 
    Let 
    \begin{align}\label{R0}
    \maxz_0 = \sum_{z \in \range} \abs{z \cdot \uhat}
    \end{align}
    and take $R>\maxz_0$.  
    For $x \in \ZZ^d$ and $t \in \RR$, we introduce the following \textit{slab at $\uhat$-level t of width R},
    \[\Lt{\sA,x,t} = \Lt{\sA,x,t}(\maxz, \uhat) = \{v \in \ZZ^d : v-x \in \RgroupA \text{ and } t \leq v \cdot \uhat <  t + \maxz\}.\] 
    Note that if $y \in \ZZ^d$ with $y-x \in \RgroupA$, then $\Lt{\sA,x,t} = \Lt{\sA,y,t}$. 
    We abbreviate $\Lt{\sA,\orig,t} = \Lt{\sA,t}$.  For $\beta < \infty$, $x \in \ZZ^d$, $t>x \cdot \uhat$, and $\e \geq 0$ define the (regularized) tilted \emph{point-to-level partition function} 
	\[
    \prtlevn{x}{t}{\sA, \beta, \e}(\w)= \sum_{v\in\Lt{\sA, x,t}} \prtunr{x}{v}{\beta}(\w) e^{\beta h \cdot (v-x) -\e\abs{v}_1}.
	\]
    Above, the regularization by introducing $\e|v|_1$ in the exponent is there solely to ensure finiteness.

    Recall the convention that $\prtunr{x}{v}{\beta} = 0$ if there are no admissible paths from $x$ to $v$. Hence, the above sum is only over sites $v$ that are accessible from $x$, i.e., $v-x\in\RsemiA$. Since $\xi \cdot \uhat = 1$ there exists a $z\in\rangeA$ such that $z\cdot\uhat>0$. Since $z\cdot\uhat<\maxz$ and $x\cdot \uhat < t$, there is at least one $v \in \Lt{\sA, x,t}$ which is reachable from $x$.  
    The corresponding free energy is
	\[
	\freelevn{x}{t}{\sA, \beta, \e}(\omega) = \frac{1}{\beta} \log  \prtlevn{x}{t}{\sA, \beta, \e}(\w).
	\] 
    At zero temperature, define 
    \[
    \freelevn{x}{t}{\sA, \infty, 0}(\w) = \sup_{v\in\Lt{\sA, x,t}} (\freeunr{x}{v}{\infty}(\w) +  h\cdot (v-x)).
    \]
    Recall the convention that $\freeunr{x}{v}{\infty} = -\infty$ if there are no admissible paths from $x$ to $v$, meaning that $v$ is not considered in the supremum.  
 
	Condition \ref{VCondition} guarantees that $\freelevn{x}{t}{\sA, \beta,\e}(\w)$ is finite and integrable, which we now show. 
    \begin{lemma}\label{lem:FneFinite}
     If $\beta = \infty$ or Condition \ref{VCondition}\eqref{condBddBelowByC} or \eqref{condProdMeasure} is satisfied, take $\e = 0$.  If $\beta < \infty$ and Condition \ref{VCondition}\eqref{condBddBelow0} holds, take $\e > 0$.  Then, for each $\sA \in \faces$, $x \in \ZZ^d$, $\beta \in (0,\infty]$, and $t>x \cdot \uhat$, 
    \[
    \bbE[|\freelevn{x}{t}{\sA, \beta, \e}|] < \infty.
    \] 
    \end{lemma}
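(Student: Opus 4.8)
The plan is to sandwich $\freelevn{x}{t}{\sA,\beta,\e}$ between two integrable random variables, controlling its negative part by keeping a single admissible path and its positive part by using the hypotheses of Condition \ref{VCondition} together with the geometry of the slab $\Lt{\sA,x,t}$. Since $\xi\cdot\uhat=1$ and $t>x\cdot\uhat$, I would fix once and for all a point $v_0\in\Lt{\sA,x,t}$ with $v_0-x\in\RsemiA$ and a fixed admissible path $\gamma=(x=\gamma_0,\dots,\gamma_\ell=v_0)$ with steps in $\rangeA$, and set $P^+(\w)=\sum_{i=0}^{\ell-1}\pote^+(\T_{\gamma_i}\w,\gamma_{i+1}-\gamma_i)$; this is in $L^1(\bbP)$ since $\bbP$ is $\T$-invariant and $\pote^+(\cdot,z)\in L^1(\bbP)$ for each $z\in\rangeA$.

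\emph{Lower bound.} Keeping only the summand $v=v_0$ in $\prtlevn{x}{t}{\sA,\beta,\e}$ and, inside $\prtunr{x}{v_0}{\beta}$, only the reference trajectory that follows $\gamma$ for its first $\ell$ steps (an event of $\rwP_x$-probability $p_\gamma=\prod_{i=0}^{\ell-1}p(\gamma_{i+1}-\gamma_i)>0$), on which $\stoppt{v_0}\le\ell$ and $\sum_{k=0}^{\stoppt{v_0}-1}\pote(\T_{\gamma_k}\w,\gamma_{k+1}-\gamma_k)\le P^+(\w)$, gives $\prtunr{x}{v_0}{\beta}(\w)\ge p_\gamma e^{-\beta P^+(\w)}$ when $\beta<\infty$ and $\freeunr{x}{v_0}{\infty}(\w)\ge -P^+(\w)$ when $\beta=\infty$. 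In either case this produces
\[
\freelevn{x}{t}{\sA,\beta,\e}(\w)\ \ge\ -P^+(\w)+C_1
\]
for a deterministic constant $C_1$, hence $\bbE\bigl[(\freelevn{x}{t}{\sA,\beta,\e})^-\bigr]\le\bbE[P^+]+\abs{C_1}<\infty$.

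\emph{Upper bound.} I would split on which part of Condition \ref{VCondition} is in force. If Condition \ref{VCondition}\eqref{condBddBelow0} holds, then $\pote\ge0$ on $\rangeA$, so $\prtunr{x}{v}{\beta}(\w)\le\rwP_x(\stoppt{v}<\infty)\le1$ and $\freeunr{x}{v}{\infty}(\w)\le0$; since $h=(1-\ppShapeUnr{\beta}(\xi))\uhat$ with $\ppShapeUnr{\beta}(\xi)<0$ and $v\cdot\uhat<t+\maxz$ on $\Lt{\sA,x,t}$, the term $h\cdot(v-x)$ is bounded above there by a deterministic constant $C_2$. For $\beta<\infty$ (where $\e>0$) this yields $\prtlevn{x}{t}{\sA,\beta,\e}(\w)\le e^{\beta C_2}\sum_{v\in\ZZ^d}e^{-\e\abs{v}_1}<\infty$, and for $\beta=\infty$ it gives $\freelevn{x}{t}{\sA,\infty,0}\le C_2$; in both cases $\freelevn{x}{t}{\sA,\beta,\e}$ is bounded above by a deterministic constant. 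If instead Condition \ref{VCondition}\eqref{condBddBelowByC} or \eqref{condProdMeasure} holds, then $\orig\notin\UsetA$ and, by \eqref{eq:uhatHConditions}, $\uhat\cdot z>0$ for all $z\in\rangeA$, so $\uhat\cdot X_k$ increases strictly along admissible paths; hence any admissible path from $x$ remaining at $\uhat$-levels below $t+\maxz$ uses at most a deterministic number of steps and stays in a finite set $B=B(x,t,\sA)$, so $\Lt{\sA,x,t}\cap(x+\RsemiA)$ is finite, $h\cdot(v-x)$ is bounded on it, and along any such path $\sum_k\pote^-(\T_{X_k}\w,X_{k+1}-X_k)\le S(\w):=\sum_{y\in B}\sum_{z\in\rangeA}\pote^-(\T_y\w,z)$. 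Using $-\pote\le\pote^-$, $\prtunr{x}{v}{\beta}(\w)\le e^{\beta S(\w)}$ and $\freeunr{x}{v}{\infty}(\w)\le S(\w)$ uniformly over $v\in\Lt{\sA,x,t}\cap(x+\RsemiA)$, and summing the finitely many terms gives
\[
\freelevn{x}{t}{\sA,\beta,\e}(\w)\ \le\ S(\w)+C_3
\]
for a deterministic $C_3$. Since $\pote^-(\cdot,z)\le\abs{c}$ under \eqref{condBddBelowByC} and $\pote^-(\cdot,z)\in L^q(\bbP)\subset L^1(\bbP)$ under \eqref{condProdMeasure}, we get $S\in L^1(\bbP)$, so $\bbE\bigl[(\freelevn{x}{t}{\sA,\beta,\e})^+\bigr]<\infty$. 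Combining with the lower bound proves the lemma.

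\emph{Anticipated main obstacle.} Everything above is fairly mechanical except the upper bound under Condition \ref{VCondition}\eqref{condProdMeasure}, where $\pote$ has no uniform lower bound and $\prtlevn{x}{t}{\sA,\beta,\e}$ cannot be bounded deterministically: there one must genuinely use that the directed structure confines the relevant paths to a fixed finite set $B$, so that $e^{-\beta\sum\pote}\le e^{\beta S(\w)}$ with $S$ a finite sum of $L^1$ variables. The point to be careful about is that the number of steps, hence $\abs{B}$, is deterministic, which is precisely where the vector $u$ with $u\cdot z>0$ on $\rangeA$ (from \cite[Lemma A.1]{Jan-Nur-Ras-22}) enters; relatedly, the damping $\e>0$ is needed only in case \eqref{condBddBelow0}, where the slab $\Lt{\sA,x,t}$ can be infinite.
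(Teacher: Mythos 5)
Your proposal is correct and follows the paper's overall sandwich-and-case-analysis structure, but reorganizes both halves in a slightly cleaner way.

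For the lower bound, the paper bounds $\beta^{-1}\log\prtunr{x}{y}{\beta}e^{\beta h\cdot(y-x)-\e\abs{y}_1}$ below by citing \cite[Lemmas 3.11 and 3.12]{Jan-Nur-Ras-22}; you instead keep a single fixed admissible path $\gamma$ from $x$ to $v_0$ and bound $\prtunr{x}{v_0}{\beta}\ge p_\gamma e^{-\beta P^+(\w)}$ directly, which is self-contained and correct (the partial sum $\sum_{k<\stoppt{v_0}}\pote$ along $\gamma$ is indeed $\le P^+(\w)$ since each $\pote^+$ term is nonnegative, even if $\stoppt{v_0}<\ell$).

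For the upper bound, your treatment of Condition \ref{VCondition}\eqref{condBddBelow0} matches the paper's. For \eqref{condBddBelowByC} and \eqref{condProdMeasure}, the paper treats them separately — using the pointwise bound $\pote\ge c$ in case \eqref{condBddBelowByC}, and bounding $\prtunr{x}{v}{\beta}\le e^{\beta\freeunr{x}{v}{\infty}}$ and then $\freeunr{x}{v}{\infty}\le\max_k\freeres{x}{v}{k}{\infty}$ with $\bbE[\freeres{x}{v}{k}{\infty}]<\infty$ in case \eqref{condProdMeasure}. You unify the two by observing that, since $\uhat\cdot z\ge\delta>0$ on $\rangeA$ and hence the sites visited are distinct and confined to the deterministic finite set $B$, the total accumulated $\pote^-$ along any admissible path ending in $\Lt{\sA,x,t}$ is dominated by $S(\w)=\sum_{y\in B}\sum_{z\in\rangeA}\pote^-(\T_y\w,z)$, an $L^1$ random variable (deterministic under \eqref{condBddBelowByC}, genuinely random but integrable under \eqref{condProdMeasure}). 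This unification buys a shorter and more transparent argument at the cost of slightly less explicit constants; the paper's route through $\freeres{x}{v}{k}{\infty}$ reuses machinery set up elsewhere. Both establish the lemma, and your diagnosis of the essential point — the directed structure confines paths to a finite deterministic set, which is exactly why $\e>0$ is needed only in case \eqref{condBddBelow0} — is accurate.
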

    \begin{proof}     
    Let $\sA, x, \beta, \e,$ and $t$ be as in the statement.  Choose $y$ such that $y \in \Lt{\sA, x,t}$ and $y-x \in \RsemiA$.  The hypothesis on $t$ and the definition of $\maxz$ ensure that such a $y$ exists. For such $y$, let $x_{0:n}$ be an admissible path with $x_0=x$ and $x_n=y$.  Then for $\beta \in (0,\infty)$, we have
\begin{align*}
\freelevn{x}{t}{\sA, \beta, \e} &= \frac{1}{\beta} \log \sum_{v \in \Lt{\sA, x,t}} \prtunr{x}{v}{\beta} e^{\beta h \cdot (v-x) -\e \abs{v}_1} \geq \frac{1}{\beta} \log \Bigl(\prtunr{x}{y}{\beta} e^{\beta h \cdot (y-x) -\e \abs{y}_1}\Bigr), \text{ and } \\
\freelevn{x}{t}{\sA, \infty, 0} &\geq \freeunr{x}{y}{\infty} + h \cdot (y-x).
\end{align*}
Recall that $\freeunr{x}{y}{\infty}\ge\frac{1}{\beta} \log \prtunr{x}{y}{\beta}$ and then bound
\begin{align*}
\E\Bigl[\frac{1}{\beta} \log \prtunr{x}{y}{\beta}\Bigr] \geq \sum_{i=0}^{n-1}\left[\log p(x_{i+1}-x_i)-\E\left[\pote(T_{x_i}\w, x_{i+1}-x_{i})\right] \right]>-\infty.
\end{align*}

For the upper bound, first consider the case where Condition \ref{VCondition}\eqref{condBddBelow0} is satisfied so that $\pote(\w, z) \geq 0$ for all $z \in \rangeA$.  
Then for all $v$ such that $v-x\in \RsemiA$, $\prtunr{x}{v}{\beta} = \rwE_x\Bigl[ e^{-\beta \sum_{i=0}^{\stoppt{v}-1} \pote(\T_{X_i} \w, X_{i+1}-X_i)} \one_{\{\stoppt{v} < \infty\}} \Bigr] \leq 1$. Recall that $\ppShapeUnr{\beta}(\xi) < 0$ and $h = (1-\ppShapeUnr{\beta}(\xi)) \uhat$.  Then, with the convention that $C$ is a chameleon constant that may change line-to-line,
\begin{align}
\freelevn{x}{t}{\sA, \beta, \e} &= \frac{1}{\beta} \log \sum_{v \in \Lt{\sA, x,t}} \prtunr{x}{v}{\beta} e^{\beta h \cdot (v-x) -\e \abs{v}_1}
\leq \frac{1}{\beta} \log \sum_{v \in \Lt{\sA, x,t}} e^{\beta (1-\ppShapeUnr{\beta}(\xi))\uhat \cdot v - \beta h \cdot x -\e \abs{v}_1} \nonumber \\ 
&\leq \frac{1}{\beta} \log \sum_{v \in \Lt{\sA, x,t}} e^{\beta (1-\ppShapeUnr{\beta}(\xi))(t+\maxz) - \beta h \cdot x -\e \abs{v}_1}
= (1-\ppShapeUnr{\beta}(\xi))(t+\maxz) - h\cdot x + \frac{1}{\beta} \log \sum_{v \in \ZZ^d} e^{-\e \abs{v}_1} \nonumber \\
&\leq (1-\ppShapeUnr{\beta}(\xi))(t+\maxz) - h\cdot x + \frac{1}{\beta} \log\bigg( C \sum_{k=0}^\infty k^d e^{-\e k} \bigg)\nonumber \\
&\leq (1-\ppShapeUnr{\beta}(\xi))(t+\maxz) - h\cdot x + \frac{1}{\beta} \log\bigg( C\sum_{k=0}^\infty e^{-\e k/2}\bigg)\nonumber \\
&\le (1-\ppShapeUnr{\beta}(\xi))(t+\maxz) - h\cdot x + \beta^{-1}\log (C\e^{-1}), \label{eqn:L1UpperBoundCase1}
\end{align}
where the last bound applies for $\e$ sufficiently small.
At zero temperature, the equivalent bound is
\begin{align}
\freelevn{x}{t}{\sA, \infty, 0} &= \sup_{v \in \Lt{\sA, x,t}} (\freeunr{x}{v}{\infty}(\w)+h\cdot(v-x))  \leq \sup_{v \in \Lt{\sA, x,t}} ((1-\ppShapeUnr{\beta}(\xi))v\cdot\uhat - h\cdot x) \nonumber \\
&\leq (1-\ppShapeUnr{\beta}(\xi))(t+\maxz) - h\cdot x. \label{eqn:L1UpperBoundCase1Zero}
\end{align}

Next, assume that either Condition \ref{VCondition}\eqref{condBddBelowByC} or \eqref{condProdMeasure} holds. Then $\orig\not\in\UsetA$ and there exists a $\delta>0$ such that for all $z \in \rangeA$, $\uhat \cdot z \geq \delta > 0$.  For $v \in L_{\sA, x,t}$, all paths from $x$ to $v$ have length at least $(t-x\cdot\uhat) \maxz^{-1}$ since $\maxz > \sum_{z \in \rangeA} \abs{z \cdot \uhat}$.  Similarly all paths have length at most $(t + \maxz-x\cdot\uhat)\delta^{-1}$  since $z \cdot \uhat \geq \delta$ for all $z \in \rangeA$.  
Let $\ell = \ell(t) = \lfloor (t + \maxz-x\cdot\uhat)\delta^{-1}\rfloor$ be the maximum possible length.  Let $\tilde{L}_{\sA, x,t}$ be the finite set of $v \in \Lt{\sA, x,t}$ which are reachable from $x$.  The cardinality of $\tilde{L}_{\sA, x,t}$ is bounded above by $C\ell^d$ for some fixed constant $C > 0$ depending only on the dimension $d$ and $\rangeA$. 

Suppose Condition \ref{VCondition}\eqref{condBddBelowByC} holds
so that $\pote(\w,z) \geq c$ for some $c \in \R$ and all $z \in \rangeA$.
 Then,
\begin{align}
    \freelevn{x}{t}{\sA, \beta, 0} &= \frac{1}{\beta} \log \sum_{v \in \Lt{\sA, x,t}} \prtunr{x}{v}{\beta} e^{\beta h \cdot (v-x)}
    = \frac{1}{\beta} \log \sum_{v \in \Lt{\sA, x,t}} \rwE_x\Bigl[ e^{-\beta \sum_{i=0}^{\stoppt{v}-1} \pote(\T_{X_i}\w, X_{i+1}-X_i)} \one_{\{\stoppt{v}<\infty\}} \Bigr] e^{\beta h \cdot (v-x) } \nonumber\\
    &\leq \frac{1}{\beta} \log \sum_{v \in \Lt{\sA, x,t}} \rwE_x\Bigl[ e^{\beta \ell\abs{c} } \one_{\{\stoppt{v}<\infty\}} \Bigr] e^{\beta h \cdot (v-x) }
    \leq \frac{1}{\beta} \log \sum_{v \in \tilde{L}_{\sA, x,t}} e^{\beta \ell\abs{c} + \beta h \cdot (v-x) } \nonumber\\
    &\leq \ell\abs{c}  + \frac{1}{\beta} \log C\ell^d  + \max_{v \in \tilde{L}_{\sA, x,t}} h \cdot(v-x)\nonumber\\
    &\leq \ell \abs{c}  + \frac{1}{\beta}  \log C\ell^d + \ell d \max_{1\le i \le d} |h_i| \cdot \max_{\substack{z \in \rangeA \\ 1 \le i \le d }} |z_i|.  \label{eqn:L1upperBoundBddBelowC}
\end{align}
The last line follows since $v$ is at most $\ell$ steps from $x$.  At zero temperature, the equivalent bound is
\begin{align}
    \freelevn{x}{t}{\sA, \infty, 0} &= \sup_{v \in \Lt{\sA, x,t}} (\freeunr{x}{v}{\infty} + h\cdot(v-x)) \leq \sup_{v \in \tilde{L}_{\sA, x,t}}  (\ell \abs{c} + h\cdot(v-x)) \nonumber\\
    &\leq \ell\abs{c} + \ell d \max_{1\le i \le d} |h_i| \cdot \max_{\substack{z \in \rangeA \\ 1 \le i \le d }} |z_i|.\label{eqn:L1upperBoundBddBelowCZero}
\end{align}

Lastly, suppose Condition \ref{VCondition}\eqref{condProdMeasure} is satisfied.
Then in the positive temperature case,
\begin{align}
\freelevn{x}{t}{\sA, \beta, 0} &= \frac{1}{\beta} \log \sum_{v \in \Lt{\sA,x, t}} \prtunr{x}{v}{\beta} e^{\beta h \cdot v} \leq \frac{1}{\beta} \log \bigl(C \ell^{d} \max_{v \in \tilde{L}_{\sA, x,t}} \prtunr{x}{v}{\beta} e^{\beta h \cdot v})  
\leq  \max_{v \in \tilde{L}_{\sA, x,t}} \frac{1}{\beta} \log e^{\beta \freeunr{x}{v}{\infty}+\beta h \cdot v} + \frac{1}{\beta}\log C\ell^{d} \nonumber\\
&\leq \max_{(t-x\cdot\uhat) \maxz^{-1} \leq k \leq (t + \maxz-x\cdot\uhat)\delta^{-1} } \max_{v \in \tilde{L}_{\sA,x, t}} (\freeres{x}{v}{k}{\infty} +h\cdot v) + \frac{1}{\beta}\log C\ell^{d}. \label{eqn:eqn:L1upperBoundIID}
\end{align}
Similarly, in the zero temperature case,
\begin{align}
\freelevn{x}{t}{\sA, \infty, 0} 
\leq \max_{(t-x\cdot\uhat) \maxz^{-1} \leq k \leq (t + \maxz-x\cdot\uhat)\delta^{-1} } \max_{v \in \tilde{L}_{\sA,x, t}} (\freeres{x}{v}{k}{\infty} +h\cdot v).\label{eqn:eqn:L1upperBoundIID2}
\end{align}
The bound now follows since $\bbE[\freeres{x}{v}{k}{\infty}] < \infty$ for each of the finitely many $k$ and $v$ because $\pote^-(\w,z) \in L^1(\P)$ for each $z \in \rangeA$.
\end{proof}

For each $t >  \max\{x \cdot \uhat,(x+y) \cdot \uhat\}$ and $\e \geq 0$, the free energy satisfies an approximate shift-covariance property, which is exact if $\e = 0$.
 For $\beta \in (0,\infty)$, start by writing
    \begin{align}
        \freelevn{x+y}{t}{\sA, \beta, \e}(\w) &= \frac{1}{\beta} \log \sum_{v \in \Lt{\sA, x+y,t}} \prtunr{x+y}{v}{\beta}(\w) e^{\beta h \cdot (v-x-y) -\e \abs{v}_1} 
        = \frac{1}{\beta} \log \sum_{v \in \Lt{\sA, x+y,t}} \prtunr{x}{v-y}{\beta}(\T_y \w) e^{\beta h \cdot (v-x-y)-\e \abs{v}_1} \nonumber \\
        &= \frac{1}{\beta} \log \sum_{v \in \Lt{\sA, x,t-y\cdot \uhat}} \prtunr{x}{v}{\beta}(\T_y \w) e^{\beta h \cdot (v-x) -\e \abs{v+y}_1}. \label{eqn:approxShiftCovDeriv}
        \end{align}
    Using the bounds $-\varepsilon \abs{v}_1 - \varepsilon \abs{y}_1 \leq -\varepsilon \abs{v+y}_1 \leq -\varepsilon \abs{v}_1 + \varepsilon \abs{y}_1$, we see that
    \begin{equation}\label{eqn:approxShiftCov}
    \Bigl|\freelevn{x+y}{t}{\sA, \beta, \e}(\w) - \freelevn{x}{t-(y\cdot\uhat)}{\sA, \beta, \e}(\T_y \w)\Bigr| \leq \frac{ \e \abs{y}_1}{\beta}. 
    \end{equation}
    If $\beta=\infty$ and $\e=0$, this shift-covariance is exact:
    \begin{align}
    \freelevn{x+y}{t}{\sA, \infty, 0}(\w) &= \sup_{v \in \Lt{\sA, x+y,t}} (\freeunr{x}{v-y}{\infty}(\T_y\w) + h\cdot(v-x-y)) \nonumber \\
    &= \sup_{v \in \Lt{\sA, x,t-(y\cdot\uhat)}} (\freeunr{x}{v}{\infty}(\T_y\w) + h\cdot(v-x)) = \freelevn{x}{t-(y\cdot\uhat)}{\sA, \infty, 0}(\T_y \w).  \label{eqn:approxShiftCovZeroTemp}
    \end{align}

	For $\sA \in \faces$, $(\beta,\m) \in \DctbA, \e \geq 0, t\in\R, x \in \ZZ^d,$  and  $z \in \rangeA$,  define 
	\begin{align}
		\Bn{t,\e}{\sA, \beta, \m}(x, x+z) &= \freelevn{x}{t}{\sA, \beta, \e}-\freelevn{x+z}{t}{\sA, \beta, \e} - h \cdot z,
	\end{align}
	if $t>\max\bigl\{x \cdot \uhat,(x+z) \cdot \uhat : z \in \rangeA \bigr\}$ 
    and 
	$\Bn{t,\e}{\sA, \beta, \m}(x, x+z) = 0$ otherwise.  
	
	\begin{lemma}
	Let $\sA \in \faces$, $(\beta, \m) \in \DctbA$, and $\e \geq 0$. Then $\P$-almost surely for all $x\in\ZZ^d$ and $t > \max\bigl\{x \cdot \uhat,(x+z) \cdot \uhat : z \in \rangeA \bigr\}$,
	\begin{align}
		&\sum_{z \in \rangeA} p(z) \exp \left\{ -\beta \Bn{t, \e}{\sA, \beta, \m}(x, x+z) - \beta \pote(\T_x \omega, z) \right\} = 1\quad\text{if } \beta < \infty \text{ and}\label{unrPosPreLimRecovery}\\
		&\min_{z \in \rangeA} \left\{ \Bn{t, \e}{\sA, \infty, \m} (x, x+z) + \pote(\T_x \omega, z) \right\} = 0\quad\text{if } \beta = \infty.\label{unrZeroPreLimRecovery}
	\end{align}
	\end{lemma}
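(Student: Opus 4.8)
The plan is to reduce both \eqref{unrPosPreLimRecovery} and \eqref{unrZeroPreLimRecovery} to a one-step recursion for the point-to-level partition function (resp.\ point-to-level passage time), which itself follows from conditioning the reference walk on its first step (resp.\ the dynamic-programming recursion for passage times) combined with the slab-invariance identity $\Lt{\sA,x,t}=\Lt{\sA,x+z,t}$ valid for every $z\in\rangeA$. Fix $\sA$, $(\beta,\m)\in\DctbA$, $x\in\ZZ^d$, and $t>\max\bigl\{x\cdot\uhat,(x+z)\cdot\uhat:z\in\rangeA\bigr\}$, so that each $\Bn{t,\e}{\sA,\beta,\m}(x,x+z)$, $z\in\rangeA$, lies in the non-default branch of its definition and each $\freelevn{x+z}{t}{\sA,\beta,\e}$ is defined. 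Note that every $v\in\Lt{\sA,x,t}$ has $v\cdot\uhat\ge t>x\cdot\uhat$, hence $v\neq x$.

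For $\beta<\infty$: conditioning on the first step yields $\prtunr{x}{v}{\beta}(\w)=\sum_{z\in\rangeA}p(z)e^{-\beta\pote(\T_x\w,z)}\prtunr{x+z}{v}{\beta}(\w)$ for $v\neq x$, with the usual convention $\prtunr{x+z}{v}{\beta}=0$ when $v-(x+z)\notin\RsemiA$. Substituting this into the definition of $\prtlevn{x}{t}{\sA,\beta,\e}$, interchanging the double sum (finite, or absolutely convergent once the $\e$-damping is present under Condition \ref{VCondition}\eqref{condBddBelow0}), writing $e^{\beta h\cdot(v-x)}=e^{\beta h\cdot z}e^{\beta h\cdot(v-(x+z))}$, and using $\Lt{\sA,x,t}=\Lt{\sA,x+z,t}$ to recognize the inner sum as $\prtlevn{x+z}{t}{\sA,\beta,\e}(\w)$, I get
\[
\prtlevn{x}{t}{\sA,\beta,\e}(\w)=\sum_{z\in\rangeA}p(z)\,e^{-\beta\pote(\T_x\w,z)}\,e^{\beta h\cdot z}\,\prtlevn{x+z}{t}{\sA,\beta,\e}(\w).
\]
Dividing by $\prtlevn{x}{t}{\sA,\beta,\e}(\w)$ (positive, and finite by Lemma \ref{lem:FneFinite}) and using $\prtlevn{x+z}{t}{\sA,\beta,\e}/\prtlevn{x}{t}{\sA,\beta,\e}=e^{-\beta(\Bn{t,\e}{\sA,\beta,\m}(x,x+z)+h\cdot z)}$, the factors $e^{\beta h\cdot z}$ cancel and one is left with exactly \eqref{unrPosPreLimRecovery}.

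For $\beta=\infty$ (so $\e=0$): the corresponding input is the dynamic-programming identity $\freeunr{x}{v}{\infty}(\w)=\max_{z\in\rangeA}\bigl(-\pote(\T_x\w,z)+\freeunr{x+z}{v}{\infty}(\w)\bigr)$ for $v\neq x$, with $\freeunr{x+z}{v}{\infty}=-\infty$ when $v-(x+z)\notin\RsemiA$; ``$\le$'' comes from splitting a path at its first step and ``$\ge$'' from prepending a step, using finiteness of $\freeunr{\cdot}{\cdot}{\infty}$ under Condition \ref{VCondition}. Inserting this into the definition of $\freelevn{x}{t}{\sA,\infty,0}$, swapping $\sup_v$ with $\max_z$, and using $\Lt{\sA,x,t}=\Lt{\sA,x+z,t}$ gives $\freelevn{x}{t}{\sA,\infty,0}(\w)=\max_{z\in\rangeA}\bigl(-\pote(\T_x\w,z)+h\cdot z+\freelevn{x+z}{t}{\sA,\infty,0}(\w)\bigr)$, which rearranges to $0=-\min_{z\in\rangeA}\bigl(\pote(\T_x\w,z)+\Bn{t,0}{\sA,\infty,\m}(x,x+z)\bigr)$, i.e.\ \eqref{unrZeroPreLimRecovery}.

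The computation is essentially routine; the one point that needs care is the bookkeeping around conventions: ensuring that sites $v$ unreachable from $x$ or from $x+z$ contribute $0$ (resp.\ $-\infty$) consistently on both sides of the recursion, that the hypothesis on $t$ keeps every $\Bn{t,\e}{\sA,\beta,\m}(x,x+z)$ out of its default-$0$ branch, and—in the unbounded-slab situation under Condition \ref{VCondition}\eqref{condBddBelow0}, where $\e>0$ is forced—that the $\e$-damping makes the double sum absolutely convergent so that the interchange of summations is legitimate. All of this is supplied by Lemma \ref{lem:FneFinite} together with the identity $\Lt{\sA,x,t}=\Lt{\sA,x+z,t}$.
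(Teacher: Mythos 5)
Your proof is correct and takes essentially the same approach as the paper: both rest on the one-step decomposition of the point-to-level partition function (or passage time, at $\beta=\infty$) together with the slab invariance $\Lt{\sA,x,t}=\Lt{\sA,x+z,t}$. You package the computation as a recursion for $\prtlevn{x}{t}{\sA,\beta,\e}$ followed by division, whereas the paper starts from the recovery sum and recognizes $\prtlevn{x}{t}{\sA,\beta,\e}$ after expansion, but these are the same calculation read in opposite directions.
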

	\begin{proof}
            The desired equations essentially come from the one-step decomposition of $\prtlevn{x}{t}{\sA, \beta, \e}$. 
		Take $\sA,\beta, \m, \e, x,$ and $t$ as in the statement. Recall that $\Lt{\sA, x,t} = \Lt{\sA, x+z,t}$ if $z \in \rangeA$.  Then 
	\begin{align*}
		&\sum_{z \in \rangeA} p(z) \exp \bigl\{ -\beta \Bn{t, \e}{\sA, \beta, \m}(x, x+z) - \beta \pote(\T_x \omega, z) \bigr\} \\
		&= \sum_{z \in \rangeA} p(z) \exp \bigl\{ -\beta( \freelevn{x}{t}{\sA, \beta, \e}-\freelevn{x+z}{t}{\sA, \beta, \e} -  h\cdot z ) - \beta \pote(\T_x \omega, z)\bigr\} \\
		&= \sum_{z \in \rangeA} p(z) \exp \bigl\{  \log \prtlevn{x+z}{t}{\sA, \beta, \e} -\log \prtlevn{x}{t}{\sA, \beta, \e}  + \beta h \cdot z - \beta \pote(\T_x \omega, z) \bigr\} \\
		&= ( \prtlevn{x}{t}{\sA, \beta, \e} )^{-1} \sum_{z \in \rangeA} p(z) e^{ \beta h \cdot z - \beta \pote(\T_x \omega, z) } \prtlevn{x+z}{t}{\sA, \beta, \e} \\
		&= (\prtlevn{x}{t}{\sA, \beta, \e})^{-1} \sum_{z \in \rangeA} p(z) e^{\beta h \cdot z - \beta \pote(\T_x \omega, z)} \\
		&\qquad\qquad\qquad\qquad\qquad
		\times\sum_{v \in \Lt{\sA, x+z,t}} e^{\beta h \cdot (v-x-z) -\e\abs{v}_1} \rwE_{x+z} \Bigl[ \exp \Bigl\{ -\beta \sum_{i=0}^{\stoppt{v}-1} \pote(\T_{X_i} \omega, X_{i+1}-X_i) \Bigr\} \one_{\{\stoppt{v}<\infty\}} \Bigr]   \\
		&= (\prtlevn{x}{t}{\sA, \beta, \e})^{-1} \sum_{v \in \Lt{\sA, x,t}}  e^{\beta h \cdot (v-x) -\e \abs{v}_1} \sum_{z \in \rangeA} p(z)  \rwE_{x} \Bigl[ \exp \Bigl\{ -\beta \sum_{i=0}^{\stoppt{v}-1} \pote(\T_{X_i} \omega, X_{i+1}-X_i) \Bigr\} \one_{\{\stoppt{v}<\infty\}} \Given X_1 = x+z \Bigr]   \\
		&= (\prtlevn{x}{t}{\sA, \beta, \e})^{-1}  \sum_{v \in \Lt{\sA, x,t}}  e^{\beta h \cdot (v-x) -\e \abs{v}_1} \rwE_{x} \Bigl[ \exp \Bigl\{ -\beta \sum_{i=0}^{\stoppt{v}-1} \pote(\T_{X_i} \omega, X_{i+1}-X_i)  \Bigr\} \one_{\{\stoppt{v}<\infty\}} \Bigr]   \\
		&= (\prtlevn{x}{t}{\sA, \beta, \e})^{-1}  \prtlevn{x}{t}{\sA, \beta, \e} = 1.
	\end{align*}
	The fifth equality above comes from the Markov property. The recovery property \eqref{unrPosPreLimRecovery} is proved.  For the zero temperature case, write 
	\begin{align*}
		&\min_{z \in \rangeA} \Bigl\{ \Bn{t, \e}{\sA, \infty, \m}(x, x+z) + \pote(\T_x \omega, z) \Bigr\} \\
        &= \min_{z \in \rangeA} \Bigl\{ \freelevn{x}{t}{\sA, \infty, 0}  - \freelevn{x+z}{t}{\sA, \infty, 0} - h \cdot z + \pote(\T_x \omega, z) \Bigr\} \\
        &= \min_{z \in \rangeA} \Bigl\{ \freelevn{x}{t}{\sA, \infty, 0} - \sup_{v \in \Lt{\sA, x+z,t}} \sup_{k \geq 1} \sup_{\substack{x_{0:k} \in \PathsPtPKilledRes{x+z}{v}{k} }} \Bigl\{ -\sum_{i=0}^{k - 1} \pote(T_{x_i} \omega, x_{i+1}-x_i) + h\cdot (v-x-z) \Bigr\}  - h \cdot z + \pote(\T_x \omega, z) \Bigr\}    \\
		  &= \freelevn{x}{t}{\sA, \infty, 0} -  \max_{z \in \rangeA} \sup_{v \in \Lt{\sA, x,t}} \sup_{k \geq 2}\sup_{\substack{x_{0:k} \in \PathsPtPKilledRes{x}{v}{k} \\ x_1 = x+z }} \Bigl\{ -\sum_{i=0}^{k - 1} \pote(T_{x_i} \omega, x_{i+1}-x_i) + h \cdot (v-x) \Bigr\}     \\
		&= \freelevn{x}{t}{\sA, \infty, 0} -  \sup_{v \in \Lt{\sA, x,t}} \sup_{k \geq 1} \sup_{\substack{x_{0:k} \in \PathsPtPKilledRes{x}{v}{k} }} \Bigl\{ -\sum_{i=0}^{k - 1} \pote(T_{x_i} \omega, x_{i+1}-x_i)  + h \cdot (v-x) \Bigr\}    \\
		&= \freelevn{x}{t}{\sA, \infty, 0} - \freelevn{x}{t}{\sA, \infty, 0} = 0.
	\end{align*}
	The recovery property \eqref{unrZeroPreLimRecovery} is proved.
	\end{proof}

The next lemma gives some information about where the minimum in \eqref{unrZeroPreLimRecovery} is attained. 

\begin{lemma}\label{lm:rec-gen-t}
Let $\sA \in \faces$ and let $\m$ be such that $(\infty, \m) \in \DctbA$. 
Then $\P$-almost surely, for any finite set $A\subset\ZZ^d$ and any $t>\max\{x\cdot\uhat,(x+z)\cdot\uhat:x\in A,z\in\rangeA\}$,
there exists $y\in A$ and $z\in\rangeA$ such that $y+z\not\in A$ and
$\pote(\T_y\w,z)+\Bn{t,0}{\sA, \infty, \m}(y,y+z)=0$.
\end{lemma}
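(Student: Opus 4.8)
The plan is to work on the full‑probability event on which the pre‑limit recovery identity \eqref{unrZeroPreLimRecovery} holds for all $x$ and all admissible $t$; in fact, under Condition \ref{VCondition} the point‑to‑level free energies $\freelevn{x}{t}{\sA,\infty,0}$ are finite for every $\w$ by the (deterministic) bounds appearing in the proof of Lemma \ref{lem:FneFinite}, so the argument below is really deterministic. Fix $\sA\in\faces$ and $\m$ with $(\infty,\m)\in\DctbA$, together with the accompanying $\uhat,h$ from \eqref{eq:uhatHConditions}, and abbreviate $g(x)=\freelevn{x}{t}{\sA,\infty,0}$. Recall that $g(x)=\sup_{v\in\Lt{\sA,x,t}}\bigl(\freeunr{x}{v}{\infty}+h\cdot(v-x)\bigr)$, where $\freeunr{x}{v}{\infty}$ is itself a supremum over admissible paths; that $\Bn{t,0}{\sA,\infty,\m}(x,x+z)=g(x)-g(x+z)-h\cdot z$ for $z\in\rangeA$; and that, by \eqref{unrZeroPreLimRecovery}, $g(x)=\max_{z\in\rangeA}\bigl(g(x+z)+h\cdot z-\pote(\T_x\w,z)\bigr)$ whenever $t>\max\{x\cdot\uhat,(x+z)\cdot\uhat:z\in\rangeA\}$. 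Fix $A$ and $t$ as in the statement; we may assume $A\neq\varnothing$. Note that $\Lt{\sA,x,t}\cap A=\varnothing$ for every $x$, since points of $\Lt{\sA,x,t}$ have $\uhat$‑coordinate $\ge t$ while points of $A$ have $\uhat$‑coordinate $<t$.

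The heart of the argument is a near‑geodesic (dynamic programming) step that produces, for each $\eps>0$, a step out of $A$ coming within $\eps$ of attaining the recovery minimum, after which a pigeonhole over $\eps\downarrow 0$ finishes. Fix $\eps>0$ and pick any $x_0\in A$. Since $g(x_0)$ is a supremum of values $\operatorname{val}(x_{0:n}):=-\sum_{i=0}^{n-1}\pote(\T_{x_i}\w,x_{i+1}-x_i)+h\cdot(x_n-x_0)$ over admissible paths from $x_0$ to points of $\Lt{\sA,x_0,t}$, there is such a path $P\colon x_0\to x_1\to\dots\to x_n=v^{\ast}$ with $\operatorname{val}(P)>g(x_0)-\eps$; its steps all lie in $\rangeA$ because $v^{\ast}\in\Lt{\sA,x_0,t}$ forces $v^{\ast}-x_0\in\RsemiA$ and $\sA$ is a face of $\cone$. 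As $v^{\ast}\notin A$ while $x_0\in A$, the path leaves $A$; let $k$ be the last index with $x_k\in A$, so $x_k\in A$ and $x_{k+1}\notin A$.

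Before pinning the exit step I will reduce to the case $k=0$. The functional $\operatorname{val}$ is additive along concatenations, and concatenating the initial segment $x_0\to\dots\to x_k$ with \emph{any} admissible path from $x_k$ to a point of $\Lt{\sA,x_k,t}$ gives an admissible path from $x_0$ to a point of $\Lt{\sA,x_0,t}$ (these level sets coincide along any path); taking the supremum over the second piece yields $g(x_0)\ge\operatorname{val}(x_0\to\dots\to x_k)+g(x_k)$. Together with $\operatorname{val}(P)=\operatorname{val}(x_0\to\dots\to x_k)+\operatorname{val}(x_k\to\dots\to v^{\ast})\ge g(x_0)-\eps$ this forces $\operatorname{val}(x_k\to\dots\to v^{\ast})\ge g(x_k)-\eps$, so the tail of $P$ is $\eps$‑optimal from $x_k$. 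Replacing $(x_0,P)$ by $(x_k,\text{tail})$, we may assume $x_1\notin A$; set $z=x_1-x_0\in\rangeA$. Then recovery at $x_0$ (available since $x_0\in A$) gives $\pote(\T_{x_0}\w,z)+\Bn{t,0}{\sA,\infty,\m}(x_0,x_0+z)\ge 0$, while bounding $g(x_1)$ below by the value of the sub‑path $x_1\to\dots\to v^{\ast}$ and using $\operatorname{val}(P)\ge g(x_0)-\eps$ gives $\pote(\T_{x_0}\w,z)+\Bn{t,0}{\sA,\infty,\m}(x_0,x_0+z)\le\eps$. Hence for every $\eps>0$ there is a pair $(y,z)$ with $y\in A$, $z\in\rangeA$, $y+z\notin A$, and $0\le\pote(\T_y\w,z)+\Bn{t,0}{\sA,\infty,\m}(y,y+z)\le\eps$. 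Since $\{(y,z):y\in A,\ z\in\rangeA,\ y+z\notin A\}$ is finite, one such pair recurs along a sequence $\eps_j\downarrow 0$, and for it $\pote(\T_y\w,z)+\Bn{t,0}{\sA,\infty,\m}(y,y+z)=0$, which is the claim.

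The one delicate point is precisely the reduction to $k=0$ in the third paragraph: the recovery identity \eqref{unrZeroPreLimRecovery} is guaranteed only at lattice points whose $\uhat$‑coordinate lies below $t$, and an $\eps$‑optimal path from an arbitrary $x_0\in A$ could a priori rise above level $t$ before leaving $A$, so one cannot simply telescope recovery along all of its vertices. Observing that the tail of an $\eps$‑optimal path is again $\eps$‑optimal collapses the whole argument onto recovery at the single point $x_0\in A$, sidestepping this issue; everything else is the one‑step decomposition of $g$ already packaged in \eqref{unrZeroPreLimRecovery} and a finite pigeonhole.
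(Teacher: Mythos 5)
Your proof is correct and follows essentially the same strategy as the paper's: take an $\eps$‑optimal path for the point‑to‑level free energy $\freelevn{\cdot}{t}{\sA,\infty,0}$, locate a step that exits $A$, use additivity of the value together with recovery at the exit point to sandwich $\pote(\T_y\w,z)+\Bn{t,0}{\sA,\infty,\m}(y,y+z)$ between $0$ and $\eps$, and finish by a pigeonhole over a finite set of candidate pairs $(y,z)$ as $\eps\to0$. The only cosmetic differences from the paper are that you fold the two layers of approximation (choice of $v$ and choice of path to $v$) into a single $\eps$‑optimal path, and you take the \emph{last} visit to $A$ rather than the \emph{first} exit — both give valid exit pairs and the argument is unchanged. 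Incidentally, the ``delicate point'' you flag about the path possibly rising above level $t$ while still in $A$ cannot occur, since by hypothesis every point of $A$ lies strictly below level $t$; no telescoping of recovery along the path is needed in either your proof or the paper's, so the concern is moot.
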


\begin{proof}
    Take $A$ and $t$ as in the statement, $x\in A$ and $\e>0$. Let $v\in\Lt{\sA, x,t}$ be such that 
    \[\freelevn{x}{t}{\sA, \infty, 0}(\w) - \e \le  \freeunr{x}{v}{\infty}(\w) +  h\cdot (v-x)\le \freelevn{x}{t}{\sA, \infty, 0}(\w).\]
    Take a path $x_{0:n}=x_{0:n}(\e)\in\PathsPtPRes{x}{v}{n}$ such that
        \[\freeunr{x}{v}{\infty}(\w)-\e\le - \sum_{i=0}^{n-1} \pote(\T_{x_i}\w, x_{i+1}-x_i)\le \freeunr{x}{v}{\infty}(\w).\]
    Note $v\not\in A$ by the definition of $t$. Let $k=k(\e)=\min\{i\in[0,n-1]:x_{i+1}\not\in A\}$, $y=y(\e)=x_k$, and $z=z(\e)=x_{k+1}-x_k$. Since 
    $\freeunr{x}{y}{\infty}(\w)+\freeunr{y}{v}{\infty}(\w)\le \freeunr{x}{v}{\infty}(\w)$ for all $v\in\Lt{\sA, x,t}$, we have
    $\freeunr{x}{y}{\infty}(\w)+h\cdot(y-x)+\freelevn{y}{t}{\sA, \infty, 0}(\w)\le \freelevn{x}{t}{\sA, \infty, 0}(\w)$. Thus,
        \begin{align*}
        \freeunr{x}{y}{\infty}(\w)+h\cdot(y-x)+\freelevn{y}{t}{\sA, \infty, 0}(\w)-2\e&\le \freelevn{x}{t}{\sA, \infty, 0}(\w)-2\e\le - \sum_{i=0}^{n-1} \pote(\T_{x_i}\w, x_{i+1}-x_i)+h\cdot(v-x)\\
        &\le \freeunr{x}{y}{\infty}(\w)+h\cdot(y-x)- \sum_{i=k}^{n-1} \pote(\T_{x_i}\w,x_{i+1}-x_i)+h\cdot(v-y),
        \end{align*}
        which implies
        \[\freelevn{y}{t}{\sA, \infty, 0}(\w)-2\e\le -\sum_{i=k}^{n-1} \pote(\T_{x_i}\w,x_{i+1}-x_i)+h\cdot(v-y).\]
    Thus,
    \begin{align*}
    \pote(\T_y\w,z)+\Bn{t,0}{\sA, \infty, \m}(y, y+z) 
    &= \pote(\T_y\w,z)+\freelevn{y}{t}{\sA, \infty, 0}-\freelevn{y+z}{t}{\sA, \infty, 0} - h \cdot z\\
    &\le \pote(\T_y\w,z)+\freelevn{y}{t}{\sA, \infty, 0}-\freeunr{y+z}{v}{\infty}(\w)  -h\cdot(v-y-z)- h \cdot z\\
    &\le \freelevn{y}{t}{\sA, \infty, 0}+\sum_{i=k}^{n-1} \pote(\T_{x_i}\w, x_{i+1}-x_i)  - h \cdot(v-y)\le2\e.
    \end{align*}
Since $A$ and $\rangeA$ are finite, we can find a subsequence $\e_j\to0$ and $y\in A$, $z\in\rangeA$, such that $y(\e_j)=y\in A$ and $z(\e_j)=z\in\rangeA$ for all $j$. Applying the above with $\e=\e_j$ and taking $j\to\infty$ shows that $\pote(\T_y\w,z)+\Bn{t,0}{\sA, \infty, \m}(y, y+z)\le0$. The claim follows from this and \eqref{unrZeroPreLimRecovery}. 
\end{proof}

   Next, we develop bounds on the limiting free energy.
    Recall the definition of $\maxz_0$ in \eqref{R0}. If necessary, enlarge the slab width to satisfy $R>2\maxz_0$.
    
    \begin{lemma}\label{lm:EF-bound}
    	If $\beta = \infty$ or Condition \ref{VCondition}\eqref{condBddBelowByC} or \eqref{condProdMeasure} is satisfied, take $\e = 0$.  If $\beta < \infty$ and Condition \ref{VCondition}\eqref{condBddBelow0} holds, take $\e > 0$.    Then, for any $a<b$,
    \begin{align}\label{p2l-EF}
    1 - \frac{\abs{\xi}_1 \e}{\beta}\le \varliminf_{n \rightarrow \infty} \frac{1}{n} \bbE[\inf_{a\le s\le b}\freelevn{\orig}{n+s}{\sA, \beta, \e}] \le \varlimsup_{n \rightarrow \infty} \frac{1}{n} \bbE[\sup_{a\le s\le b}\freelevn{\orig}{n+s}{\sA, \beta, \e}]\le1
    \end{align}
	and, $\bbP$-almost surely,
    \begin{align}\label{p2l-F}
    1 - \frac{\abs{\xi}_1 \e}{\beta}\le\varliminf_{n \rightarrow \infty} \frac{1}{n}\inf_{a\le s\le b} \freelevn{\orig}{n+s}{\sA, \beta, \e}
    \le\varlimsup_{n \rightarrow \infty} \frac{1}{n} \sup_{a\le s\le b}\freelevn{\orig}{n+s}{\sA, \beta, \e}\le1.
    \end{align}
    \end{lemma}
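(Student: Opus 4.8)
\emph{Overall plan.} I would first establish the almost-sure bounds \eqref{p2l-F} using the shape theorem (Theorem \ref{Thm:ShapeUnr}) together with convex duality for $\ppShapeUnrA{\beta,\usc}$, and then deduce the mean bounds \eqref{p2l-EF} from \eqref{p2l-F} by (reverse) Fatou, using for domination the pathwise upper bounds already produced inside the proof of Lemma \ref{lem:FneFinite} and an $L^1$ version of the point-to-point shape theorem. Throughout I would use \eqref{eq:uhatHConditions}, namely $\m=\uhat-h$, $\uhat\cdot\xi=1$, and $\m\cdot\xi=\ppShapeUnrA{\beta,\usc}(\xi)=\ppShapeUnr{\beta}(\xi)$ (Lemma \ref{lem:concaveDualHomogeneous}), together with the fact that $\m\in\superDiffUnrA{\beta,\usc}(\xi)$ and positive $1$-homogeneity give $\ppShapeUnrA{\beta,\usc}(\zeta)\le\m\cdot\zeta$ for every $\zeta\in\subspaceA$.

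\emph{Upper bound in \eqref{p2l-F}.} For $v\in\Lt{\sA,\orig,n+s}$ reachable from $\orig$ we have $v\in\RsemiA\subset\subspaceA$, so \eqref{shape-UB} (and, at $\beta=\infty$, the upper bound in \eqref{shape-unr}) gives $\freeunr{\orig}{v}{\beta}\le\ppShapeUnrA{\beta,\usc}(v)+o(|v|_1)$, and duality together with $\uhat\cdot v<n+s+\maxz$ give $\ppShapeUnrA{\beta,\usc}(v)+h\cdot v\le(\m+h)\cdot v=\uhat\cdot v<n+s+\maxz$. Hence each term of $\prtlevn{\orig}{n+s}{\sA,\beta,\e}$ is at most $\exp\{\beta(n+s+\maxz)+\beta\,o(|v|_1)-\e|v|_1\}$. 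When $\e>0$ (Condition \ref{VCondition}\eqref{condBddBelow0}), taking the shape error below $\e|v|_1/(2\beta)$ for $|v|_1$ large makes $\sum_{v\in\ZZ^d}e^{\beta o(|v|_1)-\e|v|_1}$ convergent, so $\prtlevn{\orig}{n+s}{\sA,\beta,\e}\le e^{\beta(n+s+\maxz)+o(n)}$; when $\e=0$ the model is directed, $\Lt{\sA,\orig,n+s}$ has only $O(n^d)$ reachable sites, all with $|v|_1=O(n)$, so again $\prtlevn{\orig}{n+s}{\sA,\beta,0}\le e^{\beta(n+s+\maxz)+o(n)}$, with the obvious modification at $\beta=\infty$ where $\freelevn{\orig}{n+s}{\sA,\infty,0}=\sup_v(\freeunr{\orig}{v}{\infty}+h\cdot v)$ is bounded termwise. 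In all cases $n^{-1}\sup_{a\le s\le b}\freelevn{\orig}{n+s}{\sA,\beta,\e}\le1+o(1)$, $\bbP$-almost surely.

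\emph{Lower bound in \eqref{p2l-F}.} Partition $[a,b]$ into finitely many subintervals of length $<\maxz$. On the $i$-th one I would choose, for all large $n$, a single reachable site $v_n^{(i)}\in\RsemiA$ with $v_n^{(i)}/n\to\xi$ and with $\uhat\cdot v_n^{(i)}$ placed in the window that forces $v_n^{(i)}\in\Lt{\sA,\orig,n+s}$ for every $s$ in that subinterval; this is possible since $\xi\in\ri\sA$ and $\uhat\cdot\xi=1$, so a lattice approximation of $n\xi$ can be adjusted by a bounded number of admissible steps to place its $\uhat$-coordinate in the required range. Keeping only that term, $\freelevn{\orig}{n+s}{\sA,\beta,\e}\ge\freeunr{\orig}{v_n^{(i)}}{\beta}+h\cdot v_n^{(i)}-\frac{\e}{\beta}|v_n^{(i)}|_1$. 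The lower bound in \eqref{shape-unr} gives $\freeunr{\orig}{v_n^{(i)}}{\beta}\ge n\,\ppShapeUnr{\beta}(\xi)+o(n)$, while $h\cdot v_n^{(i)}=n\,h\cdot\xi+o(n)$ and $|v_n^{(i)}|_1=n|\xi|_1+o(n)$. Since $\ppShapeUnr{\beta}(\xi)+h\cdot\xi=\m\cdot\xi+h\cdot\xi=\uhat\cdot\xi=1$, minimizing over the finitely many $i$ yields $n^{-1}\inf_{a\le s\le b}\freelevn{\orig}{n+s}{\sA,\beta,\e}\ge1-\frac{\e}{\beta}|\xi|_1+o(1)$, $\bbP$-almost surely.

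\emph{Mean bounds \eqref{p2l-EF}, and the main obstacle.} For the upper bound, the estimates \eqref{eqn:L1UpperBoundCase1}, \eqref{eqn:L1UpperBoundCase1Zero}, \eqref{eqn:L1upperBoundBddBelowC} inside the proof of Lemma \ref{lem:FneFinite} bound $n^{-1}\sup_{a\le s\le b}\freelevn{\orig}{n+s}{\sA,\beta,\e}$ by a deterministic constant uniformly in $n\ge1$ in the cases of Condition \ref{VCondition}\eqref{condBddBelow0} or \eqref{condBddBelowByC} and at $\beta=\infty$, while in the product-environment case \eqref{condProdMeasure} estimate \eqref{eqn:eqn:L1upperBoundIID} dominates it by $n^{-1}$ times a maximum of restricted-length passage times over a box of side $O(n)$, whose supremum over $n$ is integrable under the hypothesis $\pote^-(\cdot,z)\in L^q$, $q>d$ (a greedy-lattice-animal estimate). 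Reverse Fatou combined with the a.s.\ bound from above then gives $\varlimsup_n n^{-1}\bbE[\sup_{a\le s\le b}\freelevn{\orig}{n+s}{\sA,\beta,\e}]\le1$. For the lower bound I would take expectations in the single-site inequality: $n^{-1}\bbE[\freeunr{\orig}{v_n^{(i)}}{\beta}]\to\ppShapeUnr{\beta}(\xi)$ by Kingman's subadditive ergodic theorem (the integrability hypothesis of Theorem \ref{Thm:ShapeUnr} forces $\bbE[\freeunr{\orig}{nx}{\beta}]$ to be linear in $n$, so the convergence holds in $L^1$), and then use $\bbE[\min_i X_i]\ge\bbE[X_1]-\sum_{j}\bbE|X_1-X_j|$ with each $\bbE|X_1-X_j|=O(1)$ (the $v_n^{(i)}$ differ pairwise by a bounded number of steps, so the corresponding free energies differ by a stationary, $n$-independent quantity), which yields $\varliminf_n n^{-1}\bbE[\inf_{a\le s\le b}\freelevn{\orig}{n+s}{\sA,\beta,\e}]\ge1-\frac{|\xi|_1\e}{\beta}$. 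The difficulties I expect are (i) keeping the shape-theorem error uniform over the moving slabs $\Lt{\sA,\orig,n+s}$ (and controlling the number of reachable sites per slab in the directed case) and constructing the sites $v_n^{(i)}$ with the prescribed $\uhat$-coordinate, and (ii), the more serious one, producing an $n$-uniform integrable dominating function for the mean bounds in the unbounded-potential case \eqref{condProdMeasure} — which is exactly where the class-$\classL$ and $L^q$ ($q>d$) hypotheses and the $L^1$ content of the shape theorems are needed.
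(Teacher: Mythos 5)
Your a.s.\ bounds \eqref{p2l-F} follow essentially the paper's route: concavity and $\m\in\superDiffUnrA{\beta,\usc}(\xi)$ give $\ppShapeUnrA{\beta,\usc}(v)+h\cdot v\le\uhat\cdot v$, the shape theorem controls $\freeunr{\orig}{v}{\beta}$, and for the lower bound you pick a site in the slab converging to $\xi$. Your reduction to a finite partition of $[a,b]$ is a cosmetic variant of the paper's adjust-the-slab-width trick, and your mean upper bound via reverse Fatou plus the \eqref{eqn:L1UpperBoundCase1}--\eqref{eqn:eqn:L1upperBoundIID2}-type domination and lattice animals in case \eqref{condProdMeasure} is the paper's argument.

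However, the mean \emph{lower} bound of \eqref{p2l-EF} has a genuine gap. You write that $n^{-1}\bbE[\freeunr{\orig}{v_n^{(i)}}{\beta}]\to\ppShapeUnr{\beta}(\xi)$ ``by Kingman's subadditive ergodic theorem.'' But Kingman applies to a fixed lattice vector $x$ and the sequence $\freeunr{\orig}{nx}{\beta}$; the sites $v_n^{(i)}$ you construct are lattice approximations of $n\xi$ (with rounding), not multiples of a fixed $x$, so Kingman does not apply to them directly. No $L^1$ version of the unrestricted-length \emph{shape} theorem is stated or proved in the paper (Theorem~\ref{Thm:ShapeUnr} is a.s.\ only), so the $L^1$ convergence you invoke is exactly what needs to be established. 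Your fallback $\bbE[\min_i X_i]\ge\bbE[X_1]-\sum_j\bbE|X_1-X_j|$ with ``$\bbE|X_1-X_j|=O(1)$'' is also not justified: in the directed case $\orig\notin\UsetA$ one cannot traverse the difference $v_n^{(j)}-v_n^{(i)}$ in both directions, so $|\freeunr{\orig}{v_n^{(i)}}{\beta}-\freeunr{\orig}{v_n^{(j)}}{\beta}|$ is not obviously $O(1)$ in $L^1$. The paper circumvents all this by observing $\prtres{\orig}{v}{n}{\beta}\le\prtunr{\orig}{v}{\beta}$ (which itself requires a short argument in case~\eqref{condBddBelow0} using $\pote\ge0$, a step you do not mention), then lower-bounding $\freelevn{\orig}{n+s}{\sA,\beta,\e}$ by the \emph{restricted-length} free energy $\freeres{\orig}{\widehat{x}_{m_n}(\xi/t)}{m_n}{\beta}$ along the specific lattice approximation of \cite[(2.1)]{Ras-Sep-14}, for which $L^1$ convergence \emph{is} available from \cite[Theorem~2.2]{Ras-Sep-14} (and \cite[Theorem~2.4]{Geo-Ras-Sep-16} at $\beta=\infty$), and finally applying Fatou. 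That reduction to the restricted-length point-to-point $L^1$ shape theorem is the decisive idea your proof is missing.
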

    
    \begin{proof}
        It will be convenient in this and the following paragraph to modify our notation to include the width of the slab, writing $\freelevn{\orig}{n+a,\maxz}{\sA, \beta, \e}$ when this width is $\maxz$.
        With this notation, we see that if $\maxz'>\maxz$, then $s\le t\le t+\maxz\le s+\maxz'$ and $\freelevn{\orig}{t,\maxz}{\sA, \beta, \e}\le\freelevn{\orig}{s,\maxz'}{\sA, \beta, \e}$.
        Thus, for any $s\in[a,b]$, $n+a\le n+s\le n+s+\maxz\le n+a+(\maxz+b-a)$ and, therefore, $\sup_{a\le s\le b}\freelevn{\orig}{n+s,\maxz}{\sA, \beta, \e}\le\freelevn{\orig}{n+a,\maxz+b-a}{\sA, \beta, \e}$.
        
        Similarly, for any $s\in[a,b]$, take $j=\lfloor2\maxz^{-1}(s-a)\rfloor$, which satisfies $n+s\le n+a+j\maxz/2\le n+a+(j+1)\maxz/2\le n+s+\maxz$ and $0\le j\le 2\maxz^{-1}(b-a)+1$. Therefore, 
            \[\inf_{a\le s\le b}\freelevn{\orig}{n+s,\maxz}{\sA, \beta, \e}
            \ge\min_{j\in[0,2\maxz^{-1}(b-a)+1]\cap\ZZ}\freelevn{\orig}{n+a+j\maxz/2,\maxz/2}{\sA, \beta, \e}.\]
        The upshot is that, modulo increasing or decreasing the size of $\maxz$, it is enough to prove the claims of the lemma for a fixed $s\in\R$, without taking suprema or infima over $s\in[a,b]$. In the rest, we will go back to omitting the width of the slab from the free energy notation.

         Fix $s\in\R$. We will show that for each $s\in\R$, we have, $\bbP$-almost surely,
         \begin{align}\label{aux-p2l-F}
         1 - \frac{\abs{\xi}_1 \e}{\beta}\le\varliminf_{n \rightarrow \infty} \frac{1}{n} \freelevn{\orig}{n+s}{\sA, \beta, \e}
    \le\varlimsup_{n \rightarrow \infty} \frac{1}{n} \freelevn{\orig}{n+s}{\sA, \beta, \e}\le1.
    \end{align}
        This then proves \eqref{p2l-F}, as explained in the above two paragraphs. 
         
         We start with the lower bound. Write $\xi=\sum_{z\in\rangeA}b_z z$ with coefficients $b_z\ge0$. For $n\in\ZZ_{\ge0}$ let $b_{n,z}= \lceil (n+s) b_z \rceil$ if $z \cdot \uhat \ge 0$ and $b_{n,z} = \lfloor (n+s) b_z \rfloor$ otherwise. Let $v_n = \sum_{z \in \rangeA} b_{n,z} z$. Note that
        $v_n/n\to\xi$ as $n\to\infty$. 
        Since $\xi \cdot \uhat = 1$, 
\[ n+s= (n+s)\sum_{z \in \rangeA} b_z z \cdot \uhat \leq v_n \cdot \uhat \leq (n+s)\sum_{z \in \rangeA} b_z z \cdot \uhat + \sum_{z \in \rangeA} \abs{z\cdot\uhat} < n+s + \maxz.\]
    Thus, $v_n\in\Lt{\sA, n+s}$ for all $n$ and
\begin{align*}
\frac{1}{n} \freelevn{\orig}{n+s}{\sA, \beta, \e} 
&= \frac{1}{n\beta} \log \sum_{v \in \Lt{\sA, n+s}} \prtunr{\orig}{v}{\beta} e^{\beta h \cdot v-\e \abs{v}_1} 
\geq 
\frac{1}{n\beta} \log \prtunr{\orig}{v_n}{\beta} + \frac{\beta h \cdot v_n - \abs{v_n}_1 \e}{n\beta}.
\end{align*}
Since $\uhat - h \in \superDiffUnrA{\beta, \usc}(\xi)$ and $\xi \cdot \uhat = 1$, by Lemma \ref{lem:concaveDualHomogeneous}, $ \ppShapeUnr{\beta}(\xi) = \ppShapeUnrA{\beta, \usc}(\xi)  = (\uhat - h) \cdot \xi = 1 - h \cdot \xi$.  Taking  $n \rightarrow \infty$ and applying \eqref{shape-unr}, 
we obtain, $\bbP$-almost surely,
\[
\varliminf_{n \rightarrow \infty} \frac{1}{n} \freelevn{\orig}{n+s}{\sA, \beta, \e} \geq \ppShapeUnr{\beta}(\xi) + h \cdot \xi - \frac{\abs{\xi}_1 \e}{\beta} = 1 - \frac{\abs{\xi}_1 \e}{\beta}.
\]
The equivalent result at zero temperature is
\[
\varliminf_{n \rightarrow \infty} \frac{1}{n} \freelevn{\orig}{n+s}{\sA, \infty, 0} \geq \varliminf_{n \rightarrow \infty} \frac{1}{n} (\freeunr{\orig}{v_{n}}{\infty} + h\cdot v_{n}) = \ppShapeUnr{\infty}(\xi) + h\cdot\xi = 1.
\]

Next, we derive the almost sure upper bound. Recall that we are now working with a fixed $s$. The upper bound \eqref{shape-UB} 
implies that, with $\bbP$-probability one, there exists $n_0 = n_0(\w, \e)$ such that whenever $\abs{v}_1 \cdot \abs{\uhat}_{\infty} \geq n_0$, 
\[
\frac{1}{\beta}\log \prtunr{\orig}{v}{\beta} \leq \ppShapeUnrA{\beta, \usc}(v) + \frac{\e}{2\beta} \abs{v}_1.
\]
Also note that since $\uhat-h \in \superDiffUnrA{\beta, \usc}(\xi)$, for $v \in L_{\sA, n+s}$
\begin{align*}
&\ppShapeUnrA{\beta, \usc}(v) \leq \ppShapeUnrA{\beta, \usc}(\xi) + (v - \xi) \cdot (\uhat-h)= v \cdot (\uhat-h) \leq n+s + \maxz - v\cdot h. 
\end{align*}
For $n \geq n_0$,
\begin{align*}
\frac{1}{n} \freelevn{\orig}{n+s}{\sA, \beta, \e} &= \frac{1}{n\beta} \log \sum_{v \in L_{\sA, n+s}} \prtunr{\orig}{v}{\beta} e^{\beta h \cdot v -\e \abs{v}_1} \\
&\leq \frac{1}{n\beta} \log \sum_{v \in L_{\sA, n+s}} e^{\beta \ppShapeUnrA{\beta, \usc}(v) + \beta h \cdot v - \e \abs{v}_1 / 2} \\
&\leq \frac{1}{n\beta} \log \sum_{v \in L_{\sA, n+s}} e^{\beta (n+s + \maxz - v\cdot h) + \beta h \cdot v - \e \abs{v}_1 / 2} \\
&\le \frac{n+s+\maxz}{n}  + \frac{1}{n\beta} \log \sum_{v \in \ZZ^d} e^{- \e \abs{v}_1 / 2} \\
&\leq \frac{n+s+\maxz}{n} + \frac{1}{n\beta} \log \Bigl(C \sum_{k = 0}^{\infty}k^d e^{- \e k / 2} \Bigr)
\xrightarrow[n \rightarrow \infty]{} 1.
\end{align*}
The equivalent bound at zero temperature is
\begin{align*}
\frac{1}{n} \freelevn{\orig}{n+s}{\sA, \infty, 0} = \frac{1}{n} \sup_{v \in \Lt{\sA, n+s}} (\freeunr{\orig}{v}{\infty}+h\cdot v) \leq \frac{1}{n} \sup_{v \in \Lt{\sA, n+s}} (\ppShapeUnrA{\infty, \usc}(v) + 1 + h\cdot v) \leq \frac{1}{n} \bigl(n+s +\maxz +1 \bigr) \xrightarrow[n \rightarrow \infty]{} 1.
\end{align*}

Now we turn to proving \eqref{p2l-EF}. As explained at the beginning of the proof, it is enough to prove that for each $s\in\R$,
    \begin{align}\label{aux-p2l-EF}
    1 - \frac{\abs{\xi}_1 \e}{\beta}\le \varliminf_{n \rightarrow \infty} \frac{1}{n} \bbE[\freelevn{\orig}{n+s}{\sA, \beta, \e}] \le \varlimsup_{n \rightarrow \infty} \frac{1}{n} \bbE[\freelevn{\orig}{n+s}{\sA, \beta, \e}]\le1.
    \end{align}
We first prove the upper bound. For this, we  consider the three cases in Condition \ref{VCondition}. 

If Condition \ref{VCondition}\eqref{condBddBelow0} or \eqref{condBddBelowByC} is satisfied, then the upper bound in \eqref{aux-p2l-EF} follows from the bounds 
(\ref{eqn:L1UpperBoundCase1}-\ref{eqn:L1upperBoundBddBelowCZero}), 
the upper bound in \eqref{aux-p2l-F}, and Fatou's Lemma. 


Suppose now that Condition \ref{VCondition}\eqref{condProdMeasure} holds. 
Without any loss of generality, we can assume $r_0 > \maxz$. 
Let $f(\w) = \max_{z \in \rangeA}\pote^-(\w, z) + \max_{z \in \rangeA} \abs{h\cdot z}$. Then, with the notation $\ell = \ell(n+s)$ as introduced above \eqref{eqn:L1UpperBoundCase1Zero}, the
bounds in \eqref{eqn:eqn:L1upperBoundIID} and \eqref{eqn:eqn:L1upperBoundIID2} give 
\begin{align}
\frac1n\freelevn{\orig}{n+s}{\sA, \beta, 0} 
&\leq \frac1n\max_{(n+s) \maxz^{-1} \leq k \leq (n+s + \maxz)\delta^{-1} } \max_{v \in \tilde{L}_{\sA,\orig, n+s}} (\freeres{\orig}{v}{k}{\infty} +h\cdot v) + \frac{1}{n\beta}\log C\ell^{d}\notag\\
&\le\frac{1}{n} \max_{x_{0:\ell} \in \PathsNStep{\orig}{\ell}(\rangeA)}  \sum_{k=0}^{\ell-1} f(\T_{x_k}\w)
+ \frac{1}{n\beta}\log C\ell^{d},\label{aux-F-bnd}
\end{align}
where it is understood that in the zero temperature case, the last term on the right-hand side is $0$.
In the second inequality, we used the fact that $f$ is non-negative, and hence the sum of shifts of $f$ does not get smaller if we add more terms, and the maximum does not get smaller if we drop the requirement that the endpoint $v \in \Lt{\sA, n+s}$. 
We will bound the first term on the right-hand side using lattice animal bounds.

A subset $S$ of $\ZZ^d$ is said to be connected if, for any two points in $S$, there exists a nearest-neighbor path in $S$ that connects them.
Given $n\in\Z_{>0}$, a connected subset of $\ZZ^d$ that has cardinality $n$ is called a lattice animal of size $n$. Let $\sS_n$ denote the set of lattice animals of size $n$. Let $B = \{0, 1, 2, \ldots, r_0-1\}^d$. Then $\{r_0 y + B : y \in \ZZ^d\}$ is a disjoint tiling of $\ZZ^d$.

Consider a path $x_{0:\ell} \in \PathsNStep{\orig}{\ell}(\rangeA)$. Let $y_0, \ldots, y_\ell \in \ZZ^d$ and $u_0,\ldots,u_\ell\in B$ be 
(the unique points) such that $x_k=r_0y_k+u_k$ for each $0 \leq k \leq \ell$. 
It must be that $\abs{y_{k+1} - y_k}_{\infty} \leq 1$, since $r_0>\maxz$.  Although $y_k$ and $y_{k+1}$ might not be nearest neighbors, we can fill in with at most $d-1$ intermediate points to obtain a nearest-neighbor sequence. In total, we start with $\ell+1$ points then add at most $\ell(d-1)$ more. The resulting sequence may have repetitions and fewer than $\ell d+1$ points, but it will be contained in a lattice animal $S \in \sS_{\ell d+1}$.
Since the path $x_{0:\ell}$ does not repeat points, the pairs $(y_k,u_k)$ are distinct. Therefore, 
    \begin{align*}
        \sum_{k=0}^{\ell-1}f(\T_{x_k}\w)
        =\sum_{k=0}^{\ell-1}f(\T_{r_0 y_k+u_k}\w)
        \le\sum_{u\in B}\max_{S \in \sS_{\ell d+1}} \sum_{y \in S} f(\T_{r_0 y + u}\w).
    \end{align*}
We have thus shown that
\begin{align*}
\frac{1}{n} \max_{x_{0:\ell} \in \PathsNStep{\orig}{\ell}(\rangeA)}  \sum_{k=0}^{\ell-1} f(\T_{x_k}\w)
&\leq \frac{\ell}{n} \cdot \frac{1}{\ell} \sum_{u \in B} \max_{S \in \sS_{\ell d+1}} \sum_{y \in S} f(\T_{r_0 y + u}\w).
\end{align*}

Since $\{f(\T_{r_0 y + u}\w) : y \in \ZZ^d\}$ are i.i.d.\ and $f \in L^q(\P)$ for some $q > d$, \cite[Theorem 1.1]{Mar-02-spa} implies that the right-hand side converges both almost-surely and in $L^1(\P)$. This, the upper bound \eqref{aux-F-bnd}, the upper bound in \eqref{aux-p2l-F}, and Fatou's Lemma imply the upper bound in \eqref{aux-p2l-EF} and, therefore, the upper bound in \eqref{p2l-EF} is proved.

We finish the proof by arguing for the lower bound in \eqref{aux-p2l-EF}.
We work out the case of a positive temperature, with the case of zero temperature being similar. Observe that for any $n\in\Z_{>0}$ and $u,v\in\ZZ^d$ such that $v-u\in\RgroupA\cap\Dn{n}$, 
	\begin{align*}
		\prtres{u}{v}{n}{\beta}(\w) 
		&= \rwE_u\Bigl[ e^{- \beta\sum_{k=0}^{n-1} \pote(\T_{X_k} \omega, X_{k+1}-X_k)}\one_{\{X_n=v\}}\Bigr]
		\le \rwE_u\Bigl[ e^{- \beta\sum_{k=0}^{\stoppt{v}-1} \pote(\T_{X_k} \omega, X_{k+1}-X_k)}\one_{\{\stoppt{v}=n\}}\Bigr]\\
		&\le\rwE_u\Bigl[ e^{- \beta\sum_{k=0}^{\stoppt{v}-1} \pote(\T_{X_k} \omega, X_{k+1}-X_k)}\one_{\{\stoppt{v}<\infty\}}\Bigr]
		=\prtunr{u}{v}{\beta}(\w). 	
	\end{align*}
To justify the first inequality note that if $\orig\not\in\Uset$, then $X_n=v$ implies $\stoppt{v}=n$ while if $\orig\in\Uset$, then Condition \ref{VCondition}\eqref{condBddBelow0} must hold, in which case $X_n=v$ implies $\pote(T_{X_k}\w,X_{k+1}-X_k)\ge0$ for all $k\le n$ and, therefore,
$e^{- \beta\sum_{k=\stoppt{v}}^{n-1} \pote(\T_{X_k} \omega, X_{k+1}-X_k)}\le1$. 

Take any $t>0$ such that $\xi/t\in\UsetA$. Let $\hat x_n(\xi/t)$ be the path defined in \cite[(2.1)]{Ras-Sep-14}. Note that this path has increments in $\rangeA$ and satisfies  $\hat x_n(\xi/t)\in\RgroupA\cap\Dn{n}$ for all $n\in\Z_{>0}$ and $\hat x_n(\xi/t)/n\to\xi/t$ as $n\to\infty$. 
This and $\xi\cdot\uhat=1$ imply that there exists a subsequence $m_n$ such that  $\hat x_{m_n}(\xi/t)\in\Lt{\sA, n+s}$ for all $n$ large enough. 
Thus, for $n$ large enough,
\begin{align*}
\frac{1}{n} \freelevn{\orig}{n+s}{\sA, \beta, \e} 
&= \frac{1}{n\beta} \log \sum_{v \in \Lt{\sA, n+s}} \prtunr{\orig}{v}{\beta} e^{\beta h \cdot v-\e \abs{v}_1} 
\geq 
\frac{1}{n\beta} \log \prtunr{\orig}{\hat x_{m_n}(\xi/t)}{\beta} + \frac{\beta h \cdot \hat x_{m_n}(\xi/t) - \abs{\hat x_{m_n}(\xi/t)}_1 \e}{n\beta}\\
&\geq 
\frac{1}{n\beta} \log \prtres{\orig}{\hat x_{m_n}(\xi/t)}{m_n}{\beta}  + \frac{\beta h \cdot \hat x_{m_n}(\xi/t) - \abs{\hat x_{m_n}(\xi/t)}_1 \e}{n\beta}.
\end{align*}

By \cite[Theorem 2.2]{Ras-Sep-14} (and \cite[Theorem 2.4]{Geo-Ras-Sep-16} for the zero temperature case), $m^{-1}\freeres{\orig}{\hat x_m(\xi/t)}{m}{\beta}$ converges both almost surely and in $L^1(\P)$, as $m\to\infty$. Furthermore, dividing    
    \[n+s\le\hat x_{m_n}(\xi/t)\cdot\uhat<n+s+R\]
by $m_n$ and taking $n\to\infty$ implies that $m_n/n\to t$ as $n\to\infty$.
Thus, $(n\beta)^{-1}\freeres{\orig}{\hat x_{m_n}(\xi/t)}{m_n}{\beta}$ converges both almost surely and in $L^1(\P)$, as $n\to\infty$. This allows us to apply Fatou's Lemma and deduce the lower bound in \eqref{aux-p2l-EF} from the one in \eqref{aux-p2l-F}, completing the proof of the lemma.
\end{proof}
    
	For each $n \in \ZZ_{>0}$, let $(U_n)_{n\in\Z_{>0}}$ be a sequence of independent random variables, independent of everything else, and such that for each $n$,  $U_n$ is uniformly distributed on $[0,n]$. Denote the distribution of this sequence by $\bbU$ and let $\bbPU = \bbP \otimes \bbU$ with expectation $\bbEU$. 
	Define $\bfPne$ to be the distribution of 
	\[
	\Bigl(  \omega,  \bigl\{ \BUne{\sA, \beta, \m}(x, x+z) : \sA \in \faces, (\beta,m) \in \DctbA, x \in \ZZ^d, z \in \rangeA \bigr\}\Bigr)
	\]
	induced by $\bbPU$ on $(\Omegahat, \Sighat)$.  
	
	\begin{lemma}
	If $\beta = \infty$ or Condition \ref{VCondition}\eqref{condBddBelowByC} or \eqref{condProdMeasure} is satisfied, take $\e = 0$.  If $\beta < \infty$ and Condition \ref{VCondition}\eqref{condBddBelow0} holds, take $\e > 0$.   Then, the family $\{\bfPne : n \in \ZZ_{>0} \}$ is tight. 
	\end{lemma}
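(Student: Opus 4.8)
The plan is to reduce tightness on the product space to tightness of the finitely-listed real coordinate marginals, dispatch the lower tail of each pre-limit Busemann increment using the recovery identity, and dispatch the upper tail by averaging the approximate shift-covariance identity and invoking the linear growth bound of Lemma \ref{lm:EF-bound}.

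\textbf{Reduction.} Since $\faces$ is finite (faces of a polyhedral cone), each $\sE_\sA$ is countable, and $\Omega$ is Polish, the space $\Omegahat = \Omega\times\prod_{\sA\in\faces}\RR^{\sE_\sA}$ is a countable product of Polish spaces with the product topology. For such a space a family of probability measures is tight as soon as the $\Omega$-marginal and each of the (countably many) real coordinate marginals forms a tight family: given $\eta>0$, choose a compact set in the $\Omega$-factor carrying all but $\eta/2$ of the mass uniformly in $n$, compact intervals in the $j$th real factor carrying all but $\eta 2^{-j-1}$, and take the product, which is compact by Tychonoff. The $\Omega$-marginal of $\bfPne$ is $\bbP$ for every $n$, hence tight. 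So it remains to prove that for each $\sA\in\faces$, $(\beta,\m)\in\DctbA$, $x\in\ZZ^d$, $z\in\rangeA$, the laws of $\BUne{\sA,\beta,\m}(x,x+z)=\Bn{U_n,\e}{\sA,\beta,\m}(x,x+z)$ under $\bbPU$ are tight; by Markov's inequality it is enough to show $\sup_n\bbEU\bigl[\,\bigl\lvert\Bn{U_n,\e}{\sA,\beta,\m}(x,x+z)\bigr\rvert\,\bigr]<\infty$.

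\textbf{Lower bound.} Recall $\Bn{t,\e}{\sA,\beta,\m}(x,x+z)=\freelevn{x}{t}{\sA,\beta,\e}-\freelevn{x+z}{t}{\sA,\beta,\e}-h\cdot z$ for $t$ exceeding $\max\{x\cdot\uhat,(x+z')\cdot\uhat:z'\in\rangeA\}$ and $=0$ otherwise. From the recovery identity \eqref{unrPosPreLimRecovery} every summand is at most $1$, so $\Bn{t,\e}{\sA,\beta,\m}(x,x+z)\ge-\beta^{-1}\log(1/p(z))-\pote^+(\T_x\w,z)$ for all $t$, with the analogous bound $\ge-\pote^+(\T_x\w,z)$ at $\beta=\infty$ coming from \eqref{unrZeroPreLimRecovery}. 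Since $\bbP$ is $\T$-invariant and $\pote^+(\w,z)\in L^1(\bbP)$ (Condition \ref{VCondition}), this yields $\sup_n\bbEU\bigl[(\Bn{U_n,\e}{\sA,\beta,\m}(x,x+z))^-\bigr]<\infty$, and in particular $\bbEU[\Bn{U_n,\e}{\sA,\beta,\m}(x,x+z)]$ is well defined in $(-\infty,\infty]$. Writing $a^+=a+a^-$, it now suffices to bound $\bbEU[\Bn{U_n,\e}{\sA,\beta,\m}(x,x+z)]$ from above uniformly in $n$. By Fubini this equals $n^{-1}\int_0^n\bbE[\Bn{t,\e}{\sA,\beta,\m}(x,x+z)]\,dt$, with the integrand vanishing for small $t$ (and with only finitely many $n$ for which the whole integral is $0$, which are harmless).

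\textbf{Upper bound and main obstacle.} The crux is this averaged upper bound. First I would use the approximate shift-covariance \eqref{eqn:approxShiftCov} (exact, \eqref{eqn:approxShiftCovZeroTemp}, when $\e=0$) together with the $\T$-invariance of $\bbP$ to get $\bbE[\freelevn{x+z}{t}{\sA,\beta,\e}]\ge\bbE[\freelevn{x}{t-z\cdot\uhat}{\sA,\beta,\e}]-\e\abs{z}_1/\beta$, hence
\[
\bbE[\Bn{t,\e}{\sA,\beta,\m}(x,x+z)]\le\bbE[\freelevn{x}{t}{\sA,\beta,\e}]-\bbE[\freelevn{x}{t-z\cdot\uhat}{\sA,\beta,\e}]+\frac{\e\abs{z}_1}{\beta}-h\cdot z.
\]
Integrating in $t$ makes the difference of translated integrals telescope, leaving only two boundary integrals: one over a bounded interval near the lower endpoint (a constant independent of $n$, hence $O(1/n)$ after dividing by $n$) and one over an interval of length $\abs{z\cdot\uhat}$ near $t=n$. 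For the latter, shift-covariance again converts $\bbE[\freelevn{x}{t}{\sA,\beta,\e}]$ to $\bbE[\freelevn{\orig}{t-x\cdot\uhat}{\sA,\beta,\e}]$ up to a fixed additive constant, and \eqref{p2l-EF} of Lemma \ref{lm:EF-bound} shows that $n^{-1}\bbE[\freelevn{\orig}{n+s}{\sA,\beta,\e}]$ stays bounded above and below for $s$ in any bounded interval; hence $n^{-1}\int_{n-z\cdot\uhat}^{n}\bbE[\freelevn{x}{t}{\sA,\beta,\e}]\,dt$ is bounded uniformly in $n$. Collecting the estimates gives $\sup_n\bbEU[\Bn{U_n,\e}{\sA,\beta,\m}(x,x+z)]<\infty$, which with the lower-bound estimate produces the desired uniform $L^1$ bound and hence tightness. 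The main difficulty is exactly this upper estimate: the recovery property controls only the negative part of the increments, so the positive part must be reached through the $n$-average, where the two essential inputs are the telescoping enabled by (approximate) shift-covariance and the linear-growth control of $\bbE[\freelevn{\orig}{n+s}{\sA,\beta,\e}]$ from Lemma \ref{lm:EF-bound}; the only delicate bookkeeping is near the integration endpoints — the small-$t$ regime where $\Bn{t,\e}{\sA,\beta,\m}$ is set to $0$, and the level translations by $x\cdot\uhat$ and $z\cdot\uhat$ — which are routine.
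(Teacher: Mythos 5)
Your argument is correct and follows essentially the same route as the paper's proof: reduce tightness to uniform $L^1$ control of the coordinate increments, obtain the lower (negative-part) bound from the recovery identity \eqref{unrPosPreLimRecovery}/\eqref{unrZeroPreLimRecovery}, and obtain the uniform upper bound on the mean by averaging the approximate shift-covariance identity in $t$, telescoping, and invoking the linear growth control of Lemma \ref{lm:EF-bound} at the $t\approx n$ boundary. This is exactly the computation spelled out in \eqref{unrBnMean} and \eqref{eqn:BUneUpperBound}.
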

	
	\begin{proof}
	 Take $x \in \ZZ^d$ and $z \in \rangeA$. Let $t_0= \max\{0,x \cdot \uhat,(x+z) \cdot \uhat : z \in \rangeA \}$.  Recall that $\Bn{t,\e}{\sA, \beta, \m}(x,x+z) = 0$ whenever $0<t \leq t_0$. Then for $n > t_0$, use the approximate shift-covariance properties \eqref{eqn:approxShiftCov} and \eqref{eqn:approxShiftCovZeroTemp} to obtain
	\begin{align}
		&\bbEU \bigl[ \BUne{\sA, \beta, \m}(x, x+z) \bigr] \nonumber \\
        &= \frac{1}{n} \int_{t_0}^{n}  \bbE \bigl[ \Bn{s,\e}{\sA, \beta, \m}(x, x+z)  \bigr] \,ds \nonumber \\
		&= \frac{1}{n}  \int_{t_0}^{n} \bbE\bigl[ \freelevn{x}{s}{\sA, \beta, \e}-\freelevn{x+z}{s}{\sA, \beta, \e} - h\cdot z \bigr]\,ds\nonumber \\
		&\leq \frac{1}{n}  \int_{t_0}^{n} \bbE\bigl[ \freelevn{\orig}{s - x \cdot \uhat}{\sA, \beta, \e} - \freelevn{\orig}{s - (x+z) \cdot \uhat }{\sA, \beta, \e} \bigr] \,ds - \frac{(n-t_0) h \cdot z}{n} + \frac{\e}{\beta} (\abs{x}_1 + \abs{x+z}_1) \nonumber \\
		&= \frac{1}{n}  \int_{t_0}^{n} \bbE\bigl[ \freelevn{\orig}{s - x \cdot \uhat}{\sA, \beta, \e}\bigr]\,ds - \frac{1}{n}  \int_{t_0-z\cdot\uhat}^{n-z\cdot\uhat}\bbE\bigl[ \freelevn{\orig}{s - x \cdot \uhat}{\sA, \beta, \e} \bigr]  \,ds - \frac{(n-t_0) h \cdot z}{n} + \frac{\e}{\beta} (\abs{x}_1 + \abs{x+z}_1) \nonumber \\
		&= \frac{1}{n}  \int_{n-z\cdot\uhat}^{n} \bbE\bigl[ \freelevn{\orig}{s - x \cdot \uhat}{\sA, \beta, \e}\bigr]\,ds - \frac{1}{n}  \int_{t_0-z\cdot\uhat}^{t_0}\bbE\bigl[ \freelevn{\orig}{s - x \cdot \uhat}{\sA, \beta, \e} \bigr]  \,ds - \frac{(n-t_0) h \cdot z}{n} + \frac{\e}{\beta} (\abs{x}_1 + \abs{x+z}_1) \nonumber \\
        &= \frac{1}{n}  \int_{-(x+z)\cdot\uhat}^{-x\cdot\uhat} \bbE\bigl[ \freelevn{\orig}{n+s}{\sA, \beta, \e}\bigr]\,ds - \frac{1}{n}  \int_{t_0-(x+z)\cdot\uhat}^{t_0-x\cdot\uhat}\bbE\bigl[ \freelevn{\orig}{s}{\sA, \beta, \e} \bigr] \,ds - \frac{(n-t_0) h \cdot z}{n} + \frac{\e}{\beta} (\abs{x}_1 + \abs{x+z}_1).
    \label{unrBnMean}
 	\end{align}
    In the above computation, we use the convention that if $b<a$, then $\int_a^b=-\int_b^a$. When $\beta = \infty$, we take $\e = 0$, and the last term is understood to be zero.
	
	On the event $\{U_n > t_0\}$,  
	\eqref{unrPosPreLimRecovery} and \eqref{unrZeroPreLimRecovery}  imply that 
		\begin{align}\label{fofo}
		\BUne{\sA, \beta, \m}(x, x+z) \geq -\pote(\T_x \omega, z) + \beta^{-1} \log p(z)\ge-\pote^+(\T_x\omega,z)+\beta^{-1}\log p(z).
		\end{align}
	Again when $\beta = \infty$, the last term is understood to be 0.  Then,
		\begin{align}
		&\bbEU \bigl[ \bigl|\BUne{\sA, \beta, \m}(x, x+z) \bigr| \bigr] 
		= \bbEU \bigl[ \bigl|\BUne{\sA, \beta, \m}(x, x+z) \bigr|\one\{U_n>t_0\} \bigr] \nonumber\\
		&\quad= \bbEU \bigl[ \BUne{\sA, \beta, \m}(x, x+z)\one\{U_n>t_0\} \bigr]-2\bbEU \bigl[\min\bigl\{0,\BUne{\sA, \beta, \m}(x, x+z)\bigr\}\one\{U_n>t_0\} \bigr] \nonumber\\
		&\quad=\bbEU \bigl[ \BUne{\sA, \beta, \m}(x, x+z)\bigr]
		  -2\bbEU \bigl[\min\bigl\{0,\BUne{\sA, \beta, \m}(x, x+z)\bigr\}\one\{U_n>t_0\} \bigr] \nonumber\\
		\begin{split}
		&\quad\le \frac{1}{n}  \int_{-(x+z)\cdot\uhat}^{-x\cdot\uhat} \bbE\bigl[ \freelevn{\orig}{n+s}{\sA, \beta, \e}\bigr]\,ds - \frac{1}{n}  \int_{t_0-(x+z)\cdot\uhat}^{t_0-x\cdot\uhat}\bbE\bigl[ \freelevn{\orig}{s}{\sA, \beta, \e} \bigr] \,ds\\
		&\qquad\qquad- \frac{(n-t_0) h \cdot z}{n} + \frac{\e}{\beta}(\abs{x}_1 + \abs{x+z}_1) + 2 \bbE[\pote^+(\omega, z)] - 2 \beta^{-1} \log p(z).
		\end{split}
		\label{eqn:BUneUpperBound}
		\end{align}

All the terms on the right-hand side except the first are trivially uniformly bounded in $n$. The first term on the right-hand side is also bounded in $n$ by \eqref{aux-p2l-EF}. This proves the claimed tightness.
	\end{proof}

    If $\beta = \infty$ or Condition \ref{VCondition}\eqref{condBddBelowByC} or \eqref{condProdMeasure} is satisfied, then let $\bfPs{0}$ (with expectation $\bfEs{0}$) be any weak subsequential limit point of $\bfPs{n,0}$.  Otherwise, for each $0 < \e \leq 1$, let $\bfPe$ (with expectation $\bfEe$) denote any weak subsequential limit point in $n$ of $\bfPne$.   Recall that $\B{\sA, \beta, \m}(x, x+z)$ is the $(\sA, \beta, \m, x, z)$-coordinate of $\omegahat \in \Omegahat$. By \eqref{fofo} we have that $\bfPne$-almost surely, for all $\sA \in \faces$, $(\beta, \m) \in \DctbA$, $x \in \ZZ^d$, and $z \in \rangeA$,  $\B{\sA, \beta, \m}(x, x+z) \geq -\abs{\pote(\T_x \omega, z)} + \beta^{-1} \log p(z)$.

    Use \eqref{eqn:BUneUpperBound}, Fatou's Lemma, and the upper bound in \eqref{p2l-EF} to obtain 
    \begin{align}
    \bfEe\bigl[\bigl|\B{\sA, \beta, \m}(x, x+z)\bigr| \bigr] 
    &\leq \varliminf_{n \rightarrow \infty} \frac{1}{n}  \Bigl(\int_{-(x+z)\cdot\uhat}^{-x\cdot\uhat } \bbE\bigl[ \freelevn{\orig}{n+s}{\sA, \beta,  \e}\bigr]\,ds - \frac{1}{n}  \int_{t_0-(x+z)\cdot\uhat }^{t_0-x\cdot\uhat }\bbE\bigl[ \freelevn{\orig}{s}{\sA, \beta, \e} \bigr] \,ds - \frac{(n-t_0) h \cdot z}{n} \Bigr) \notag\\
    &\qquad+ \frac{\e}{\beta}(\abs{x}_1 + \abs{x+z}_1) + 2 \bbE[|\pote(\omega, z)|] - 2 \beta^{-1} \log p(z)\notag\\
    &\leq \m \cdot z + \frac{1}{\beta}(\abs{x}_1 + \abs{x+z}_1) + 2 \bbE[|\pote(\omega, z)|] - 2 \beta^{-1} \log p(z).\label{E|B|}
    \end{align}
    The family $\{\bfPe : 0 < \e \leq 1\}$ is therefore tight. 
    Similarly, using Fatou's Lemma with \eqref{unrBnMean} and \eqref{p2l-EF} gives
    \begin{align}
    \bfEe\bigl[\B{\sA, \beta, \m}(x, x+z)\bigr]
    &\leq \uhat \cdot z - h \cdot z + \frac{\e}{\beta}(\abs{x}_1 + \abs{x+z}_1) = \m \cdot z + \frac{\e}{\beta}(\abs{x}_1 + \abs{x+z}_1). \label{eqn:FatouForEpsilon}
    \end{align}
    
Let $\bbPhat$ denote any weak subsequential limit point of $\bfPe$ as $\e\to0$ in $(0,1]$.  If $\beta = \infty$ or Condition \ref{VCondition}\eqref{condBddBelowByC} or \eqref{condProdMeasure} is satisfied, then take $\bbPhat = \bfPs{0}$. 
 
	\begin{lemma}
	$\bbPhat$ is $\That$-invariant.
	\end{lemma}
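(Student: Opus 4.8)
\textit{Proof plan.} The plan is to deduce $\That$-invariance of $\bbPhat$ from an asymptotic $\That$-invariance of the pre-limit laws $\bfPne$, which in turn comes from the $\T$-invariance of $\bbP$ together with the (approximate) shift-covariance of the tilted point-to-level free energies recorded in \eqref{eqn:approxShiftCov}--\eqref{eqn:approxShiftCovZeroTemp}. It suffices to prove $\bbEhat[g\circ\That_v]=\bbEhat[g]$ for every $v\in\ZZ^d$ and every function $g$ of the form $g(\omegahat)=f(\w(\omegahat))\,G\bigl(\{\B{\sA_i,\beta_i,\m_i}(x_i,x_i+z_i,\omegahat)\}_{i=1}^N\bigr)$ with $f$ bounded continuous and $G$ bounded Lipschitz, since such functions form a measure-determining class on the Polish space $\Omegahat$. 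Writing $\Xi_n$ for the $\Omegahat$-valued random variable of law $\bfPne$ (so that $\bbEhat[\,\cdot\,]=\lim_k\lim_j\bbEU[\,\cdot\,(\Xi_{n_j})]$ along the subsequences defining $\bfP_{\e_k}$ and then $\bbPhat$, with the convention $\e=0$ in the cases where $\bbPhat=\bfPs0$), and noting that $g\circ\That_v$ is again bounded continuous, the task reduces to bounding $\bigl|\bbEU[g(\That_v\Xi_n)]-\bbEU[g(\Xi_n)]\bigr|$.

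The coordinate of $\That_v\Xi_n$ indexed by $(\sA_i,\beta_i,\m_i,x_i,z_i)$ equals $\BUne{\sA_i,\beta_i,\m_i}(x_i+v,x_i+v+z_i)(\omega)$ and its $\Omega$-coordinate is $\T_v\omega$. Applying \eqref{eqn:approxShiftCov} to each of the two point-to-level free energies in the definition of $\BUne{\sA,\beta,\m}(x+v,x+v+z)$ --- and observing that the thresholds in the definition of $\Bn{\cdot,\e}{\cdot}$ match exactly on the two sides once the slab level is shifted by $v\cdot\uhat$ --- gives
\[
\Bigl|\BUne{\sA,\beta,\m}(x+v,x+v+z)(\omega)-\Bn{U_n-v\cdot\uhat,\,\e}{\sA,\beta,\m}(x,x+z)(\T_v\omega)\Bigr|\le\frac{2\e|v|_1}{\beta},
\]
the right-hand side being $0$ when $\beta=\infty$ by the exact identity \eqref{eqn:approxShiftCovZeroTemp}. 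Since $G$ is Lipschitz, this lets us replace $g(\That_v\Xi_n)$ by $g\bigl(\T_v\omega,\{\Bn{U_n-v\cdot\uhat_i,\,\e}{\sA_i,\beta_i,\m_i}(x_i,x_i+z_i)(\T_v\omega)\}_i\bigr)$ at an additive cost at most $\rho(\e):=\|f\|_\infty\,\mathrm{Lip}(G)\,2\e|v|_1\sum_{i:\,\beta_i<\infty}\beta_i^{-1}$, which tends to $0$ as $\e\to0$. The measure-preserving change of variables $\omega\mapsto\T_{-v}\omega$ on $(\Omega,\bbP)$ then eliminates the outer $\T_v$, leaving $\bbEU\bigl[f(\omega)\,G\bigl(\{\Bn{U_n-v\cdot\uhat_i,\,\e}{\sA_i,\beta_i,\m_i}(x_i,x_i+z_i)(\omega)\}_i\bigr)\bigr]$, which must be compared with $\bbEU[g(\Xi_n)]=\bbEU\bigl[f(\omega)\,G\bigl(\{\Bn{U_n,\e}{\sA_i,\beta_i,\m_i}(x_i,x_i+z_i)(\omega)\}_i\bigr)\bigr]$; the only difference is that the $i$-th cocycle is now evaluated at slab level $U_n-v\cdot\uhat_i$ instead of $U_n$.

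It then remains to show this last difference is $o(1)$ as $n\to\infty$ for fixed $\e$, after which passing $n\to\infty$ along the defining subsequence gives $\bigl|\bbEe[g\circ\That_v]-\bbEe[g]\bigr|\le\rho(\e)$ and then $\e\to0$ (vacuous, i.e.\ $\rho\equiv0$, when $\bbPhat=\bfPs0$) gives $\bbEhat[g\circ\That_v]=\bbEhat[g]$. Here $U_n$ is uniform on the growing window $[0,n]$ and each shift $v\cdot\uhat_i$ is a fixed constant, uniformly bounded over the finitely many coordinates: when all $\uhat_i$ coincide the comparison is immediate, since a change of variables in $U_n$ produces only an $O(|v|_1\,/\,n)$ boundary term; in general the cocycles in the family are built with \emph{different} slab directions $\uhat_i$, so the shifts cannot all be undone by a single reparametrization of $U_n$, and one must instead use the uniform $L^1$-control of the level increments from Lemma~\ref{lm:EF-bound} to argue that a bounded, direction-dependent level shift is asymptotically invisible to the average over $[0,n]$. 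This last point is the step I expect to require the most care; the rest --- the measure-determining property of bounded Lipschitz cylinder functions, the change of variables, and the integrability inputs --- is routine.
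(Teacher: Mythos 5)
Your proposal follows the paper's own route almost exactly: pass to the pre-limit laws $\bfPne$, use $\T$-invariance of $\bbP$ together with the approximate shift-covariance \eqref{eqn:approxShiftCov}--\eqref{eqn:approxShiftCovZeroTemp} to convert the effect of $\That_v$ into a shift of the level variable, and then appeal to the Cesàro average over $[0,n]$ to wash the level shift away. You are right to single out the subtlety that the shift $v\cdot\uhat$ is indexed by the triple $(\sA,\beta,\m)$: when the test function depends on coordinates drawn from several triples, a single change of variables $s\mapsto s+v\cdot\uhat$ cannot undo all the shifts simultaneously. Note that the paper's own proof performs precisely one such change of variables (with the boundary discrepancy bounded by $2\abs{y\cdot\uhat}\sup\abs{f}$), so it too is written as if the $\uhat$'s coincide; you are doing well to pause here.

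The remedy you propose, however, does not fill the hole. Lemma~\ref{lm:EF-bound} controls the $n^{-1}$-normalized oscillation of $\freelevn{\orig}{n+s}{\sA,\beta,\e}$ over a bounded window in $s$, and only up to an additive $O(\abs{\xi}_1\e/\beta)$ in the limit; translated back, this gives an $O(\e\,n)$, not $o(1)$, bound on the level increment $\bigl\lvert\freelevn{\orig}{s-c}{\sA,\beta,\e}-\freelevn{\orig}{s}{\sA,\beta,\e}\bigr\rvert$ at levels $s$ of order $n$. After forming the cocycle increment $\Bn{s-c,\e}{\sA,\beta,\m}-\Bn{s,\e}{\sA,\beta,\m}$ (which is $O(1)$ in $L^1$ uniformly in $s$, by the tightness estimates, but not small) and then Cesàro averaging over $s\in[0,n]$, one still has a quantity that is $O(1)$, not vanishing, so "asymptotic invisibility of a direction-dependent bounded level shift" is exactly what remains to be proved and is not a consequence of Lemma~\ref{lm:EF-bound}. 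A clean way around this difficulty would be to run the whole construction with an independent uniform level variable $U_n^{(\sA,\beta,\m)}$ for each triple: the cocycle, recovery, mean-identification, and independence conclusions of Theorem~\ref{thm:Cocycles} are all per-triple statements (sharing only the common $\w$), so they survive unchanged, while the $\That$-invariance argument then goes through coordinate by coordinate via the per-triple change of variables.
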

	
	\begin{proof}
    Let $x,y \in \ZZ^d$, and $z \in \rangeA$.  
    Using \eqref{eqn:approxShiftCov} we get
    \begin{align}
    \Bigl| \Bn{t, \e}{\sA, \beta, \m}(x+y,x+y+z,\T_{-y}\w) - \Bn{t-y\cdot\uhat, \e}{\sA, \beta, \m}(x,x+z,\w) \Bigr| \leq \frac{2 \e \abs{y}_1}{\beta}. \label{eqn:preLimitCov}
    \end{align}
%
    
    Fix $y\in\ZZ^d$. Let $\alpha \geq 0$, $\ell \in \ZZ_{>0}$, and take $f$ to be a bounded, $\alpha$-Lipschitz continuous function of any $\ell$ coordinates of $\omegahat$. These coordinates can be faces $\sA$, vectors $\m$, inverse temperatures $\beta$, vertices $x$, or increments $z$. Let $t_0$ be large enough so that $t_0 > \max\{y\cdot\uhat,x \cdot \uhat,(x+z) \cdot \uhat,(x+y)\cdot\uhat,(x+y+z)\cdot\uhat :z\in\rangeA\}$, 
    where the  maximum goes over all of the at most $\ell$ values of the $x \in \ZZ^d$ coordinates on which $f$ depends.  Let $(n_{\e, k})_{k \in \ZZ_{>0}}$ be the subsequence along which $\bfPs{n,\e}$ converges weakly to $\bfPe$.  Let $\e_j \rightarrow 0$ be the sequence along which $\bfPs{\e_j}$ converges weakly to $\bbPhat$.  Then using continuity of $\That_{y}$, we have 
    \begin{align*}
        \bbEhat[f \circ \That_{y}] &= \lim_{j \rightarrow \infty} \bfEs{\e_j}[f \circ \That_{y}] = \lim_{j \rightarrow \infty} \lim_{k \rightarrow \infty} \bfEs{n_k, \e_j}[f \circ \That_{y}]\\
        &= \lim_{j \rightarrow \infty} \lim_{k \rightarrow \infty} \bfEs{n_k, \e_j}\bigl[f\bigl(\T_y\w,\{\B{\sA, \beta, \m}(x+y,x+y+z)\}\bigr)\bigr]\\
        &= \lim_{j \rightarrow \infty} \lim_{k \rightarrow \infty} \Bigl(\sO(n_k^{-1}) + \frac{1}{n_k} \int_{t_0}^{n_k} \bbE\bigl[f\bigl(\T_y \w, \{\Bn{s, \e_j}{\sA, \beta, \m}(x+y,x+y+z, \w)\}\bigr)\bigr] \,ds \Bigr) \\
        &= \lim_{j \rightarrow \infty} \lim_{k \rightarrow \infty} \frac{1}{n_k} \int_{t_0}^{n_k} \bbE\bigl[f\bigl(\w, \{\Bn{s, \e_j}{\sA, \beta, \m}(x+y,x+y+z, \T_{-y}\w)\}\bigr)\bigr] \,ds \\
        &= \lim_{j \rightarrow \infty} \lim_{k \rightarrow \infty} \frac{1}{n_k} \int_{t_0}^{n_k} \bigl\{\bbE\bigl[f\bigl(\w, \{\Bn{s-y\cdot\uhat, \e_j}{\sA, \beta, \m}(x,x+z,\w)\}\bigr)\bigr] + \alpha\ell\sO(\beta^{-1}\e_j\abs{y}_1)\bigr\} \,ds \\
        &= \lim_{j \rightarrow \infty} \lim_{k \rightarrow \infty} \frac{1}{n_k} \int_{t_0 -y\cdot\uhat}^{n_k-y\cdot\uhat} \bbE\bigl[f\bigl(\w, \{\Bn{s, \e_j}{\sA, \beta, \m}(x,x+z,\w)\}\bigr)\bigr] \,ds \\
        &= \lim_{j \rightarrow \infty} \lim_{k \rightarrow \infty} \frac{1}{n_k} \int_{t_0}^{n_k} \bbE[f(\w, \{\Bn{s, \e_j}{\sA, \beta, \m}(x,x+z,\w)\})] \,ds = \lim_{j \rightarrow \infty} \lim_{k\rightarrow \infty} \bfEs{n_k,\e_j}[f] = \bbEhat[f].
    \end{align*}
    On the fourth line, we used the $\T$-invariance of $\bbP$. On the fifth line, we used \eqref{eqn:preLimitCov} and that $f$ is an $\alpha$-Lipschitz continuous function of $\ell$ coordinates of $\omegahat$.  
    The first equality on the last line follows because the integrals differ by at most $2\abs{y\cdot \uhat}\sup\abs{f}$.
	\end{proof}

    We are now ready to prove the main theorem of this section.
    
    \begin{proof}[Proof of Theorem \ref{thm:Cocycles}]
    	For each $n \in \ZZ_{>0}$, $\e \geq 0$, and $A \in \Sig$, $\bfPne(\wProj \in A) = \bbP(\w \in A)$. Since $\wProj$ is continuous, taking $n\to\infty$ and $\e\to 0$ shows that $\bbPhat$ too has this property, for all closed sets $A$ and hence also for all $A\in\Sig$. Part \eqref{Cocycles.a} is proved.  
    
	Take $x, y \in \ZZ^d$ with $y-x \in \RsemiA$.  Consider a path $x_{0:k} \in \PathsPtPUnr{x}{y}$ of some length $k \in \ZZ_{\ge0}$. For any $t$ large enough that $t> x_i \cdot \uhat$ for all $0 \leq i \leq k$,
	\begin{align*}
		\sum_{i=0}^{k-1} \Bn{t,\e}{\sA, \beta, \m}(x_i, x_{i+1}) &= \sum_{i=0}^{k-1} \left(  \freelevn{x_i}{t}{\sA, \beta, \e} -\freelevn{x_{i+1}}{t}{\sA, \beta, \e} - h\cdot(x_{i+1}-x_i) \right) =  \freelevn{x}{t}{\sA, \beta, \e} - \freelevn{y}{t}{\sA, \beta, \e} - h\cdot (y-x). 
	\end{align*}
	Thus, for any pair of paths $x_{0:k},x'_{0:k'}\in\PathsPtPUnr{x}{y}$, $\bbPU$-almost surely, on the event  
	\[\bigl\{U_n>c''=\max\{x_i \cdot \uhat, x'_j \cdot \uhat :0\le i\le k,0\le j\le k'\}\bigr\},\]
	we have 
	\begin{align*}
		\sum_{i=0}^{k-1} \BUne{\sA, \beta, \m}(x_i, x_{i+1}) = \sum_{i=0}^{k'-1} \BUne{\sA, \beta, \m}(x'_i, x'_{i+1}) . 
	\end{align*}

	From this, we get
	\[\bfPne\Bigl\{\sum_{i=0}^{k-1} \B{\sA, \beta, \m}(x_i, x_{i+1}) = \sum_{i=0}^{k'-1} \B{\sA, \beta, \m}(x'_i, x'_{i+1})\Bigr\}\ge
	\bbPU\{U_n> c''\}\mathop{\longrightarrow}_{n\to\infty}1.\]
	Since the event on the left-hand side is closed and $\bbPhat$ is a weak limit point of $\bfPne$, 
	we conclude that the event has $\bbPhat$-probability one. Similarly, for any $n\in\ZZ_{>0}$ and $\e \geq 0$,
	$\BUne{\sA, \beta, \m}(x,x)=0$, $\bbPU$-almost surely and hence $\B{\sA, \beta, \m}(x,x)=0$, $\bbPhat$-almost surely, for any $x\in\ZZ^d$.
	Consequently, there is a $\That$-invariant event $\Omegahat_0$ with $\bbPhat(\Omegahat_0) = 1$ such that for all $\omegahat\in\Omegahat_0$, $\sA \in \faces$, $(\beta, \m) \in \DctbA$,  and $x,y\in\ZZ^d$ with $y-x\in\RsemiA$, $\B{\sA, \beta, \m}(x,x,\omegahat)=0$ and
	\begin{align}\label{Bdef}
	\B{\sA, \beta, \m}(x, y, \omegahat) = \sum_{i=0}^{k-1} \B{\sA, \beta, \m}(x_i, x_{i+1}, \omegahat)
	\end{align}
	has the same value for all paths $x_{0:k} \in \PathsPtPUnr{x}{y}$.  
	
	For each $z \in \rangeA$ and $x \in \ZZ^d$, set $\B{\sA, \beta, \m}(x, x-z, \omegahat) = -\B{\sA, \beta, \m}(x-z, x, \omegahat)$.  This definition is consistent with the one above when it also happens to be that $-z\in\RsemiA$. 
	With this definition we have that the right-hand side of \eqref{Bdef} has the same value for all paths $x_{0:k}$ with $k\in\ZZ_{\ge0}$, $x_0=x$, $x_k=y$, and $x_{i+1}-x_i\in\rangeA\cup(-\rangeA)$ for $0\le i\le k-1$. Therefore, one can use the right-hand side of \eqref{Bdef} to define $\B{\sA, \beta, \m}(x,y,\omegahat)$ for all $x,y\in\ZZ^d$ with $y-x \in \RgroupA$ and $\omegahat\in\Omegahat_0$. With this definition, we have the cocycle property \eqref{Bcoc}, for all $\omegahat\in\Omegahat_0$, $(\beta, \m) \in \DctbA$, and $u,x,y\in\ZZ^d$ such that $x-u, y-x \in \RgroupA$.

The covariance property \eqref{Bcov} holds, for all $\omegahat\in\Omegahat$ and $v,x,y\in\ZZ^d$ with $y-x\in\rangeA$ simply by the definition of the shift $\That_z$. Then the definition \eqref{Bdef} ensures that this holds for all $\omegahat\in\Omegahat_0$ and $v,x,y\in\ZZ^d$ such that $y-x \in \RgroupA$, without the restriction $y-x\in\rangeA$.

Using a similar argument as above one deduces from \eqref{unrPosPreLimRecovery} and \eqref{unrZeroPreLimRecovery} and passing $n\to\infty$ and $\e\to 0$ that there exists a $\That$-invariant event $\OmegaCoc\subset\Omegahat_0$ with $\bbPhat(\OmegaCoc)=1$ such that \eqref{Brecbeta} and \eqref{Brecinf} hold for all $\omegahat\in\OmegaCoc$, 
$x\in\ZZ^d$, and $(\beta, \m) \in \DctbA$.
	Part \eqref{Cocycles.c} is proved.
	
	Next, we prove part \eqref{Cocycles.b}. 
	By \eqref{fofo} we have that $\bbPU$-almost surely, for all $\sA \in \faces$, $(\beta, \m) \in \DctbA$, $x \in \ZZ^d$, and $z \in \rangeA$,  $\Bn{\e}{\sA, \beta, \m}(x, x+z) \geq -\abs{\pote(\T_x \omega, z)} + \beta^{-1} \log p(z)$.

 
	Recall the vector $\mVec(\B{\sA, \beta, \m})$ from \eqref{meanVec}. Using (\ref{Brecbeta}-\ref{Brecinf}) and applying Fatou's Lemma to  (\ref{E|B|}-\ref{eqn:FatouForEpsilon}) we get that $\B{\sA, \beta, \m}(x, x+z)$ is integrable under $\bbPhat$  and that
	\begin{align}
		-\E[\abs{\pote(\T_x \omega, z)}] + \beta^{-1} \log p(z)\le\bbEhat[\B{\sA, \beta, \m}(x, x+z)] \leq \m\cdot z. \label{FatouUnr}
	\end{align}
	%
	This implies that $\bbEhat[\mVec(\B{\sA, \beta, \m})\cdot \zeta]  \leq \m\cdot \zeta$ for all $\zeta \in \sA$.  In particular, $\bbEhat[\mVec(\B{\sA, \beta, \m})\cdot \xi]  \leq \m \cdot \xi = \ppShapeUnr{\beta}(\xi)$.

    By the variational formula \cite[(2.8)]{Jan-Nur-Ras-22}, $-\ppShapeUnr{\beta}(\xi) \geq -\mVec(\B{\sA, \beta, \m}) \cdot \xi$, $\bbPhat$-almost surely.  Hence $\mVec(\B{\sA, \beta, \m}) \cdot \xi = \ppShapeUnr{\beta}(\xi)$, $\bbPhat$-almost surely. Since $\xi \in \ri \sA$, we can scale $\xi$ by a positive constant to obtain a direction in $\ri \UsetA$, apply \cite[Theorem 6.9]{Roc-70}, and rescale back to $\xi$ to obtain constants $a_z > 0$ for $z\in\rangeA$, such that $\xi = \sum_{z \in \rangeA} a_z z$. Then $\bbPhat$-almost surely,
    \[
    \sum_{z \in \rangeA} a_z z \cdot \mVec(\B{\sA, \beta, \m}) = \mVec(\B{\sA, \beta, \m})\cdot\xi = \ppShapeUnr{\beta}(\xi) = \m\cdot \xi = \sum_{z \in \rangeA} a_z z \cdot \m.
    \]
    Taking the expectation and rearranging, we obtain
    \[
    \sum_{z \in \rangeA} a_z (\m \cdot z -  \bbEhat[\mVec(\B{\sA, \beta, \m})] \cdot z ) = 0.
    \]
	
	By \eqref{FatouUnr}, $\bbEhat[\mVec(\B{\sA, \beta, \m})] \cdot z \leq \m \cdot z$ for each $z \in \rangeA$, so each term in the sum is non-negative.  Therefore each term is exactly 0.  Since $a_z > 0$, it must be that $\bbEhat[\mVec(\B{\sA, \beta, \m})] \cdot z = \m \cdot z$ for each $z \in \rangeA$.  Since $\m$ and $\bbEhat[\mVec(\B{\sA, \beta,\m})]$ are vectors of the linear subspace generated by the steps $z \in \rangeA$, this implies that $m = \bbEhat[\mVec(\B{\sA, \beta,\m})]$, and part \eqref{Cocycles.b} follows. 
	
	It remains to prove part \eqref{Cocycles.d} of the theorem. Recall the definitions of $I^{\ge}_{\sA}$ and $I^{<}_{\sA}$ in \eqref{def:Igeq} and \eqref{def:Ileq}. Observe that the distribution of 
	   \[\Bigl(\{\pote(T_v\w,z):z\in\rangeA, v \in I_{\sA}^{<}\},\bigl\{ \B{\sA, \beta, \m}(x,x+z, \omegahat) : x \in I, z \in \rangeA, (\beta, m) \in \DctbA \bigr\}\Bigr)\]
	   under $\bbPhat$ comes from taking subsequential limits of the distribution of 
	   \begin{align}\Bigl(\{\pote(T_v\w,z):z\in\rangeA, v \in I_{\sA}^{<}\},\bigl\{ \BUne{\sA, \beta, \m}(x,x+z, \omega) : x \in I, z \in \rangeA, (\beta, m) \in \DctbA \bigr\}\Bigr)\label{eq:BusIndep}\end{align}
	   under $\bbPU$, first taking $n\to\infty$ then $\e\to0$. Note that the hypotheses of part \eqref{Cocycles.d} of the theorem imply that the two collections in \eqref{eq:BusIndep} are independent.
       The independence claim comes then from the fact that if $\{(\pote(T_v\w,z))_{z\in\rangeA}:v\in\ZZ^d\}$ are independent under $\bbP$, then the two families in the above display are independent under $\bbPU$.
	The proof of Theorem \ref{thm:Cocycles} is complete.
	\end{proof}
	
		     The same argument that gives \eqref{Brecinf} from \eqref{unrZeroPreLimRecovery} proves the following lemma using Lemma \ref{lm:rec-gen-t}.

		\begin{lemma}\label{lm:rec-gen}
		 Suppose the assumptions of Theorem \ref{thm:Cocycles} hold.
		 There exists a $\That$-invariant event $\Omegahat_0$ with $\bbPhat(\Omegahat_0) = 1$ such that for all $\omegahat\in\Omegahat_0$, each face $\sA\in \faces$, $\m$ such that $(\infty,\m)\in \DctbA$, and any finite set $A\subset\ZZ^d$,
there exist a $y\in A$ and a $z\in\rangeA$ such that $y+z\not\in A$ and
    $\pote(\T_y\w,z)+\B{\sA, \infty, \m}(y,y+z)=0$.
		 \end{lemma}
		 
	We close this section with a result showing a version of weak continuity of covariant cocycles. For $\sA\in\faces$ let 
	$\wt\Omega_\sA=\Omega \times \R^{\ZZ^d\times\rangeA}$,  equipped with the product topology and its Borel $\sigma$-algebra, ${\wt\Sig}_\sA$. 
	For $\wt\w \in \wt\Omega_\sA$, 
	let $\w(\wt\w)$ be the projection to its $\Omega$ coordinate $\w$ and 
	let $\B{\sA}(x, x+z, \wt\w)$ be the $(x, z)$-coordinate of $\wt\w$.  
	
     \begin{lemma}\label{lem:partWeakCont}
    Fix a face $\sA\in \faces$. Suppose that $\pote^-(\w,z)\in L^1(\P)$ for each $z \in \rangeA$.
    Let $(\beta, \m)$ be such that $\beta \in (0,\infty]$, $\ppShapeUnrA{\beta,\usc}$ is deterministic and finite on $\sA$, and there exists a $\xi \in(\ri \sA)\setminus\{\orig\}$ such that $\m \in \subspaceA \cap \superDiffUnrA{\beta,\usc}(\xi)$. Let $\beta_n \in (0,\infty]$ and $\m_n \in \superDiffUnrA{\beta_n, \usc}(\ri\sA)$ be such that $\beta_n \rightarrow \beta$ and $\m_n \rightarrow \m$.  For each $n$, let $\B{\beta_n, \m_n}$ be an $L^1(\bbP)$ shift-covariant $\beta_n$-recovering cocycle on $\sA$ which satisfies the conclusions of Theorem \ref{thm:Cocycles}\eqref{Cocycles.c} and \eqref{Cocycles.b}.  Then there is a subsequence $\{n_k\}_{k \geq 1}$ such that $\B{\beta_{n_k}, \m_{n_k}}$ converges weakly to a random variable $\B{\beta, m}$ on $(\wt\Omega,\wt\Sig)$, which also satisfies the conclusions of Theorem \ref{thm:Cocycles}\eqref{Cocycles.c} and \eqref{Cocycles.b} on $\sA$.
     \end{lemma}
     
     \begin{proof}
          Let $\beta_n \rightarrow \beta$, $m_n \rightarrow m$, and $\B{\beta_n, m_n}$ be as in the statement. Recovery implies that for $x \in \Z^d$ and $z \in \sA$, $\B{\beta_n,m_n}(x,x+z) \geq \pote(\w(\That_x \widehat{\w}),z)-\beta_n^{-1}\log p(z)$; combining this with convergence of the mean vectors $m_n \rightarrow m$, we see that the distributions of $(\w,\B{\beta_n, m_n})\in\wt\Omega$ under $\bbP$ are tight.
          Let $\wt\bbP$ be a weak limit along a subsequence $\{n_k\}_{k \geq 1}$. Then, we can repeat the proof of Theorem \ref{thm:Cocycles}\eqref{Cocycles.b} to show that, under $\wt\bbP$, the coordinate projection $\B{\sA}(x,x+z)$ is integrable with mean $\m\cdot z$. The covariance property $\eqref{Bcov}$ and cocycle property $\eqref{Bcoc}$ hold for each $\B{\beta_n, m_n}$ and are inherited in the limit since coordinate projections and shifts are continuous in the product topology. 

         Define  the function $f:(0,\infty]\times\R^{\Z^d\times\rangeA}\to\R$ by
         \begin{align*}
         f(\beta,B)=\begin{cases}
         \frac{1}{\beta} \log \sum_{z \in \rangeA} p(z) \exp \bigl\{ -\beta B(x,x+z) - \beta \pote(\omega(\wt\T_x \wt\w), z) \bigr\}&\text{if }\beta\in(0,\infty),\\
         f(\infty,B)=-\min_{z \in \rangeA} (B(x,x+z) + \pote(\omega(\wt\T_x \wt\w), z)).
         \end{cases}
         \end{align*}
         Note that $f$ is continuous on $(0,\infty]\times\R^{\Z^d\times\rangeA}$, including at $\beta=\infty$.           

        Thus, the recovery properties \eqref{Brecbeta} and \eqref{Brecinf} that  $\B{\beta_{n_k},\m_{n_k}}$ satisfy for each $k$ can be transferred to $\B{\sA}$ by applying Skorohod's representation theorem.
 \end{proof}

	\section{Semi-infinite path measures}\label{sec:semiinf}
	
	
	We begin by defining semi-infinite polymer measures from a shift-covariant recovering cocycle.  Throughout this section, $(\Omega,\Sig,\bbP,\{\T_x:x\in\ZZ^d\})$ satisfies the general stationary setting described in the first paragraph of Section \ref{sec:rwrp}.  We note that the extended space $(\Omegahat,\Sighat,\bbPhat,\{\That_x:x\in\ZZ^d\})$ from Theorem \ref{thm:Cocycles} satisfies these hypotheses.  We switch to this case in Section \ref{sec:MainThmProofs} where we prove Theorem \ref{thm:CocycleMeasuresFace}.
	
	\begin{definition}\label{pathMeasureDefPos}
		Let $B$ be an $L^1(\bbP)$ $\T$-covariant $\beta$-recovering cocycle on a face $\sA \in \faces$ {\rm(}possibly $\cone$ itself\,{\rm)}.  
		For $x_0\in\ZZ^d$, $0 < \beta < \infty$, and $\w\in\Omega$ such that \eqref{eqn:rec-beta} is satisfied for all $x\in\ZZ^d$,
		define $\Qinf{x_0}{\beta, B, \w}$ to be the probability measure on $\PathsSemiInf{x_0}(\rangeA)$ that is the distribution of the Markov process $X_{0:\infty}$ with transition probability from $x\in\ZZ^d$ to $x+z$, $z\in\rangeA$, given by 
		\[p(z) e^{-\beta B(x, x+z, \w) - \beta \pote(\T_x \w, z)}.\]
	\end{definition} 

For a subset $A\subset\ZZ^d$, let $\prob_A$ be the set of all probability measures on $A$, with the convention that $\prob_\varnothing=\varnothing$. 

    \begin{definition}\label{def:tieBreaker}
        A tie-breaker is an element $\tie\in\Bigl(\prod_{A\subset\range}\prob_A\Bigr)^{\Z^d}$. A covariant tie-breaker is a measurable function $\tie:\Omega\to\Bigl(\prod_{A\subset\range}\prob_A\Bigr)^{\Z^d}$ that is covariant: for any $x,z\in\ZZ^d$ and $A\subset\range$, $\tie_{x,A}^{T_z\w}=\tie_{x+z,A}^\w$. 
    \end{definition}

	\begin{definition}\label{pathMeasureDefZero}
		Let $B$ be an $L^1(\bbP)$ $\T$-covariant $\infty$-recovering cocycle on a face $\sA \in \faces$ {\rm(}possibly $\cone$ itself\,{\rm)}.  
		For $x_0 \in \ZZ^d$, $\w \in \Omega$ such that \eqref{eqn:rec-inf} is satisfied for all $x\in\ZZ^d$, and a tie-breaker $\tie$ define $\Qinf{x_0}{\infty, B, \tie,\w}$ to be  the probability measure on  $\PathsSemiInf{x_0}(\rangeA)$ that is the distribution of the Markov process $X_{0:\infty}$ with transition probability from $x\in\ZZ^d$ to $x+z$, $z\in\rangeA$, given by $\tie_{x,A}(z)$, where
		\[A=\bigl\{z\in\rangeA:\B{}(x, x+z, \w) + \pote(\T_x \w, z)=0 \bigr\}.\]
	\end{definition}
	
	    
\begin{example}\label{ranking}
    Let $\sS$ denote the set of bijections from $\range$ to $\{1,\dotsc,|\range|\}$.
    One natural way to define a covariant tie-breaker is by taking a covariant measurable function $\mathfrak z:\Omega\to\sS^{\Z^d}$ and setting $\tie_{x,A}^\w$ to be a Dirac mass at the step in $A$ that has the lowest $\mathfrak z_x^\w$-value.  
	    In this case, the probability measure $\Qinf{x_0}{\infty, B, \tie,\w}$ is a Dirac mass on a single semi-infinite path in $\PathsSemiInf{x_0}(\rangeA)$, which is the path that out of $x$ follows the increment that satisfies $\B{}(x, x+z, \w) + \pote(\T_x \w, z)=0$ and uses the ranking given by $\mathfrak z_x$ to break the tie if there are multiple such increments. 
\end{example}

	

In the next theorem, we will use a condition to guarantee the transience of semi-infinite geodesic paths at zero temperature.

\begin{condition}\label{BCondition}
Given a face $\sA \in \faces$ {\rm(}possibly $\cone$ itself\,{\rm)} and
an $\infty$-recovering cocycle $B$ on the face $\sA$,
assume that $\bbP$-almost surely, for any finite set $A\subset\ZZ^d$, there exist a $y\in A$ and a $z\in\rangeA$ such that $y+z\not\in A$ and $\pote(\T_y\w,z)+B(y,y+z)=0$.
\end{condition}

\begin{remark}
This condition is necessary for transience at zero temperature since a set $A$ violating the condition acts as a trap from which the semi-infinite path cannot exit, $\Qinf{u}{\infty,B,\tie,\w}$-almost surely. We will show in Lemma \ref{lm:trans.zero} below that the converse is also true. 
Hence, this condition is, in fact, equivalent to having transient paths. 
\end{remark}

\begin{remark}\label{rk:zeroloop}
Lemma \ref{lm:trap} below shows that $\infty$-recovering cocycles that violate the above condition may exist. 
Nevertheless, by Lemma \ref{lm:rec-gen}, Condition \ref{BCondition} is always satisfied for the cocycles $\B{\sA,\beta, \m}$ that we construct in Theorem \ref{thm:Cocycles}.
\end{remark}

The following theorem shows that the measures $\Qinf{u}{\beta,B,\w}$ are consistent with both the restricted-length and unrestricted-length point-to-point measures. It also relates these measures to the Green's function \eqref{g:def}. As the next result is a precursor to Theorem \ref{thm:CocycleMeasuresFace}, it may be helpful to consult the statement and discussion around that theorem to contextualize the notation below.

\begin{theorem}\label{thm:Measures} 
	Fix a face $\sA\in \faces$ {\rm(}possibly $\cone$ itself\,{\rm)}. 
    Assume $\pote$ satisfies Conditions \ref{VCondition} and \ref{classLCondition} on $\sA$. 
    Let $\B{}$ be an $L^1(\bbP)$ $\T$-covariant $\beta$-recovering cocycle on the face $\sA$. 
    There exists a $\T$-invariant event $\Omega_B \subset \Omega$ with $\bbP(\Omega_B) = 1$ such that for all $\w \in \Omega_B$, the following hold.
	\begin{enumerate}[label=\rm(\alph{*}), ref=\rm\alph{*}] \itemsep=2pt 
		\item\label{Measures.consPos} \textup{(Consistency with $\Qunr{x}{y}{\beta, \w}$.)} If $\beta < \infty$, for all non-negative integers $j \leq k \leq n$, points $u, x, y \in \ZZ^d$ with $x-u$ and $y-x$ in  $\RsemiA$,  $x_{0:k} \in \PathsPtPKilled{u}{y}$ with $j = \min\{0 \leq i \leq k : x_i = x\}$, and $x_{k:n} \in \PathsNStep{y}{n-k}(\rangeA)$, then 
	\begin{align} 
	\begin{split}
	&\Qinf{u}{\beta, B, \w}(X_{0:\stoppt{y}+n-k} = x_{0:n} \given \stoppt{x} \leq \stoppt{y} <\infty, X_{0:\stoppt{x}} = x_{0:j}, X_{\stoppt{y}:\stoppt{y}+n-k}=x_{k:n}) \\
		&\qquad=\Qinf{u}{\beta, B, \w}(X_{\stoppt{x}:\stoppt{y}} = x_{j:k} \given \stoppt{x} \leq \stoppt{y} <\infty) = \Qunr{x}{y}{\beta, \w}(X_{0:\stoppt{y}} = x_{j:k}).  
		\end{split}\label{eqn:consist}
	\end{align}

	\item\label{Measures.Greens} \textup{(Consistency with $\greens(x,y)$.)} If $\beta<\infty$, then for each $x, y \in \ZZ^d$ such that $y-x \in \RsemiA$,
    \begin{align}
         \rwE^{\Qinf{x}{\beta,B,\w}}\Bigl[ \sum_{n=0}^\infty \one_{\{X_n = y\}} \Bigr] = \greens(x,y)e^{-\beta B(x,y)}. \label{eqn:greensExpectation}
    \end{align}
    Hence, if $x\ne y$,
    \begin{align}
         \rwE^{\Qinf{x}{\beta,B,\w}}\Bigl[ \sum_{n=0}^\infty \one_{\{X_n = y\}} \Big| \stoppt{y}<\infty \Bigr] = \frac{\greens(x,y)}{\prtunr{x}{y}{\beta,\w}}. \label{eqn:greensExpCond}
    \end{align}

    \item\label{Measures.consPosRes} \textup{(Consistency with $\Qres{x}{y}{n}{\beta, \w}$.)} If $\beta < \infty$, for all non-negative integers $j \leq k \leq n$, points $u, x, y \in \ZZ^d$ such that $x-u \in \DnA{j}$ and $y-x \in \DnA{k-j}$, $x_{0:j} \in \PathsPtPResKM{u}{x}{0}{j}$, $x_{j:k} \in \PathsPtPResKM{x}{y}{j}{k}$, and $x_{k:n} \in \PathsNStep{y}{n-k}(\rangeA)$, 
	\begin{align}
    \begin{split}
		\Qinf{u}{\beta, B, \w}(X_{0:n} = x_{0:n} \given X_{0:j} = x_{0:j}, X_{k:n} = x_{k:n}) &= \Qinf{u}{\beta, B, \w}(X_{j:k} = x_{j:k} \given X_j = x, X_k = y) \\
		&= \Qres{x}{y}{k-j}{\beta, \w}(X_{0:k-j} = x_{j:k}).
    \end{split}\label{eqn:consistRes}
	\end{align} 		
	
	\item\label{Measures.consZero} If $\beta = \infty$, for each $x \in \ZZ^d$ and tie-breaker $\tie$, $\Qinf{x}{\infty, B, \tie,\w}$ is supported on a set of semi-infinite geodesics rooted at $x$ and having increments in $\rangeA$.
	
	\item\label{Measures.LLNLimit} If $\beta<\infty$, then for each $x \in \ZZ^d$, 
	 \[\Qinf{x}{\beta, B, \w}(\text{all limit points of } X_n/n \text{ are contained in } \facetRes{\mVec(B),\sA}) = 1.\]
	Similarly, if $\beta=\infty$, then for each $x \in \ZZ^d$ and tie-breaker $\tie$, 
	\[\Qinf{x}{\beta, B,\tie, \w}(\text{all limit points of } X_n/n \text{ are contained in } \facetRes{\mVec(B),\sA}) = 1.\]
 	\end{enumerate}		
Furthermore, we have the following.
    \begin{enumerate}[label=\rm(\alph{*}), ref=\rm\alph{*},resume] \itemsep=2pt 
	\item\label{Measures.trans.pos} If $\beta<\infty$, then for any {\rm(}and hence all\,{\rm)} $x\in\ZZ^d$, $\Qinf{x}{\beta, B, \w}\{\abs{X_n}_1\to\infty\}=1$ $\bbP$-almost surely if and only if either $\sA\ne\sA_\orig$ or both $\sA=\sA_\orig$ and Condition \ref{trans} holds.
			
	\item\label{Measures.trans.zero} If $\beta=\infty$ and Condition \ref{BCondition} is satisfied, then there exists a covariant tie-breaker $\tie$ such that for $\bbP$-almost every $\w$ and for every $x\in\ZZ^d$, $\Qinf{x}{\infty, B,\tie, \w}\{\abs{X_n}_1\to\infty\}=1$.
			
    \item\label{Measures.Directed} For $\beta\in(0,\infty)$, if  $\Qinf{\orig}{\beta, B, \w}(\abs{X_n}_1\to\infty)=1$, $\bbP$-almost surely, then  for $\bbP$-almost every $\w$, for all $x\in\ZZ^d$,
    \[\Qinf{x}{\beta, B, \w}(X_{0:\infty} \text{ is directed into } \facetUnr{\mVec(B), \sA}) = 1.\]
    Similarly, for $\beta=\infty$ and a covariant tie-breaker $\tie$, if $\Qinf{\orig}{\beta, B,\tie, \w}(\abs{X_n}_1\to\infty)=1$, $\bbP$-almost surely, then  for $\bbP$-almost every $\w$, for all $x\in\ZZ^d$,
    \[\Qinf{x}{\beta, B,\tie, \w}(X_{0:\infty} \text{ is directed into } \facetUnr{\mVec(B),\sA}) = 1.\]
	\end{enumerate}
\end{theorem}

\begin{remark}\label{rk:Qunr->Qres}
If we consider a restricted-length model with $\pote$ satisfying Conditions \ref{VConditionRes} and \ref{classLCondition}, then we can use the space-time cocycles from Remark \ref{rmk:obtainingSTCocycles} to define semi-infinite polymer measures that are consistent with the restricted-length point-to-point polymer measures. Unlike the measures in the above theorem, these polymer measures are not necessarily consistent with the unrestricted-length point-to-point polymer measures. 
\end{remark}
	
We will prove the various claims in the above theorem as separate lemmas, beginning with the consistency with the finite-path measures.  

\begin{lemma}\label{lem:consistencyPos}
		Fix a face $\sA\in \faces$ {\rm(}possibly $\cone$ itself\,{\rm)}.
		Let $0 < \beta < \infty$ and $B$ be an $L^1(\bbP)$ $\T$-covariant $\beta$-recovering cocycle on $\sA$. 
	Take $\w\in\Omega$ such that \eqref{eqn:rec-beta} holds for all $x\in\ZZ^d$.  
	Then for all non-negative integers $j \leq k \leq n$, points $u, x, y \in \ZZ^d$ with $x-u \in \RsemiA$ and $y-x \in \RsemiA$, $x_{0:k} \in \PathsPtPKilled{u}{y}$ with $j = \min\{0 \leq i \leq k : x_i = x\}$, and $x_{k:n} \in \PathsNStep{y}{n-k}(\rangeA)$, 
 \eqref{eqn:consist} holds.
\end{lemma}

\begin{proof} Take $\beta, B, \w,j,k,n,u, x, y, x_{0:n}$ as in the statement. Let $\stoppt{x,y} = \inf\{k \geq \stoppt{x} : X_k = y\}$ where $\inf \varnothing = \infty$.  
Note that $\{\stoppt{x} \leq \stoppt{y}\}$ is in 
the stopped $\sigma$-algebra corresponding to the stopping time $\stoppt{x}$: 
for any integer $t \ge 0$, \[\{\stoppt{x}=t\} \cap \{\stoppt{x} \leq \stoppt{y}\} = \{\stoppt{x}=t\}\cap \{\stoppt{y}\ge t\} \in \sigma(X_k:k\le t).\]


In the second equality of the next computation, use the fact that on the event $\{\stoppt{x} \leq \stoppt{y}\}$, $\stoppt{y} = \stoppt{x,y}$.  In the third equality, use the above measurability observation and the strong Markov property. In the second-to-last equality, use the cocycle property. We have
	\begin{align*}
		&\Qinf{u}{\beta, B, \w}(X_{\stoppt{x}:\stoppt{y}} = x_{j:k} \given \stoppt{x} \leq \stoppt{y} <\infty) \\
		&= \frac{\Qinf{u}{\beta, B, \w}(\stoppt{x} \leq \stoppt{y} <\infty, X_{\stoppt{x}:\stoppt{y}} = x_{j:k})}{\Qinf{u}{\beta, B, \w}(\stoppt{x} \leq \stoppt{y} <\infty) } \\
		&= \frac{\Qinf{u}{\beta, B, \w}(\stoppt{x} \leq \stoppt{y},\stoppt{x,y}<\infty, X_{\stoppt{x}:\stoppt{x,y}} = x_{j:k})}{\Qinf{u}{\beta, B, \w}(\stoppt{x} \leq \stoppt{y},\stoppt{x,y} <\infty) } \\
		&= \frac{\Qinf{u}{\beta, B, \w}(\stoppt{x} \leq \stoppt{y})\Qinf{x}{\beta, B, \w}(\stoppt{y}<\infty, X_{0:\stoppt{y}} = x_{j:k})}{\Qinf{u}{\beta, B, \w}(\stoppt{x} \leq \stoppt{y})\Qinf{x}{\beta, B, \w}(\stoppt{y} <\infty) } \\
		&= \frac{\prod_{i=j}^{k-1} p(x_{i+1}-x_i) e^{-\beta B(x_{i}, x_{i+1}, \w) - \beta \pote(\T_{x_i} \w, x_{i+1}-x_i)}}{\sum_{k=0}^\infty \sum_{\pi_{0:k} \in \PathsPtPKilledRes{x}{y}{k}} \prod_{i=0}^{k-1} p(\pi_{i+1}-\pi_i) e^{-\beta B(\pi_i, \pi_{i+1}, \w) - \beta \pote(\T_{\pi_i} \w, \pi_{i+1}-\pi_i)}}  \\
		&= \frac{e^{-\beta B(x, y, \w)} \prod_{i=j}^{k-1} p(x_{i+1}-x_i) e^{-\beta \pote(\T_{x_i} \w, x_{i+1}-x_i)}}{e^{-\beta B(x, y, \w)}\prtunr{x}{y}{\beta, \w}}  \\
		&= \Qunr{x}{y}{\beta, \w}(X_{0:\stoppt{y}} = x_{j:k}).
	\end{align*}
	Similarly, 
	\begin{align*}
		&\Qinf{u}{\beta, B, \w}(X_{0:\stoppt{y}+n-k} = x_{0:n} \given \stoppt{x} \leq \stoppt{y} <\infty, X_{0:\stoppt{x}} = x_{0:j}, X_{\stoppt{y}:\stoppt{y}+n-k} = x_{k:n}) \\
		&= \frac{\Qinf{x}{\beta, B, \w}(\stoppt{y}<\infty, X_{0:\stoppt{y}+n-k} = x_{j:n})}{\Qinf{x}{\beta, B, \w}(\stoppt{y}<\infty, X_{\stoppt{y}:\stoppt{y}+n-k} = x_{k:n})} \\
		&= \frac{\Qinf{x}{\beta, B, \w}(\stoppt{y}<\infty, X_{0:\stoppt{y}} = x_{j:k}) \Qinf{y}{\beta, B, \w}(X_{0:n-k} = x_{k:n})}{\Qinf{x}{\beta, B, \w}(\stoppt{y}<\infty) \Qinf{y}{\beta, B, \w}(X_{0:n-k} = x_{k:n})} \\
		&= \frac{\Qinf{x}{\beta, B, \w}(\stoppt{y}<\infty, X_{0:\stoppt{y}} = x_{j:k})}{\Qinf{x}{\beta, B, \w}(\stoppt{y}<\infty) } \\
		&= \Qunr{x}{y}{\beta, \w}(X_{0:\stoppt{y}} = x_{j:k}),  
	\end{align*}
    where the last equality comes from the previous calculation.
	\end{proof}

    \begin{lemma}\label{lem:GreensExpectation}
        	Fix a face $\sA\in \faces$ {\rm(}possibly $\cone$ itself\,{\rm)}.
        	Fix $0 < \beta < \infty$ and let $B$ be an $L^1(\bbP)$ $\T$-covariant $\beta$-recovering cocycle on $\sA$.  For each $x, y \in \ZZ^d$ such that $y-x \in \RsemiA$, \eqref{eqn:greensExpectation} and \eqref{eqn:greensExpCond} hold.
    \end{lemma}
    \begin{proof}
        Take $\beta, B, x, y$ as in the statement.  First, use the cocycle property to obtain
        \[
        \Qinf{x}{\beta,B,\w}(X_n = y) = \sum_{x_{0:n} \in \PathsPtPRes{x}{y}{n}} \prod_{i=0}^{n-1} p(x_{i+1} - x_i) e^{-\beta B(x_i, x_{i+1}) - \beta \pote(\T_{x_i} \w, x_{i+1}-x_i)} = \prtres{x}{y}{n}{\beta,\w} e^{-\beta B(x,y)}.
        \]
        Sum over $n$ to obtain \eqref{eqn:greensExpectation}:
        \[
        \rwE^{\Qinf{x}{\beta,B,\w}}\Bigl[ \sum_{n=0}^\infty \one_{\{X_n = y\}} \Bigr] = \sum_{n=0}^{\infty} \Qinf{x}{\beta,B,\w}(X_n = y) = \sum_{n=0}^{\infty} \prtres{x}{y}{n}{\beta,\w} e^{-\beta B(x,y)} = \greens(x,y) e^{-\beta B(x,y)}.
        \]
        
        Similarly, if $x\ne y$,
        \begin{align}\label{Q-aux}
        \Qinf{x}{\beta,B,\w}(\stoppt{y}<\infty) = \sum_{n=0}^{\infty} \sum_{x_{0:n} \in \PathsPtPKilledRes{x}{y}{n}} \prod_{i=0}^{n-1} p(x_{i+1} - x_i) e^{-\beta B(x_i, x_{i+1}) - \beta \pote(\T_{x_i} \w, x_{i+1}-x_i)} = \prtunr{x}{y}{\beta,\w} e^{-\beta B(x,y)}.
        \end{align}
        
        Combining the two previous displays, we obtain \eqref{eqn:greensExpCond}.
    \end{proof}

	We next show consistency with the restricted-length finite quenched polymer measures.	
	\begin{lemma}\label{lem:consistencyPosRes}
		Fix a face $\sA\in \faces$ {\rm(}possibly $\cone$ itself\,{\rm)}.
		Fix $0 < \beta < \infty$ and let $B$ be an $L^1(\bbP)$ $\T$-covariant $\beta$-recovering cocycle on $\sA$. Take $\w\in\Omega$ such that \eqref{eqn:rec-beta} holds for all $x\in\ZZ^d$.  For $j, k, n \in \ZZ_{\ge0}$ with $j \leq k \leq n$, $u, x, y \in \ZZ^d$ such that $x-u \in \DnA{j}$ and $y-x \in \DnA{k-j}$, $x_{0:j} \in \PathsPtPResKM{u}{x}{0}{j}$, $x_{j:k} \in \PathsPtPResKM{x}{y}{j}{k}$, and $x_{k:n} \in \PathsNStep{y}{n-k}(\rangeA)$, 
    \eqref{eqn:consistRes} holds.	
\end{lemma}

\begin{proof}
			Take $\beta, B, \w,j,k,n,u, x, y$, and $x_{0:n}$ as in the statement. Using the Markov property and the cocycle property,
			
			\begin{align*}
					&\Qinf{u}{\beta, B, \w}(X_{0:n} = x_{0:n} \given X_{0:j} = x_{0:j}, X_{k:n} = x_{k:n}) \\
					&= \frac{\Qinf{u}{\beta, B, \w}(X_{0:n} = x_{0:n})}{\Qinf{u}{\beta, B, \w}(X_{0:j} = x_{0:j},X_{k:n} = x_{k:n})} \\
					&= \frac{\Qinf{u}{\beta, B, \w}(X_{0:j} = x_{0:j})\Qinf{x}{\beta, B, \w}(X_{0:k-j} = x_{j:k})\Qinf{y}{\beta, B, \w}(X_{0:n-k} = x_{k:n})}{\Qinf{u}{\beta, B, \w}(X_{0:j} = x_{0:j})\Qinf{x}{\beta, B, \w}(X_{k-j} =y)\Qinf{y}{\beta, B, \w}(X_{0:n-k} = x_{k:n})} \\
					&= \frac{\prod_{i=j}^{k-1} p(x_{i+1}-x_i) e^{-\beta B(x_i, x_{i+1}, \w) - \beta \pote(\T_{x_i} \w, x_{i+1}-x_i)}}{ \sum_{\pi_{0:k-j} \in \PathsPtPRes{x}{y}{k-j}} \prod_{i=0}^{k-j-1} p(\pi_{i+1}-\pi_i) e^{-\beta B(\pi_i, \pi_{i+1}, \w) - \beta \pote(\T_{\pi_i} \w, \pi_{i+1}-\pi_i)}} \\ 
					&= \frac{e^{-\beta B(x, y, \w)} \prod_{i=j}^{k-1} p(x_{i+1}-x_i) e^{-\beta \pote(\T_{x_i} \w, x_{i+1}-x_i)}}{ e^{-\beta B(x, y, \w)} \prtres{x}{y}{k-j}{\beta, \w}} \\
					&=\Qres{x}{y}{k-j}{\beta, \w}(X_{0:k-j} = x_{j:k}).\qedhere
				\end{align*}

\end{proof}
	
Now we work out the consistency in the zero temperature case. Recall that the definition of the quenched measures, in this case, requires a tie-breaker $\tie$. 
	
	\begin{lemma}\label{lem:consistencyZero}
			Fix a face $\sA\in \faces$ {\rm(}possibly $\cone$ itself\,{\rm)}.
			Let $B$ be an $L^1(\bbP)$ $\T$-covariant $\infty$-recovering cocycle on $\sA$.   
		Let $\w\in\Omega$ be such that \eqref{eqn:rec-inf} is satisfied for all $x\in\ZZ^d$.
		Then, for each $x \in \ZZ^d$, for any tie-breaker $\tie$, $\Qinf{x}{\infty, B,\tie, \w}$ is supported on a set of semi-infinite geodesics rooted at $x$ and having increments in $\rangeA$.  
	\end{lemma}
	
	\begin{proof}  
	The measure $\Qinf{x}{\infty, B,\tie, \w}$ only allows steps $z \in \rangeA$, so the paths stay within $x + \sA$.  
	Next, observe that $\Qinf{x}{\infty, B,\tie, \w}$-almost surely, $B(X_i, X_{i+1}) = -\pote(\T_{X_i} \w, X_{i+1}-X_i)$  for all $i \in \ZZ_{\ge0}$. We will show that this implies that $X_{0:\infty}$ is a geodesic.
	
	Let $x_{0:\infty}\in\PathsSemiInf{x}(\rangeA)$ be a semi-infinite path such that $B(x_i, x_{i+1}) = -\pote(\T_{x_i} \w, x_{i+1}-x_i)$  for each $i \in \ZZ_{\ge0}$.  Let $0 \leq j < k$, $n \in \ZZ_{>0}$, and $y_{0:n} \in \PathsPtPRes{x_j}{x_k}{n}$.  Using the cocycle and recovery properties,
	\begin{align*}
		-\sum_{i=j}^{k-1} \pote(\T_{x_i} \w, x_{i+1}-x_i)&= \sum_{i=j}^{k-1} B(x_i, x_{i+1}) = B(x_j, x_k) \\
		&= \sum_{i=0}^{n-1} B(y_i, y_{i+1}) \geq -\sum_{i=0}^{n-1} \pote(\T_{y_i} \w, y_{i+1}-y_i).
	\end{align*}  
Since this holds for all lengths $n$ and all paths $y_{0:n}$ from $x_j$ to $x_k$, $x_{j:k}$ is a geodesic from $x_j$ to $x_k$.  Since this holds for all $0 \leq j < k$, $x_{0:\infty}$ is a semi-infinite geodesic rooted at $x$.  
\end{proof}

Now that the consistency properties have been proven, we turn to directedness. 
We first address the question of transience versus recurrence of the semi-infinite polymers. Recall that $\Uset_\orig$ is the unique face of $\Uset$ that contains $\orig$ in its relative interior and that  $\range_{\orig}=\range\cap\Uset_\orig$.

\begin{lemma}\label{lm:trans.pos}
	Fix a face $\sA\in \faces$ {\rm(}possibly $\cone$ itself\,{\rm)}.
	Assume \eqref{V>0} holds and $\beta\in(0,\infty)$.
Let $B$ be an $L^1(\bbP)$ $\T$-covariant $\beta$-recovering cocycle on $\sA$. 
The following are equivalent:
\begin{enumerate}[label={\rm(\roman*)}, ref={\rm\roman*}]   \itemsep=2pt
    \item\label{lm:trans.i} Either  $\sA\ne\sA_\orig$ or both $\sA=\sA_\orig$  and Condition \ref{trans} holds.
    \item\label{lm:trans.ii} There exists an $x\in\ZZ^d$ such that $\bbP$-almost surely $\Qinf{x}{\beta, B, \w}\{\abs{X_n}_1\to\infty\}=1$.
    \item\label{lm:trans.iii} $\bbP$-almost surely, for all $x\in\ZZ^d$, $\Qinf{x}{\beta, B, \w}\{\abs{X_n}_1\to\infty\}=1$.
\end{enumerate}
\end{lemma}

\begin{proof}
Note that \eqref{lm:trans.ii} and \eqref{lm:trans.iii} are equivalent due to the shift-invariance of $\bbP$ and the shift-covariance of $B$. We, therefore, fix an $x\in\ZZ^d$ and prove that \eqref{lm:trans.i} is equivalent to \eqref{lm:trans.ii} with this choice of $x$.

From \eqref{eqn:greensExpectation} and a standard fact about time-homogeneous Markov chains, we see that the Markov chain $\Qinf{x}{\beta, B, \w}$ is recurrent if and only if $\greens(x,x)=\infty$. Let $\sigma_1$ be the time of first return of $X_n$ to the starting point: $\sigma_1 = \inf \{k \geq 1 : X_k = X_0\}$. Lemma 6.2 in \cite{Jan-Nur-Ras-22} gives 
    \begin{equation}\label{eqn:gr}
    \greens(x,x)=\frac1{1-\rwE_x\Bigl[e^{-\beta\sum_{k=0}^{\sigma_1-1}\pote(\T_{X_k}\w,X_{k+1}-X_k)}\one\{\sigma_1<\infty\}\Bigr]}\le\frac1{\rwP_x(\sigma_1=\infty)}\,.\end{equation}
If the reference random walk $\rwP_\orig$ is transient, then the above implies that $\greens(x,x)<\infty$ 
and, therefore, the Markov chain $\Qinf{x}{\beta, B, \w}$ is transient. This includes the case $\sA\ne\sA_\orig$ because, in that case, $\rangeA\setminus\range_\orig\ne\varnothing$, and once the reference random walk takes a step in $\rangeA\setminus\range_\orig$, it will never return to its starting point.


Suppose that $\rwP_\orig$ is recurrent and, in particular, $\sA=\sA_\orig$. Assume Condition \ref{trans} holds. We will prove that, $\bbP$-almost surely, $\Qinf{x}{\beta, B, \w}$ is again transient, for all $x\in\ZZ^d$. 

Without loss of generality, let $x = \orig$.  
By Condition \ref{trans}, with $\bbP$-probability one, there exist $y\in\Rsemi_{\sA_\orig}$, $z_0 \in \range_{\orig}$, and $\e > 0$ such that $\pote(\T_y\w, z_0) \ge \e$. \cite[Lemma 3.4]{Jan-Nur-Ras-22} shows that  $y$ can be written as $y = \sum_{z \in \range_\orig} \gamma_z z$ with $\gamma_z \in \ZZ_{\ge 0}$ and $\gamma_z \le C |y|_1$ for all $z \in \range_\orig$ and a finite positive constant $C$.  Use this to obtain an admissible path $y_{0:n}$ from $\orig$ to $y$ of length $n = \sum_{z \in \range_\orig} \gamma_z \leq C |\range_\orig| \cdot |y|_1$.  Next, we will extend this path to obtain a loop from $\orig$ to $\orig$.  Let $y_{n+1} = y + z_0$.  Then, use the same argument to construct a path $y_{n+1:m}$ from $y+z_0$ to $\orig$, with length $m-n-1 \leq C' |\range_\orig| \cdot |y|_1$.  Then $y_{0:m}$ is a loop which starts at $\orig$, travels to $y$, takes a step to $y+z_0$, and returns to $\orig$.  The length $m \leq C_1 |y|_1$, where $C_1 = (C+C')|\range_\orig|+1$.  Let $\kappa = \min_{z \in \range_\orig} p(z)$.  Then, $\rwP_\orig(X_{0:m} = y_{0:m}) \geq \kappa^{C_1 |y|_1}$. 

For the loop constructed in the above paragraph,
\[\exp\Bigl\{ -\beta\sum_{i=0}^{m-1} \pote(\T_{y_i} \w, y_{i+1}-y_i) \Bigr\} \leq \exp\{ -\beta\pote(\T_{y} \w, z_0) \} \leq e^{-\beta\e}.\]  
On the other hand, for any loop $y'_{0:\ell}$ from $\orig$ to $\orig$, \[\exp\Bigl\{ -\beta\sum_{i=0}^{\ell-1} \pote(\T_{y'_i} \w, y'_{i+1}-y'_i) \Bigr\} \leq 1\] because $\pote(\T_{v} \w, z) \geq 0$ for all $v \in \Rsemi_{\sA_\orig}$ and $z \in \range_\orig$.  Therefore,
\begin{align*}
\rwE_\orig \Bigl[ e^{-\beta\sum_{i=0}^{\sigma_1-1} \pote(\T_{X_k}\w, X_{k+1}-X_k) } \one_{\{ \sigma_1 < \infty \}} \Bigr] &\leq \rwP_\orig(X_{0:m} \neq y_{0:m}) + e^{-\beta\e} \rwP_\orig(X_{0:m} = y_{0:m}) \\
&= 1 - (1 - e^{-\beta\e}) \rwP_\orig(X_{0:m}=y_{0:m}) \leq 1 - (1 - e^{-\beta\e}) \kappa^{C_1 |y|_1}.
\end{align*}
Using \eqref{eqn:gr}, we see that $\bbP$-almost surely,
\[
g(\orig,\orig) \leq \kappa^{-C_1 |y|_1} (1-e^{-\beta\e})^{-1}
\]
and $\Qinf{\orig}{\beta, B, \w}$ is transient.

If, on the other hand, Condition \ref{trans} is violated, then with positive $\bbP$-probability, $\pote(\T_y\w,z)=0$ for all 
$y\in\Rsemi_{\sA_\orig}$ and $z\in\rangeA$. On this event, $\Qinf{\orig}{\beta, B, \w}$-almost surely,
$\pote(\T_{X_k}\w,X_{k+1}-X_k)=0$ for all $k\in\ZZ_{\ge0}$ and  \eqref{eqn:gr} gives $\greens(\orig,\orig)=(1-\rwP_\orig(\sigma_1<\infty))^{-1}=\infty$. 
The lemma is proved.
\end{proof}

\begin{lemma}\label{lm:trans.zero}
	Fix a face $\sA\in \faces$ {\rm(}possibly $\cone$ itself\,{\rm)}.
	Assume \eqref{V>0} holds and $\beta=\infty$.
Let $B$ be an $L^1(\bbP)$ $\T$-covariant $\infty$-recovering cocycle on $\sA$. Assume $B$ satisfies Condition \ref{BCondition}.
Then there exists a covariant tie-breaker $\tie$ such that, $\bbP$-almost surely, for any $x\in\ZZ^d$, $\Qinf{x}{\infty, B,\tie, \w}\{\abs{X_n}_1\to\infty\}=1$.
\end{lemma}

Before we prove the lemma, we need a definition and an intermediate result. 
Given an $\infty$-recovering cocycle $B$ on a face $\sA\in\faces$ we will say that  $x$ and $y$ in $\ZZ^d$ communicate, and will write $x	\leftrightsquigarrow y$, if there exist paths $x_{0:k}\in\PathsPtPUnr{x}{y}(\rangeA)$ and $y_{0:\ell}\in\PathsPtPUnr{y}{x}(\rangeA)$, $k,\ell\in\Z_{>0}$, such that
$\pote(\T_{x_i}\w,x_{i+1}-x_i)=B(x_i,x_{i+1})=0$ and 
$\pote(\T_{y_j}\w,y_{j+1}-y_j)=B(y_j,y_{j+1})=0$ for all integers $i\in[0,k-1]$ and $j\in[0,\ell-1]$. Note that $\leftrightsquigarrow$ is symmetric and transitive, but it is not necessarily reflexive. Namely, $x\leftrightsquigarrow x$ holds if and only if there is a non-empty admissible loop from $x$ to $x$ along which $\pote$ and $B$ vanish.  In other words, $x$ does not communicate with itself if and only if it does not communicate with any site $y\in\ZZ^d$. We will call such a site \emph{non-essential} and the sites that do communicate with themselves \emph{essential}. Hence, $\leftrightsquigarrow$ is an equivalence relation on the set of essential sites. 

%


\begin{lemma}\label{lm:trans.aux}
	Fix a face $\sA\in \faces$ {\rm(}possibly $\cone$ itself\,{\rm)}.
	Assume \eqref{V>0} holds and $\beta=\infty$.
Let $B$ be an $\infty$-recovering cocycle on $\sA$. If there exists a path $x_{0:k}\in\PathsPtPRes{x}{x}{k}$ with $x\in\ZZ^d$ and $k\in\Z_{>0}$, such that  \begin{align}\label{rec-temp}
    \pote(\T_{x_i}\w,x_{i+1}-x_i)+B(x_i,x_{i+1})=0\quad\text{for all integers }i \in [0,k-1],
    \end{align}
    then $x$ is essential.
\end{lemma}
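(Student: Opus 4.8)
The plan is to upgrade the hypothesis ``$\pote+B$ vanishes around the loop $x_{0:k}$'' to the stronger statement that $\pote$ and $B$ vanish \emph{separately} at every step of the loop, using a sign constraint supplied by \eqref{V>0} together with the cocycle property of $B$. Fix $\w$ in the full-$\bbP$-measure event on which $B$ is a cocycle and on which $\pote(\T_v\w,z)\ge 0$ for all $v\in\ZZ^d$ and all $z\in\range_{\orig}$ (this last property follows from \eqref{V>0} and the $\T$-invariance of $\bbP$ by a countable union over $(v,z)$). Let $x_{0:k}\in\PathsPtPRes{x}{x}{k}$ satisfy \eqref{rec-temp}, and abbreviate $z_i=x_{i+1}-x_i$ for $0\le i\le k-1$.

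The first step is to show that every step of this loop lies in $\range_{\orig}$, so that $\pote$ is nonnegative along it. Since $x_0=x_k=x$ we have $\sum_{i=0}^{k-1}z_i=\orig$, hence $\orig=\tfrac1k\sum_{i=0}^{k-1}z_i$ exhibits $\orig$ as a convex combination with strictly positive coefficients of the points $z_0,\dots,z_{k-1}\in\Uset$. Because $\orig$ lies in the face $\Uset_{\orig}$ of $\Uset$, the defining property of a face forces $z_i\in\Uset_{\orig}$ for every $i$, and therefore $z_i\in\range\cap\Uset_{\orig}=\range_{\orig}$; the supporting-hyperplane description of faces then also places each $z_i$ in $\rangeA$, so that $x_{0:k}\in\PathsPtPUnr{x}{x}(\rangeA)$. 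In particular $\pote(\T_{x_i}\w,z_i)\ge 0$ for each $i$ by the choice of $\w$.

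The second step combines this with the cocycle identity. Iterating the cocycle property along the loop gives $\sum_{i=0}^{k-1}B(x_i,x_{i+1},\w)=B(x_0,x_k,\w)=B(x,x,\w)=0$, while \eqref{rec-temp} gives $B(x_i,x_{i+1},\w)=-\pote(\T_{x_i}\w,z_i)\le 0$ for each $i$. A sum of nonpositive numbers that equals zero has all of its summands equal to zero, so $B(x_i,x_{i+1},\w)=0$ and hence $\pote(\T_{x_i}\w,z_i)=0$ for every $i$. Thus $x_{0:k}$ is a non-empty admissible loop from $x$ to $x$, with steps in $\rangeA$, along which both $\pote$ and $B$ vanish; by the definition recalled just before the lemma this is precisely the statement $x\leftrightsquigarrow x$, i.e.\ $x$ is essential.

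The only step here that is not pure bookkeeping is the convex-geometric observation that a closed admissible loop can only use steps from $\range_{\orig}$ (equivalently, that $\orig$ lying in the relative interior of the convex hull of a multiset of points of $\Uset$ forces that multiset into $\Uset_{\orig}$); everything else is the cocycle identity plus the sign input from \eqref{V>0}. Note that the $\infty$-recovery property of $B$ is not needed for this lemma, only the cocycle property, though it is harmless to list it since the lemma is applied to the recovering cocycles produced by Theorem \ref{thm:Cocycles}.
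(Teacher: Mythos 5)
Your proof is correct and follows essentially the same route as the paper's: both extract $\sum_i\pote(\T_{x_i}\w,z_i)=0$ (equivalently $\sum_i B(x_i,x_{i+1})=0$) from the cocycle identity around the loop, place the loop steps in $\range_\orig$ by the face property, and then use \eqref{V>0} to force every individual term to vanish. The only difference is that you spell out the convex-geometric reasons for ``every step lies in $\range_\orig$'' (and further in $\rangeA$), which the paper's proof simply asserts, and you cancel via $\sum B=0$ with each $B\le 0$ rather than via $\sum\pote=0$ with each $\pote\ge 0$; these are interchangeable through \eqref{rec-temp}.
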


\begin{proof}
Sum \eqref{rec-temp} over $i$ and use the cocycle property and $B(x,x)=0$ to get that  $\sum_{i=0}^{k-1}\pote(\T_{x_i}\w,x_{i+1}-x_i)=0$.
Next, observe that the steps of the loop $x_{0:k}$ are all in $\range_\orig$. By \eqref{V>0}, we get that $\pote(\T_{x_i}\w,x_{i+1}-x_i)=0$ along the loop, which, in turn, implies that $B(x_i,x_{i+1})=0$ along the loop. Consequently, $x\leftrightsquigarrow x$, and the lemma is proved.
%
\end{proof}

\begin{proof}[Proof of Lemma \ref{lm:trans.zero}]
We begin by defining the covariant tie-breaker. 
To this end, fix a bijection $\mathfrak z$ from $\range$ to $\{1,\dotsc,|\range|\}$.
For any non-essential point $x$  and subset $A\subset\range$, we define $\tie_{x,A}$ as a Dirac mass at the step in $A$ that has the minimum $\mathfrak z$ value. 

Fix another bijection $\mathfrak z'$ from $\ZZ^d$ to $\Z_{>0}$. 
If $x$ is an essential point, then let $A$ be its equivalence class. We treat first the case where $A$ is finite. Consider the pairs $(y,z)$ that appear in Condition \ref{BCondition} for this set $A$. 
Among these pairs, choose the ones where $y$ has the smallest $\mathfrak{z}'$ value. If there are multiple such pairs, select the unique pair with the lowest $\mathfrak{z}$ value for $z$.

Next, for a set $C\subset\range$ containing  $z$, we define $\tie_{y,C}$ as a Dirac mass at $z$. If $z\not\in C$, let $\tie_{y,C}$ be a Dirac mass at the point in $C$ with the lowest $\mathfrak z$ value.

To define the tie-breaker at points in $A\setminus{\{y\}}$, we first construct a spanning tree of $A$ as follows. We start by selecting the point $u\in A\setminus{\{y\}}$ with the smallest $\mathfrak{z}'$ value among all such points. We then choose the first step $z_1\in\range_\orig$ of an admissible path from $u$ to $y$ that remains entirely within $A$ and avoids $u$ after leaving it. If there are multiple choices for $z_1$, we select the one with the smallest $\mathfrak{z}$ value.

We continue by selecting the first step $z_2\in\range_\orig$ of an admissible path from $u+z_1$ to $y$ that remains entirely within $A$, does not visit $u$, and avoids $u+z_1$ after leaving it. We iterate this process until we reach $y$, constructing a path from $u$ to $y$ that forms a branch of the spanning tree.

Next, we select a point $v$ in $A$ that is not on the just constructed path and that has the smallest $\mathfrak{z}'$ value among all such points. We then construct a path from $v$ to $y$, similarly to how we constructed the path from $u$ to $y$, except that if this second path intersects with the first one, we stop its construction at the intersection point. We continue this process until we have exhausted all points in $A$, resulting in a spanning tree of $A$ with all paths leading to the root $y$.

For a point $u\in A$, let $z_1\in\range_\orig$ be the first step on the path from $u$ to $y$ in the spanning tree constructed above. If $C\subset\range$ and $z_1\in C$, define $\tie_{u,C}$ to be a Dirac mass at $z_1$. Otherwise, we define $\tie_{u,C}$ to be a Dirac mass at the element in $C$ with the smallest $\mathfrak{z}$ value. Note that this second case is irrelevant because we know that $\pote(\T_u\w,z_1)+B(u,z_1)=0$.

Next, consider the case when $A$ is infinite. In this scenario, we will construct a spanning forest that contains semi-infinite coalescing paths. We will provide a brief overview of the construction, as it is similar to the one described above for the case of a finite $A$.

We begin by selecting the point $u\in A$ with the smallest $\mathfrak{z}'$ value. Since $A$ is infinite, we can find a sequence of points $x_n\in A$ such that $\abs{x_n}_1\geq n$ for each $n$. For every $n$, there exists an admissible path from $u$ to $x_n$ that does not return to $u$ after leaving it. By restricting to a subsequence $n_j$ if necessary, we can ensure that all paths from $u$ to $x_{n_j}$ pass through the same step $z_1\in\range_\orig$. If there are multiple options, we choose the $z_1$ with the smallest $\mathfrak{z}$ value.

Continuing the construction, we obtain a semi-infinite path that starts at $u$, remains entirely within $A$, and contains no loops. Once this path is constructed, we select a point $v\in A$ that is not on the path and construct another semi-infinite path that remains within $A$. If this second path intersects with the first one, we stop its construction at that intersection point. We repeat this process until all points in $A$ have been exhausted.

Using the spanning forest we constructed, we define the tie-breaker $\tie_x$ for points $x\in A$ similarly to when $A$ is finite.

The shift-covariance of $\tie$ is immediately evident from the construction and the shift-covariance of $B$. Additionally, the construction guarantees the following properties hold with $\Qinf{x}{\infty, B,\tie, \w}$-probability one:
\begin{enumerate}[label={\rm(\roman*)}, ref={\rm\roman*}]   \itemsep=2pt
\item If $X_{0:\infty}$ enters a finite equivalence class $A$, it will exit $A$ without forming a loop inside it.
\item Once $X_{0:\infty}$ exits an equivalence class $A$, it cannot return to $A$ because doing so would create a loop that remains entirely within $A$.
\item If $X_{0:\infty}$ enters an infinite equivalence class $A$, it remains within $A$ and does not form any loops inside $A$.
\item By Lemma \ref{lm:trans.aux}, the path does not form loops that include non-essential points.
\end{enumerate}

These properties imply that, almost surely, under $\Qinf{x}{\infty, B,\tie, \w}$, $X_{0:\infty}$ does not revisit any previously visited points, and as a result, $\abs{X_n}_1$ tends to infinity.
\end{proof}

\begin{lemma}\label{lm:trap}
Assume $\sA\in\faces$ is such that $\orig\in\ri\UsetA$. Fix an $\w\in\Omega$. 
Assume that $\pote(\T_v\w,z)\ge0$ for all $v\in\RgroupA$ and $z\in\rangeA$. Assume that there exist a $k\in\ZZ_{>0}$ and a loop  $x_{0:k}\in\PathsPtPRes{\orig}{\orig}{k}$ with $\pote(\T_{x_i}\w,x_{i+1}-x_i)=0$ for each $i\in[0,k)$.  Then $B(x,y)=\freeunr{x}{\orig}{\infty}(\w)-\freeunr{y}{\orig}{\infty}(\w)$, $x,y\in\RgroupA$, is an $\infty$-recovering cocycle. For any tie-breaker $\tie$,     
\[\Qinf{\orig}{\infty, B, \tie,\w}\Big\{\pote(\T_{X_i}\w,X_{i+1}-X_i)=0\text{ for all }i\in\Z_{\ge0}\Bigr\}=1.\]
\end{lemma}

\begin{remark}
This lemma shows that Condition \ref{BCondition} is violated when, with positive probability, a loop $x_{0:k}$ as in the statement of the lemma exists and there is a finite set of sites $y\in\RgroupA$ for which there exist an $\ell\in\ZZ_{>0}$ and a path $y_{0:\ell}\in\PathsPtPRes{\orig}{y}{\ell}$, such that $\pote(\T_{y_i}\w,y_{i+1}-y_i)=0$ for each $i\in[0,\ell)$.
This situation can arise in various scenarios, including the standard first-passage percolation model described in Example \ref{examples}\eqref{itm:randomGrowth}, when the edge weights are i.i.d.\ and non-negative, and there exists a positive probability of encountering a zero edge weight. 
Nevertheless, Lemma \ref{lm:rec-gen} asserts that even in such cases, Condition \ref{BCondition} remains valid for the specific cocycles $\B{\sA,\beta, \m}$ constructed in Theorem \ref{thm:Cocycles}.
\end{remark}

\begin{proof}[Proof of Lemma \ref{lm:trap}]
The cocycle property is immediate from the definition of $B$. For the recovery property, write
\begin{align*}
\min_{z \in \rangeA} \{ B(x, x+z) + \pote(\T_x \w, z) \} 
&=\freeunr{x}{\orig}{\infty}(\w)-\max_{z \in \rangeA}\{\freeunr{x+z}{\orig}{\infty}(\w) - \pote(\T_x \w, z)\}\\
&=\freeunr{x}{\orig}{\infty}(\w)-\max_{z \in \rangeA}\sup_{k \in \ZZ_{\ge0}} \sup_{x_{0:k}\in \PathsPtPRes{x+z}{\orig}{k}}\Bigl\{ \sum_{i=0}^{k-1} (-\pote(\T_{x_i}\w, x_{i+1}-x_i))- \pote(\T_x \w, z)\Bigr\}\\
&=\freeunr{x}{\orig}{\infty}(\w)-\sup_{k \in \ZZ_{>0}} \sup_{x_{0:k}\in \PathsPtPRes{x}{\orig}{k}}\sum_{i=0}^{k-1} (-\pote(\T_{x_i}\w, x_{i+1}-x_i)).
\end{align*}
The right-hand side vanishes when $x\ne\orig$ because, to go from $x$ to $\orig$, the path needs at least one step, and therefore, the supremum over $k>0$ is the same as the supremum over $k\ge0$, which gives $\freeunr{x}{\orig}{\infty}(\w)$. When $x=\orig$, both terms on the right-hand side vanish because of the assumptions that the potential is non-negative (and hence both terms are suprema of non-positive quantities) and that there exists an admissible loop from $\orig$ to $\orig$, of length at least one, which has a total weight of zero. Either way, the right-hand side vanishes, and $B$ is an $\infty$-recovering cocycle. 

To see the second claim first note that Definition \ref{pathMeasureDefZero} implies that for any tie-breaker $\tie$,
\begin{align}\label{loop-aux}
\Qinf{\orig}{\infty, B, \tie,\w}\Big\{B(X_i,X_{i+1})+\pote(\T_{X_i}\w,X_{i+1}-X_i)=0\text{ for all }i\in\Z_{\ge0}\Bigr\}=1.
\end{align}
Take $k\in\ZZ_{\ge0}$ and $\e>0$. There exists a path $Y^\e_{k:\ell}\in\PathsPtPResKM{X_k}{\orig}{k}{L}$ with 
$L\in\ZZ_{\ge k}$ and 
\[\sum_{j=k}^{L-1} (-\pote(\T_{Y^\e_j}\w, Y^\e_{j+1}-Y^\e_j))\ge \freeunr{X_k}{\orig}{\infty}(\w)-\e.\]
Rearranging and using the definition of $B$, we get
\[B(X_k,\orig)+\sum_{j=k}^{L-1} \pote(\T_{Y^\e_j}\w, Y^\e_{j+1}-Y^\e_j)\le\e.\]
Together with \eqref{loop-aux}, the cocycle property, and $B(\orig,\orig)=0$, this gives that 
$\Qinf{\orig}{\infty, B, \tie,\w}$-almost surely, 
\[\sum_{i=0}^{k-1}\pote(\T_{X_i}\w,X_{i+1}-X_i)
+\sum_{j=k}^{L-1}\pote(\T_{Y^\e_j}\w,Y^\e_{j+1}-Y^\e_j)\le\e.\]
Since the potential is non-negative, this implies 
    \[0\le\sum_{i=0}^{k-1}\pote(\T_{X_i}\w,X_{i+1}-X_i)\le\e.\]
Taking $\e\to0$ and using again the fact that the potential is non-negative gives that $\pote(\T_{X_i}\w,X_{i+1}-X_i)=0$ for all $i\in[0,k)$. Since $k$ was arbitrary, the lemma is proved.
\end{proof}

Now, we  address the directedness of the cocycle polymer measures. Recall the set $\facetUnr{\mVec(B),\sA}$ defined in \eqref{def:facetUnr}. 

\begin{theorem}\label{thm:dir}
Assume the setting of Theorem \ref{thm:Measures}. 
Let $B$ be a $\beta$-recovering cocycle on the face $\sA$ with $\mVec(B)$ as in \eqref{meanVec}. If $\beta=\infty$, then let $\tie$ be a covariant tie-breaker and, for the purpose of this theorem, abbreviate $\Qinf{\orig}{\beta, B,\w}=\Qinf{\orig}{\beta, B,\tie,\w}$. Assume that $\Qinf{\orig}{\beta, B, \w}\{\abs{X_n}_1\to\infty\}=1$, $\bbP$-almost surely.
Then $\bbP$-almost surely for any $x \in \ZZ^d$, 
\[\Qinf{x}{\beta, B, \w}\big(X_{0:\infty}\text{ is directed into }\facetUnr{\mVec(B),\sA}\big)=1.\] 
\end{theorem}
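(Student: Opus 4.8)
The plan is to prove directedness by combining the recovery property of $B$ with the shape theorems for both the unrestricted-length free energy $\ppShapeUnrA{\beta}$ and for the cocycle $B$ itself, viewed as a stationary $L^1$ field. First I would reduce to the case $x=\orig$: since $\bbP$ is $\T$-invariant and $B$ is $\T$-covariant, and since the claim $\Qinf{\orig}{\beta,B,\w}\{\abs{X_n}_1\to\infty\}=1$ holding $\bbP$-a.s.\ implies (again by covariance) the same for every $x\in\ZZ^d$ on a single full-measure event, it suffices to prove the directedness statement for the path started at $\orig$, on a $\T$-invariant full-measure event.

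The core identity I would exploit comes from iterating the recovery property. For $\beta<\infty$, summing \eqref{eqn:rec-beta} along a path and using the cocycle property shows that under $\Qinf{\orig}{\beta,B,\w}$, the Radon--Nikodym derivative of the path law up to time $n$ against $\rwP_\orig$ is $e^{-\beta\sum_{k=0}^{n-1}\pote(\T_{X_k}\w,X_{k+1}-X_k)-\beta B(\orig,X_n)}$; equivalently, $\Qinf{\orig}{\beta,B,\w}(X_n=v)=\prtres{\orig}{v}{n}{\beta}(\w)\,e^{-\beta B(\orig,v)}$. At zero temperature the analogous statement is that $\Qinf{\orig}{\infty,B,\tie,\w}$-a.s.\ one has $B(\orig,X_n)=-\freeres{\orig}{X_n}{n}{\infty}(\w)$ along the (geodesic) path, by Lemma \ref{lem:consistencyZero} and the cocycle property. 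Now take any subsequential limit point $\zeta$ of $X_n/\abs{X_n}_1$; say $X_{n_j}/\abs{X_{n_j}}_1\to\zeta$ with $\abs{X_{n_j}}_1\to\infty$. Along this subsequence, divide the exponent identity by $\abs{X_{n_j}}_1$. The shape theorem for $B$ (it is a centered-type $L^1$ cocycle; apply the ergodic/shape argument of Theorem \ref{Thm:ShapeUnr}-style reasoning, or directly the known cocycle shape theorem) gives $\abs{X_{n_j}}_1^{-1}B(\orig,X_{n_j})\to \mVec(B)\cdot\zeta$. The shape theorem \eqref{shape-unr} for $\ppShapeUnrA{\beta}$ combined with the crude lower bound $\freeres{\orig}{v}{n}{\beta}\le\freeunr{\orig}{v}{\beta}$ and, in the other direction, the restricted-length shape theorem \eqref{shape-res}, pins down $\abs{X_{n_j}}_1^{-1}\freeunr{\orig}{X_{n_j}}{\beta}\to\ppShapeUnrA{\beta,\usc}(\zeta)$. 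The key point is that because $\Qinf{\orig}{\beta,B,\w}$ is a probability measure, $\Qinf{\orig}{\beta,B,\w}(X_n=X_n)=1$ forces $\freeunr{\orig}{X_n}{\beta}(\w)-B(\orig,X_n,\w)\ge$ (something like) $0$ eventually — more precisely the normalization $\sum_v\prtunr{\orig}{v}{\beta}e^{-\beta B(\orig,v)}\le$ a controlled quantity forces the limit relation $\ppShapeUnrA{\beta,\usc}(\zeta)-\mVec(B)\cdot\zeta\le 0$ to be saturated; together with the variational bound $\ppShapeUnrA{\beta}(\eta)\le\mVec(B)\cdot\eta$ for all $\eta\in\sA$ (from \cite[Theorem 2.14]{Jan-Nur-Ras-22}, as used in Lemma \ref{lem:E[m(B)]}), this yields $\ppShapeUnrA{\beta,\usc}(\zeta)=\mVec(B)\cdot\zeta$, i.e.\ $\zeta\in\facetUnr{\mVec(B),\sA}$ after normalizing $\abs{\zeta}_1=1$. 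Since $\zeta$ was an arbitrary limit point, $X_{0:\infty}$ is directed into $\facetUnr{\mVec(B),\sA}$.

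The main obstacle I anticipate is establishing the shape theorem for the cocycle $B$ uniformly enough to conclude $\abs{X_n}_1^{-1}B(\orig,X_n)\to\mVec(B)\cdot\zeta$ along the random sequence $X_n$, rather than along deterministic directions. The standard route is: $B(\orig,x)$ is an additive (cocycle) functional of a stationary field, so by the multidimensional ergodic theorem applied to $B(\orig,e_i)$-type increments one gets $x^{-1}B(\orig,\flint{tx})\to \mVec(B)\cdot$ along rays, and upgrading to a uniform-on-compacts statement requires an equicontinuity/monotonicity input or an $L^1$-domination of the increments (available since $B(\orig,z)\in L^1$ and the recovery property gives the one-sided bound $B(\orig,z)\ge-\pote^+(\orig,z)+\beta^{-1}\log p(z)$, with the matching upper bound coming from the free-energy shape theorem via the identity above). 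I would handle this by citing/adapting the cocycle shape theorem already in the literature (the same machinery behind \cite[Theorem 3.10]{Jan-Nur-Ras-22} and Theorem \ref{Thm:ShapeUnr} here), noting that $B$ satisfies exactly the hypotheses needed: $L^1$, $\T$-covariant, and — crucially — sandwiched between $-\pote^+$-type lower bounds and free-energy upper bounds that are themselves controlled by Condition \ref{classLCondition}. A secondary technical point is the bookkeeping between faces: one must check that $\Qinf{\orig}{\beta,B,\w}$ is supported on paths with increments in $\rangeA$ (true by construction, Definitions \ref{pathMeasureDefPos}–\ref{pathMeasureDefZero}), so all limit points $\zeta$ lie in $\sA$, which is what makes the face-restricted superdifferential and the face-restricted shape function the right objects; and that transience, assumed via $\abs{X_n}_1\to\infty$, is what guarantees limit points of $X_n/\abs{X_n}_1$ exist and lie on $\{\abs{\zeta}_1=1\}\cap\sA$.

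Finally, for the zero-temperature case $\beta=\infty$ I would run the same argument with sums replaced by maxima: the identity becomes $B(\orig,X_n)=-\sup\{\sum(-\pote):\text{paths }\orig\to X_n\text{ of length }n\}$ along the $\Qinf{\orig}{\infty,B,\tie,\w}$-a.s.\ geodesic (Lemma \ref{lem:consistencyZero}), so $\abs{X_n}_1^{-1}B(\orig,X_n)\to-\ppShapeUnrA{\infty,\usc}(\zeta)$ would need $\ppShapeUnrA{\infty,\usc}(\zeta)=\mVec(B)\cdot\zeta$ by the same variational sandwiching, and there are no normalization constants to worry about, so the argument is if anything cleaner. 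I would present the positive- and zero-temperature cases in parallel, isolating the one genuinely analytic input — the cocycle shape theorem plus the two free-energy shape theorems — as a preliminary lemma and then deriving the facet membership of every subsequential limit point in a couple of lines.
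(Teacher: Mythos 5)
Your overall strategy matches the paper's: reduce to $x=\orig$ by covariance, exploit the identity between $\Qinf{\orig}{\beta,B,\w}$ and the free energy and cocycle, and pin down limit points of $X_n/|X_n|_1$ via the uniform shape theorem for $B$ (citing \cite[Theorem B.3]{Jan-Ras-18-arxiv}), the uniform upper bound \eqref{shape-UB}, and the variational inequality $\mVec(B)\cdot\zeta\ge\ppShapeUnrA{\beta,\usc}(\zeta)$. For $\beta=\infty$ this works cleanly, since the recovery and cocycle properties give the pointwise identity $B(\orig,X_n)=\freeunr{\orig}{X_n}{\infty}$ along the geodesic; your sign is off — you wrote $B(\orig,X_n)=-\sup\{\cdots\}$, but the correct identity has a plus sign, $B(\orig,X_n)=\freeunr{\orig}{X_n}{\infty}$ — so the limits combine to $\mVec(B)\cdot\zeta\le\ppShapeUnrA{\infty,\usc}(\zeta)$, not the negated version; this is cosmetic but should be fixed.

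The genuine gap is in the positive-temperature case. Your claimed step — that ``the normalization $\sum_v\prtunr{\orig}{v}{\beta}e^{-\beta B(\orig,v)}\le$ a controlled quantity forces $\ppShapeUnrA{\beta,\usc}(\zeta)-\mVec(B)\cdot\zeta\le 0$ to be saturated'' — does not hold as stated. The quantity $\prtunr{\orig}{v}{\beta}e^{-\beta B(\orig,v)}=\Qinf{\orig}{\beta,B,\w}(\stoppt{v}<\infty)$ is at most $1$ term by term (this is \eqref{Q-aux}), but the sum over all $v$ is the $\Qinf{\orig}{}$-expected number of distinct sites ever visited, which is typically \emph{infinite} for a transient chain and carries no constraint. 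The correct mechanism is local and conditional, not global: the uniform shape theorems give, with high $\bbP$-probability, $\freeunr{\orig}{y}{\beta}-B(\orig,y)\le-\e|y|_1$ for all $y$ with $|y|_1$ large and $y/|y|_1$ outside a $\kappa$-neighborhood of $\facetUnr{\mVec(B),\sA}$, hence $\Qinf{\orig}{\beta,B,\w}(\stoppt{y}<\infty)\le e^{-\beta\e|y|_1}$ for all such $y$. Summing only over this bad region gives a finite bound, so Borel--Cantelli yields that $\Qinf{\orig}{\beta,B,\w}$-a.s.\ only finitely many such $y$ are ever visited; combined with $|X_n|_1\to\infty$, all subsequential limit points of $X_n/|X_n|_1$ lie in the closure of the $\kappa$-neighborhood, and one then lets $\kappa\to 0$. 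Your take-a-random-limit-point-$\zeta$-and-deduce-equality framing does not directly give a $\Qinf{\orig}{}$-a.s.\ statement: the realized path is one draw from $\Qinf{\orig}{\beta,B,\w}$, and nothing forces that particular realization's limit points to saturate an inequality that is only about expected visit probabilities. The Borel--Cantelli argument over the off-facet region is the missing step that converts the exponential probability bound into an a.s.\ statement about the path.
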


\begin{proof}
By the $\T$-covariance of $B$ and the $\T$-invariance of $\P$, we can take $x = \orig$ without any loss of generality. Fix $\kappa>0$.   
Let 
    \[\facetUnr{\mVec(B),\sA,\kappa}=\bigl\{\zeta\in\sA:\abs{\zeta}_1=1\text{ and }\exists\xi\in\facetUnr{\mVec(B),\sA}\text{ with }\abs{\zeta-\xi}_1<\kappa \bigr\}.\]    
Note that $\bbP$-almost surely, for any $\zeta\in\sA$, $\mVec(B)\cdot\zeta\ge\ppShapeUnr{\beta}(\zeta)$ by \cite[Theorem 2.14]{Jan-Nur-Ras-22}.  Since $\mVec(B)\cdot\zeta$ is continuous on all of $\sA$, $\ppShapeUnr{\beta}$ is continuous on $\ri \sA$, and $\ppShapeUnrA{\beta, \usc}$ is the unique continuous extension of $\ppShapeUnrA{\beta}$ to all of $\sA$, 
we conclude that $\mVec(B)\cdot\zeta\ge\ppShapeUnrA{\beta, \usc}(\zeta)$. 
Therefore, with $\bbP$-probability one, for all $\zeta \in \sA$ with $\abs{\zeta}_1 = 1$, $\zeta\not\in\facetUnr{\mVec(B),\sA,\kappa}$ implies $\mVec(B)\cdot\zeta>\ppShapeUnrA{\beta, \usc}(\zeta)$.
Since $\{\zeta\in\sA\setminus\facetUnr{\mVec(B),\sA,\kappa}:\abs{\zeta}_1=1\}$ is a compact set and $\ppShapeUnrA{\beta, \usc}$ is continuous on $\sA$, we see that 
\[\inf\big\{\mVec(B)\cdot\zeta-\ppShapeUnrA{\beta, \usc}(\zeta):\zeta\in\sA\setminus\facetUnr{\mVec(B),\sA,\kappa}\text{ with }\abs{\zeta}_1=1\big\}>0,\]
$\bbP$-almost surely. 

Fix any $\delta\in(0,1)$. The above implies that there exists a deterministic $\e>0$ such that 
\[\bbP\Bigl\{\mVec(B)\cdot\zeta-\ppShapeUnrA{\beta, \usc}(\zeta)\ge 3\e\ \forall\zeta\in\sA\setminus\facetUnr{\mVec(B),\sA,\kappa}\text{ with }\abs{\zeta}_1=1\Bigr\}>1-\frac\delta3.\]
The shape theorem for $B$ \cite[Theorem B.3]{Jan-Ras-18-arxiv} implies that there exists an $L_0>0$ such that for any $L\ge L_0$ we have
\[\bbP\Bigl\{B(\orig,y)\ge\mVec(B)\cdot y-\e\abs{y}_1\ \forall y\text{ with }\abs{y}_1\ge L\Bigr\}>1-\frac\delta3.\]
Applying \eqref{shape-UB} to $\freeunr{\orig}{y}{\beta}$ we get that there exists an $L_1\ge L_0$ such that 
\[\bbP\Bigl\{\freeunr{\orig}{y}{\beta}\le \ppShapeUnrA{\beta, \usc}(y)+\e\abs{y}_1\ \forall y\text{ with }\abs{y}_1\ge L_1\Bigr\}>1-\frac\delta3.\]
Putting all of the above together, we get
\begin{equation}\label{eqn:F-Bbound}\bbP\Bigl\{\freeunr{\orig}{y}{\beta}-B(\orig,y)\le-\e\abs{y}_1\ \forall y\text{ with }\abs{y}_1\ge L_1\text{ and }\frac{y}{\abs{y}_1}\in\sA\setminus\facetUnr{\mVec(B),\sA,\kappa}\Bigr\}>1-\delta.\end{equation}
At zero temperature, Lemma \ref{lem:consistencyZero} implies that, $\Qinf{\orig}{\infty, B, \w}$-almost surely, $X_{0:n}$ is an unrestricted-length geodesic, so $\freeunr{\orig}{X_n}{\infty} = \sum_{i=0}^{n-1} \pote(\T_{X_{i}}\w, X_{i+1}-X_i) = \sum_{i=0}^{n-1} B(X_i, X_{i+1}) = B(\orig, X_n)$.
This and the assumption that $\abs{X_n}_1\to\infty$, $\Qinf{\orig}{\infty, B, \w}$-almost surely, imply that on the event in the above probability
we have
\begin{align}\label{Q-F}
\Qinf{\orig}{\infty, B, \w}\Bigl\{\exists n_0 : \frac{X_n}{\abs{X_n}_1} \in \facetUnr{\mVec(B),\sA,\kappa}\ \forall n\ge n_0\Bigr\}=1.
\end{align}
To derive this at positive temperature, first observe that from \eqref{Q-aux},
    \[\prtunr{\orig}{y}{\beta}e^{-\beta B(\orig,y)}=\Qinf{\orig}{\beta, B, \w}(\stoppt{y}<\infty).\]
Thus, we have 
\[\bbP\Bigl\{\Qinf{\orig}{\beta, B, \w}(\stoppt{y}<\infty)\le e^{-\beta \e\abs{y}_1}\ \forall y\text{ with }\abs{y}_1\ge L_1\text{ and }\frac{y}{\abs{y}_1}\in\sA\setminus\facetUnr{\mVec(B),\sA,\kappa}\Bigr\}>1-\delta.\]
Again, since $|X_n|_1\to\infty$, $\Qinf{x}{\beta, B, \w}$-almost surely, Borel-Cantelli implies that on the event in the above probability, \eqref{Q-F} holds again.

Taking $\delta\to0$ implies that \eqref{Q-F} is a full $\bbP$-probability event. This, in turn, implies that 
\[\P\Bigl\{\Qinf{\orig}{\beta, B, \w}\Big(X_{0:\infty}\text{ is directed into }\overline{\facetUnr{\mVec(B),\sA,\kappa}}\Big)=1\Bigr\}=1,\]
where $\overline{\facetUnr{\mVec(B),\sA,\kappa}}$ is the closure of $\facetUnr{\mVec(B),\sA,\kappa}$. Since $\facetUnr{\mVec(B),\sA}$ is closed, $\bigcap_{\kappa>0}\overline{\facetUnr{\mVec(B),\sA,\kappa}}=\facetUnr{\mVec(B),\sA}$. Taking $\kappa\to0$ in the above display finishes the proof.
\end{proof}

 Next, we turn to the question of determining the possible limit points of $X_n / n$ under the polymer measure. We achieve this by proving a large deviation principle. 
Fix a face $\sA \in \faces$.  For each $\xi \in \UsetA \cap \QQ^d$, let $\{y_n(\xi)\}_{n \in \ZZ_{\ge0}}$ be a path with $y_0(\xi)=\orig$, $y_n(\xi)-y_{n-1}(\xi) \in \rangeA$ for all $n\in\Z_{>0}$, and $y_{kj} = kj\xi$ for all $k \in \ZZ_{\ge0}$ and some $j=j(\xi) \in \ZZ_{>0}$.

\begin{lemma}\label{lem:upperBoundCompact} Fix $\beta \in (0,\infty]$ and $\sA \in \faces$.
Assume Condition \ref{classLCondition} holds.  Let $B$ be an $L^1(\bbP)$ $\T$-covariant $\beta$-recovering cocycle on the face $\sA$. 
For $\bbP$-a.e.\ $\w$, all subsets $K \subset \RR^d$, and all $\delta > 0$, if $\beta < \infty$, 
	\begin{align*}
		&\varlimsup_{n \rightarrow \infty} \frac{1}{n} \log \rwE_{\orig} \left[ e^{-\beta B(\orig, X_n) - \beta \sum_{i=0}^{n-1} \pote(\T_{X_i} \w, X_{i+1}-X_i)} \one\{X_n/n \in K\cap\UsetA\} \right] \\
		&\leq \sup_{\xi \in \QQ^d \cap K_\delta \cap \UsetA} \varlimsup_{n \rightarrow \infty} \frac{1}{n} \log \rwE_{\orig} \left[ e^{-\beta B(\orig, X_n) - \beta \sum_{i=0}^{n-1} \pote(\T_{X_i} \w, X_{i+1}-X_i)} \one\{X_n = y_n(\xi) \} \right]
	\end{align*}
where $K_\delta = \{\zeta \in \RR^d : \exists \zeta' \in K \text{ with } |\zeta - \zeta'|_1 \le \delta \}$.  If $\beta = \infty$,
\begin{align*}
    \varlimsup_{n \rightarrow \infty} \frac{1}{n} \max_{x \in \DnA{n}\cap nK} \bigl( \freeres{\orig}{x}{n}{\infty} - B(\orig, x) \bigr) &\leq \sup_{\xi \in \QQ^d \cap K_\delta \cap \UsetA} \varlimsup_{n \rightarrow \infty} \frac{1}{n} \bigl( \freeres{\orig}{y_n(\xi)}{n}{\infty}   - B(\orig, y_n(\xi))\bigr).
\end{align*}
\end{lemma}

%
%
%
%
%
\begin{proof}
    We follow the strategy of the proof of Lemma 2.9 in \cite{Ras-Sep-14}.
  	Let $R = \max\{|z|_1 : z \in \rangeA \}$.  Order $\rangeA^{\id}$ as $\{z_1,\dotsc,z_m\}$ and for each of these $z_i$ let $\widehat{z}_i \in \rangeA\setminus\rangeA^{\id}$ be as in Condition \ref{classLCondition}.  If  $\rangeA^{\id}=\varnothing$, then set $m=1$ and take any $\widehat{z}_1 \in \rangeA$. Let $\eps \in \bigl(0, \delta/(4R|\rangeA|)\bigr)$.  Let $j$ be an integer with $j \geq \max\{\frac{1+2\eps}{\eps}|\range|, (2\varepsilon)^{-1}\}$.  Because $(mj)^{-1} \DnA{mj}$ is finite, there exists an integer $b$ such that $y_{kb} = kb\xi$ for all $k \in \ZZ_{\ge0}$ and $\xi \in (mj)^{-1} \DnA{mj}$.  
  
  Next, we construct a path from each $x \in \DnA{n} \cap nK$ to a multiple of a point $\xi(n,x) \in K_\delta \cap (mj)^{-1} \DnA{mj}$.  To do this, start by writing $x = \sum_{z \in \rangeA} a_z z$ where each $a_z \in \ZZ_{\ge0}$ and $\sum_{z \in \rangeA} a_z = n$.  Let $k_n = \lceil \frac{(1+2\eps)n}{mj} \rceil$ and $s_z^{(n)} = \lceil \frac{ja_z}{(1+2\eps)n} \rceil$. Then
  \[
  \frac{a_z}{n} - \frac{s_z^{(n)}}{j} = \frac{a_z}{n} \Bigl(1 - \frac{ns_z^{(n)}}{ja_z}\Bigr) \geq \frac{a_z}{n} \Bigl(1-\frac{1}{1+2\eps}\Bigr) - \frac{1}{j}.
  \]
  Similarly, 
  \[
  \frac{a_z}{n} - \frac{s_z^{(n)}}{j} \leq \frac{a_z}{n} \left(1-\frac{1}{1+2\eps}\right).
  \]
  Summing over $z$, we get 
  \begin{align}\label{auxaux000}
    \frac{\eps}{1+2\eps} \leq 1-\frac{1}{1+2\eps} - \frac{|\range|}{j} \leq 1 - \frac{1}{j} \sum_{z \in \rangeA} s_z^{(n)} \leq 1 - \frac{1}{1+2\eps} < \frac{\delta}{2R\abs{\rangeA}}\le\frac{\delta}{2R}.
  \end{align}
  
  Also,
    \[\abs{n^{-1}a_z-j^{-1}s_z^{(n)}}\le\max\Bigl\{\frac{a_z}n\Bigl(1-\frac1{1+2\e}\Bigr),\frac1j\Bigr\}\le 2\e.\] 
 Thus,
  \begin{align}\label{auxaux001}
  \begin{split}
  \Bigl| \frac{1}{j} \sum_{z \in \rangeA} s_z^{(n)} z - \frac{x}{n} \Bigr|_1 = \Bigl| \frac{1}{j} \sum_{z \in \rangeA} \Bigl( s_z^{(n)} z - \frac{j}{n} a_z z \Bigr) \Bigr|_1 
  \leq R \sum_{z \in \rangeA} |j^{-1} s_z^{(n)} - n^{-1} a_z| \leq 2R|\rangeA|\varepsilon < \frac{\delta}{2}.
  \end{split}
  \end{align}

Define
\begin{align}\label{jxi}
\xi(n,x) = j^{-1} \sum_{z \in \rangeA} s_z^{(n)} z + \sum_{i=1}^m m^{-1}\Bigl(1-j^{-1}\sum_{z \in \rangeA} s_z^{(n)}\Bigr) \widehat{z}_i.
\end{align}
\eqref{auxaux000} and \eqref{auxaux001} imply that $\xi(n,x)\in K_\delta \cap (mj)^{-1} \DnA{mj}$
 and the path that takes $ms_z^{(n)}$ $z$-steps for each $z\in\rangeA$ then $(j-\sum_{z \in \rangeA} s_z^{(n)})$  $\widehat{z}_i$-steps, for each $i\in\{1,\dotsc,m\}$, is an admissible path that takes a total of $mj$ steps to go from $\orig$ to $mj\xi(n,x)$.

Consider next the following admissible path starting at the origin $\orig$. It begins by taking $a_z$ $z$-steps for each $z\in\rangeA$, leading to the point $x$. Next, the path takes $(mk_n s_z^{(n)}-a_z)$ $z$-steps for each $z\in\rangeA$. Then, it proceeds with $(k_n j - \sum_{z \in \rangeA} k_n s_z^{(n)})$ $\widehat z_i$-steps, for each $i\in\{1,\dotsc,m\}$. This entire path is admissible because, first, $mk_n s_z^{(n)} \geq m\cdot\frac{(1+2\eps)n}{mj} \cdot \frac{j a_z}{(1+2\eps)n} = a_z$ and second, as previously observed, $k_n j\ge\sum_{z \in \rangeA} k_n s_z^{(n)}$.

The above path consists of a total of $k_n mj$ steps and ends at 
$k_n mj \xi(n,x)$.
Notably, the last $k_n mj - n$ steps give an admissible path from $x$ to $k_n mj \xi(n,x)$.
For each $i\in\{1,\dotsc,m\}$, the number of $\widehat{z}_i$ steps in the path is at least 
\[
k_n j\Bigl(1 - j^{-1} \sum_{z \in \rangeA} s_z^{(n)} \Bigr) \geq \frac{(1+2\eps)n}{mj} \cdot j \cdot \frac{\eps}{1+2\eps} = nm^{-1}\eps.
\]
Let $\ell_n=\lceil\frac{k_n}b\rceil$ so that $(\ell_n-1)b < k_n \leq \ell_n b$.  Repeating the steps of the path below \eqref{jxi} that goes from $\orig$ to $mj \xi(n,x)$, one gets a path that goes from $k_n mj\xi(n,x)$ to $\ell_n b mj \xi(n,x)$.  This requires repeating each step $\ell_n b - k_n<b$ times.   Thus, the number of steps to go from $x$ to $\ell_n bmj \xi(n,x)$ is 
$r_n = (k_n mj - n)+(\ell_n b - k_n)mj = \ell_n b mj - n$.  Since $b$ is a function solely dependent on $mj$ and independent of $n$,
\begin{align*}
r_n 
\leq \Bigl(\frac{k_n}{b} + 1\Bigr)bmj - n
\leq \Bigl(\frac{(1+2\eps)n}{mj} + 1\Bigr)mj + bmj - n  
= 2\eps n + (b+1)mj \leq 3 \eps n
\end{align*}
for all $n\ge n_0$ with $n_0$  depending on a $mj$ and $\eps$.  Denote the steps we constructed to go from $x$ to $\ell_n b mj \xi(n,x)$ by $(u_1, \ldots, u_{r_n})$ and denote by $\mb{u}(n,x)$ the path that starts at $\orig$ and takes these exact steps.  

We are ready to estimate:  
\begin{align*}
	&\frac{1}{n} \log \rwE_{\orig} \left[ e^{-\beta B(\orig, X_n) - \beta \sum_{i=0}^{n-1} \pote(\T_{X_i} \w, X_{i+1}-X_i)} \one\{X_n/n \in K\cap\UsetA\} \right] \\ 
	&= \frac{1}{n} \log \sum_{x \in \DnA{n} \cap nK} \rwE_{\orig} \left[ e^{-\beta B(\orig, x) - \beta \sum_{i=0}^{n-1} \pote(\T_{X_i} \w, X_{i+1}-X_i)} \one\{X_n = x\} \right] \\
	&\leq \max_{x \in \DnA{n} \cap nK} \frac{1}{n} \log \rwE_{\orig} \left[ e^{-\beta B(\orig, x) - \beta \sum_{i=0}^{n-1} \pote(\T_{X_i} \w, X_{i+1}-X_i)} \one\{X_n = x\} \right] + \frac{C \log n}{n} \\
	&\leq \max_{x \in \DnA{n} \cap nK} \frac{1}{n} \log \rwE_{\orig} \left[ e^{-\beta B(\orig, \ell_n bmj\xi(n,x)) - \beta \sum_{i=0}^{n+r_n-1} \pote(\T_{X_i} \w, X_{i+1}-X_i)} \one\{X_n = x, Z_{n+1:r_n+n} = u_{1:r_n}\} \right] \\
	&\qquad- \frac{r_n}{n} \log \min_{z \in \rangeA} p(z)
	+ \max_{x \in \DnA{n}\cap nK} \frac{\beta B(x, \ell_n bmj\xi(n,x))}{n}\\
	&\qquad+ \max_{x \in \DnA{n} \cap nK} \frac{\beta}{n} \sum_{i=0}^{r_n-1} \pote^+(\T_{x + u_1 + \ldots + u_i} \w,u_{i+1}) + \frac{C \log n}{n} \\ 
	&\leq \max_{x \in \DnA{n} \cap nK} \frac{1}{n} \log \rwE_{\orig} \left[ e^{-\beta B(\orig, \ell_n bmj\xi(n,x)) - \beta \sum_{i=0}^{n+r_n-1} \pote(\T_{X_i} \w, X_{i+1}-X_i)} \one\{X_{\ell_n bmj} = \ell_n b mj \xi(n,x)\} \right] \\
	&\qquad- \frac{r_n}{n} \log \min_{z \in \rangeA} p(z) + \max_{x \in \DnA{n}\cap nK} \frac{\beta B(x, \ell_n bmj\xi(n,x))}{n}\\
	&\qquad+ \max_{x \in \DnA{n} \cap nK} \frac{\beta}{n} \sum_{i=0}^{r_n-1} \pote^+(\T_{x + u_1 + \ldots + u_i} \w,u_{i+1}) + \frac{C \log n}{n}. 
\end{align*}
Above, we used the notation $Z_{i+1}=X_{i+1}-X_i$ and took the convention that an empty sum is $0$. Hence, $x+u_1+\ldots+u_i=x$ when $i=0$.
%

We now explain why all of the quantities on the right-hand side converge to 0 when we send $n\rightarrow \infty$ and then $\eps \rightarrow 0$. 
Since $\xi(n,x)\in K_\delta \cap (mj)^{-1} \DnA{mj}\subset \QQ^d \cap K_\delta \cap \UsetA$ and $y_{\ell_n bmj}(\xi(n,x))=\ell_n bmj\xi(n,x)$,
we have that for $n$ large enough, the first maximum on the right-hand side is bounded above by 
\[
(1+3\eps)\sup_{\xi \in \QQ^d \cap K_\delta \cap \UsetA} \varlimsup_{n \rightarrow \infty} \frac{1}{n} \log \rwE_{\orig} \left[ e^{-\beta B(\orig, X_n) - \beta \sum_{i=0}^{n-1} \pote(\T_{X_i} \w, X_{i+1}-X_i)} \one\{X_n = y_n(\xi) \} \right].
\]
For the second term, use $r_n \leq 3\eps n$.  By the shape theorem for cocycles, \cite[Theorem B.3]{Jan-Ras-18-arxiv}, in the limit, the next maximum is bounded in absolute value 
by $\beta (1+3\eps) 3\eps R  |\mVec(B)|_1$, where $\mVec(B)$ is the random vector which satisfies \eqref{meanVec}.  
The last term goes to $0$.  

For the third maximum, notice that the particular order of the steps in $\mb{u}(n,x)$ does not matter to this point.  For each $i\in\{1,\dotsc,m\}$, the ratio of $z_i$ steps to $\widehat{z}_i$ steps is at most $r_n / (m^{-1}n\eps) \leq 3m$.  So order the steps of $\mb{u}(n,x)$ in this way.  
First, alternate between $\widehat{z}_1$ steps and $z_1$ steps in such a way that there are no more than $3m$ consecutive $z_1$ steps.  Continue this process until we have exhausted all $z_1$ steps and their corresponding $k_n j\Bigl(1 - j^{-1} \sum_{z \in \rangeA} s_z^{(n)} \Bigr)$ $\widehat{z}_1$ steps.   Repeat this procedure for $i=2,\dotsc,m$. It is important to note that if $\widehat{z}_i=\widehat{z}_{i'}$ for some distinct $i$ and $i'$, we only exhaust the corresponding number of $\widehat{z}_i$ steps when we exhaust the $z_i$ steps, leaving the $\widehat{z}_{i'}$ steps to be paired with the $z_{i'}$ steps.

Next, choose an ordering for the set $\rangeA \setminus \rangeA^{\id}$, denoted as $\{z'_1, \ldots, z'_c\}$.  Continue ordering the steps of $\mb{u}(n,x)$ by using all of the $z'_1$ steps first, followed by the $z'_2$ steps, and so on for the remaining steps. Note that this includes  $(m k_n s_{\widehat z_i}^{(n)} - a_{\widehat z_i})$  $\widehat{z}_i$ steps, for each $i\in\{1,\dotsc,m\}$ from the first part of the path from $x$ to $k_n mj\xi(n,x)$, which have not been used in the above part of the procedure, when we exhausted the $z_i$ steps.

Recall that $\T_z$ is the identity map for all $z\in\rangeA^{\id}$. 
Using the above ordering, we may bound
\begin{align*}
\max_{x \in \DnA{n} \cap nK} \frac{1}{n} \sum_{k=0}^{r_n-1} \pote^+(\T_{x + u_1 + \ldots + u_k} \w,u_{k+1})
&\leq \frac{|\rangeA|}{n} \max_{x \in \DnA{n} \cap nK} \max_{y \in x+\mb{u}(n,x)} \max_{z \in \rangeA \setminus \rangeA^{\id}} \sum_{k=0}^{r_n-1} \pote^+(\T_{y+ k z}\w,z)\\
&\quad+\sum_{i=1}^m \frac{3m}n\max_{x \in \DnA{n} \cap nK} \max_{y \in x+\mb{u}(n,x)}\sum_{k=0}^{r_n-1} \pote^+(\T_{y+ k{\widehat z}_i}\w,z_i).
\end{align*}
Since $\pote^+(\w,z) \in \classL_{z,\rangeA}$ for all $z\in\rangeA$ and $\pote^+(\w,z_i)\in\classL_{\widehat z_i,\rangeA}$, for each $i\in\{1,\dotsc,m\}$, the right-hand side converges to $0$ when we take $n\to\infty$ and then $\eps\to\infty$.  

At zero temperature, the equivalent estimate is 
\begin{align*}
    &\frac{1}{n} \max_{x \in \DnA{n} \cap nK} \bigl( \freeres{\orig}{x}{n}{\infty} - B(\orig, x) \bigr) \\
    &\leq \frac{1}{n} \max_{x \in \DnA{n} \cap nK} \bigl( \freeres{\orig}{x}{n}{\infty} - \sum_{i=0}^{r_n-1} \pote(\T_{x+u_1+\ldots+u_i}\w, u_{i+1})  - B(\orig, \ell_n bmj\xi(n,x)) \bigr)  \\
    &\qquad \qquad + \max_{x \in \DnA{n}\cap nK} \frac{ B(x, \ell_n bmj\xi(n,x))}{n} + \max_{x \in \DnA{n} \cap nK} \frac{1}{n} \sum_{i=0}^{r_n-1} \pote^+(\T_{x + u_1 + \ldots + u_i} \w,u_{i+1}) \\
    &\leq \frac{1}{n} \max_{x \in \DnA{n} \cap nK} \bigl( \freeres{\orig}{\ell_n bmj\xi(n,x)}{\ell_n bmj}{\infty}   - B(\orig, \ell_n bmj\xi(n,x)) \bigr)  \\
    &\qquad \qquad + \max_{x \in \DnA{n}\cap nK} \frac{ B(x, \ell_n bmj\xi(n,x))}{n} + \max_{x \in \DnA{n} \cap nK} \frac{1}{n} \sum_{i=0}^{r_n-1} \pote^+(\T_{x + u_1 + \ldots + u_i} \w,u_{i+1}).
\end{align*}
The two terms on the last line converge to 0 after sending $n \rightarrow \infty$ and $\e \rightarrow 0$, as in the positive temperature case.  The term in the second-to-last line is bounded above by
\begin{align*}
    &(1+3\e) \sup_{\xi \in \QQ^d \cap K_\delta \cap \UsetA} \varlimsup_{n \rightarrow \infty} \frac{1}{n} \bigl( \freeres{\orig}{y_n(\xi)}{n}{\infty}   - B(\orig, y_n(\xi))\bigr).
\end{align*}
Take $\e\to0$.
\end{proof}

Recall the restricted-length limiting quenched point-to-point free energy $\ppShapeRes{\beta}$ in \eqref{ppShapeRes}, its restriction $\ppShapeResA{\beta}$ to the face $\sA$, and the upper semicontinuous regularization $\ppShapeResA{\beta, \usc}$.  Define 
\[I_{B}(\xi) = \begin{cases}\beta \mVec(B) \cdot \xi - \beta \ppShapeResA{\beta, \usc}(\xi)&\text{if $\xi \in \UsetA$,}\\ \infty&\text{otherwise.}\end{cases}\]

\begin{theorem}\label{thm:LDP}
	Assume the setting of Theorem \ref{thm:Measures} with $\beta < \infty$. 
    There exists an event $\Omega_B \subset \Omega$ with $\bbP(\Omega_B) = 1$ such that for each $\w \in \Omega_B$, the distribution of $X_n / n$ under $\Qinf{x}{\beta, B, \w}$ satisfies a large deviation principle with rate function $I_B$.
	This means the following bounds hold
	\begin{align}
		&\varlimsup_{n \rightarrow \infty} n^{-1} \log \Qinf{x}{\beta, B, \w}(X_n / n \in K) \leq -\inf_{\zeta \in K} I_{B}(\zeta) \text{ for closed } K \subset \RR^d\quad\text{and} \\
		&\varliminf_{n \rightarrow \infty} n^{-1} \log \Qinf{x}{\beta, B, \w}(X_n / n \in O) \geq -\inf_{\zeta \in O} I_{B}(\zeta) \text{ for open } O \subset \RR^d.\label{LDP-lower}
	\end{align}
\end{theorem}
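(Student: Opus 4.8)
The plan is to deduce the LDP for $X_n/n$ under $\Qinf{x}{\beta,B,\w}$ from the Gärtner-Ellis-type machinery together with Lemma \ref{lem:upperBoundCompact} and the shape theorem for $B$ (\cite[Theorem B.3]{Jan-Ras-18-arxiv}), closely following the scheme of \cite{Ras-Sep-14} and \cite{Jan-Ras-20-aop}. The key identity, available from the cocycle property exactly as in the proof of Lemma \ref{lem:GreensExpectation}, is
\begin{align*}
\Qinf{x}{\beta,B,\w}(X_n = y) = \rwE_x\Bigl[e^{-\beta B(X_0,X_n) - \beta\sum_{i=0}^{n-1}\pote(\T_{X_i}\w,X_{i+1}-X_i)}\one\{X_n=y\}\Bigr],
\end{align*}
so that, after using $\T$-covariance of $B$ to reduce to $x=\orig$, the quantity $n^{-1}\log\Qinf{\orig}{\beta,B,\w}(X_n/n\in K)$ is exactly the object appearing on the left-hand side of Lemma \ref{lem:upperBoundCompact}. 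Note also that $\Qinf{\orig}{\beta,B,\w}$ is supported on paths with increments in $\rangeA$, so $X_n/n \in \DnA{n}/n \subset \UsetA$ automatically, which is why intersecting $K$ with $\UsetA$ is harmless.

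The first step is to identify the ``directional'' limit. For each $\xi\in\QQ^d\cap\UsetA$ with associated path $y_n(\xi)$ as before Lemma \ref{lem:upperBoundCompact}, I would show
\begin{align*}
\lim_{n\to\infty}\frac1n\log\rwE_{\orig}\Bigl[e^{-\beta B(\orig,X_n)-\beta\sum_{i=0}^{n-1}\pote(\T_{X_i}\w,X_{i+1}-X_i)}\one\{X_n=y_n(\xi)\}\Bigr] = -I_B(\xi)
\end{align*}
for $\bbP$-a.e.\ $\w$, simultaneously for all such $\xi$ (a countable set). Indeed, by restricting the reference walk to follow $y_{\abullet}(\xi)$ for the first $jk$ steps and then iterating, superadditivity gives that $\frac1{jk}\log(\cdots)$ along $n=jk$ converges to $\beta\ppShapeResA{\beta}(\xi)-\beta\mVec(B)\cdot\xi$ once we pull $B(\orig,y_n(\xi))=B(\orig,n\xi/(\text{denominator}))$ out using the shape theorem for $B$ and the definition \eqref{ppShapeRes} of $\ppShapeRes{\beta}$; filling in between multiples of $jk$ costs only $o(n)$. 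Since $\ppShapeResA{\beta,\usc}=\ppShapeResA{\beta}$ on $\ri\UsetA$ and the rate function uses the usc regularization, this matches $-I_B(\xi)$ for $\xi\in\ri\UsetA$, and the boundary is handled by continuity of $\ppShapeResU{\beta,\usc}$ from Theorem \ref{Thm:ShapeRes}.

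The second step is the upper bound: for closed $K$, Lemma \ref{lem:upperBoundCompact} gives
\begin{align*}
\varlimsup_{n\to\infty}\frac1n\log\Qinf{\orig}{\beta,B,\w}(X_n/n\in K) \le \sup_{\xi\in\QQ^d\cap K_\delta\cap\UsetA}\bigl(-I_B(\xi)\bigr) = -\inf_{\xi\in\QQ^d\cap K_\delta\cap\UsetA}I_B(\xi),
\end{align*}
and then letting $\delta\downarrow0$ and using lower semicontinuity of $I_B$ (equivalently, upper semicontinuity and concavity of $\ppShapeResA{\beta,\usc}$, plus the fact that $K_\delta\cap\UsetA$ decreases to $K\cap\UsetA$) yields $\le -\inf_{\zeta\in K}I_B(\zeta)$. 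The third step is the lower bound \eqref{LDP-lower}: for open $O$ and $\zeta\in O\cap\UsetA$ with $I_B(\zeta)<\infty$, approximate $\zeta$ by rationals $\xi\in O\cap\UsetA\cap\QQ^d$, use $\Qinf{\orig}{\beta,B,\w}(X_n/n\in O)\ge\Qinf{\orig}{\beta,B,\w}(X_n=y_n(\xi))$ for $n$ large, apply the directional limit from step one, and optimize; continuity of $\ppShapeResA{\beta,\usc}$ on $\ri\UsetA$ and the usc-extension property give $\inf$ over the rationals matching $\inf_{\zeta\in O}I_B(\zeta)$. Points $\zeta$ with $I_B(\zeta)=\infty$ (i.e.\ $\zeta\notin\UsetA$) contribute nothing to the lower bound.

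I expect the main obstacle to be the interchange of the $\bbP$-a.e.\ statement with the supremum/infimum over directions: the directional limits in step one hold a.s.\ for each fixed rational $\xi$, hence simultaneously for all of them on a single full-measure event $\Omega_B$, but one must also absorb the shape theorem for $B$ (Theorem B.3 of \cite{Jan-Ras-18-arxiv}, which controls $B(\orig,y)$ uniformly in $y$) and the usc shape theorem Theorem \ref{Thm:ShapeRes} into the same event, and then check that Lemma \ref{lem:upperBoundCompact}'s a.e.\ conclusion is compatible. The bookkeeping around $\UsetA$ versus $\sA$, and the boundary behavior of $\ppShapeResA{\beta,\usc}$, is where care is needed; everything else is the standard Varadhan/Gärtner-Ellis-with-lattice-animals argument already executed in \cite{Ras-Sep-14,Jan-Ras-20-aop}.
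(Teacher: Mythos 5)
Your plan is correct and follows essentially the same route as the paper: use the cocycle property to write $\Qinf{x}{\beta,B,\w}(X_n=y)=e^{-\beta B(x,y,\w)}\prtres{x}{y}{n}{\beta,\w}$, combine the cocycle shape theorem of \cite[Theorem B.3]{Jan-Ras-18-arxiv} with Theorem \ref{Thm:ShapeRes} to get the directional limit $n^{-1}\log\Qinf{x}{\beta,B,\w}(X_n=x_n)\to -I_B(\xi)$ along any $x_n/n\to\xi\in\ri\UsetA$, feed this into Lemma \ref{lem:upperBoundCompact} and send $\delta\downarrow 0$ (using lower semicontinuity of $I_B$ and compactness of $K_\delta\cap\UsetA$) for the upper bound, and use the directional limit plus continuity of $I_B$ on $\UsetA$ to pass from $\ri\UsetA$ to all of $O\cap\UsetA$ for the lower bound. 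The one detour worth flagging: your ``step one'' re-derives the directional limit by a superadditivity argument along multiples of $jk$; this is unnecessary, since the two shape theorems already deliver the limit directly once one writes $n^{-1}\log\Qinf{x}{\beta,B,\w}(X_n=x_n)=-n^{-1}\beta B(x,x_n)+n^{-1}\beta\freeres{x}{x_n}{n}{\beta}$. Relatedly, at boundary directions $\xi\in\UsetA\setminus\ri\UsetA$ the convergence $n^{-1}\freeres{\orig}{y_n(\xi)}{n}{\beta}\to\ppShapeResA{\beta,\usc}(\xi)$ is not asserted by Theorem \ref{Thm:ShapeRes} (only a liminf/limsup sandwich via \eqref{shape-res}); so stating a \emph{limit} equal to $-I_B(\xi)$ for all rational $\xi\in\UsetA$ overclaims slightly. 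For the upper bound the one-sided bound $\varlimsup\le -I_B(\xi)$ from \eqref{shape-res} suffices, and for the lower bound you only need interior directions followed by continuity of $I_B$, exactly as the paper does. With that small adjustment your argument closes.
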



\begin{proof}
    We follow the strategy of the proof of \cite[Theorem 4.1]{Ras-Sep-14}.
	First, observe that for $y-x\in\RsemiA$, using the cocycle property, we get
	\begin{align}
		\Qinf{x}{\beta, B, \w}(X_n = y) &= \sum_{x_{0:n} \in \PathsPtPRes{x}{y}{n}} p(x_{0:n}) e^{-\beta \sum_{i=0}^{n-1}  B(x_i, x_{i+1}, \w) - \beta \sum_{i=0}^{n-1} \pote(\T_{x_i} \w, x_{i+1}-x_i)} \notag\\
		&= e^{-\beta B(x, y, \w)} \sum_{x_{0:n} \in \PathsPtPRes{x}{y}{n}} p(x_{0:n}) e^{ -\beta \sum_{i=0}^{n-1}  \pote(\T_{x_i} \w, x_{i+1}-x_i)}\notag \\
		&= e^{-\beta B(x, y, \w)} \prtres{x}{y}{n}{\beta, \w}.\label{QeBZ}
	\end{align}

Then using \cite[Theorem B.3]{Jan-Ras-18-arxiv} and Theorem \ref{Thm:ShapeRes}, we have for $\bbP$-almost every $\w$, for any sequence  $x_n\in\DnA{n}+x$, $n\in\ZZ_{>0}$, such that $x_n/n\to\xi\in\ri\UsetA$ as $n\to\infty$, 
\[
\frac{1}{n} \log \Qinf{x}{\beta, B, \w}(X_n = x_n)  = -\frac{1}{n} \beta B(x, x_n, \w) + \frac{1}{n} \log \prtres{x}{x_n}{n}{\beta, \w} \mathop{\longrightarrow}_{n\rightarrow \infty} -\beta \mVec(B) \cdot \xi + \beta \ppShapeResA{\beta, \usc}(\xi) = -I_{B}(\xi).
\]

Let $O \subset \RR^d$ be an open set. Take $\xi \in O\cap\ri\UsetA$ and take $x_n$ as above.  
Then $x_n \in nO$ for all $n$ large enough and
\begin{align*}
	\varliminf_{n\rightarrow \infty} \frac{1}{n} \log \Qinf{x}{\beta, B, \w}(X_n/n \in O) 
	\geq \varliminf_{n\rightarrow \infty} \frac{1}{n} \log \Qinf{x}{\beta, B, \w}(X_n =x_n) 
	= -I_{B}(\xi). 
\end{align*}
By Theorem \ref{Thm:ShapeRes}, $I_B$ is continuous on $\UsetA$ and, therefore, the bound, in fact, holds for all $\xi\in O\cap\UsetA$.
This bound also holds trivially for $\xi \in O \setminus (\UsetA)$. 
Take the supremum over all $\xi \in O$ to get \eqref{LDP-lower}.

Next, Lemma \ref{lem:upperBoundCompact} says that for $\bbP$-almost every $\w$, for any closed set $K\subset \RR^d$ and $\delta > 0$, for any $x\in\ZZ^d$, 
\begin{align}
	\varlimsup_{n\rightarrow \infty} \frac{1}{n} \log \Qinf{x}{\beta, B, \w}\Bigl(\frac{X_n}{n} \in K \Bigr)
	&= \varlimsup_{n\rightarrow \infty} \frac{1}{n} \log \rwE_{\orig} \Bigl[ e^{-\beta B(\orig, X_n) - \beta \sum_{i=0}^{n-1} \pote(\T_{X_i} \w, X_{i+1}-X_i)} \one\Bigl\{\frac{X_n}n \in K\cap\UsetA\Bigr\} \Bigr]\notag\\
\begin{split}
	&\leq \sup_{\xi \in \QQ^d \cap K_\delta \cap \UsetA} \varlimsup_{n \rightarrow \infty} \frac{1}{n} \log \Qinf{x}{\beta, B, \w}(X_n = y_n(\xi)) \\
	&=- \inf_{\xi \in \QQ^d \cap K_\delta \cap \UsetA} I_{B}(\xi) \leq -\inf_{\xi \in K_\delta \cap \UsetA} I_{B}(\xi).
\end{split}\label{QLDP-aux}
\end{align}

Since \eqref{QLDP-aux} holds for all $\delta >0$, $K_\delta \cap \UsetA$ is compact and decreases to $K\cap\UsetA$ as $\delta$ decreases to $0$, and since $I_{B}$ is lower semicontinuous,  we obtain the bound 
\begin{align*} 
	&\varlimsup_{n\rightarrow \infty} \frac{1}{n} \log \Qinf{x}{\beta, B, \w}\Bigl(\frac{X_n}{n} \in K \Bigr) \leq -\lim_{\delta \rightarrow 0} \inf_{\xi \in K_\delta \cap \UsetA} I_{B}(\xi)
	= -\inf_{\xi \in K \cap \UsetA} I_{B}(\xi) = -\inf_{\xi \in K} I_{B}(\xi).\qedhere
\end{align*}
\end{proof}

We are now ready to prove Theorem \ref{thm:Measures}.

\begin{proof}[Proof of Theorem \ref{thm:Measures}]
Parts \eqref{Measures.consPos}, \eqref{Measures.Greens}, \eqref{Measures.consPosRes},  \eqref{Measures.consZero}, \eqref{Measures.trans.pos}, \eqref{Measures.trans.zero}, and \eqref{Measures.Directed} were proved in Lemmas \ref{lem:consistencyPos}, \ref{lem:GreensExpectation}, \ref{lem:consistencyPosRes},  \ref{lem:consistencyZero}, \ref{lm:trans.pos}, \ref{lm:trans.zero}, and Theorem \ref{thm:dir}, respectively. 
Next, we will prove part \eqref{Measures.LLNLimit} in the positive temperature case.

Write the complement $\RR^d \setminus \facetRes{\mVec(B),\sA}$ as a countable union of non-decreasing compact sets $K_j$.  Explicitly, we define
\[
K_j = \{v \in \RR^d : |v|_1 \leq j \text{ and } d(v, \facetRes{\mVec(B),\sA}) \geq j^{-1} \}.
\]
Then in fact $\RR^d \setminus \facetRes{\mVec(B),\sA} = \bigcup_{j=1}^\infty \mathring{K_j}$, where $\mathring{A}$ is the interior of the set $A\subset\RR^d$.  

We now show that for each $j$, $\Qinf{x}{\beta, B, \w}(X_n/n \text{ has no limit points in } \mathring{K_j}) = 1$.  From the large deviation principle in Theorem \ref{thm:LDP}, 
\begin{align*}
	\varlimsup_{n \rightarrow \infty} n^{-1} \log \Qinf{x}{\beta, B, \w}(X_n / n \in K_j) \leq -\inf_{\zeta \in K_j} I_{B}(\zeta).
\end{align*}
The rate function $I_{B}(\zeta) = 0$ precisely when $\zeta \in \facetRes{\mVec(B),\sA}$.  Because $K_j$ is compact, $K_j \subset \RR^d \setminus \facetRes{\mVec(B),\sA}$, and $I_B$ is continuous, 
we see that $-\inf_{\zeta \in K_j} I_{B}(\zeta) < 0$. 
Therefore $\Qinf{x}{\beta, B, \w}(X_n/n \in K_j)$ is summable in $n$, and by  Borel-Cantelli, $\Qinf{x}{\beta, B, \w}(X_n/n \in K_j \text{ for only finitely many } n) = 1$.  Therefore, $X_n/n$ has no limit points in $\mathring{K_j}$, $\Qinf{x}{\beta, B, \w}$-almost surely.  

Then by a union bound in $j$, since $K_j$ are non-decreasing, 
we see $\Qinf{x}{\beta, B, \w}$-almost surely, $X_n/n$ has no limit points in $\bigcup_{j=1}^{\infty} \mathring{K_j} = \RR^d \setminus \facetRes{\mVec(B),\sA}$.  So we conclude that $\Qinf{x}{\beta, B, \w}(X_n \text{ is directed into } \facetRes{\mVec(B),\sA}) = 1$.


Lastly, we prove part \eqref{Measures.LLNLimit} in the zero temperature case.  For any sequence $x_n \in \DnA{n}+x$, $n\in\ZZ_{>0}$, such that $x_n/n\to\xi\in\ri\UsetA$ as $n\to\infty$, the cocycle and recovery properties imply that $n^{-1} B(x,x_n) \geq n^{-1}\freeres{x}{x_{n}}{n}{\infty}$.  
The shape theorem \cite[Theorem B.3]{Jan-Ras-18-arxiv} and Theorem \ref{Thm:ShapeRes} give that $\mVec(B) \cdot \xi \geq \ppShapeResA{\infty, \usc}(\xi)$, for all $\xi \in \ri\UsetA$.  The continuity of $\ppShapeResA{\infty, \usc}$ extends this inequality to all  $\xi \in \UsetA$.   

Consider $X_n$ under $\Qinf{x}{\infty, B,\tie, \w}$. Using 
$n^{-1}B(x, X_{n}) = n^{-1} \freeres{x}{X_{n}}{n}{\infty}$, the shape theorem \cite[Theorem B.3]{Jan-Ras-18-arxiv}, and Theorem \ref{Thm:ShapeRes}, we get $\mVec(B) \cdot \zeta \leq \ppShapeResA{\infty, \usc}(\zeta)$, for any limit point $\zeta$ of $X_n/n$. 
Combined with the inequality above, we see $\mVec(B) \cdot \zeta = \ppShapeResA{\infty, \usc}(\zeta)$, which then implies that $\zeta \in\facetResZero{\mVec(B),\sA}$. 
\end{proof}

\section{Proof of Theorem \ref{thm:CocycleMeasuresFace}}\label{sec:MainThmProofs}

   Apply Theorem \ref{thm:Cocycles} to obtain, for each $\sA \in \faces$, $(\beta, \m) \in \DctbA$, and $x \in \ZZ^d$, the $L^1(\bbPhat)$, $\That$-covariant, $\beta$-recovering cocycle $\B{\sA, \beta, \m}(x, y, \omegahat)$ on $\sA$ with mean $\bbEhat[\mVec(\B{\sA, \beta,\m})] = m$. 
Next, apply Theorem \ref{thm:Measures} to obtain the semi-infinite path measures $\Qinf{x}{\sA, \beta, \B{\sA,\beta, \m}, \omegahat}$ for triples $(\sA,\beta,\m)$ with $\beta<\infty$.
For the triples $(\sA,\infty,\m)$, Lemma \ref{lm:rec-gen} verifies that Condition \ref{BCondition} holds for $\B{\sA, \infty,\m}$. Therefore, Theorem \ref{thm:Measures} produces the measures $\Qinf{x}{\sA, \infty, \B{\sA,\infty, \m},\tie, \omegahat}$, where $\tie$ is the tie-breaker mentioned in part \eqref{Measures.trans.zero} of the theorem. We will abbreviate this family of measures as $\{\Qinf{x}{\sA, \beta, \m, \omegahat}:\sA\in\faces,(\beta,m)\in\DctbA,x\in\ZZ^d\}$. Let $\Omegahat_0=\Omegahat_{\coc}\cap\bigcap_{\sA\in\faces,(\beta,m)\in\DctbA}\Omegahat_{\B{\sA,\beta, \m}}$, where $\Omegahat_{\coc}$ is the full-probability event from Theorem \ref{thm:Cocycles}\eqref{Cocycles.c} and $\Omegahat_{\B{\sA,\beta, \m}}$ are the full-probability events from Theorem \ref{thm:Measures}. Then the measures $\Qinf{x}{\sA, \beta, \m, \omegahat}$ satisfy the consistency properties in parts \eqref{CocycleMeasuresFace.consPos}, \eqref{CocycleMeasuresFace.Greens}, \eqref{CocycleMeasuresFace.consPosRes}, and \eqref{CocycleMeasuresFace.consZero}, for all $\omegahat\in\Omegahat_0$. 

    
    By parts \eqref{Measures.trans.pos}, \eqref{Measures.trans.zero}, and \eqref{Measures.Directed} of Theorem \ref{thm:Measures}, we have that under the conditions of Theorem \ref{thm:CocycleMeasuresFace}\eqref{CocycleMeasuresFace.trans}, for all $x \in \ZZ^d$ and $\omegahat \in \Omegahat_0$, 
        \[\Qinf{x}{\sA, \beta, \m, \omegahat}\{\abs{X_n}_1\to\infty\}=\Qinf{x}{\sA, \beta, \m, \omegahat}\{X_{0:\infty} \text{ is directed into } \facetUnr{\mVec(\B{\sA, \beta,\m})}\} = 1.\]
        (Recall that Lemma \ref{lm:rec-gen} verifies that Condition \ref{BCondition} holds for $\B{\sA, \infty,\m}$.)
        By Theorem \ref{thm:Measures}\eqref{Measures.LLNLimit}, for all $x\in\ZZ^d$ and $\omegahat\in\Omegahat_0$,
         \[\Qinf{x}{\sA, \beta, \m, \omegahat}\{\text{all limit points of } X_n/n \text{ are contained in } \facetRes{\mVec(\B{\sA, \beta,\m})}\} = 1.\]
        By Theorem \ref{thm:Measures}\eqref{Cocycles.b} and \eqref{mIsExtreme}, $\bbEhat[\mVec(\B{\sA, \beta,\m})] = \m \in \ext(\subspaceA \cap \superDiffUnrA{\beta, \usc}(\xi))$ for some $\xi \in \ri \sA$. By Lemma \ref{lem:E[m(B)]}, $\mVec(\B{\sA, \beta,\m}) \in \subspaceA \cap \superDiffUnrA{\beta, \usc}(\xi)$, $\bbPhat$-almost surely.  It must be then that $\m = \bbEhat[\mVec(\B{\sA, \beta,\m})] = \mVec(\B{\sA, \beta,\m})$, $\bbPhat$-almost surely by the definition of an extreme point.
      So there exists a $\That$-invariant event of $\bbPhat$-probability one, on which $\facetRes{\mVec(\B{\sA, \beta,\m})} = \facetRes{\m,\sA}$ and $\facetUnr{\mVec(\B{\sA, \beta,\m})} = \facetUnr{\m,\sA}$. Let $\Omegahat_{\dir}$ be the intersection of this event with $\Omegahat_0$. All the claims of the theorem are now verified, but with the quenched measures being measurable functions of $\omegahat$.

    It remains to construct the family of measures on the original space $\Omega$, using a standard argument in measure theory. To this end, recall that $\pi_{\Omega}$ is the projection from $\Omegahat$ to $\Omega$.  By \cite[Example 10.4.11]{Bog-07}, there exist a $\T$-invariant Borel set $\OmegaReg \subset \Omega$ and a family of regular conditional distributions $\mu_{\w}(\cdot) = \bbPhat(\cdot \given \pi_{\Omega}^{-1}(\w))$ such that $\bbP(\OmegaReg) = 1$ and $\mu_{\w}(\pi_{\Omega}^{-1}(\w)) = 1$ for all $\w \in \OmegaReg$.  For $\bbP$-a.e.\ $\w \in \Omega$, $\mu_\w(\widehat{\Omega}_{\dir}) = 1$ since
    \[
    \int \mu_\w(\widehat{\Omega}_{\dir}) \,\bbP(d\w) = \bbPhat(\widehat{\Omega}_{\dir}) = 1.
    \]
    Define the event $\OmegaDir = \OmegaReg \cap \{\w \in \Omega : \mu_\w(\widehat{\Omega}_0) = 1\}$. Then $\bbP(\OmegaDir) = 1$.  For each $\w \in \OmegaDir$, $\mu_\w(\pi_{\Omega}^{-1}(\w) \cap \widehat{\Omega}_{\dir}) = 1$, so there exists $\omegahat \in \widehat{\Omega}_0$ such that $\pi_{\Omega}(\omegahat) = \w$.  For each $\w \in \OmegaDir$, define $\Qinf{x}{\sA, \beta, \m, \w} = \Qinf{x}{\sA, \beta, \m, \omegahat}$. This family satisfies all the desired claims.\hfill\qed

\appendix

\section{Basic convex analysis and linear algebra facts}\label{app:conv}

In this short appendix, we recall some convex analysis facts and prove some lemmas that are of use to us.

For a convex set $K$, $A$ is called a face of $K$  if for all $\xi, \eta \in K$ and $t \in (0,1)$, $t\xi + (1-t)\eta \in A$ implies that $\xi, \eta \in A$.  The intersection of faces is a face.  $K$ itself is a face, and, by   \cite[Corollary 18.1.3]{Roc-70}, all other faces are contained in the relative boundary of $K$.  Extreme points are the zero-dimensional faces. If $\xi \in A$ can be written as a convex combination of $\eta, \zeta \in K$ then $\eta, \zeta \in A$.  The relative interiors of the non-empty faces of $K$ form a partition of $K$ by \cite[Theorem 18.2]{Roc-70}.  Thus, every $\xi \in K$ has a unique face $K_{\xi}$ such that $\xi \in \ri K_{\xi}$. If $K$ is in the convex set (respectively convex cone) generated by a set $R$, then by \cite[Theorem 18.3]{Roc-70} a face $A$ of $K$ is in the convex set (convex cone) generated by $R \cap A$.

\begin{lemma}[Euler's homogeneous concave function 
theorem]\label{lem:concaveDualHomogeneous}
    Let $\sX$ and $\sY$ be real vector spaces with a bilinear function $\langle \cdot, \cdot \rangle : \sX \times \sY \rightarrow \R$.  Let $f : \sX \rightarrow [-\infty, \infty)$ be a proper, concave, positively 1-homogeneous function.  The superdifferential at $x \in \sX$ is
    \[
        \partial f(x) = \{ y \in \sY : \forall u \in \sX,   f(u)  \le f(x) + \langle u-x, y \rangle\}.
    \]
    For $x \in \sX$ and $y \in \sY$, 
    if $y \in \partial f(x)$ then $f(x) = \langle x, y \rangle$.
    \end{lemma}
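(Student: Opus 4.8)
The plan is to prove Euler's homogeneous concave function theorem, namely that $y \in \partial f(x)$ implies $f(x) = \langle x, y \rangle$. The proof is short and relies on applying the supergradient inequality at two carefully chosen test points obtained from the positive $1$-homogeneity of $f$.

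First I would record the supergradient inequality: since $y \in \partial f(x)$, we have $f(u) \le f(x) + \langle u - x, y\rangle$ for all $u \in \sX$. The idea is to substitute $u = 2x$ and $u = \tfrac12 x$ (or, more generally, $u = tx$ for $t > 0$) and use $f(tx) = t f(x)$, which holds by positive $1$-homogeneity. Taking $u = 2x$ gives $2f(x) = f(2x) \le f(x) + \langle x, y\rangle$, hence $f(x) \le \langle x, y\rangle$. Taking $u = \tfrac12 x$ gives $\tfrac12 f(x) = f(\tfrac12 x) \le f(x) + \langle -\tfrac12 x, y\rangle$, hence $\tfrac12 \langle x, y\rangle \le \tfrac12 f(x)$, i.e.\ $\langle x, y\rangle \le f(x)$. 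Combining the two inequalities yields $f(x) = \langle x, y\rangle$.

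One small technical point to address: the substitutions require $f(x)$ to be finite. This is automatic here, because the existence of a supergradient $y \in \partial f(x)$ forces $f(x) > -\infty$ (a supergradient inequality with $f(x) = -\infty$ would be vacuous in a way that prevents $\partial f(x)$ from being used, but more to the point $\partial f(x)$ is conventionally empty where $f = -\infty$), and $f$ is proper so $f(x) < \infty$; I would simply remark that $f(x) \in \R$ since $\partial f(x) \neq \varnothing$. With that in hand the two-line argument above is complete.

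I do not anticipate any real obstacle here — the only thing to be careful about is invoking $f(tx) = tf(x)$ for the specific positive scalars $t = 2$ and $t = 1/2$, which is exactly positive $1$-homogeneity, and making sure the direction of each inequality is tracked correctly when $t < 1$ (the coefficient of $\langle x, y\rangle$ becomes negative, so one multiplies through by a positive constant rather than dividing by something that could flip the sign). This is genuinely a routine lemma; the main work in the paper happens elsewhere.
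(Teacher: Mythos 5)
Your proof is correct and uses essentially the same argument as the paper: the paper takes $u=\lambda x$ for general $\lambda>0$ and then specializes to $\lambda<1$ and $\lambda>1$, whereas you pick the representatives $\lambda=2$ and $\lambda=1/2$, which are instances of the same substitution. Your added remark that $y\in\partial f(x)$ together with properness forces $f(x)\in\R$ is a valid point the paper leaves implicit.
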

    \begin{proof}
    Let $x \in \sX$ and $y \in \sY$ such that $y \in \partial f(x)$.  Let $\lambda > 0$ and $u = \lambda x$.  By homogeneity,
    \[
    (\lambda - 1) f(x) = f(u) - f(x) \leq \langle u, y \rangle - \langle x, y \rangle = (\lambda - 1) \langle x, y \rangle.
    \]
    With $\lambda < 1$, this implies $f(x) \geq \langle x, y \rangle$.  With $\lambda > 1$, this implies $f(x) \leq \langle x, y \rangle$.  Therefore, $f(x) = \langle x, y \rangle.$
    \end{proof}
    
\begin{lemma}\label{lm:z.uhat=1}
Let $d\in\Z_{>0}$ and $\range\subset\Z^d$. The following are equivalent:
\begin{enumerate}[label=\rm(\alph{*}), ref=\rm\alph{*}] \itemsep=2pt 
    \item\label{same length} For any $x,y\in\Z^d$ with $y-x\in\Rgroup$, all paths in $\PathsPtPUnr{x}{y}$ have the same length.
    \item\label{z.uhat=1} There exists a vector $\uhat\in\R^d$ such that $\uhat\cdot z=1$ for all $z\in\range$.
\end{enumerate}
\end{lemma}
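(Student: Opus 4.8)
\textbf{Proof plan for Lemma \ref{lm:z.uhat=1}.}

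The plan is to prove the two implications separately, with the direction \eqref{z.uhat=1} $\Rightarrow$ \eqref{same length} being essentially immediate and the converse requiring a short linear-algebra argument.

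For \eqref{z.uhat=1} $\Rightarrow$ \eqref{same length}: suppose $\uhat\in\R^d$ satisfies $\uhat\cdot z=1$ for all $z\in\range$. Take any $x,y\in\Z^d$ with $y-x\in\Rgroup$ and any path $x_{0:n}\in\PathsPtPUnr{x}{y}$. Then, telescoping, $\uhat\cdot(y-x)=\sum_{i=0}^{n-1}\uhat\cdot(x_{i+1}-x_i)=\sum_{i=0}^{n-1}1=n$, so the length $n$ is determined by $x$ and $y$ alone; hence all paths from $x$ to $y$ have the same length.

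For \eqref{same length} $\Rightarrow$ \eqref{z.uhat=1}: the idea is to show that the hypothesis forces an affine-dependence relation among the steps that we can then dualize. Fix an enumeration $\range=\{z_1,\dots,z_r\}$. First I would observe that condition \eqref{same length} implies the following linear constraint: whenever integers $b_1,\dots,b_r$ (of either sign) satisfy $\sum_j b_j z_j=\zero$, one must have $\sum_j b_j=0$. Indeed, writing $b_j=b_j^+-b_j^-$ with $b_j^\pm\ge0$, the equality $\sum_j b_j z_j=\zero$ says the point $y:=\sum_j b_j^+ z_j = \sum_j b_j^- z_j$ is reached from $\orig$ both by a path of length $\sum_j b_j^+$ (taking each $z_j$ exactly $b_j^+$ times, in any order) and by a path of length $\sum_j b_j^-$; since $y-\orig=y\in\Rsemi\subset\Rgroup$, condition \eqref{same length} forces $\sum_j b_j^+=\sum_j b_j^-$, i.e. $\sum_j b_j=0$. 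By clearing denominators, the same implication holds for rational, hence (by density/continuity of the linear conditions, or just by scaling) for real coefficients: $\sum_j \lambda_j z_j=\zero \Rightarrow \sum_j\lambda_j=0$ for all $\lambda\in\R^r$. This says precisely that the vector $(1,1,\dots,1)\in\R^r$ lies in the orthogonal complement of $\ker M$, where $M$ is the $d\times r$ matrix with columns $z_1,\dots,z_r$; equivalently $(1,\dots,1)\in(\ker M)^\perp=\operatorname{row}(M)=\operatorname{range}(M^{\mathsf T})$. Therefore there exists $\uhat\in\R^d$ with $M^{\mathsf T}\uhat=(1,\dots,1)^{\mathsf T}$, i.e. $\uhat\cdot z_j=1$ for every $j$, which is \eqref{z.uhat=1}.

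The main obstacle — really the only non-routine point — is justifying the passage from the integer (or rational) implication ``$\sum b_j z_j=\zero\Rightarrow\sum b_j=0$'' to the real-coefficient statement, and then recognizing it as the assertion $(1,\dots,1)\in\operatorname{range}(M^{\mathsf T})$. The cleanest route is: the set $\{\lambda\in\R^r:\sum_j\lambda_j z_j=\zero\}=\ker M$ is a rational subspace (defined by integer equations), so it is spanned by integer vectors; on each such integer generator the hypothesis gives $\sum_j\lambda_j=0$, and by linearity this extends to all of $\ker M$; hence the linear functional $\lambda\mapsto\sum_j\lambda_j$ vanishes on $\ker M$ and so factors through $\R^r/\ker M\cong\operatorname{range}(M^{\mathsf T})$, giving the desired $\uhat$. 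Everything else is bookkeeping.
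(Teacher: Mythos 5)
Your proof is correct, and the direction \eqref{z.uhat=1}~$\Rightarrow$~\eqref{same length} coincides with the paper's telescoping argument. The converse is argued along a genuinely different route. You isolate the clean statement that \eqref{same length} forces every integer relation $\sum_j b_j z_j=\orig$ to satisfy $\sum_j b_j=0$ --- by splitting $b_j=b_j^+-b_j^-$ and comparing the two admissible paths from $\orig$ to the common point $\sum_j b_j^+z_j=\sum_j b_j^-z_j\in\Rsemi$ --- then extend to real coefficients via the rationality of $\ker M$ and dualize: $(1,\dots,1)\in(\ker M)^\perp=\mathrm{range}(M^{\mathsf T})$ produces $\uhat$ with $\uhat\cdot z=1$ for all $z\in\range$ in one stroke. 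The paper works instead inside the linear span of $\range$: it chooses a basis $z_1,\dots,z_k\subset\range$ of that span, augments it to a rational basis of $\R^d$ to show each $z\in\range$ has rational expansion coefficients $a_i$, applies the same two-path trick to the particular relation $nz+\sum na_i^-z_i=\sum na_i^+z_i$ to conclude $\sum a_i=1$, deduces that $\{z-z_1:z\in\range\}$ spans a proper subspace, extracts a nonzero $\uhat'$ in the span of $\range$ orthogonal to that subspace so that $\uhat'\cdot z\equiv c$ on $\range$, and finally checks $c\ne 0$ before scaling. The underlying mechanism is identical --- \eqref{same length} forces step-relations to be affine --- but your duality formulation in the coefficient space sidesteps both the basis-augmentation bookkeeping and the separate verification that $c\ne 0$, since the normalization is built in once $(1,\dots,1)$ is shown to annihilate $\ker M$.
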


\begin{proof}
Suppose \eqref{z.uhat=1} holds. Then any path $x_{0:k}\in\PathsPtPUnr{x}{y}$ satisfies $(y-x)\cdot\uhat = \sum_{i=0}^{k-1} (x_{i+1}-x_i)\cdot\uhat = k$.  Thus, \eqref{same length} holds.

Now suppose \eqref{same length} holds. Let $\mathbb V$ denote the linear span of $\range$.
Let $k\in[1,d]$ be the dimension of this vector space and let $z_1,\dotsc,z_k\in\range$ be a basis for it. Augment this set to a basis $\{z_1,\dotsc,z_k,z_{k+1},\dotsc,z_d\}$ of $\R^d$, where $z_{k+1},\dotsc,z_d$ are also integer vectors. Let $A$ be the unique invertible linear transformation such that $A z_i=e_i$ for $1\le i\le d$.
 In the standard basis, the matrix of $A$ is the inverse of the matrix $B=[z_1,\dotsc,z_d]$; hence this matrix has rational entries. 
 
 Take $z\in\range$. There is a unique set of numbers $\{a_1,\dotsc,a_k\}$ such that $z=\sum_{i=1}^k a_i z_i$. Applying $A$ gives $Az=\sum_{i=1}^k a_i e_i$. Since the left-hand side has rational coordinates, we get that $a_i$ is rational for  $1\le i\le k$.  Let $n\in\Z_{>0}$ be such that $na_i\in\Z$ for all $i\le k$. 

Since $nz+\sum_{i=1}^k na_i^- z_i=\sum_{i=1}^k na^+_i z_i$, \eqref{same length} implies that $n+\sum_{i=1}^k na_i^-=\sum_{i=1}^k na_i^+$, which implies that $\sum_{i=1}^k a_i=1$. Consequently, we have $z-z_1=\sum_{i=2}^k a_i(z_i-z_1)$. We have thus shown that  the linear span of $\{z-z_1:z\in\range\}$ has dimension at most $k-1$. As this is a subspace of $\mathbb V$ and $\mathbb V$ has dimension $k$, there exists a vector $\uhat'\in\mathbb V\setminus\{\orig\}$ that is orthogonal to $z-z_1$ for all $z\in\range$. This implies that $\uhat'\cdot z=\uhat'\cdot z'$ for all $z,z'\in\range$. Denote this common number by $c$. If $c=0$, then $\uhat'$ is perpendicular to all $z\in\range$ and is hence perpendicular to $\mathbb V$. However, then $\uhat'$ is perpendicular to itself, and thus $\uhat'=\orig$. This contradicts the choice of $\uhat'$ and proves that $c\ne0$.  Taking $\uhat=c^{-1}\uhat'$ satisfies \eqref{z.uhat=1}.
\end{proof}

Let $\range\subset\ZZ^d$ and $\cone = \Bigl\{\sum_{z \in \range} b_z z : b_z \in \Rnonneg\Bigr\}$.
Fix a face $\sA$ of $\cone$ and let $\stRgroupA$ be the subgroup of $\Z^{d+1}$ generated by $\{\langle z,1\rangle:z\in\rangeA\}$. Let $\RgroupA^{(0)}$ be the group generated by $\{z-z':z,z'\in\rangeA\}$. Take $z_0\in\rangeA$ and for $j\in\Z$ let $\RgroupA^{(j)}=jz_0+\RgroupA^{(0)}$. Note that $\RgroupA^{(j)}$ does not depend on the choice of $z_0$.

\begin{lemma}\label{lm:stRgroupA}
We have
\[\stRgroupA=\bigcup_{j\in\Z}\bigl\{\langle x,j\rangle:x\in\RgroupA^{(j)}\bigr\}.\]
\end{lemma}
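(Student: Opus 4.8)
The plan is to prove the two inclusions separately, and to track the time-coordinate carefully throughout. First I would observe that the claimed right-hand side is itself a subgroup of $\Z^{d+1}$: if $\langle x,i\rangle$ and $\langle y,j\rangle$ belong to it, then $x\in\RgroupA^{(i)}$ and $y\in\RgroupA^{(j)}$, and adding (resp.\ subtracting) gives $x\pm y\in iz_0+jz_0+\RgroupA^{(0)}=(i\pm j)z_0+\RgroupA^{(0)}=\RgroupA^{(i\pm j)}$, so $\langle x,i\rangle\pm\langle y,j\rangle$ lies in the right-hand side; and $\langle 0,0\rangle$ is clearly there since $0\in\RgroupA^{(0)}$. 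Having this, the inclusion $\stRgroupA\subset\bigcup_{j}\{\langle x,j\rangle:x\in\RgroupA^{(j)}\}$ follows once each generator $\langle z,1\rangle$ with $z\in\rangeA$ lies in the right-hand side: indeed $z=z_0+(z-z_0)\in z_0+\RgroupA^{(0)}=\RgroupA^{(1)}$, so $\langle z,1\rangle$ is in the right-hand side, and since $\stRgroupA$ is the \emph{smallest} subgroup containing those generators, the inclusion is immediate.

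For the reverse inclusion, I would take $\langle x,j\rangle$ with $x\in\RgroupA^{(j)}$ and exhibit it as an integer combination of the generators $\langle z,1\rangle$, $z\in\rangeA$. By definition $x-jz_0\in\RgroupA^{(0)}$, so $x-jz_0=\sum_{i=1}^N (z_i-z_i')$ for some $z_i,z_i'\in\rangeA$ (a finite sum, with the convention that an empty sum is $0$). Then
\begin{align*}
\langle x,j\rangle &= \langle jz_0,j\rangle + \Bigl\langle \sum_{i=1}^N(z_i-z_i'),\,0\Bigr\rangle \\
&= j\langle z_0,1\rangle + \sum_{i=1}^N\bigl(\langle z_i,1\rangle-\langle z_i',1\rangle\bigr),
\end{align*}
which is a $\Z$-linear combination of the generators of $\stRgroupA$, hence lies in $\stRgroupA$. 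This handles every $j\in\Z$ (including negative $j$, where $j\langle z_0,1\rangle$ means $-|j|$ copies of $\langle z_0,1\rangle$), completing the proof.

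The only point that requires a little care is the well-definedness remark already flagged in the statement, namely that $\RgroupA^{(j)}=jz_0+\RgroupA^{(0)}$ does not depend on the choice of $z_0\in\rangeA$: if $z_0,z_0'\in\rangeA$ then $z_0-z_0'\in\RgroupA^{(0)}$, whence $jz_0+\RgroupA^{(0)}=jz_0'+j(z_0-z_0')+\RgroupA^{(0)}=jz_0'+\RgroupA^{(0)}$. I would state this at the outset so the decomposition $x-jz_0\in\RgroupA^{(0)}$ used in the second inclusion is unambiguous. There is no real obstacle here — the lemma is a purely group-theoretic bookkeeping statement — so the ``hard part'' is merely being scrupulous about the sign conventions for negative time increments and about treating the degenerate cases ($\rangeA$ a single step, or $\RgroupA^{(0)}=\{0\}$) correctly, both of which are covered automatically by the empty-sum convention.
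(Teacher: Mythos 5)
Your proof is correct, and it differs from the paper's in one direction. For the inclusion $\stRgroupA\subset\bigcup_j\{\langle x,j\rangle:x\in\RgroupA^{(j)}\}$, the paper takes a general element $\langle x,j\rangle=\sum_{i=1}^k\langle z_i,1\rangle-\sum_{i=1}^{\ell}\langle z_i',1\rangle$ of $\stRgroupA$ (with $k-\ell=j$), and manipulates the sum directly to exhibit $x\in\RgroupA^{(j)}$; you instead verify that the right-hand side is itself a subgroup of $\Z^{d+1}$ containing every generator $\langle z,1\rangle$, $z\in\rangeA$, and invoke minimality of the generated group. Your version is a bit cleaner and spares the explicit rearrangement, at the cost of one extra (easy) verification that the right-hand side is closed under the group operations; the paper's is more hands-on. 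For the reverse inclusion the two arguments coincide: write $x-jz_0$ as a finite sum $\sum(z_i-z_i')$ and lift to $\langle x,j\rangle=j\langle z_0,1\rangle+\sum(\langle z_i,1\rangle-\langle z_i',1\rangle)$. Your side remark on the independence of $\RgroupA^{(j)}$ from the choice of $z_0$ matches what the paper states in the sentence preceding the lemma.
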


\begin{proof}
    If $x\in\RgroupA^{(0)}$, then $x=\sum_{i=1}^k(z_i-z_i')$ for some $z_1,\dotsc,z_k,z_1',\dotsc,z_k'\in\rangeA$ and,  consequently, 
    \[\langle x,0\rangle=\sum_{i=1}^k(\langle z_i,1\rangle-\langle z_i',1\rangle)\in\stRgroupA.\]
    If $j\in\Z$, then 
    \[\langle jz_0+x,j\rangle=\langle x,0\rangle+j\langle z_0,1\rangle\in\stRgroupA.\]
    For the other direction, take $x\in\ZZ^d$ and $j\in\ZZ$ such that $\langle x,j\rangle\in\stRgroupA$. Then 
        \[\langle x,j\rangle=\sum_{i=1}^k\langle z_i,1\rangle-\sum_{i=1}^{\ell}\langle z_i',1\rangle,\]
        for some $z_1,\dotsc,z_k,z_1',\dotsc,z'_\ell\in\rangeA$ and $k,\ell\in\Z_{\ge0}$ with $k-\ell=j$. 
        From this, we get
            \[x=\sum_{i=1}^k z_i+\ell z_0-kz_0-\sum_{i=1}^{\ell}z_i'+jz_0\in\RgroupA^{(j)}.\qedhere\]
\end{proof}

\begin{proof}[Proof of \eqref{comp:c}]
        Note that $\overline\m\in\partial\overline\Lambda^{\beta,\usc}_{\overline\sA}(\langle\xi,1\rangle)$ is equivalent to having
    	\[t\overline\Lambda^{\beta,\usc}_{\overline\sA}(\langle\zeta,1\rangle)-\overline\Lambda^{\beta,\usc}_{\overline\sA}(\langle\xi,1\rangle)\le\m\cdot(t\zeta-\xi)+c(t-1)\]
    	for all $\zeta\in\Uset'$ and $t>0$. Rearranging and reverting back to restricted-length gives
        \[t \bigl(\ppShapeResU{\beta,\usc}(\zeta)-\m\cdot\zeta-c\bigr) \leq \ppShapeResU{\beta,\usc}(\xi) -m\cdot\xi-c.\]
        Taking $t\to0$ and $t\to\infty$ gives 
        \[ 
        \ppShapeResU{\beta,\usc}(\zeta) - \m\cdot\zeta\le c \leq \ppShapeResU{\beta,\usc}(\xi) - \m\cdot\xi,
        \]
        for all $\zeta\in\Uset'$. This gives 
        \[c=\ppShapeResU{\beta,\usc}(\xi) - \m\cdot\xi\]
        and implies that $\m$ is in the superdifferential at $\xi$ of the concave function that is equal to $\Lambda^{\beta,\usc}_{\sA}$ on $\Uset'$ and is set to $-\infty$ outside $\Uset'$.
\end{proof}

We close this section the proof of an observation made just prior to Theorem \ref{Thm:ShapeUnr}.

\begin{lemma}\label{lem:pathrem}
Fix $x \in \cone$ and let $\sA$ be the unique face of $\cone$ for which $x \in\ri \sA$. Let $x_{0:n}$  be any path with $x_0 = \orig$, $x_n = x$, and $z_i=x_{i}-x_{i-1}\in\range$ for $i =1,\dots, n$. Then $x_{i}\in \sA$ for all $i=0,\dots,n$.
\end{lemma}
\begin{proof}
    We begin by noting that $\orig \in \sA$. If $\cone \neq \bbR^d$, then $\cone$ is polyhedral (\cite[Theorem 19.1]{Roc-70}), i.e.~equal to the intersection of finitely many closed half-spaces whose boundary hyperplanes pass through the origin. The facets of such a set are obtained by intersecting with these hyperplanes and so in particular $\orig\in \sA$.
    
    Next, denote by $S$ the set of steps $x_{i}-x_{i-1}= z_i\in \range$, $i=1,\dots, n$ used in this path. By definition, $x$ lies in the relative interior of the convex hull of $n S$, which is a convex set in $\sC$. Because $x$ lies in the relative interior of the face $\sA$, it follows from \cite[Theorem 18.1]{Roc-70} that the convex hull of $n S$ is a subset of $\sA$. For each $i=1,\dots, n$, $x_i$ is a convex combination of $\orig$ and points in $n S$, so $x_i\in\sA$ for all such $i$.
\end{proof}

\section{Gibbs consistency}\label{app:consistency}

In this appendix, we discuss the Gibbs consistency of the various polymer measures to place our results within the framework of Gibbs specifications and solutions to the Dobrushin-Landford-Ruelle (DLR) equations.

As stated in the next lemma, the restricted-length finite path measures are consistent in the sense of conditioning,
indicating that this family forms a Gibbs specification. 
	See Section 2.4 in \cite{Jan-Ras-18-arxiv}.  
	
	\begin{lemma}\label{lem:consistencyFinite}
		For $j, k, n \in \ZZ_{\ge0}$ with $j \leq k \leq n$, for $u, v, x, y \in \ZZ^d$ such that $x-u \in \Dn{j}$, $y-x \in \Dn{k-j}$, and $v-y \in \Dn{n-k}$, and for $\beta \in (0,\infty)$, $\omega \in \Omega$, $x_{0:j} \in \PathsPtPResKM{u}{x}{0}{j}$, $x_{j:k} \in \PathsPtPResKM{x}{y}{j}{k}$, and $x_{k:n} \in \PathsPtPResKM{y}{v}{k}{n}$,
		\begin{align*}
			\Qres{u}{v}{n}{\beta, \omega}(X_{0:n} = x_{0:n} \given X_{0:j} = x_{0:j}, X_{k:n} = x_{k:n}) &= \Qres{u}{v}{n}{\beta, \omega}(X_{j:k} = x_{j:k} \given X_j = x, X_k = y) \\
			&= \Qres{x}{y}{k-j}{\beta, \w}(X_{0:k-j} = x_{j:k}).
		\end{align*} 		
	\end{lemma}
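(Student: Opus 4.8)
The plan is to prove both equalities by a direct computation from the Radon--Nikodym derivative in \eqref{finitePathMeasures}, using that the path weight factorizes over disjoint time intervals. For an admissible finite subpath $x_{a:b}$ (i.e.\ $x_{i+1}-x_i\in\range$ for $a\le i\le b-1$) write
\[
W(x_{a:b})=\prod_{i=a}^{b-1}p(x_{i+1}-x_i)\,e^{-\beta \pote(\T_{x_i}\w,\,x_{i+1}-x_i)}\,,
\]
so that $W(x_{0:n})=W(x_{0:j})\,W(x_{j:k})\,W(x_{k:n})$, and, since $\rwP_u(X_{0:n}=x_{0:n})=\prod_{i=0}^{n-1}p(x_{i+1}-x_i)$, one has $\Qres{u}{v}{n}{\beta,\w}(X_{0:n}=x_{0:n})=W(x_{0:n})/\prtres{u}{v}{n}{\beta}(\w)$ for every $x_{0:n}\in\PathsPtPResKM{u}{v}{0}{n}$. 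The one auxiliary fact I would record first is that, for $a\le b$ and $u',v'\in\ZZ^d$ with $v'-u'\in\Dn{b-a}$,
\[
\sum_{x_{a:b}\in\PathsPtPResKM{u'}{v'}{a}{b}}W(x_{a:b})=\prtres{u'}{v'}{b-a}{\beta}(\w)\,,
\]
which follows by reindexing time by $-a$ and recognizing the right-hand side of \eqref{def:prtPtPRes}; here it matters that the potential at a site is evaluated at $\T_{X_\ell}\w$, which depends only on the spatial location and not on the time index, so the time shift introduces no shift of the environment.

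Using these observations, I would compute the three relevant probabilities:
\begin{align*}
\Qres{u}{v}{n}{\beta,\w}(X_{0:j}=x_{0:j},\,X_{k:n}=x_{k:n})&=\frac{W(x_{0:j})\,\prtres{x}{y}{k-j}{\beta}(\w)\,W(x_{k:n})}{\prtres{u}{v}{n}{\beta}(\w)}\,,\\
\Qres{u}{v}{n}{\beta,\w}(X_j=x,\,X_k=y)&=\frac{\prtres{u}{x}{j}{\beta}(\w)\,\prtres{x}{y}{k-j}{\beta}(\w)\,\prtres{y}{v}{n-k}{\beta}(\w)}{\prtres{u}{v}{n}{\beta}(\w)}\,,\\
\Qres{u}{v}{n}{\beta,\w}(X_{j:k}=x_{j:k})&=\frac{\prtres{u}{x}{j}{\beta}(\w)\,W(x_{j:k})\,\prtres{y}{v}{n-k}{\beta}(\w)}{\prtres{u}{v}{n}{\beta}(\w)}\,,
\end{align*}
each obtained by summing the factorization of $W(x_{0:n})$ over the free coordinates and applying the auxiliary fact; note that $\{X_{0:n}=x_{0:n}\}$ is contained in the first conditioning event and $\{X_{j:k}=x_{j:k}\}$ (recall $x_j=x$, $x_k=y$) is contained in the second. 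Dividing then gives
\[
\Qres{u}{v}{n}{\beta,\w}(X_{0:n}=x_{0:n}\mid X_{0:j}=x_{0:j},X_{k:n}=x_{k:n})
=\Qres{u}{v}{n}{\beta,\w}(X_{j:k}=x_{j:k}\mid X_j=x,X_k=y)
=\frac{W(x_{j:k})}{\prtres{x}{y}{k-j}{\beta}(\w)}\,,
\]
and the auxiliary fact applied once more (reindexing time by $-j$) identifies the last expression with $\Qres{x}{y}{k-j}{\beta,\w}(X_{0:k-j}=x_{j:k})$, which is the asserted common value.

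Finally I would note that all denominators above are strictly positive: since $p(z)>0$ for every $z\in\range$ and $\pote$ is real-valued, $W$ of any admissible subpath is positive, and the path sets $\PathsPtPResKM{u}{x}{0}{j}$, $\PathsPtPResKM{x}{y}{j}{k}$, $\PathsPtPResKM{y}{v}{k}{n}$ and $\PathsPtPResKM{u}{v}{0}{n}$ are nonempty by the hypotheses $x-u\in\Dn{j}$, $y-x\in\Dn{k-j}$, $v-y\in\Dn{n-k}$, so every conditioning event has positive probability. I do not anticipate a genuine obstacle: the statement is essentially the Markov property of $\Qres{u}{v}{n}{\beta,\w}$ combined with its spatial homogeneity, and the only points requiring care are the time-reindexing identity for the partition function (checking that no environment shift sneaks in) and the positivity of the normalizers just mentioned.
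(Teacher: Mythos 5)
Your proof is correct and follows essentially the same route as the paper: express each probability directly from the Radon--Nikodym derivative, factor the path weight over the time intervals $[0,j]$, $[j,k]$, $[k,n]$, sum the middle factor over $\PathsPtPResKM{x}{y}{j}{k}$ to recover $\prtres{x}{y}{k-j}{\beta}(\w)$, and cancel. You spell out the second equality and the positivity of the normalizers, which the paper leaves to the reader, but the argument is the same.
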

\begin{proof}
		Let $j, k, n, u, x, y, v, \beta, \omega$ be as in the statement. Let $x_{0:j} \in \PathsPtPResKM{u}{x}{0}{j}$, $x_{j:k} \in \PathsPtPResKM{x}{y}{j}{k}$, and $x_{k:n} \in \PathsPtPResKM{y}{v}{k}{n}$. 
		Then
		\begin{align*}
			&\Qres{u}{v}{n}{\beta, \omega}(X_{0:n} = x_{0:n} \given X_{0:j} = x_{0:j}, X_{k:n} = x_{k:n}) \\
			&\qquad= \frac{\Qres{u}{v}{n}{\beta, \omega}(X_{0:n} = x_{0:n})}{\Qres{u}{v}{n}{\beta, \omega}(X_{0:j} = x_{0:j}, X_{k:n} = x_{k:n})} \\
			&\qquad= \frac{\prod_{i=0}^{n-1} p(x_{i+1}-x_i) e^{-\beta \pote(T_{x_i} \w, x_{i+1}-x_i)}}{\prod_{i=0}^{j-1} p(x_{i+1}-x_i) e^{-\beta \pote(T_{x_i} \w, x_{i+1}-x_i)} \prtres{x}{y}{k-j}{\beta, \w}  \prod_{i=k}^{n-1} p(x_{i+1}-x_i) e^{-\beta \pote(T_{x_i} \w, x_{i+1}-x_i)}} \\
			&\qquad= \frac{\prod_{i=j}^{k-1} p(x_{i+1}-x_i) e^{-\beta \pote(T_{x_i} \w, x_{i+1}-x_i)}}{\prtres{x}{y}{k-j}{\beta, \w}} \\
			&\qquad=\Qres{x}{y}{k-j}{\beta, \w}(X_{0:k-j} = x_{j:k}).
		\end{align*}
	
	Similarly,
	\begin{align*}
		\Qres{u}{v}{n}{\beta, \omega}(X_{j:k} = x_{j:k} \given X_j = x, X_k = y) 
		&=\Qres{x}{y}{k-j}{\beta, \w}(X_{0:k-j} = x_{j:k}).\qedhere
	\end{align*}
	\end{proof}	
 
    Let $\Uset$ be the convex hull of $\range$, with $\ri\Uset$ denoting its relative interior.  The next result shows that the unrestricted-length finite path measures are also Gibbs consistent if $\orig \not\in\Uset$. 
    
    \begin{lemma}\label{lem:consistencyFiniteUnr}
        Assume $\orig\not\in\Uset$. Let $\beta \in (0,\infty)$, $\w \in \Omega$, and $u,v,x,y \in \ZZ^d$ such that $x-u, y-x, v-y \in \Rsemi$.  Let $x_{0:n} \in \PathsPtPKilled{u}{v}$, such that $x_j = x$ and $x_k = y$ for some $0 \leq j \leq k \leq n$. Then,
        \begin{align*}
            \Qunr{u}{v}{\beta,\w}(X_{0:\stoppt{v}} = x_{0:n} \given X_{0:\stoppt{x}} = x_{0:j}, X_{\stoppt{y}:\stoppt{v}} = x_{k:n}) &= \Qunr{u}{v}{\beta,\w}(X_{\stoppt{x}:\stoppt{y}} = x_{j:k} \given \stoppt{x} \leq \stoppt{y} < \infty)\\
            &= \Qunr{x}{y}{\beta,\w}(X_{0:\stoppt{y}} = x_{j:k}).
        \end{align*}
    \end{lemma}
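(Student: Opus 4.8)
The plan is to use the hypothesis $\orig\notin\Uset$ to reduce each of the three quantities to an explicit ratio of partition functions. First I would record what $\orig\notin\Uset$ buys us: by \cite[Corollary~A.2]{Ras-Sep-Yil-13} there is a vector $\uhat$ with $z\cdot\uhat>0$ for all $z\in\range$, so along any admissible path the coordinate $x\mapsto x\cdot\uhat$ is strictly increasing; in particular every admissible path is self-avoiding. Two consequences: (i) every path in $\PathsPtPKilled{a}{b}$ reaches $b$ only at its last step, and a path from $a$ to $b$ visits any given site at most once; and (ii) whenever $c-a,\,b-c\in\Rsemi$, concatenation at $c$ is a bijection between the set of paths in $\PathsPtPKilled{a}{b}$ that visit $c$ and $\PathsPtPKilled{a}{c}\times\PathsPtPKilled{c}{b}$ --- surjectivity because a concatenation of admissible paths is again admissible (hence self-avoiding), and well-definedness/injectivity because of self-avoidance and the $\uhat$-monotonicity, which forces any path visiting both $a$ and $c$ (when $c-a\in\Rsemi$) to visit $a$ first. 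The same holds for any finite chain of intermediate sites.

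Next I would introduce the path weight $W(x_{a:b})=\prod_{i=a}^{b-1}p(x_{i+1}-x_i)\,e^{-\beta\sum_{i=a}^{b-1}\pote(\T_{x_i}\w,\,x_{i+1}-x_i)}$, so that, directly from \eqref{def:prtPtPUnr} and \eqref{finitePathMeasures}, $\prtunr{a}{b}{\beta,\w}=\sum_{\sigma\in\PathsPtPKilled{a}{b}}W(\sigma)$ and $\Qunr{a}{b}{\beta,\w}(X_{0:\stoppt b}=x_{a:b})=W(x_{a:b})/\prtunr{a}{b}{\beta,\w}$, and $W$ is multiplicative under concatenation. Summing $W$ over the bijections of the previous step then gives: decomposing over the chain $u,x,y,v$, $\Qunr{u}{v}{\beta,\w}(\stoppt x\le\stoppt y<\infty)=\prtunr{u}{x}{\beta,\w}\,\prtunr{x}{y}{\beta,\w}\,\prtunr{y}{v}{\beta,\w}/\prtunr{u}{v}{\beta,\w}$; freezing the middle block to the given path $x_{j:k}$, $\Qunr{u}{v}{\beta,\w}(X_{\stoppt x:\stoppt y}=x_{j:k})=\prtunr{u}{x}{\beta,\w}\,W(x_{j:k})\,\prtunr{y}{v}{\beta,\w}/\prtunr{u}{v}{\beta,\w}$; and freezing the outer blocks to $x_{0:j}$ and $x_{k:n}$ while leaving a free middle block in $\PathsPtPKilled{x}{y}$, $\Qunr{u}{v}{\beta,\w}(X_{0:\stoppt x}=x_{0:j},\,X_{\stoppt y:\stoppt v}=x_{k:n})=W(x_{0:j})\,\prtunr{x}{y}{\beta,\w}\,W(x_{k:n})/\prtunr{u}{v}{\beta,\w}$.

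Finally I would assemble the conditional probabilities, using that $\{X_{0:\stoppt v}=x_{0:n}\}\subseteq\{X_{0:\stoppt x}=x_{0:j}\}\cap\{X_{\stoppt y:\stoppt v}=x_{k:n}\}$ (the full path determines its first-passage segments, and self-avoidance forces $\stoppt x=j$ and $\stoppt y=k$ there), that $\{X_{\stoppt x:\stoppt y}=x_{j:k}\}\subseteq\{\stoppt x\le\stoppt y<\infty\}$, and that $W(x_{0:n})=W(x_{0:j})W(x_{j:k})W(x_{k:n})$. Dividing the explicit formulas above, every partition-function factor except $\prtunr{x}{y}{\beta,\w}$ cancels, and both conditional probabilities in the statement reduce to $W(x_{j:k})/\prtunr{x}{y}{\beta,\w}=\Qunr{x}{y}{\beta,\w}(X_{0:\stoppt y}=x_{j:k})$. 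I expect the main obstacle to be the careful bookkeeping in points~(i)--(ii) and in the assembly: verifying the concatenation bijections, tracking the stopping times $\stoppt x,\stoppt y,\stoppt v$ along the decomposed paths, and handling the degenerate coincidences among $u,x,y,v$ (for instance $x=y$, where $\stoppt x=\stoppt y$, $\prtunr{x}{x}{\beta,\w}=1$ and the middle block is trivial), none of which changes the identities but each of which makes the indexing delicate.
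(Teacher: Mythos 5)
Your proof is correct and takes essentially the same approach as the paper's: decompose the relevant events via concatenation at $x$ and $y$, write the probabilities as products of path weights and partition functions, and cancel. The paper does this slightly more tersely by computing the two conditional probabilities directly from the Radon--Nikodym derivative form of the quenched measures, while you spell out the concatenation bijections and the weight multiplicativity explicitly; the substance, including the crucial role of $\orig\notin\Uset$ in forcing $\stoppt{x}\leq\stoppt{y}\leq\stoppt{v}$ on the event in question, is the same.
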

    \begin{proof}
        Assume $\orig \not\in\Uset$ and let $\beta \in (0,\infty)$, $\w \in \Omega$, and $u,v,x,y \in \ZZ^d$ such that $x-u, y-x, v-y \in \Rsemi$.  Let $x_{0:n} \in \PathsPtPKilled{u}{v}$ such that $x_j = x$ and $x_k = y$. Since there are no loops and $y-x \in \Rsemi$, it must be that $0 \leq j \leq k \leq n$.  Furthermore, on the event $\{\stoppt{x} < \infty, \stoppt{y}<\infty\}$, it must be that $\stoppt{x} \leq \stoppt{y} \leq \stoppt{v}$, for otherwise we can construct a loop. Then,
        \begin{align*}
            &\Qunr{u}{v}{\beta,\w}(X_{0:\stoppt{v}} = x_{0:n} \given X_{0:\stoppt{x}} = x_{0:j}, X_{\stoppt{y}:\stoppt{v}} = x_{k:n}) \\
            &\qquad= \frac{\Qunr{u}{v}{\beta,\w}(X_{0:\stoppt{v}} = x_{0:n})}{\Qunr{u}{v}{\beta,\w}(X_{0:\stoppt{x}} = x_{0:j}, X_{\stoppt{y}:\stoppt{v}} = x_{k:n})} \\
            &\qquad= \frac{\prod_{i=0}^{n-1} p(x_{i+1}-x_i) e^{-\beta\pote(\T_{x_i}\w,x_{i+1}-x_i)}}{\prod_{i=0}^{j-1} p(x_{i+1}-x_i) e^{-\beta\pote(\T_{x_i}\w,x_{i+1}-x_i)} \prtunr{x}{y}{\beta,\w}  \prod_{i=k}^{n-1} p(x_{i+1}-x_i) e^{-\beta\pote(\T_{x_i}\w,x_{i+1}-x_i)}} \\
            &\qquad= \frac{\prod_{i=j}^{k-1} p(x_{i+1}-x_i) e^{-\beta\pote(\T_{x_i}\w,x_{i+1}-x_i)}}{\prtunr{x}{y}{\beta,\w} } \\
            &\qquad= \Qunr{x}{y}{\beta,\w}(X_{0:\stoppt{y}} = x_{j:k}).
        \end{align*}

        Similarly, 
        \begin{align*}
            &\Qunr{u}{v}{\beta,\w}(X_{\stoppt{x}:\stoppt{y}} = x_{j:k} \given \stoppt{x} \leq \stoppt{y}<\infty) \\
            &\qquad= \frac{\Qunr{u}{v}{\beta,\w}(\stoppt{x}\leq\stoppt{y}<\infty, X_{\stoppt{x}:\stoppt{y}} = x_{j:k})}{\Qunr{u}{v}{\beta,\w}(\stoppt{x} \leq \stoppt{y}<\infty)} \\
            &\qquad= \frac{\prtunr{u}{x}{\beta,\w} \prod_{i=j}^{k-1} p(x_{i+1}-x_i) e^{-\beta\pote(\T_{x_i}\w, x_{i+1}-x_i)} \prtunr{y}{v}{\beta,\w}}{\prtunr{u}{x}{\beta,\w} \prtunr{x}{y}{\beta,\w} \prtunr{y}{v}{\beta,\w}}\\
            &\qquad= \frac{\prod_{i=j}^{k-1} p(x_{i+1}-x_i) e^{-\beta\pote(\T_{x_i}\w, x_{i+1}-x_i)}}{ \prtunr{x}{y}{\beta,\w} }\\
            &\qquad= \Qunr{x}{y}{\beta,\w}(X_{0:\stoppt{y}} = x_{j:k}).\qedhere
        \end{align*}
    \end{proof}
  
    The measures $\Qunr{u}{v}{\beta, \w}$  are not consistent in general if $\orig \in \Uset$, but they are asymptotically consistent as $|v|_1 \rightarrow \infty$.  
    To see this, we introduce some more notation and definitions.  
	Let $x-u, y-x, v-y \in \Rsemi$ with $y \neq v$. 
		  Define the partition functions
		\begin{align*}
		&\prtunr{u}{x}{\beta, \w}(\stoppt{x} \le \min\{\stoppt{y}, \stoppt{v}\})= \rwE_u\Bigl[ e^{- \beta\sum_{i=0}^{\stoppt{x}-1} \pote(\T_{X_i} \omega, X_{i+1}-X_i)}\one_{\{\stoppt{x}<\infty, \stoppt{x} \leq \min\{\stoppt{y}, \stoppt{v}\}\}}\Bigr] \text{ and } \\ 
		&\prtunr{x}{y}{\beta, \w}(\stoppt{y} \le \stoppt{v}) = \rwE_x\Bigl[ e^{- \beta\sum_{i=0}^{\stoppt{y}-1} \pote(\T_{X_i} \omega, X_{i+1}-X_i)}\one_{\{\stoppt{y}<\infty, \stoppt{y} \leq \stoppt{v}\}}\Bigr].
		\end{align*}
		Take $j \leq k$ in $\ZZ$ and $x_{j:k} \in \PathsPtPResKM{x}{y}{j}{k}$ such that $x_i\not\in\{y,v\}$ for all integers $i\in[j,k)$ and $x_k=y$. Then,
		\begin{align*}
			&\Qunr{u}{v}{\beta, \w}(X_{\stoppt{x}:\stoppt{y}}=x_{j:k},\stoppt{x}=j,\stoppt{y}=k) = \frac{\prtunr{u}{x}{\beta, \w}(\stoppt{x} \le \min\{\stoppt{y}, \stoppt{v}\}) e^{-\beta \sum_{i=j}^{k-1} \pote(\T_{x_i} \w, x_{i+1}-x_i) } \prtunr{y}{v}{\beta, \w}}{\prtunr{u}{v}{\beta, \w}} 
		\end{align*}
		whereas,
		\begin{align*}
			 &\Qunr{u}{v}{\beta, \w}(\stoppt{x} \le \stoppt{y} \le \stoppt{v}) \cdot \Qunr{x}{y}{\beta, \w}(X_{0:k-j}=x_{j:k}) \\
			 &\qquad= \frac{\prtunr{u}{x}{\beta, \w}(\stoppt{x} \le \min\{\stoppt{y}, \stoppt{v}\}) \prtunr{x}{y}{\beta, \w}(\stoppt{y} \le \stoppt{v}) \prtunr{y}{v}{\beta, \w}}{ \prtunr{u}{v}{\beta, \w}} \cdot \frac{e^{-\beta \sum_{i=j}^{k-1}  \pote(\T_{x_i}\w, x_{i+1}-x_i)}}{ \prtunr{x}{y}{\beta, \w}}.
		\end{align*}
	
		Thus, we do not get exact consistency, since $ \prtunr{x}{y}{\beta, \w}(\stoppt{y} \le \stoppt{v}) / \prtunr{x}{y}{\beta, \w}$ is not equal to 1 until we take $|v|_1 \rightarrow \infty$.  When $\orig\not\in\Uset$, this issue is resolved since when $v-y \in \Rsemi$ having $\stoppt{y}<\infty$ implies $\stoppt{y} \leq \stoppt{v}$.  

    It is shown in Theorem \ref{thm:CocycleMeasuresFace} above that there exist measures on semi-infinite paths that are consistent with the point-to-point measures in the sense that 
			\[\Qinf{u}{\beta, \w}(X_{\stoppt{x}:\stoppt{y}} = x_{j:k} \given \stoppt{x} \leq \stoppt{y} <\infty) = \Qunr{x}{y}{\beta, \w}(X_{0:\stoppt{y}} = x_{j:k}).\]
			
At zero temperature, the consistency properties become the following facts about geodesics. Given a geodesic $x_{0:n}$, for any integers $0\le j\le k\le n$, $x_{j:k}$ optimizes the passage time among all paths in $\PathsPtPRes{x_j}{x_k}{k-j}$ in the restricted-length case, and among all paths in $\PathsPtPUnr{x_j}{x_k}$ in the unrestricted-length case. The question about the existence of semi-infinite polymer measures becomes one of the existence of restricted-length or, respectively, unrestricted-length
 \emph{semi-infinite geodesics}. These are semi-infinite admissible paths $x_{0:\infty}$ with the property that for all integers $0 \leq j < k$, $x_{j:k}$ is a restricted-length or, respectively, an unrestricted-length geodesic from $x_j$ to $x_k$.  

    \section{Shape theorems}\label{sec:shapeThms}
In this appendix, we prove the shape theorems which were stated in Section \ref{sub:FE}. These results play a key role in the body of the paper when we prove the duality between the Busemann function mean vector and the direction of the associated Gibbs measure.

\begin{proof}[Proof of Theorem \ref{Thm:ShapeUnr}]
    The finiteness of $\ppShapeUnrA{\beta}$ comes from \cite[Theorem 3.10]{Jan-Nur-Ras-22}, after adjusting their proof as in Lemma \ref{lem:upperBoundCompact} to accommodate our weaker Condition \ref{classLCondition}. 
    As a finite concave function, $\ppShapeUnrA{\beta}$ is continuous on the convex open set $\ri\sA$. Lower semicontinuity implies that $\ppShapeUnrA{\beta}$ is bounded
     below (with a bound that can depend on $\w$), uniformly over any bounded subset of $\sA$.
    Then \cite[Theorem 10.3]{Roc-70} implies that  $\ppShapeUnrA{\beta}$ has a unique continuous
extension from the relative interior to the whole of $\sA$. 
The general argument on page 726 of \cite{Ras-Sep-14} shows that this extension agrees with the upper semicontinuous regularization. (In \cite{Ras-Sep-14}, the argument is made for functions on $\sU$, but it works word for word for functions on $\sA$.) 

    
    The second inequality in \eqref{shape-unr} is implied by \eqref{shape-UB}. 
	The argument for both the first inequality in \eqref{shape-unr} and for \eqref{shape-UB} follows the proof of \cite[Theorem 3.10]{Jan-Nur-Ras-22}, with minor modifications that we will highlight. 
	Consider the following two cases. In Case 1, assume that with positive probability there exists an $\e>0$ and a sequence $x_n\in\RsemiA$ such that $\abs{x_n}_1\to\infty$ and 
	$\freeunr{\orig}{x_n}{\beta} - \ppShapeUnrA{\beta, \usc}(x_n)\ge\e\abs{x_n}_1$ for all $n$. In Case 2, assume that with positive probability there exists a $\xi\in\sA$ with $\abs{\xi}_1=1$ and a sequence $x_n\in\RsemiA$ such that $\abs{x_n}_1\to\infty$, $x_n/\abs{x_n}_1\to\xi$, and 
	\begin{align}\label{shape-aux1}
	\abs{x_n}_1^{-1}\freeunr{\orig}{x_n}{\beta} - \ppShapeUnr{\beta}(\xi)\le-\e\quad\text{for all $n$.}
	\end{align}
    In the first case,  
	we can follow the approach in \cite{Jan-Nur-Ras-22} and extract a subsequence (which we denote as $x_n$ again) such that $x_n/\abs{x_n}_1$ converges to some $\xi\in\sA$. Since $\ppShapeUnrA{\beta,\usc}$ is continuous on $\sA$ we have
	\begin{align}\label{shape-aux2}
	\abs{x_n}_1^{-1}\freeunr{\orig}{x_n}{\beta} - \ppShapeUnrA{\beta, \usc}(\xi)\ge\e/2\quad\text{for all $n$ large enough.}	
	\end{align}
	
	Now, in either case, 
	follow the argument in \cite[Theorem 3.10]{Jan-Nur-Ras-22}, specifically below their equation (3.10), adjusting it as shown in the proof of Lemma \ref{lem:upperBoundCompact} to accommodate our weaker Condition \ref{classLCondition}. Following this adjusted argument, we reach the conclusion that $\bbP$-almost surely, on the event where either \eqref{shape-aux1} holds or \eqref{shape-aux2} holds, we have
	\[-\e_2+\ppShapeUnr{\beta}(\xi)
	\le\varliminf_{n\to\infty}\abs{x_n}^{-1}\freeunr{\orig}{x_n}{\beta}
	\le\varlimsup_{n\to\infty}\abs{x_n}^{-1}\freeunr{\orig}{x_n}{\beta}\le \ppShapeUnr{\beta}\Bigl(\xi+\e_1\!\!\!\sum_{z\in\rangeA}\!\!\!z\Bigr)+\e_2,\]
	where $\e_2>0$ is arbitrary, and $\e_1>0$ can be chosen to be arbitrarily small, depending on $\e_2$. 
	Take $\e_1\to0$ then $\e_2\to0$ and use the facts that $\ppShapeUnr{\beta}\Bigl(\xi+\e_1\sum_{z\in\rangeA}z\Bigr)=\ppShapeUnrA{\beta,\usc}\Bigl(\xi+\e_1\sum_{z\in\rangeA}z\Bigr)$ and $\ppShapeUnrA{\beta,\usc}$ is continuous on $\sA$ to get that 
	\[\ppShapeUnr{\beta}(\xi)
	\le\varliminf_{n\to\infty}\abs{x_n}^{-1}\freeunr{\orig}{x_n}{\beta}\le
	\varlimsup_{n\to\infty}\abs{x_n}^{-1}\freeunr{\orig}{x_n}{\beta}\le \ppShapeUnrA{\beta, \usc}(\xi),\] 
	which contradicts both \eqref{shape-aux1} and \eqref{shape-aux2}. This proves the desired inequalities. 
	The ergodicity claim is already in Theorem 3.10 of \cite{Jan-Nur-Ras-22}. The theorem is proved.
\end{proof}

\begin{proof}[Proof of Theorem \ref{Thm:ShapeRes}]
	Write the restricted-length model as an unrestricted-length model as in Remark \ref{rk:resAsUnr1}. Then the conditions on $\pote^+$ in the statement of the theorem imply that $\overline\pote^+$ satisfies the hypotheses of Theorem \ref{Thm:ShapeUnr} and, as explained in Remark \ref{rk:resAsUnr2}, $\overline\Lambda^{\beta,\usc}_{\overline\sA}(\langle \zeta,t\rangle)=t\ppShapeResU{\beta,\usc}(\zeta/t)$ for all $\zeta\in\sA$ and $t>0$ with $\zeta/t\in\sU'$.
 %
%
	Applying \eqref{shape-UB} and \eqref{shape-unr} to the unrestricted-length model gives \eqref{shape-res-UB} and \eqref{shape-res} for the restricted-length model.
%
	The ergodicity of $\bbP$ under $\{\T_z : z \in \rangeA \}$ is equivalent to its ergodicity under $\{\T_{\langle z, 1\rangle} : \langle z, 1\rangle \in \overline\range_{\overline A}\}$, which by Theorem \ref{Thm:ShapeUnr} implies that $\overline\Lambda^{\beta, \usc}_{\overline\sA}$ is deterministic on $\overline\sA$ and, therefore, in this case, $\ppShapeResU{\beta, \usc}$ is deterministic on $\sU'$.  
\end{proof}

\section{Relationship between restricted and unrestricted-length}\label{app:resunr}

We prove a theorem relating the restricted-length and unrestricted-length limiting free energies in the directed setting where $\orig \not\in\Uset$. This connection is also known to hold in the case of the standard first-passage percolation model. See \cite[Equation (2.37)]{Kri-Ras-Sep-23}.
It is natural to expect that our theorem continues to hold under appropriate hypotheses when the model has loops, i.e., $\orig \in \Uset$. We leave this for future work. 

\begin{theorem}\label{thm:resAndUnrFreeE}
Fix a face $\sA \in \faces$ {\rm(}possibly $\cone$ itself\,{\rm)}. Assume $\orig \not\in \UsetA$, Conditions \ref{VCondition} and \ref{classLCondition} hold on $\sA$, and $\bbP$ is ergodic under $\{\T_x : x \in \RgroupA\}$.  Then for each $\beta \in (0,\infty]$ and $\xi \in (\ri\sA)\setminus\{\orig\}$, 
\begin{equation}\label{eqn:resUnrRelation}
    \ppShapeUnr{\beta}(\xi) = \sup_{\substack{s > 0 \\ \xi/s \in \UsetA}} \{ s \ppShapeRes{\beta,\usc}(\xi / s)\}, \text{ and the supremum is achieved.}
\end{equation}

\end{theorem}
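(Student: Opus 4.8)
The plan is to exploit the scaling/superadditivity structure that links restricted-length partition functions to unrestricted-length ones when there are no loops. Recall that in the directed case $\orig\not\in\UsetA$, any admissible path from $\orig$ to a point $y$ has length between $c_1|y|_1$ and $c_2|y|_1$ for fixed constants, and more importantly, $\prtunr{\orig}{y}{\beta} = \sum_{\ell} \prtres{\orig}{y}{\ell}{\beta}$ with only finitely many nonzero terms (those with $\ell$ comparable to $|y|_1$). First I would fix $\xi\in(\ri\sA)\setminus\{\orig\}$ and a sequence $x_n\in\RsemiA$ with $x_n/n\to\xi$. For the ``$\ge$'' direction of \eqref{eqn:resUnrRelation}, note that for any admissible $s>0$ with $\xi/s\in\UsetA$ we can choose a lattice approximation $\widehat{x}_{\lceil sn\rceil}(\xi/s)\in\Dn{\lceil sn\rceil}$ of $n\xi$ with increments in $\rangeA$ (as in \cite[(2.1)]{Ras-Sep-14}); then $\prtunr{\orig}{\widehat x_{\lceil sn\rceil}}{\beta}\ge \prtres{\orig}{\widehat x_{\lceil sn\rceil}}{\lceil sn\rceil}{\beta}$, so dividing by $n$ and letting $n\to\infty$, using \eqref{ppShapeUnr} (along the sequence $\widehat x_{\lceil sn\rceil}$, which tends to $\xi$) and \eqref{shape-res} gives $\ppShapeUnr{\beta}(\xi)\ge s\ppShapeResU{\beta,\usc}(\xi/s)$ for the unique face $\sU'$ of $\UsetA$ whose cone is $\sA$. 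Taking the supremum over admissible $s$ yields one inequality. The zero-temperature case is identical with passage times in place of free energies.

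For the ``$\le$'' direction, the key is that $\freeunr{\orig}{x_n}{\beta} = \frac1\beta\log\sum_{\ell} \prtres{\orig}{x_n}{\ell}{\beta}$ where the sum ranges over $\ell\in[c_1|x_n|_1, c_2|x_n|_1]$, a range whose cardinality is $O(|x_n|_1)=O(n)$. Hence $\freeunr{\orig}{x_n}{\beta} \le \max_\ell \freeres{\orig}{x_n}{\ell}{\beta} + \frac1\beta\log(Cn)$. Writing $\ell = s_n n$ with $s_n$ ranging over a bounded set, and noting $x_n/\ell = (x_n/n)/s_n \to \xi/s$ along subsequences, I would apply the restricted shape theorem \eqref{shape-res} (more precisely the uniform upper bound \eqref{shape-res-UB}, which controls $\freeres{\orig}{x}{\ell}{\beta}$ uniformly over $x\in\ell\,\sU'\cap\Dn{\ell}$) to get $n^{-1}\freeres{\orig}{x_n}{\ell}{\beta} = s_n\cdot\ell^{-1}\freeres{\orig}{x_n}{\ell}{\beta} \le s_n \ppShapeResU{\beta,\usc}(x_n/\ell) + o(1)$. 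Since $x_n/\ell$ stays in a compact subset of $\UsetA$ and $\ppShapeResU{\beta,\usc}$ is continuous there, this is bounded by $\sup_{s>0,\ \xi/s\in\UsetA} s\ppShapeResU{\beta,\usc}(\xi/s) + o(1)$, and the $\frac1\beta\log(Cn)$ term vanishes after division by $n$. At zero temperature one replaces the logarithm-of-a-sum with a maximum over $O(n)$ terms directly.

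Finally, for the statement that the supremum is achieved: the function $s\mapsto s\ppShapeResU{\beta,\usc}(\xi/s)$ is defined on the closed bounded interval $\{s>0:\xi/s\in\UsetA\}$ — closed because $\UsetA$ is compact and $\orig\not\in\UsetA$ forces $s$ to stay bounded away from $0$ and from $\infty$ (here $\orig\not\in\UsetA$ is essential: it guarantees $\{s:\xi/s\in\UsetA\}$ is a compact subinterval of $(0,\infty)$) — and it is continuous on this interval because $\ppShapeResU{\beta,\usc}$ is the continuous extension of $\ppShapeResU{\beta}$ to $\sU'$ by Theorem \ref{Thm:ShapeRes}, whose hypotheses follow from Conditions \ref{VCondition} and \ref{classLCondition} via Remark \ref{rk:resAsUnr2} and Remark \ref{rk:La_res-finite}. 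A continuous function on a compact set attains its supremum. The main obstacle I anticipate is the bookkeeping in the ``$\le$'' direction: one must carefully handle the fact that the maximizing length $\ell$ depends on $n$ and on $\w$, ensure the ratios $x_n/\ell$ genuinely lie in $\UsetA$ (not just $\sA$) so that the restricted shape theorem applies on the correct face $\sU'$, and verify that the error terms are uniform — this is exactly where \eqref{shape-res-UB}, rather than the pointwise limit \eqref{shape-res}, is needed. The ergodicity hypothesis enters only to make $\ppShapeUnr{\beta}$ and $\ppShapeResU{\beta,\usc}$ deterministic so that the equality \eqref{eqn:resUnrRelation} is between deterministic quantities.
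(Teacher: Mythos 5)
Your proposal is correct and follows essentially the same route as the paper: both directions are obtained by sandwiching $\freeunr{\orig}{x_n}{\beta}$ between a single restricted-length term and a polynomially sized sum of restricted-length terms, then invoking the two shape theorems. The only cosmetic difference is at the end — the paper extracts a subsequential limit $t=\lim k_n/n$ of the maximizing lengths and applies \eqref{shape-res} along that subsequence, so that achievement of the supremum at $s=t$ drops out of the resulting chain of inequalities, whereas you invoke the uniform bound \eqref{shape-res-UB} and then argue achievement separately from compactness of $\{s>0:\xi/s\in\UsetA\}$ and continuity of $s\mapsto s\,\ppShapeResU{\beta,\usc}(\xi/s)$; both are valid.
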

\begin{proof}
    For each $\zeta \in \Uset$ and $n \in \ZZ_{> 0}$, define the lattice point
    $\widehat{x}_n(\zeta)$ as in \cite[(2.1)]{Ras-Sep-14}. The point $\widehat{x}_n(\zeta)$ approximates $n\zeta$.  In particular, $\widehat{x}_n(\zeta) / n \rightarrow \zeta$ as $n \rightarrow \infty$, and if $\zeta \in \UsetA$, then $\widehat{x}_n(\zeta) \in \DnA{n}$.   
    
    For the lower bound, note that there are no loops since $\orig \not\in \UsetA$.  Let $\xi\in(\ri\sA)\setminus\{\orig\}$.  For any $s > 0$ such that $\xi / s \in \ri\UsetA$, 
    \begin{align*}
        \frac{1}{n} \log \prtunr{\orig}{\widehat{x}_{\lfloor sn \rfloor}(\xi / s) }{\beta} = \frac{1}{n} \log \sum_{k=0}^{\infty} \prtres{\orig}{\widehat{x}_{\lfloor sn \rfloor}(\xi / s)}{k}{\beta} &\geq \frac{1}{n} \log \prtres{\orig}{\widehat{x}_{\lfloor sn\rfloor}(\xi / s) }{\lfloor sn \rfloor}{\beta}.  
    \end{align*}
    The equivalent bound at zero temperature is $n^{-1} \freeunr{\orig}{\widehat{x}_{\lfloor sn \rfloor}(\xi/s) }{\infty} \ge n^{-1} \freeres{\orig}{\widehat{x}_{\lfloor sn \rfloor}(\xi/s) }{\lfloor sn \rfloor}{\infty}$.  Take limits, apply Theorems \ref{Thm:ShapeUnr} and \ref{Thm:ShapeRes}, and take the supremum over $s > 0$ to get 
    \begin{align}\label{LBaux}
   \ppShapeUnrA{\beta}(\xi)=\ppShapeUnrA{\beta,\usc}(\xi) \ge  \sup_{\substack{s > 0 \\ \xi/s \in \ri\UsetA}} \{ s \ppShapeRes{\beta}(\xi / s)\}=\sup_{\substack{s > 0 \\ \xi/s \in \UsetA}} \{ s \ppShapeRes{\beta,\usc}(\xi / s)\}.
    \end{align}
    


    For the upper bound, first note that by \cite[Lemma A.1]{Jan-Nur-Ras-22} there exist a $\uhat \in \RR^d$ and a $\delta > 0$ such that $\uhat \cdot z \geq \delta > 0$ for all $z \in \rangeA$.  
    Any admissible path from $\orig$ to $x_n$ must take at least $\abs{x_n}_1\min_{z\in\rangeA}\abs{z}_1^{-1}$ steps and at most $\delta^{-1} x_n \cdot \uhat$ steps. Thus, for $n$ large enough, the number of steps is between $\e n$ and $Cn$, 
    for some finite positive constants $C = C(\xi)$ and $\e=\e(\xi)$.  
    For such $n$, we have
    \begin{align*}
        \frac{1}{n} \log\prtunr{\orig}{x_n}{\beta} 
        &= \frac{1}{n} \log \rwE_{\orig}\bigl[ e^{-\beta\sum_{i=0}^{\stoppt{x_n}-1} \pote(\T_{X_i} \w, X_{i+1} - X_i)} \one_{\{\stoppt{x_n} < \infty\}} \bigr] \\
        &= \frac{1}{n} \log \sum_{k=0}^\infty \rwE_{\orig}\bigl[ e^{-\beta\sum_{i=0}^{k-1} \pote(\T_{X_i} \w, X_{i+1} - X_i)} \one_{\{\stoppt{x_n} = k\}} \one_{\{X_k = x_n\}} \bigr] \\
        &\leq \frac{1}{n} \log \sum_{\e n\le k\le Cn} \prtres{\orig}{x_n}{k}{\beta} \\
        &\leq \max_{\e n\leq k \leq Cn}  \frac{1}{n} \log\prtres{\orig}{x_n}{k}{\beta} + \frac{1}{n} \log Cn.
    \end{align*}
    The equivalent bound at zero temperature is
    \[
    \frac{1}{n} \freeunr{\orig}{x_n}{\infty} \le \max_{\e n \leq k \leq Cn} \frac{1}{n} \freeres{\orig}{x_n}{k}{\infty}. 
    \]
    In either case, let $k_n\in[\e n,Cn]$ be the integer that achieves the maximum. Then
        \[\frac{1}{n} \freeunr{\orig}{x_n}{\beta} =  \frac{k_n}n\cdot\frac{1}{k_n} \freeres{\orig}{x_n}{k_n}{\beta}.\]
    Take a subsequence such that $k_n/n$ converges to some $t\in[\e,C]$. In particular, $k_n\to\infty$ and $x_n/k_n\to\xi/t\in\UsetA$ as $n\to\infty$.  Take $n\to\infty$ along this subsequence and apply Theorems \ref{Thm:ShapeUnr} and \ref{Thm:ShapeRes} to get 
    \[
    \ppShapeUnrA{\beta}(\xi) \le t \ppShapeResA{\beta,\usc}(\xi / t)
    \le \sup_{\substack{s > 0 \\ \xi/s \in \UsetA}} s \ppShapeResA{\beta,\usc}(\xi / s).
    \]
Together with the lower bound \eqref{LBaux} we get that the inequalities above are, in fact, all equalities.
\end{proof}

\begin{remark}
Theorem \ref{thm:CocycleMeasuresFace} shows that the set $\facetRes{\m,\sA}$ must be non-empty. One can see this directly if one first proves the relationship in \eqref{eqn:resUnrRelation} on $\sA$.  
        Indeed, if $\m \in \superDiffUnrA{\beta,\usc}(\xi)$ for  $\xi \in(\ri \sA)\setminus\{\orig\}$, then  $\m\cdot \xi = \ppShapeUnrA{\beta, \usc}(\xi)$, and  
        using \eqref{eqn:resUnrRelation}, one would get
        \[
        \m\cdot \xi = \ppShapeUnrA{\beta, \usc}(\xi) = \ppShapeUnrA{\beta}(\xi) = s \ppShapeResA{\beta, \usc}(\xi / s)
        \]
        for some $s > 0$ with $\xi / s \in \UsetA$.  Therefore, $\ppShapeResA{\beta,\usc}(\xi / s) = \m \cdot \xi / s$, and $\facetRes{\m,\sA}$ is non-empty.  
\end{remark}

\subsection*{Acknowledgment} 
The authors extend their gratitude to the anonymous referees for their meticulous and detailed reviews, as well as for their insightful and constructive comments.

	\bibliographystyle{abbrv}  
	\bibliography{firasbib2010}

\begin{thebibliography}{10}

\bibitem{Alb-Ras-Sim-20}
T.~Alberts, F.~Rassoul-Agha, and M.~Simper.
\newblock Busemann functions and semi-infinite {O}'{C}onnell--{Y}or polymers.
\newblock {\em Bernoulli}, 26(3):1927--1955, 2020.

\bibitem{Ale-Ber-18}
K.~S. Alexander and Q.~Berger.
\newblock Geodesics toward corners in first passage percolation.
\newblock {\em J. Stat. Phys.}, 172(4):1029--1056, 2018.

\bibitem{Auf-Dam-Han-17}
A.~Auffinger, J.~Hanson, and M.~Damron.
\newblock {\em 50 years of first passage percolation}, volume~68 of {\em
  University Lecture Series}.
\newblock American Mathematical Society, Providence, RI, 2017.

\bibitem{Bak-13}
Y.~Bakhtin.
\newblock The {B}urgers equation with {P}oisson random forcing.
\newblock {\em Ann. Probab.}, 41(4):2961--2989, 2013.

\bibitem{Bak-16}
Y.~Bakhtin.
\newblock Inviscid {B}urgers equation with random kick forcing in noncompact
  setting.
\newblock {\em Electron. J. Probab.}, 21:Paper No. 37, 50, 2016.

\bibitem{Bak-Cat-Kha-14}
Y.~Bakhtin, E.~Cator, and K.~Khanin.
\newblock Space-time stationary solutions for the {B}urgers equation.
\newblock {\em J. Amer. Math. Soc.}, 27(1):193--238, 2014.

\bibitem{Bak-Kha-18}
Y.~Bakhtin and K.~Khanin.
\newblock On global solutions of the random {H}amilton-{J}acobi equations and
  the {KPZ} problem.
\newblock {\em Nonlinearity}, 31(4):R93--R121, 2018.

\bibitem{Bak-Li-19}
Y.~Bakhtin and L.~Li.
\newblock Thermodynamic limit for directed polymers and stationary solutions of
  the {B}urgers equation.
\newblock {\em Comm. Pure Appl. Math.}, 72(3):536--619, 2019.

\bibitem{Bal-Ras-Sep-19}
M.~Bal\'{a}zs, F.~Rassoul-Agha, and T.~Sepp\"{a}l\"{a}inen.
\newblock Large deviations and wandering exponent for random walk in a dynamic
  beta environment.
\newblock {\em Ann. Probab.}, 47(4):2186--2229, 2019.

\bibitem{Bat-24}
E.~Bates.
\newblock Empirical measures, geodesic lengths, and a variational formula in
  first-passage percolation.
\newblock {\em Mem. Amer. Math. Soc.}, 293(1460):xi+92, 2024.

\bibitem{Bog-07}
V.~I. Bogachev.
\newblock {\em Measure theory. {V}ol. {I}, {II}}.
\newblock Springer-Verlag, Berlin, 2007.

\bibitem{Bra-05}
R.~C. Bradley.
\newblock Basic properties of strong mixing conditions. {A} survey and some
  open questions.
\newblock {\em Probab. Surv.}, 2:107--144, 2005.
\newblock Update of, and a supplement to, the 1986 original.

\bibitem{Bri-Hof-21}
G.~Brito and C.~Hoffman.
\newblock Geodesic rays and exponents in ergodic planar first passage
  percolation.
\newblock In {\em In and out of equilibrium 3. {C}elebrating {V}ladas
  {S}idoravicius}, volume~77 of {\em Progr. Probab.}, pages 163--186.
  Birkh\"{a}user/Springer, Cham, [2021] \copyright 2021.

\bibitem{Bus-Sep-Sor-24}
O.~Busani, T.~Sepp\"{a}l\"{a}inen, and E.~Sorensen.
\newblock The stationary horizon and semi-infinite geodesics in the directed
  landscape.
\newblock {\em Ann. Probab.}, 52(1):1--66, 2024.

\bibitem{Car-Sou-17}
P.~Cardaliaguet and P.~E. Souganidis.
\newblock On the existence of correctors for the stochastic homogenization of
  viscous {H}amilton-{J}acobi equations.
\newblock {\em C. R. Math. Acad. Sci. Paris}, 355(7):786--794, 2017.

\bibitem{Cat-Pim-11}
E.~Cator and L.~P.~R. Pimentel.
\newblock A shape theorem and semi-infinite geodesics for the {H}ammersley
  model with random weights.
\newblock {\em ALEA Lat. Am. J. Probab. Math. Stat.}, 8:163--175, 2011.

\bibitem{Cat-Pim-12}
E.~Cator and L.~P.~R. Pimentel.
\newblock Busemann functions and equilibrium measures in last passage
  percolation models.
\newblock {\em Probab. Theory Related Fields}, 154(1-2):89--125, 2012.

\bibitem{Cat-Pim-13}
E.~Cator and L.~P.~R. Pimentel.
\newblock Busemann functions and the speed of a second class particle in the
  rarefaction fan.
\newblock {\em Ann. Probab.}, 41(4):2401--2425, 2013.

\bibitem{Cha-Kri-19}
J.~Chaika and A.~Krishnan.
\newblock Stationary coalescing walks on the lattice.
\newblock {\em Probability Theory and Related Fields}, 175(3):655--675, 2019.

\bibitem{Cha-Kri-21}
J.~Chaika and A.~Krishnan.
\newblock Stationary coalescing walks on the lattice {II}: entropy.
\newblock {\em Nonlinearity}, 34(10):7045--7063, 2021.

\bibitem{Com-17}
F.~Comets.
\newblock {\em Directed polymers in random environments}, volume 2175 of {\em
  Lecture Notes in Mathematics}.
\newblock Springer, Cham, 2017.
\newblock Lecture notes from the 46th Probability Summer School held in
  Saint-Flour, 2016.

\bibitem{Com-Zei-04}
F.~Comets and O.~Zeitouni.
\newblock A law of large numbers for random walks in random mixing
  environments.
\newblock {\em Ann. Probab.}, 32(1B):880--914, 2004.

\bibitem{Dam-Han-14}
M.~Damron and J.~Hanson.
\newblock Busemann functions and infinite geodesics in two-dimensional
  first-passage percolation.
\newblock {\em Comm. Math. Phys.}, 325(3):917--963, 2014.

\bibitem{Dam-Han-17}
M.~Damron and J.~Hanson.
\newblock Bigeodesics in {F}irst-{P}assage {P}ercolation.
\newblock {\em Comm. Math. Phys.}, 349(2):753--776, 2017.

\bibitem{Gar-Mar-05}
O.~Garet and R.~Marchand.
\newblock Coexistence in two-type first-passage percolation models.
\newblock {\em Ann. Appl. Probab.}, 15(1A):298--330, 2005.

\bibitem{Geo-Ras-Sep-16}
N.~Georgiou, F.~Rassoul-Agha, and T.~Sepp{{\"a}}l{{\"a}}inen.
\newblock Variational formulas and cocycle solutions for directed polymer and
  percolation models.
\newblock {\em Comm. Math. Phys.}, 346(2):741--779, 2016.

\bibitem{Geo-Ras-Sep-17-ptrf-2}
N.~Georgiou, F.~Rassoul-Agha, and T.~Sepp{\"a}l{\"a}inen.
\newblock Geodesics and the competition interface for the corner growth model.
\newblock {\em Probab. Theory Related Fields}, 169(1-2):223--255, 2017.

\bibitem{Geo-Ras-Sep-17-ptrf-1}
N.~Georgiou, F.~Rassoul-Agha, and T.~Sepp{\"a}l{\"a}inen.
\newblock Stationary cocycles and {B}usemann functions for the corner growth
  model.
\newblock {\em Probab. Theory Related Fields}, 169(1-2):177--222, 2017.

\bibitem{Geo-etal-15}
N.~Georgiou, F.~Rassoul-Agha, T.~Sepp{{\"a}}l{{\"a}}inen, and A.~Yilmaz.
\newblock Ratios of partition functions for the log-gamma polymer.
\newblock {\em Ann. Probab.}, 43(5):2282--2331, 2015.

\bibitem{Gou-07}
J.-B. Gou\'{e}r\'{e}.
\newblock Shape of territories in some competing growth models.
\newblock {\em Ann. Appl. Probab.}, 17(4):1273--1305, 2007.

\bibitem{Gre-Hol-94}
A.~Greven and F.~den Hollander.
\newblock Large deviations for a random walk in random environment.
\newblock {\em Ann. Probab.}, 22(3):1381--1428, 1994.

\bibitem{Gro-23}
S.~A. Groathouse.
\newblock {\em Infinite-{L}ength {R}andom {P}olymers}.
\newblock ProQuest LLC, Ann Arbor, MI, 2023.
\newblock Thesis (Ph.D.)--The University of Utah.

\bibitem{Hag-Mee-95}
O.~H{{\"a}}ggstr{{\"o}}m and R.~Meester.
\newblock Asymptotic shapes for stationary first passage percolation.
\newblock {\em Ann. Probab.}, 23(4):1511--1522, 1995.

\bibitem{Hof-08}
C.~Hoffman.
\newblock Geodesics in first passage percolation.
\newblock {\em Ann. Appl. Probab.}, 18(5):1944--1969, 2008.

\bibitem{How-New-01}
C.~D. Howard and C.~M. Newman.
\newblock Geodesics and spanning trees for {E}uclidean first-passage
  percolation.
\newblock {\em Ann. Probab.}, 29(2):577--623, 2001.

\bibitem{Iof-Vel-12}
D.~Ioffe and Y.~Velenik.
\newblock Stretched polymers in random environment.
\newblock In {\em Probability in complex physical systems}, volume~11 of {\em
  Springer Proc. Math.}, pages 339--369. Springer, Heidelberg, 2012.

\bibitem{Jan-Nur-Ras-22}
C.~Janjigian, S.~Nurbavliyev, and F.~Rassoul-Agha.
\newblock A shape theorem and a variational formula for the quenched {L}yapunov
  exponent of random walk in a random potential.
\newblock {\em Ann. Inst. Henri Poincar\'{e} Probab. Stat.}, 58(2):1010--1040,
  2022.

\bibitem{Jan-Ras-18-arxiv}
C.~Janjigian and F.~Rassoul-Agha.
\newblock {B}usemann functions and {G}ibbs measures in directed polymer models
  on $\mathbb{Z}^2$.
\newblock 2018.
\newblock Extended version (\href{https://arxiv.org/abs/1810.03580v2}{\tt arXiv
  1810.03580v2}).

\bibitem{Jan-Ras-20-aop}
C.~Janjigian and F.~Rassoul-Agha.
\newblock Busemann functions and {G}ibbs measures in directed polymer models on
  {$\mathbb Z^2$}.
\newblock {\em Ann. Probab.}, 48(2):778--816, 2020.

\bibitem{Jan-Ras-20-jsp}
C.~Janjigian and F.~Rassoul-Agha.
\newblock Uniqueness and {E}rgodicity of {S}tationary {D}irected {P}olymers on
  {$\mathbb{Z}^2$}.
\newblock {\em J. Stat. Phys.}, 179(3):672--689, 2020.

\bibitem{Jan-Ras-Sep-22-1F1S-}
C.~Janjigian, F.~Rassoul-Agha, and T.~Sepp\"al\"ainen.
\newblock Ergodicity and synchronization of the {K}ardar-{P}arisi-{Z}hang
  equation.
\newblock 2022.
\newblock Preprint (\href{https://arxiv.org/abs/2211.06779}{\tt
  arXiv:2211.06779}).

\bibitem{Jan-Ras-Sep-23}
C.~Janjigian, F.~Rassoul-Agha, and T.~Sepp\"{a}l\"{a}inen.
\newblock Geometry of geodesics through {B}usemann measures in directed
  last-passage percolation.
\newblock {\em J. Eur. Math. Soc. (JEMS)}, 25(7):2573--2639, 2023.

\bibitem{Kal-81}
S.~A. Kalikow.
\newblock Generalized random walk in a random environment.
\newblock {\em Ann. Probab.}, 9(5):753--768, 1981.

\bibitem{Kes-86-stflour}
H.~Kesten.
\newblock Aspects of first passage percolation.
\newblock In {\em \'{E}cole d'{\'e}t{\'e} de probabilit{\'e}s de
  {S}aint-{F}lour, {XIV}---1984}, volume 1180 of {\em Lecture Notes in Math.},
  pages 125--264. Springer, Berlin, 1986.

\bibitem{Kri-Ras-Sep-23}
A.~Krishnan, F.~Rassoul-Agha, and T.~Sepp\"{a}l\"{a}inen.
\newblock Geodesic length and shifted weights in first-passage percolation.
\newblock {\em Comm. Amer. Math. Soc.}, 3:209--289, 2023.

\bibitem{Kru-Mea-Hal-92}
J.~Krug, P.~Meakin, and T.~Halpin-Healy.
\newblock Amplitude universality for driven interfaces and directed polymers in
  random media.
\newblock {\em Phys. Rev. A}, 45:638--653, 1992.

\bibitem{Kry-Bog-37}
N.~Kryloff and N.~Bogoliouboff.
\newblock La th\'{e}orie g\'{e}n\'{e}rale de la mesure dans son application
  {\`a} l'\'{e}tude des syst{\`e}mes dynamiques de la m\'{e}canique non
  lin\'{e}aire.
\newblock {\em Ann. of Math. (2)}, 38(1):65--113, 1937.

\bibitem{Lic-New-96}
C.~Licea and C.~M. Newman.
\newblock Geodesics in two-dimensional first-passage percolation.
\newblock {\em Ann. Probab.}, 24(1):399--410, 1996.

\bibitem{Lig-85}
T.~M. Liggett.
\newblock An improved subadditive ergodic theorem.
\newblock {\em Ann. Probab.}, 13(4):1279--1285, 1985.

\bibitem{Lio-Pap-Var-87}
P.-L. Lions, G.~Papanicolaou, and S.~Varadhan.
\newblock Homogenization of {H}amilton-{J}acobi equation.
\newblock pages 1--34, 1987.
\newblock
  \href{https://www.researchgate.net/publication/246383838_Homogenization_of_Hamilton-Jacobi_equation}{\tt
  Unpublished manuscript}.

\bibitem{Mai-Pra-03}
J.~Mairesse and B.~Prabhakar.
\newblock The existence of fixed points for the {$\cdot/GI/1$} queue.
\newblock {\em Ann. Probab.}, 31(4):2216--2236, 2003.

\bibitem{Mar-02-spa}
J.~B. Martin.
\newblock Linear growth for greedy lattice animals.
\newblock {\em Stochastic Process. Appl.}, 98(1):43--66, 2002.

\bibitem{Mar-04}
J.~B. Martin.
\newblock Limiting shape for directed percolation models.
\newblock {\em Ann. Probab.}, 32(4):2908--2937, 2004.

\bibitem{New-95}
C.~M. Newman.
\newblock A surface view of first-passage percolation.
\newblock In {\em Proceedings of the {I}nternational {C}ongress of
  {M}athematicians, {V}ol.\ 1, 2 ({Z}{\"u}rich, 1994)}, pages 1017--1023,
  Basel, 1995. Birkh{\"a}user.

\bibitem{Ras-03-aop}
F.~Rassoul-Agha.
\newblock The point of view of the particle on the law of large numbers for
  random walks in a mixing random environment.
\newblock {\em Ann. Probab.}, 31(3):1441--1463, 2003.

\bibitem{Ras-05}
F.~Rassoul-Agha.
\newblock On the zero-one law and the law of large numbers for random walk in
  mixing random environment.
\newblock {\em Electron. Comm. Probab.}, 10:36--44 (electronic), 2005.

\bibitem{Ras-Sep-11}
F.~Rassoul-Agha and T.~Sepp{{\"a}}l{{\"a}}inen.
\newblock Process-level quenched large deviations for random walk in random
  environment.
\newblock {\em Ann. Inst. Henri Poincar{\'e} Probab. Stat.}, 47(1):214--242,
  2011.

\bibitem{Ras-Sep-14}
F.~Rassoul-Agha and T.~Sepp{{\"a}}l{{\"a}}inen.
\newblock Quenched point-to-point free energy for random walks in random
  potentials.
\newblock {\em Probab. Theory Related Fields}, 158(3-4):711--750, 2014.

\bibitem{Ras-Sep-15-ldp}
F.~Rassoul-Agha and T.~Sepp{{\"a}}l{{\"a}}inen.
\newblock {\em A course on large deviations with an introduction to {G}ibbs
  measures}, volume 162 of {\em Graduate Studies in Mathematics}.
\newblock American Mathematical Society, Providence, RI, 2015.

\bibitem{Ras-Sep-Yil-13}
F.~Rassoul-Agha, T.~Sepp{{\"a}}l{{\"a}}inen, and A.~Yilmaz.
\newblock Quenched free energy and large deviations for random walks in random
  potentials.
\newblock {\em Comm. Pure Appl. Math.}, 66(2):202--244, 2013.

\bibitem{Ras-Sep-Yil-17-ber}
F.~Rassoul-Agha, T.~Sepp{\"a}l{\"a}inen, and A.~Yilmaz.
\newblock Variational formulas and disorder regimes of random walks in random
  potentials.
\newblock {\em Bernoulli}, 23(1):405--431, 2017.

\bibitem{Roc-70}
R.~T. Rockafellar.
\newblock {\em Convex analysis}.
\newblock Princeton Mathematical Series, No. 28. Princeton University Press,
  Princeton, N.J., 1970.

\bibitem{Sep-Sor-23-pmp}
T.~Sepp\"{a}l\"{a}inen and E.~Sorensen.
\newblock Global structure of semi-infinite geodesics and competition
  interfaces in {B}rownian last-passage percolation.
\newblock {\em Probab. Math. Phys.}, 4(3):667--760, 2023.

\bibitem{Slo-21-}
D.~J. Slonim.
\newblock A zero-one law for random walks in random environments on
  $\mathbb{Z}^2$ with bounded jumps.
\newblock 08 2021.

\bibitem{Spo-14}
H.~Spohn.
\newblock K{PZ} scaling theory and the semidiscrete directed polymer model.
\newblock In {\em Random matrix theory, interacting particle systems, and
  integrable systems}, volume~65 of {\em Math. Sci. Res. Inst. Publ.}, pages
  483--493. Cambridge Univ. Press, New York, 2014.

\bibitem{Szn-Zer-99}
A.-S. Sznitman and M.~Zerner.
\newblock A law of large numbers for random walks in random environment.
\newblock {\em Ann. Probab.}, 27(4):1851--1869, 1999.

\bibitem{Tou-15}
L.~Tournier.
\newblock Asymptotic direction of random walks in {D}irichlet environment.
\newblock {\em Ann. Inst. Henri Poincar\'{e} Probab. Stat.}, 51(2):716--726,
  2015.

\bibitem{Var-03-cpam}
S.~R.~S. Varadhan.
\newblock Large deviations for random walks in a random environment.
\newblock {\em Comm. Pure Appl. Math.}, 56(8):1222--1245, 2003.
\newblock Dedicated to the memory of J{{\"u}}rgen K. Moser.

\bibitem{Yil-09-aop}
A.~Yilmaz.
\newblock Large deviations for random walk in a space-time product environment.
\newblock {\em Ann. Probab.}, 37(1):189--205, 2009.

\bibitem{Yil-09-cpam}
A.~Yilmaz.
\newblock Quenched large deviations for random walk in a random environment.
\newblock {\em Comm. Pure Appl. Math.}, 62(8):1033--1075, 2009.

\bibitem{Yil-11-aop}
A.~Yilmaz.
\newblock Harmonic functions, {$h$}-transform and large deviations for random
  walks in random environments in dimensions four and higher.
\newblock {\em Ann. Probab.}, 39(2):471--506, 2011.

\bibitem{Zer-98-aap}
M.~P.~W. Zerner.
\newblock Directional decay of the {G}reen's function for a random nonnegative
  potential on {${\bf Z}^d$}.
\newblock {\em Ann. Appl. Probab.}, 8(1):246--280, 1998.

\bibitem{Zer-07}
M.~P.~W. Zerner.
\newblock The zero-one law for planar random walks in i.i.d. random
  environments revisited.
\newblock {\em Electron. Comm. Probab.}, 12:326--335 (electronic), 2007.

\bibitem{Zer-Mer-01}
M.~P.~W. Zerner and F.~Merkl.
\newblock A zero-one law for planar random walks in random environment.
\newblock {\em Ann. Probab.}, 29(4):1716--1732, 2001.

\end{thebibliography}

\end{document}